\documentclass[a4paper,fleqn]{cas-sc}

\usepackage[numbers,square,sort&compress]{natbib}
\usepackage{tikz}
\usepackage{float}
\usepackage{amsmath,amssymb,bm}
\usetikzlibrary{arrows.meta,positioning,calc,fit,backgrounds,shapes.geometric}
\usepackage{amsthm}

\newtheorem{theorem}{Theorem}
\newtheorem{lemma}{Lemma}
\newtheorem{proposition}{Proposition}
\newtheorem{corollary}{Corollary}

\theoremstyle{definition}

\theoremstyle{remark}
\newtheorem{remark}{Remark}
\newcommand{\stitle}[1]{{\bfseries\footnotesize #1}}
\definecolor{bluebg}{RGB}{222,233,248}
\definecolor{bluehead}{RGB}{31,84,150} 
\definecolor{greystep}{RGB}{226,226,226}
\definecolor{lilac}{RGB}{224,221,242}
\definecolor{ctbg}{RGB}{214,232,224}
\definecolor{cthead}{RGB}{190,219,205}
\definecolor{greybg}{RGB}{238,238,238}
\definecolor{linecol}{RGB}{90,90,100}
\def\tsc#1{\csdef{#1}{\textsc{\lowercase{#1}}\xspace}}
\tsc{WGM}
\tsc{QE}

\usepackage{subcaption}
\usepackage{float}

\usepackage{placeins}

\begin{document}
\let\WriteBookmarks\relax
\def\floatpagepagefraction{1}
\def\textpagefraction{.001}

\shorttitle{}    

\shortauthors{}  

\title [mode = title]{A Structure- and Pressure-Positivity-Preserving Semi-implicit IMEX Finite Volume Scheme for Ideal MHD at All Acoustic Mach and Alfvén Mach Numbers with Generic Equation of State}  


\tnotetext[1]{} 

%

\author[1]{Zefeng Chen}
\ead{zc378@cam.ac.uk}

\author[1]{Riccardo Demattè}
\ead{rd609@cam.ac.uk}

\author[2,3]{Walter Boscheri}
\cormark[1]
\ead{walter.boscheri@univ-smb.fr}

\author[1]{Stephen Millmore}
\ead{stm31@cam.ac.uk}

\affiliation[1]{organization={Laboratory for Scientific Computing, Cavendish Laboratory, Department of Physics},
            addressline={University of Cambridge, J. J. Thomson Avenue},
            city={Cambridge},
            postcode={CB3 0HE},
            country={United Kingdom}}

\affiliation[2]{organization={Laboratoire de Mathématiques UMR 5127 CNRS},
            addressline={Université Savoie Mont Blanc},
            city={Le Bourget du Lac},
            postcode={73376},
            country={France}}

\affiliation[3]{organization={Department of Mathematics and Computer Science, University of Ferrara},
            city={Ferrara},
            postcode={44121},
            state={Ferrara},
            country={Italy}}

\cortext[1]{Corresponding author}


\begin{abstract}
In this work we present a conservative, structure-preserving, pressure-positivity-preserving, cell-centred semi-implicit finite-volume scheme for the ideal magnetohydrodynamics (MHD) equations, which remains uniformly applicable across all acoustic Mach and Alfvén Mach number regimes and accommodates general nonlinear equations of state. The scheme is based on a flux-vector splitting of the MHD system into three sub-systems, partitioned according to the characteristic wave scales of the model: an advective sub-system associated with the hydrodynamic transport, a magnetic sub-system describing the velocity–magnetic field coupling, and a pressure sub-system describing the pressure–velocity coupling. The nonlinear advective terms are discretised explicitly as part of a standard conservative finite-volume update, while the two sub-systems responsible for the Alfvén, magnetosonic and acoustic waves are treated implicitly, so that the resulting algorithm is constrained only by a mild CFL condition depending solely on the fluid velocity. This renders the scheme highly suitable for both gas-pressure and magnetic-pressure dominated regimes, as well as for the incompressible limit of the MHD equations. The implicit discretisation gives rise to a pressure equation that reduces, in the low acoustic Mach limit, to an elliptic equation applying uniformly to ideal-gas and general thermodynamic closures. Within this same framework, pressure-positivity-preserving robustness is obtained through a local, conservation-preserving modification of the pressure–internal-energy relation within the implicit solver, which keeps the discrete pressure positive without a posteriori clipping, while preserving the conservation property of the scheme. The divergence-free constraint of the magnetic field is enforced exactly at the discrete level through a constrained transport formulation. Second-order accuracy is achieved by means of an implicit--explicit (IMEX)
Runge--Kutta time integration, with a TVD reconstruction of the explicit fluxes and
a central discretisation of the implicit terms. The scheme is validated thoroughly against a broad set of benchmark problems spanning high- and low-acoustic-Mach-number regimes, strongly magnetized configurations, and standard stringent MHD shock problems in one and two dimensions, where it is shown to provide accurate and stable solutions while retaining excellent shock-capturing capabilities.
\end{abstract}


\begin{highlights}
\item A conservative, pressure positive-preserving, cell-centred, structure-preserving semi-implicit IMEX finite-volume scheme for ideal MHD.
\item A mild, velocity-based CFL condition supported by extensive numerical evidence.
\item A pressure-based formulation supporting general nonlinear equations of state via a nested Newton solver.
\item Pressure-positivity-preserving robustness enforced within the pressure system without compromising conservation.
\end{highlights}

\begin{keywords}
\sep Ideal Magnetohydrodynamics (MHD)
\sep Semi-implicit IMEX finite volume
\sep All acoustic Mach and Alfvén Mach number flows
\sep Divergence-free
\sep Pressure-positivity-preserving
\sep Nonlinear equation of state
\end{keywords}

\maketitle

\section{Introduction}

The equations of magnetohydrodynamics (MHD) describe an electrically conducting fluid
coupled to a magnetic field and form a first-order nonlinear system of non-strictly
hyperbolic conservation laws. They arise in astrophysical plasma dynamics
\cite{GoedbloedPoedts2019,Priest2014} and in inertial and magnetic confinement fusion
\cite{Jardin2010,AtzeniMeyer2004}, where a magnetically dominated plasma may evolve
rapidly from a near-equilibrium configuration into a fully time-dependent compressible
regime: this may happen during disruption events in confined plasmas \cite{disruption}, or during the
compression of matter in magneto-inertial fusion concepts such as MagLIF
\cite{Slutz2010,Gomez2014,Sinars2020}. The model is intrinsically multiscale, as
quantified by the acoustic Mach number $M_c$ and the Alfv\'en number $M_b$: as $M_c
\to 0$ the system approaches a weakly compressible regime governed by an elliptic
pressure balance, for $M_c = \mathcal{O}(1)$ acoustic waves and shocks coexist with
the flow, and an analogous singular limit arises as $M_b \to 0$, typical of the
magnetic-pressure dominated plasmas of tokamak scenarios. These regimes may coexist
within a single simulation.

Numerical methods for these flows have traditionally been optimised for one of the two
regimes: pressure-based projection methods for the nearly incompressible low acoustic Mach limit
\cite{Chorin1968}, and density-based Godunov-type finite volume schemes
\cite{BrioWu1988,MiyoshiKusano2005} for the compressible, shock-dominated one. The
latter faces two difficulties in the stiff limits: the time step is dictated by the
fast magnetosonic speed, and the artificial dissipation of upwind fluxes scales with
that same speed rather than with the fluid velocity, so that the asymptotic scaling $p
= p_0 + \mathcal{O}(M_c^2)$ is lost \cite{GuillardViozat2017,Dellacherie2010}. Since
the fast magnetosonic speed degenerates to the Alfv\'en speed in the
magnetic-pressure dominated regime, both reappear as $M_b \to 0$. Rescaling the flux
dissipation \cite{barsukow2021truly,barsukow2023all} restores the correct scaling but does not improve the time step size, which then calls for a fully implicit integration
\cite{leidi2022finite,viallet2011towards} and hence for large nonsymmetric nonlinear
systems whose convergence is difficult to control.

By treating only the stiff terms implicitly, while keeping nonlinear transport
explicit, avoids both difficulties, and is naturally realised within the class of
implicit--explicit (IMEX) Runge--Kutta integrators
\cite{RKIMEX,RKIMEX2}. Pressure-based semi-implicit schemes trace their origins to the seminal work of
Harlow and Welch~\cite{HarlowWelch1965}, with important subsequent developments by
Casulli and Greenspan~\cite{CasulliGreenspan1984}. Building on this foundation, Dumbser
and Casulli~\cite{DumbserCasulli2016} later introduced a conservative, shock-capturing
formulation by coupling the Toro--V\'azquez flux splitting~\cite{ToroVazquez2012} with an
implicitly treated pressure sub-system. The
resulting system has a structure that recurs throughout this family and is central to
what follows: it is only mildly nonlinear, the nonlinearity being confined to a
diagonal term carrying the equation of state, while the remaining coupling is linear, positive semi-definite. Such systems are solved by the nested Newton
technique of Casulli and Zanolli \cite{CasulliZanolli2012}, which admits general
equations of state. High-order all acoustic Mach number extensions followed
\cite{BoscheriPareschi2021,TavelliDumbser2017}.

The extension to MHD has proceeded by successively removing the stiff wave families
from the explicit time step. Dumbser et al. \cite{Dumbser2019} kept the pressure terms
implicit while discretising the convective terms and the magnetic field explicitly, so
that the time step remains governed by the Alfv\'en speed: the scheme is efficient in
gas-pressure dominated scenarios and loses its advantage precisely where the Alfv\'en
speed approaches the fast magnetosonic one. Removing that restriction requires the
magnetic field to be treated implicitly as well, as done by Fambri \cite{Fambri2021}
through a structure-preserving three-way splitting on staggered dual meshes.
 On collocated grids, Dematt\'e et al.~\cite{Dematte2024} coupled implicit magnetic and
pressure sub-systems through a nested Picard iteration, building on the flux vector
splitting of Balsara, Montecinos and Toro~\cite{BalsaraMontecinosToro2016}, which
retains a well-behaved hyperbolic structure in all regimes; Boscheri and
Thomann~\cite{BoscheriThomann2024} instead formulated the implicit step for the total
energy. in both cases the time step is constrained only by the material
velocity. In this work, we attempt to develop a scheme which guarantees pressure positivity, whilst remaining general enough to use a nonlinear equation of state

The present paper constructs a conservative, cell-centred semi-implicit finite volume
scheme for ideal MHD that is uniformly applicable at all acoustic Mach and Alfv\'en Mach numbers,
admits a general nonlinear equation of state, and guarantees pressure positivity
within the implicit solve. It adopts the three-way splitting of Fambri
\cite{Fambri2021} with the semi-implicit discretisation proposed by Boscheri and Thomann \cite{BoscheriThomann2024}: the advective sub-system, whose nonzero eigenvalues equal the fluid velocity, is
discretised explicitly with a Rusanov flux. The energy is split analogously to the
Toro--V\'azquez splitting for the Euler equations, such that only the kinetic-energy
contribution appears in the explicitly treated flux, while the magnetic and pressure
sub-systems are integrated implicitly. In place of the energy-based formulation of
\cite{BoscheriThomann2024}, the implicit step is discretised directly in terms of the
pressure, the magnetic field being solved first and entering the pressure sub-system
once updated, so that the two implicit solves form a single one-directional sequence
without an outer coupling iteration. The pressure sub-system is solved twice per step
following \cite{FVVEM}, which conserves the total energy at the discrete level, and
second-order accuracy in time is provided by the LSDIRK2 integrator
\cite{LSDIRK2}. Since only the convective terms enter the explicit stability
constraint, the time step is governed solely by the fluid velocity.

The choice of a pressure-based implicit step is not a mere generalisation. In
magneto-inertial fusion the compressed material is described by tabulated closures such
as SESAME \cite{SESAME} or QEOS \cite{QEOS}, in which the pressure depends nonlinearly
on the internal energy; energy-based formulations rely on that dependence being linear,
as it is for a $\gamma$-law gas, and lose the linearity of the resulting system as soon
as a realistic closure is used. The pressure-based formulation confines the
nonlinearity to the diagonal and leaves the coupling untouched: a single
GMRES solve suffices for a linear closure, and a nested Newton iteration for a
nonlinear one. As a verification closure we employ the Redlich--Kwong equation of state
\cite{RedlichKwong1949}, which is analytic, admits an exact Riemann solution
(Section~\ref{sec:rk_riemann}), and whose nonlinearity enters the discrete system at
the same place as that of a tabulated closure. The collocated cell-centred layout,
adopted here in place of the staggered meshes of \cite{Dumbser2019,Fambri2021}, keeps
the full set of conserved variables at a single point, which is what mixed-material
Riemann solvers require at sharp interfaces and hence what makes the algorithm
extensible to plasma--solid multi-physics configurations.

One of the central contributions is a mechanism that guarantees positivity of the pressure as
an intrinsic part of the implicit solve. Existing pressure-positivity-preserving technology for
MHD is almost exclusively explicit and high-order, relying on a posteriori limiters
combined with Harten--Lax--van Leer fluxes and a discrete control of the magnetic field
divergence \cite{Balsara2012,wu2018,ChristliebEtAl2015,DingWu2024,WLSENO_PP2021};
such clipping restores positivity at the expense of conservation. Here positivity is
built into the pressure sub-system by adapting the wetting-and-drying treatment of
Casulli \cite{Casulli2009} and the piecewise-linear-system theory of Brugnano and
Casulli \cite{BrugnanoCasulli2009}. A steep-slope relaxation makes the bounded
constraint compatible with the same nested Newton iteration already used for the
equation of state and acts exclusively on the diagonal, leaving the flux coupling that
carries the discrete energy conservation unaffected. It yields a
configuration-independent lower bound on the pressure which can be made strictly
positive by construction, so that pressure positivity and exact energy conservation hold
simultaneously. Well-posedness and convergence follow the framework of
\cite{Casulli2009,BrugnanoCasulli2009,CasulliZanolli2012}; the novelty is to transfer
this idea to the MHD pressure sub-system with a general equation of state.

Finally, the solenoidal constraint $\nabla\!\cdot\mathbf{B}=0$ is not automatically
inherited at the discrete level \cite{Toth2000}. It is usually enforced through
constrained transport \cite{EvansHawley1988,GardinerStone2005,BalsaraSpicer1999,Rossmanith2006},
hyperbolic divergence cleaning \cite{Dedner2002}, or source-term and projection
approaches \cite{Powell1999,BrackbillBarnes1980}. In this work, we employ two constrained transport
variants, both built on the same electric field, obtained directly from the interface
fluxes of the implicit magnetic sub-system rather than from an auxiliary Riemann solve:
a staggered adaptation of Gardiner and Stone \cite{GardinerStone2005}, and a fully
unstaggered, potential-based variant in the spirit of Rossmanith \cite{Rossmanith2006}
which retains the cell-centred layout.

The rest of the paper is organised as follows.
Section~\ref{sec:governing} introduces the governing equations, the equations of state
and the flux splitting; Section~\ref{sec:scheme} the semi-implicit IMEX discretisation;
Sections~\ref{sec:positivity} and \ref{sec:properties} the pressure positivity construction and
its properties. Section~\ref{sec:results} assesses the scheme on a suite of benchmarks
in one and two space dimensions, and conclusions are drawn in
Section~\ref{sec:conclusions}.

\section{System of Governing Equations}
\label{sec:governing}
In conservative form, the ideal MHD equations read
\begin{equation}
\renewcommand{\arraystretch}{1.6}
\label{equ:ideal MHD}
\begin{cases}
\dfrac{\partial \rho}{\partial t}
+ \nabla \cdot (\rho \mathbf{u}) = 0, \\
\dfrac{\partial (\rho \mathbf{u})}{\partial t}
+ \nabla \cdot \left( \rho \mathbf{u} \otimes \mathbf{u}
+ \left( p + \dfrac{\|\mathbf{B}\|^2}{2} \right)\mathbf{I}
- \mathbf{B} \otimes \mathbf{B} \right) = \mathbf{0}, \\
\dfrac{\partial E}{\partial t}
+ \nabla \cdot \left( \left( E + p + \dfrac{\|\mathbf{B}\|^2}{2} \right)\mathbf{u}
- (\mathbf{u}\cdot\mathbf{B})\,\mathbf{B} \right) = 0, \\
\dfrac{\partial \mathbf{B}}{\partial t}
+ \nabla \cdot \left( \mathbf{B} \otimes \mathbf{u}
- \mathbf{u} \otimes \mathbf{B} \right) = \mathbf{0}.
\end{cases}
\end{equation}
The first equation enforces mass conservation, the second governs momentum 
(one equation per spatial direction), the third accounts for the conservation 
of total energy, and the last evolves the magnetic field. Here $\rho$ denotes 
the fluid density, $\mathbf{u} = [u, v, w]^{\mathsf T}$ is the velocity 
field, $\mathbf{B} = [B_x, B_y, B_z]^{\mathsf T}$ is the magnetic field, and 
$E$ is the total energy per unit volume, which is defined as the sum of
internal ($\rho e$), kinetic $k$, and magnetic ($m$) energy contributions, namely
\begin{equation}
\label{equ:defofU}
E = \rho e + k + m, \ k = \tfrac{1}{2}\rho |\mathbf{u}|^2, \ m =  \tfrac{1}{2}||\mathbf{B}||^2,
\end{equation}
with $e$ being the specific internal energy. 

The magnetic field is nondimensionalized such that the vacuum magnetic 
permeability is set to unity. In addition, the magnetic field must satisfy the 
solenoidal (divergence-free) constraint
\begin{equation}
\label{equ:divB}
\nabla \cdot \mathbf{B} = 0,
\end{equation}
which is an constraint of the system: if it holds at the initial time, the 
evolution equations~\eqref{equ:ideal MHD} preserve it for all later times 
at the continuous level. At the discrete level, however, this property 
is not automatically inherited, and a dedicated treatment is required to enforce 
it.

An equation of state of the form $p = p(\rho, e)$ is required to close the 
system. A key feature of the proposed framework is that it does not rely on any particular 
thermodynamic model, but accommodates a generic equation of state (EOS), 
encompassing both linear and nonlinear pressure laws. In this work we consider two 
representative closures: the ideal gas law, as a linear EOS for which the pressure 
depends linearly on the internal energy (Section~\ref{sec:ideal gas eos}), and the 
Redlich--Kwong equation of state, as a nonlinear example (Section~\ref{sec:redlich-kwong eos}).

Letting $\mathbf{U} = (\rho, \rho\mathbf{u}, E, \mathbf{B})^{\mathsf T}$ denote 
the vector of conserved quantities, the  ideal MHD system \eqref{equ:ideal MHD} can be 
written in compact form as
\begin{equation}
\label{equ:compact_MHD}
\partial_t \mathbf{U} + \nabla \cdot \boldsymbol{\mathcal{F}}(\mathbf{U}) = \mathbf{0},
\end{equation}
where $\boldsymbol{\mathcal{F}}(\mathbf{U})$ 
represents the set of flux vectors for all the variables. For one-dimensional flows aligned with the $x$-direction, the governing equations \eqref{equ:ideal MHD} read:
\begin{equation}
\label{equ:1DMHD}
\frac{\partial}{\partial t}
\begin{bmatrix}
\rho \\
\rho u \\
\rho v \\
\rho w \\
E \\
B_x \\
B_y \\
B_z
\end{bmatrix}
+
\frac{\partial}{\partial x}
\begin{bmatrix}
\rho u \\
\rho u^2 + p + \frac{1}{2}||\mathbf{B}||^2 - B_x^2 \\
\rho u v - B_x B_y \\
\rho u w - B_x B_z \\
u (E + p + \frac{1}{2}||\mathbf{B}||^2) - B_x (\mathbf{u}\cdot\mathbf{B}) \\
0 \\
u B_y - v B_x \\
u B_z - w B_x
\end{bmatrix}
= 0 .
\end{equation}
The above system has eight eigenvalues given by
\begin{equation}
\lambda_{1,8} = u \pm c_f, \qquad
\lambda_{2,7} = u \pm c_a, \qquad
\lambda_{3,6} = u \pm c_s, \qquad
\lambda_4 = u, \qquad
\lambda_5 = 0 ,
\end{equation}
with the sound speed $a$, Alfvén speed $c_a$, and the fast and slow magnetosonic wave speeds
$c_f$ and $c_s$, defined as:

\begin{equation}
\begin{gathered}
    a^2 = \frac{\partial p}{\partial \rho} + \frac{p}{\rho^2} \frac{\partial p}{\partial e}, \\
    c_a = \frac{B_x}{\sqrt{\rho}},\\
    c_{f,s}^2 =
\frac{1}{2}
\left(
\frac{||\mathbf{B}||^2}{\rho} + a^2
\pm
\sqrt{
\left(
\frac{||\mathbf{B}||^2}{\rho} + a^2
\right)^2
-
\frac{4 B_x^2 a^2}{\rho}
}
\right).
\end{gathered}
\end{equation}

\subsection{Ideal Gas EOS}
\label{sec:ideal gas eos}
For an ideal gas, the thermal and caloric equations of state are given by
\begin{equation}
\frac{p}{\rho} = R T, \qquad e = c_v T.
\end{equation}
where $T$ represents the temperature and $R$ is the gas constant. By eliminating the temperature using these relations, the equation of state
can be recast in the form $e = e(p,\rho)$, yielding

\begin{equation}
e(p,\rho) = \frac{p}{(\gamma - 1)\rho}.
\end{equation}
Here, $c_v$ and $c_p$ are the specific heat capacities at constant volume and constant
pressure, respectively, and $\gamma = c_p / c_v$ denotes the ratio of specific heats. In this case, the relation between pressure and the internal energy is linear.

For ideal gases the sound speed reduces to the standard expression

\begin{equation}
    a = \sqrt{\frac{\gamma p}{\rho}}.
\end{equation}

\subsection{Redlich-Kwong EOS}
\label{sec:redlich-kwong eos}
We consider a class of general cubic equations of state, which can be written
in the form
\begin{equation}
p(T,\rho) = \frac{RT}{v_{sp}-b}
- \frac{\mathfrak{a}(T)}{(v_{sp} - b r_1)(v_{sp} - b r_2)},
\end{equation}
where $v_{sp} = 1/\rho$ denotes the specific volume, $b$ is the covolume parameter,
and $r_1$ and $r_2$ are model constants. The temperature-dependent function
$\mathfrak{a}(T)$ represents the attractive contribution of the equation of state.
The corresponding caloric equation of state associated with this formulation
reads
\begin{equation}
e(T,\rho) = c_v T
+ \frac{\mathfrak{a}(T) - T \mathfrak{a}'(T)}{b}\, \mathcal{U}(v_{sp},b,r_1,r_2),
\end{equation}
where $\mathfrak{a}'(T) = \mathrm{d}\mathfrak{a}(T)/\mathrm{d}T$ and
\begin{equation}
\mathcal{U}(v_{sp},b,r_1,r_2)
= \frac{1}{r_1 - r_2}
\ln\!\left(\frac{v_{sp} - b r_1}{v_{sp} - b r_2}\right).
\end{equation}

Different cubic equations of state are recovered through appropriate choices
of the parameters appearing in the above expressions. In particular, the
Redlich--Kwong equation of state is obtained by setting $r_1 = 0$ and
$r_2 = -1$, together with the temperature-dependent attraction term
$\mathfrak{a}(T) = \frac{\mathfrak{a}_0}{\sqrt{T}}$. Under these assumptions, the relation between pressure
and internal energy becomes nonlinear. To evaluate the internal energy in the
form $e(p,\rho)$, the temperature is first determined by numerically solving
the nonlinear thermal equation of state for a given density and pressure.
In the present work, this inversion is performed using an efficient Newton
iteration. Once the temperature has been obtained, it is substituted into the
caloric equation of state to recover the desired relation $e(p,\rho)$.

For Redlich-Kwong EOS, the sound speed is

\begin{equation}
\begin{aligned}
a= \sqrt{
\frac{RT}{(v_{sp}-b)^2}\,\frac{1}{\rho^2}
-
\frac{\mathfrak{a}(T)}{\rho^2}
\left[
\frac{1}{v_{sp}^2(v_{sp}+b)}
+
\frac{1}{v_{sp}(v_{sp}+b)^2}
\right]
+
\frac{T}{\rho^2 c_v}
\left(
\frac{R}{v_{sp}-b}
-
\frac{\mathfrak{a}'(T)}{v_{sp}(v_{sp}+b)}
\right)^2}.
\end{aligned}
\end{equation}

\subsection{3-Split form of the Ideal MHD Equations}
\label{sec:3-split}
To handle the multiple time scales of the model we resort to a flux splitting technique,
which allows a different time discretisation to be designed for each sub-system
according to its stiffness. We adopt here the three-way splitting introduced by Fambri
\cite{Fambri2021}, in which the fluxes of the MHD system \eqref{equ:ideal MHD} are
separated into three contributions: (i) advective fluxes $\mathbf{F}^c$, associated
with the transport by the fluid velocity; (ii) pressure fluxes $\mathbf{F}^p$,
associated with the gas pressure $p$; and (iii) magnetic fluxes $\mathbf{F}^{\mathbf B}$,
associated with the magnetic field $\mathbf B$. Explicitly, the three contributions are
defined as

\begin{equation}
\label{equ:3_split}
\renewcommand{\arraystretch}{} 
\begin{array}{c}
\mathbf{F}^c(\mathbf{U}) =
\begin{bmatrix}
\rho u \\
\rho u^2\\
\rho u v\\
\rho u w\\
 k u\\
0 \\
0 \\
0
\end{bmatrix}, 
\begin{array}{cc}
\mathbf{F}^p(\mathbf{U}) =
\begin{bmatrix}
0 \\
p \\
0 \\
0 \\
\rho h u\\
0 \\
0 \\
0
\end{bmatrix},
&
\mathbf{F}^\mathbf{B}(\mathbf{U}) =
\begin{bmatrix}
0 \\
\frac{\|\mathbf{B}\|^2}{2} - B_x^2 \\
-B_x B_y \\
-B_x B_z \\
\|\mathbf{B}\|^2 u - B_x (\mathbf{u} \cdot \mathbf{B}) \\
0 \\
u B_y - v B_x \\
u B_z - w B_x
\end{bmatrix}
\end{array}
\end{array},
\end{equation}
where $h = e + \frac{p}{\rho}$ represents the specific enthalpy. A distinctive 
feature of this splitting is that the total energy flux is decomposed consistently 
across the three sub-systems: the advective flux $\mathbf{F}^c$ transports the 
kinetic energy through the term $ku$, the magnetic flux $\mathbf{F}^\mathbf{B}$ 
accounts for the magnetic energy, and the pressure flux $\mathbf{F}^p$ carries the 
internal energy via the enthalpy term $\rho h u$. The evolution of the total energy 
is thus shared among the kinetic, magnetic, and internal contributions, each 
updated by its corresponding sub-system.

Accordingly, the one-dimensional system in its novel three-way split form can be concisely represented as
\begin{equation}
    \frac{\partial \mathbf{U}}{\partial t} + \frac{\partial \mathbf{F}^c(\mathbf{U})}{\partial x} + \frac{\partial \mathbf{F}^p(\mathbf{U})}{\partial x} + \frac{\partial \mathbf{F}^{\mathbf{B}}(\mathbf{U})}{\partial x} = \mathbf{0}.
\end{equation}

\begin{enumerate}
\item The advective sub-system is
\begin{equation}
\label{equ:advective sub-system}
\frac{\partial \mathbf U}{\partial t}
+ \frac{\partial \mathbf F^c(\mathbf U)}{\partial x}
= 0,
\end{equation}

whose Jacobian admits the following set of real eigenvalues:
\begin{equation}
\lambda^c_{1,2,3,4} = 0,
\qquad
\lambda^c_{5,6,7,8} = u .
\end{equation}

Thus, the sub-system is weakly hyperbolic, since one eigenvector associated with
$\lambda^c = 0$ is missing. The characteristic speeds depend solely on the fluid velocity.

\item The pressure sub-system is
\begin{equation}
\frac{\partial \mathbf U}{\partial t}
+ \frac{\partial \mathbf F^{p}(\mathbf U)}{\partial x}
= 0,
\label{equ:pressure sub-system}
\end{equation}

whose Jacobian admits the following set of eigenvalues:
\begin{equation}
\lambda^{p}_{1,2,3,4,5,6} = 0,
\qquad
\lambda^{p}_{7,8}
= \frac{1}{2} (u \pm \sqrt{u^2 +  4a^2}).
\end{equation}

Consequently, the pressure sub-system is hyperbolic, though not strictly: the
eigenvalue $\lambda^p = 0$ has algebraic multiplicity six and admits a complete set of
eigenvectors, so that the Jacobian remains diagonalizable.

\item The magnetic sub-system is
\begin{equation}
\frac{\partial \mathbf U}{\partial t}
+ \frac{\partial \mathbf F^{\mathbf{B}}(\mathbf U)}{\partial x}
= 0,
\label{equ:magnective sub-system}
\end{equation}
and the following real eigenvalues are found
\begin{equation}
    \lambda^\mathbf{B}_{1,2,3,4} = 0, \ 
    \lambda^\mathbf{B}_{5,6} = \frac{1}{2}\left( u \pm \sqrt{u^2 + 4 c_a^2} \right), \ 
\lambda^\mathbf{B}_{7,8} =     \frac{1}{2}\left( u \pm \sqrt{u^2 + 4 c_\mathbf{B}^2} \right).
\end{equation}
Here, $c_\mathbf{B} = \frac{\lVert \mathbf{B} \rVert}{\sqrt{\rho}}$. Similarly, associated with the eigenvalue $\lambda^{\mathbf B}=0$ there are two
eigenvectors missing, so the magnetic sub-system is weakly hyperbolic.
\end{enumerate}

By inspecting the eigenvalues of the three sub-systems, it becomes evident that, in the low acoustic Mach number regime, the acoustic sound speed $a$ appearing in pressure sub-system dominates the material velocity $u$, thus introducing severe stiffness associated with the pressure-related terms.
Likewise, as the magnitude of the magnetic field $\|\mathbf B\|$ increases, the MHD model becomes increasingly governed by the magnetic sub-system.

Taking these observations into account, together with the characteristic structure of the three subsystems, it is clear that the proposed flux vector splitting is particularly well suited for the construction of semi-implicit numerical schemes for MHD flows. In the present approach, the advective sub-system is treated explicitly, while both the pressure and magnetic sub-systems are discretised implicitly.

This approach yields two key advantages. On the one hand, it avoids the loss of accuracy
typically observed in fully explicit schemes in the low acoustic Mach number regime, where the
artificial dissipation of upwind fluxes scales with the fastest signal speed rather than
with the fluid velocity, so that the asymptotic scaling $p = p_0 + \mathcal{O}(M_c^2)$ is
not recovered on a fixed mesh \cite{GuillardViozat2017,Dellacherie2010}. Since the fast
magnetosonic speed degenerates to the Alfv\'en speed in the magnetic-pressure dominated
regime, the same degradation reappears as $M_b \to 0$. In the present splitting only the
advective sub-system is discretised with an upwind flux, whose dissipation is therefore
proportional to the material velocity alone.

At the continuous level, the simultaneous low-acoustic-Mach and low-Alfvén-Mach-number limit of
the compressible ideal MHD equations converges, for well-prepared initial data, to
the incompressible MHD equations \cite{ChengJuSchochet2021}. The
present semi-implicit construction is consistent with this limit: as $M_c\to 0$ the
implicit pressure sub-system reduces to a discrete elliptic equation enforcing the
divergence constraint on the velocity field, while the implicit treatment of the
magnetic sub-system removes the Alfvén time step restriction as $M_b\to 0$. The consistency with the incompressible limit, and the insensitivity
of the time step to the acoustic and Alfvén stiffness,
are confirmed by the numerical experiments of Section~\ref{sec:results}.

\section{Semi-Implicit Numerical Scheme}
\label{sec:scheme}

We now proceed to detail the semi-implicit algorithm developed in this work. After
introducing some general notation, we describe the explicit update of the advective
sub-system, followed by the implicit updates of the magnetic and pressure sub-systems
and the shock-activated stabilisation. We then present the positivity-preserving
modification of the pressure system together with its properties, before addressing
the enforcement of the divergence constraint and the extension of the scheme to
second-order accuracy in time.

For clarity of presentation, the numerical scheme is formulated on a two-dimensional
uniform Cartesian mesh assuming slab symmetry in the $z$-direction. The extension to
three spatial dimensions follows in a straightforward manner. We therefore consider
a two-dimensional computational domain
\[
\Omega(\mathbf{x}) = [x_{\min}, x_{\max}] \times [y_{\min}, y_{\max}],
\]
which is discretised by a regular Cartesian grid composed of
\[
N_e = N_x \times N_y
\]
control volumes. The corresponding mesh spacings are defined as
\[
\Delta x = \frac{x_{\max} - x_{\min}}{N_x}, \qquad
\Delta y = \frac{y_{\max} - y_{\min}}{N_y}.
\]

Each control volume is uniquely identified by an integer index pair $(i,j)$, associated
with the $x$- and $y$-direction, respectively. Cell interfaces are denoted by the indices
$(i+\tfrac{1}{2},j)$ and $(i,j+\tfrac{1}{2})$, while cell corners are referred to as
$(i+\tfrac{1}{2},j+\tfrac{1}{2})$. With this notation, the cell centre is located at
\[
\mathbf{x}_{i,j} = (x_i, y_j),
\]
whereas the centres of the cell faces are given by
\[
\mathbf{x}_{i+\frac{1}{2},j}
=
\left(
\frac{x_i + x_{i+1}}{2},\, y_j
\right),
\qquad
\mathbf{x}_{i,j+\frac{1}{2}}
=
\left(
x_i,\, \frac{y_j + y_{j+1}}{2}
\right),
\]
and the corner of the cells are given by
\[
\mathbf{x}_{i+\frac{1}{2},j + \frac{1}{2}}
=
\left(
\frac{x_i + x_{i+1}}{2},\, \frac{y_i + y_{j+1}}{2}
\right).
\]

The advective sub-system (\ref{equ:advective sub-system}) is discretised on a collocated grid, with all conservative variables defined at the centres of the control volumes. The magnetic-pressure sub-system is likewise treated in a cell-centred framework, following the approach adopted by Boscheri and Pareschi ~\cite{BoscheriPareschi2021} for the Euler and Navier--Stokes equations, and by Dematt\'e et al.~\cite{Dematte2024} for the MHD
equations. In addition, for the staggered and unstaggered adaptation of the constrained transport method 
described in Section~\ref{chp:divfree_CT}, the magnetic field is reconstructed 
from a cell-centred magnetic vector potential in order to enforce the 
divergence-free constraint \cite{Dematte2024}. Finally, discrete time levels are denoted by 
superscripts, while the spatial discretisation is indicated by subscripts.

\subsection{Advective sub-system}
\label{sec:advective}
The advective sub-system~\eqref{equ:advective sub-system} is discretised by means of a conservative, explicit
finite-volume formulation. Given the solution vector $\mathbf{U}_{i,j}^n$ at time
level $t^n$, an intermediate state $\mathbf{U}_{i,j}^\star$ is computed as
\begin{equation}
\mathbf{U}_{i,j}^\star
=
\mathbf{U}_{i,j}^n
-
\frac{\Delta t}{\Delta x}
\left(
\mathbf{F}^c_{i+\frac12,j}
-
\mathbf{F}^c_{i-\frac12,j}
\right)
-
\frac{\Delta t}{\Delta y}
\left(
\mathbf{G}^c_{i,j+\frac12}
-
\mathbf{G}^c_{i,j-\frac12}
\right),
\label{eq:advective_update}
\end{equation}
where $\mathbf{F}^c$ and $\mathbf{G}^c$ denote the numerical fluxes associated with
the convective sub-system in the $x$- and $y$-directions, respectively.
The numerical fluxes are evaluated using a Rusanov-type
approximate Riemann solver. In the $x$-direction, the flux at the cell interface
$x_{i+\frac12,j}$ is given by
\begin{equation}
\mathbf{F}^c_{i+\frac12,j}
=
\frac12
\left(
\mathbf{F}^c(\mathbf{U}^R_{i+\frac12,j})
+
\mathbf{F}^c(\mathbf{U}^L_{i+\frac12,j})
\right)
-
\frac12
s^{\max}_{i+\frac12,j}\,
\mathsf{P}
\left(
\mathbf{U}^R_{i+\frac12,j}
-
\mathbf{U}^L_{i+\frac12,j}
\right),
\qquad
\mathsf{P} = \operatorname{diag}(1,1,1,1,1,0,0,0),
\label{eq:rusanov_flux}
\end{equation}
where $\mathbf{U}^L_{i+\frac12,j}$ and $\mathbf{U}^R_{i+\frac12,j}$ are the left and
right reconstructed states at the interface. The quantity
$s^{\max}_{i+\frac12,j}$ represents the maximum characteristic speed of the
advective sub-system and is defined as
\begin{equation}
s^{\max}_{i+\frac12,j}
=
\max\!\left(
\left|u^L_{i+\frac12,j}\right|,
\left|u^R_{i+\frac12,j}\right|
\right),
\end{equation}
so that the numerical dissipation is proportional only to the material velocity.
The projection $\mathsf{P}$ restricts the upwind dissipation to the variables
advanced by the advective sub-system. 
The evaluation of the numerical
fluxes in the $y$-direction follows the same procedure and is therefore omitted for
brevity.

A first order accurate numerical flux is employed by
setting
$\mathbf{U}^L_{i+\frac12,j} = \mathbf{U}_{i,j}$ and
$\mathbf{U}^R_{i+\frac12,j} = \mathbf{U}_{i+1,j}$.
To achieve second-order spatial accuracy, a TVD reconstruction is adopted with the minmod limiter.

The time step $\Delta t$ is selected according to a CFL-type stability condition,
\begin{equation}
\Delta t
=
\mathrm{CFL}
\left(
\frac{\max|\lambda_x^c|}{\Delta x}
+
\frac{\max|\lambda_y^c|}{\Delta y}
\right)^{-1},
\label{eq:cfl_advective}
\end{equation}
where $\lambda_d^c$ $(d=x,y)$ denotes the eigenvalues of the advective sub-system
and $\mathrm{CFL}<1$ is the Courant number. Since only the advective sub-system is treated explicitly, the time
step~\eqref{eq:cfl_advective} is dictated by the fluid velocity alone. A rigorous proof that
the scheme is stable under a CFL condition based only on the advective eigenvalues
is not available; such an analysis is not straightforward even for a simplified
problem, and is left to future work. Nonetheless, all the numerical experiments of
Section~\ref{sec:results} are advanced with this mild velocity-based condition,
providing extensive evidence that stability holds independently of the acoustic and
Alfvén (fast magnetosonic) wave speeds.

\subsection{Magnetic-Pressure sub-system}
Having evolved the advective sub-system and following the flux splitting approach
detailed in Section~\ref{sec:3-split}, we now move to consider the solution of the
magnetic and pressure sub-systems. All remaining flux contributions are treated
implicitly. Rather than coupling the two sub-systems through an outer iteration, they
are solved in sequence within a single time step: the magnetic field is advanced first
and, once known, enters the pressure sub-system.

1. Through the coupling between the magnetic field and the momentum equation, the
   magnetic field is advanced to the new time level, yielding $\mathbf{B}^{n+1}$.

2. By coupling the energy equation with the momentum equation, the pressure
   is updated to the new time level, resulting in $p^{n+1}$.

Since the algorithm involves several nested routines, each possibly consisting of multiple steps, a simplified overview of a single time step is provided in Fig.~\ref{fig: Schematic illustration}. Although the three-dimensional extension is not detailed here, it follows in a relatively straightforward manner.

\subsubsection{Magnetic sub-system}
\label{sec:magnetic_subsystem}
Following Boscheri and Thomann \cite{BoscheriThomann2024}, we first advance the magnetic
sub-system by substituting the momentum equations into the induction equation, with the
gas pressure gradient retained at the old time level and the quadratic magnetic terms
linearised as $\|\mathbf{B}\|^2 \approx \mathbf{B}^{n}\cdot\mathbf{B}^{n+1}$, so that the
resulting system is linear in the unknown $\mathbf{B}^{n+1}$. The time discretisation of
the implicit magnetic sub-system reads

\begin{figure}[H]
\centering
\resizebox{!}{0.88\textheight}{%
\begin{tikzpicture}[
  font=\small,
  >={Stealth[length=2.2mm]},
  every node/.style={transform shape},
  container/.style={rounded corners=8pt, fill=bluebg, draw=bluehead, line width=1pt},
  subcont/.style={rounded corners=6pt, fill=bluebg, draw=bluehead, line width=0.9pt},
  header/.style={rounded corners=2pt, fill=bluehead, draw=none, font=\bfseries, text = white,inner sep=4pt, minimum width=3.4cm, align=center},
  subhead/.style={rounded corners=2pt, fill=bluehead, draw=none, text=white, font=\bfseries\footnotesize,
                 inner sep=3pt, minimum width=3.0cm, align=center},
  greybox/.style={fill=greybg, draw=black!30, line width=0.5pt,
                  align=center, inner sep=4pt},
  fluxbox/.style={fill=greybg, draw=black!30, line width=0.5pt,
                  align=center, inner sep=4pt, minimum width=3.0cm},
  stepbox/.style={fill=greystep, draw=black!40, line width=0.7pt,
               align=center, inner sep=4pt, minimum width=8.4cm},
  newtonbox/.style={fill=lilac, draw=violet!40!black, line width=0.8pt,
                    align=center, inner sep=6pt, minimum width=8.4cm},
  ctbox/.style={rounded corners=6pt, fill=ctbg, draw=cthead!70!black, line width=0.9pt,
               align=center, inner sep=5pt, minimum width=9.6cm},
  link/.style={draw=linecol, line width=1pt},
  arrow/.style={draw=linecol, line width=1pt, ->},
  darrow/.style={draw=linecol, line width=1pt, ->, dashed},
]

\node[header, minimum width=4.6cm] (fullhead) {Full System};
\node[greybox, below=1mm of fullhead, minimum width=10.4cm]
  (fulleq) {$\dfrac{\partial \mathbf{U}}{\partial t}
            +\nabla\!\cdot\mathbf{F}^{c}(\mathbf{U})
            +\nabla\!\cdot\mathbf{F}^{p}(\mathbf{U})
            +\nabla\!\cdot\mathbf{F}^{\mathbf{B}}(\mathbf{U})=\mathbf{0}$};
\begin{scope}[on background layer]
  \node[container, fit=(fullhead)(fulleq), inner sep=6pt] (fullbox) {};
\end{scope}

\node[header, below=7mm of fullbox] (split) {3-Split Form (FVS)};
\draw[link] (fullbox) -- (split);

\node[subhead, below=12mm of split, xshift=-6.8cm] (advhead) {Advective Sub-system\\(explicit)};
\node[fluxbox, below=1mm of advhead]
  (adveq) {$\partial_t\mathbf{U}+\nabla\!\cdot\mathbf{F}^{c}(\mathbf{U})=0$};
\begin{scope}[on background layer]
  \node[subcont, fit=(advhead)(adveq), inner sep=5pt] (advbox) {};
\end{scope}

\node[subhead, below=12mm of split] (maghead) {Magnetic Sub-system\\(implicit)};
\node[fluxbox, below=1mm of maghead]
  (mageq) {$\partial_t\mathbf{U}+\nabla\!\cdot\mathbf{F}^{\mathbf{B}}(\mathbf{U})=0$};
\begin{scope}[on background layer]
  \node[subcont, fit=(maghead)(mageq), inner sep=5pt] (magbox) {};
\end{scope}

\node[subhead, below=12mm of split, xshift=6.8cm] (prehead) {Pressure Sub-system\\(implicit)};
\node[fluxbox, below=1mm of prehead]
  (preeq) {$\partial_t\mathbf{U}+\nabla\!\cdot\mathbf{F}^{p}(\mathbf{U})=0$};
\begin{scope}[on background layer]
  \node[subcont, fit=(prehead)(preeq), inner sep=5pt] (prebox) {};
\end{scope}

\draw[link] (split) -- (advbox.north);
\draw[link] (split) -- (magbox.north);
\draw[link] (split) -- (prebox.north);

\draw[arrow] (advbox.east) -- node[above,font=\small,align=center]
  {$\rho^{n+1},(\rho\mathbf{u})^{\star}$\\[-1pt]$E^{\star},\mathbf{B}^{n}$} (magbox.west);
\draw[arrow] (magbox.east) -- node[above,font=\small,align=center]
  {$\rho^{n+1},(\rho\mathbf{u})^{\ast\ast}$\\[-1pt]$E^{\ast\ast},\mathbf{B}^{n+1}$} (prebox.west);

\node[header, below=16mm of magbox, minimum width=9.0cm]
  (alghead) {Algorithm: one time step};

\node[stepbox, below=2mm of alghead] (s0) {%
  \stitle{Advective update (explicit)}\\[1pt]
  $\mathbf{U}^{\star}=\mathbf{U}^{n}-\Delta t\,\nabla\!\cdot\mathbf{F}^{c}(\mathbf{U}^{n})
   \quad(\rho^{n+1}\ \text{fully updated})$};

\node[stepbox, below=3mm of s0] (s1) {%
  \stitle{1.\ Magnetic Field Update (linear, GMRES)}\\[1pt]
  $\tilde{\mathbf{B}}^{n+1}=\mathbf{B}^{n}
   -\Delta t\,\nabla\!\cdot\mathbf{F}^{B}\!\big(\rho^{n+1},\rho\mathbf{u}^{\star},p^{n},\tilde{\mathbf{B}}^{n+1}\big)$};

\node[stepbox, below=3mm of s1] (s2) {%
  \stitle{2.\ Magnetic Sub-system Update}\\[1pt]
  $(\rho\mathbf{u})^{\ast\ast}=(\rho\mathbf{u})^{\star}-\Delta t\,\nabla\!\cdot\mathbf{F}^{B},\quad
   E^{\ast\ast}=E^{\star}-\Delta t\,\nabla\!\cdot F^{B}$};

\node[stepbox, below=3mm of s2] (s3) {%
  \stitle{3.\ Pressure Update --- first solve}\\[1pt]
  $\underline{\mathbf{V}}_{\mathrm{mod}}(\tilde{\mathbf{p}}^{n+1})
   +\underline{\underline{\mathbf{T}}}\,\tilde{\mathbf{p}}^{n+1}=\underline{\mathbf{b}}
   \quad(\text{$k$ from }(\rho\mathbf{u})^{\ast\ast})$};

\node[stepbox, below=3mm of s3] (s4) {%
  \stitle{4.\ Pressure Sub-system Update (momentum)}\\[1pt]
  $(\rho\mathbf{u})^{n+1}=(\rho\mathbf{u})^{\ast\ast}-\Delta t\,\nabla\tilde{p}^{n+1}$};

\node[stepbox, below=3mm of s4] (s5) {%
  \stitle{5.\ Pressure Correction --- second solve}\\[1pt]
  $\underline{\mathbf{V}}_{\mathrm{mod}}(p^{n+1})
   +\underline{\underline{\mathbf{T}}}\,p^{n+1}=\underline{\mathbf{b}}
   \quad(\text{$k$ from }(\rho\mathbf{u})^{n+1})$};

\node[stepbox, below=3mm of s5] (s6) {%
  \stitle{6.\ Energy Update}\\[1pt]
  $E^{n+1}=\rho e^{n+1}+k^{n+1}+m^{n+1}$};

\foreach \a/\b in {alghead/s0, s0/s1, s1/s2, s2/s3, s3/s4, s4/s5, s5/s6}
  \draw[link] (\a) -- (\b);

\begin{scope}[on background layer]
  \node[container, fit=(alghead)(s0)(s1)(s2)(s3)(s4)(s5)(s6),
        inner sep=8pt] (algbox) {};
\end{scope}

\draw[link] (advbox.south) |- ($(algbox.north)+(0,0.0)$);
\draw[link] (prebox.south) |- ($(algbox.north)+(0,0.0)$);

\node[ctbox, below=10mm of algbox] (ct) {%
  \stitle{Divergence-free enforcement\ \ (after all sub-systems)}\\[3pt]
  \begin{minipage}{11.2cm}\centering
  \footnotesize
  Predictor $\tilde{\mathbf{B}}^{n+1}$ discarded; corrected field built from
  $\rho^{n+1},\ \mathbf{u}^{n+1}$
  \\[5pt]
  \renewcommand{\arraystretch}{1.35}
  \begin{tabular}{@{}c@{\hspace{4mm}}|@{\hspace{4mm}}c@{}}
  \textbf{staggered CT} & \textbf{unstaggered CT}\\[2pt]
  \begin{minipage}{5.1cm}\centering
    {\scriptsize flux-consistent electric field}\\[1pt]
    $(E_z)_{i+\frac12,j}=-\big(F_{[B_y]}\big)_{i+\frac12,j}$\\
    $(E_z)_{i,j+\frac12}=+\big(G_{[B_x]}\big)_{i,j+\frac12}$\\[1pt]
    $\longrightarrow\ (E_z)_{i+\frac12,j+\frac12}$
  \end{minipage}
  &
  \begin{minipage}{5.1cm}\centering
    {\scriptsize scalar transport of the potential}\\[1pt]
    $\partial_t A+u^{n+1}\partial_x A+v^{n+1}\partial_y A=0$\\[1pt]
    {\scriptsize wave propagation, limited}
  \end{minipage}\\[4pt]
  $(\mathbf{B}_f)^{n+1}=(\mathbf{B}_f)^{n}-\Delta t\,[\![E_z]\!]$
    & $A^{n}\ \longrightarrow\ A^{n+1}$\\
  $\mathbf{B}^{n+1}_{ij}=\tfrac12\big(\mathbf{B}_{f^-}+\mathbf{B}_{f^+}\big)^{n+1}$
    & $\mathbf{B}^{n+1}_{ij}=\nabla\times\big(A^{n+1}\mathbf{e}_z\big)$\\[2pt]
  {\scriptsize$\nabla\!\cdot\mathbf{B}=0$ (face-based)}
    & {\scriptsize$\nabla\!\cdot\mathbf{B}=0$ (wide stencil)}
  \end{tabular}
  \end{minipage}};
\draw[arrow] (algbox.south) -- node[right,font=\small,align=left]
  {update $\mathbf{B}$ only} (ct.north);

\node[newtonbox, below=10mm of ct] (nw) {%
  \stitle{nested Newton solver\ \ ($s$-iterate)}\\[3pt]
  \begin{minipage}{8.4cm}\centering
  assemble $V_{\mathrm{mod}}(p)$ and $\underline{\underline{\mathbf{T}}}$:
  \ $\frac{\partial V_{\mathrm{mod}}}{\partial p}=
\begin{cases}
\frac{\partial V}{\partial p}, & p\ge p_{\varepsilon},\\[6pt]
M, & p<p_{\varepsilon},
\end{cases}$
  \\ {\small(positivity + generic EOS)},\\[2pt]
  \ $\underline{\underline{\mathbf{T}}}$ includes the shock-activated dissipation
  $\tilde \kappa$ {\small($\chi>0$)}\\[5pt]
  $J=\dfrac{\partial V_{\mathrm{mod}}}{\partial p}+\underline{\underline{\mathbf{T}}}$
  \ $\to$\ solve $J\,\Delta\mathbf{p}=-\mathbf{g}$ (GMRES)
  \ $\to$\ $\mathbf{p}\leftarrow\mathbf{p}+\Delta\mathbf{p}$
  \ $\circlearrowleft$ until $|\Delta\mathbf{p}|<\delta_\mathrm{{Newton}}$
  \end{minipage}};

\draw[darrow] (s3.west) .. controls ($(s3.west)+(-5.0,0)$) and ($(nw.west)+(-5.0,0.2)$) .. (nw.west)
  node[pos=0.5,left,font=\small,align=center] {first\\solve};
\draw[darrow] (s5.east) .. controls ($(s5.east)+(5.2,0)$) and ($(nw.east)+(5.2,0.2)$) .. (nw.east)
  node[pos=0.5,right,font=\small,align=center] {second\\solve};

\end{tikzpicture}
}
\caption{Schematic illustration of the proposed semi-implicit IMEX scheme. Note
that the divergence-free constraint is enforced separately, after all sub-systems
have been solved, acting only on the magnetic field $\mathbf{B}^{n+1}$. Both
constrained transport variants (see Sect.~\ref{chp:divfree_CT}) are built on the
same flux-consistent electric field, obtained from the interface fluxes of the
implicit magnetic sub-system: the staggered variant evolves the face-centred
components through the discrete Faraday law, while the unstaggered variant
evolves the magnetic potential and recovers $\mathbf{B}$ as its discrete curl.}
\label{fig: Schematic illustration}
\end{figure}

\clearpage
\begin{equation}
    \label{equ:discreteofB}
    \begin{aligned}
        (\rho u) ^ {n+1} &= (\rho u) ^ {*} - \Delta t \frac{\partial}{\partial x}(p^n + \frac{\mathbf{B^n} \cdot \mathbf{B^{n + 1}}}{2} - B_x^n B_x^{n+1}) - \Delta t \frac{\partial}{\partial y} (-B_y ^nB_x^{n + 1}),\\
        (\rho v) ^ {n+1} &= (\rho v) ^ {*} - \Delta t \frac{\partial}{\partial x} (-B_x^n B_y^{n+1}) - \Delta t \frac{\partial}{\partial y}(p^n + \frac{\mathbf{B^n} \cdot \mathbf{B^{n + 1}}}{2} - B_y^n B_y^{n+1}), \\
        (\rho w) ^ {n+1} &= (\rho w) ^ {*} - \Delta t \frac{\partial}{\partial x} (-B_x^n B_z^{n+1}) - \Delta t \frac{\partial}{\partial y} (-B_y ^nB_z^{n + 1}),\\
        B^{n+1}_x &= B^{n}_x - \Delta t \frac{\partial}{\partial y} (\frac{(\rho v)^{n+1}}{\rho ^{n+1}} B_x^n - \frac{(\rho u)^{n+1}}{\rho ^{n+1}} B_y^n),\\
        B^{n+1}_y &= B^{n}_y - \Delta t \frac{\partial}{\partial x} (\frac{(\rho u)^{n+1}}{\rho ^{n+1}} B_y^n - \frac{(\rho v)^{n+1}}{\rho ^{n+1}} B_x^n), \\
       B^{n+1}_z &= B^{n}_z - \Delta t \frac{\partial}{\partial x}\!\left(
                \frac{(\rho u)^{n+1}}{\rho^{n+1}} B_z^n 
                - \frac{(\rho w)^{n+1}}{\rho^{n+1}} B_x^n
              \right) \\
        &\quad \quad \; \; - \Delta t \frac{\partial}{\partial y}\!\left(
                \frac{(\rho v)^{n+1}}{\rho^{n+1}} B_z^n 
                - \frac{(\rho w)^{n+1}}{\rho^{n+1}} B_y^n
              \right).
    \end{aligned}
\end{equation}

Note that the density at the new time level, $\rho^{n+1}$, is already known after the
explicit update of the advective sub-system. Following the approach proposed
in~\cite{BoscheriThomann2024}, the pressure gradient entering the momentum
contribution is evaluated explicitly at the old time level $n$. By formally
substituting the momentum evolution equations into the
magnetic field component equations of~\eqref{equ:magnective sub-system}, one obtains the
following sub-system for the unknowns $B_x^{n+1}$, $B_y^{n+1}$ and $B_z^{n+1}$:

\begin{equation}
    \label{equ:mag-sub}
    \left\{ 
    \begin{aligned}
    B_x^{n+1} &= B_x ^ * - \Delta t^2 \frac{\partial}{\partial y} (\frac{B_x^n}{\rho^{n+1}} [\frac{\partial}{\partial x} (B_x^n B_y^{n+1}) -  \frac{\partial}{\partial y} (\frac{\mathbf{B}^n \cdot \mathbf{B}^{n+1}}{2} - B_y^n B_y^{n+1}) ] \\
    & \quad \quad \quad \quad \quad\quad + \frac{B_y^n}{\rho^{n+1}} [\frac{\partial}{\partial x} (\frac{\mathbf{B}^n \cdot \mathbf{B}^{n+1}}{2} - B_x^n B_x^{n+1}) -  \frac{\partial}{\partial y} ( B_x^{n+1} B_y^{n}) ]),\\
    B_y^{n+1} &= B_y^* - \Delta t^2 \frac{\partial}{\partial x} (\frac{B_y^n}{\rho^{n+1}} [-\frac{\partial}{\partial x} (\frac{\mathbf{B}^n \cdot \mathbf{B}^{n+1}}{2} - B_x^n B_x^{n+1} ) +  \frac{\partial}{\partial y}(B_x^{n} B_y^{n+1})]\\
    & \quad \quad \quad \quad \quad\quad + \frac{B_x^n}{\rho^{n+1}} [- \frac{\partial}{\partial x} (B_x^{n} B_y^{n+1} ) +  \frac{\partial}{\partial y}(\frac{\mathbf{B}^n \cdot \mathbf{B}^{n+1}}{2} - B_y^n B_y^{n+1})]),\\
    B_z^{n+1} &= B_z^* -\Delta t^2 \frac{\partial}{\partial x}(\frac{B_z^n}{\rho^{n+1}}[- \frac{\partial}{\partial x}(\frac{\mathbf{B}^n \cdot \mathbf{B}^{n + 1}}{2} - B_x^n B_x^{n+1}) +  \frac{\partial}{\partial y}(B_x^{n+1} B^n_y)] \\
    & \quad \quad \quad \quad \quad\quad + \frac{B_x^n}{\rho^{n+1}}[- \frac{\partial}{\partial x}(- B_x^n B_z^{n+1}) -  \frac{\partial}{\partial y}(B_y^{n} B_z^{n+1})]) \\
    & \quad \quad \quad -\Delta t^2 \frac{\partial}{\partial y}(\frac{B_z^n}{\rho^{n+1}}[-\frac{\partial}{\partial x}(B_x^n B_y^{n+1}) - \frac{\partial}{\partial y}(\frac{\mathbf{B}^n \cdot \mathbf{B}^{n + 1}}{2} - B_y^n B_y^{n+1})] \\
    & \quad \quad \quad \quad \quad\quad + \frac{B_y^n}{\rho^{n+1}}[-\frac{\partial}{\partial x}(- B_x^n B_z^{n+1}) -  \frac{\partial}{\partial y}(B_y^{n} B_z^{n+1})]),
    \end{aligned}
    \right.
\end{equation}

where
\begin{equation}
    \left\{ 
    \begin{aligned}
    B_x^* &= B_x^n - \Delta t \frac{\partial}{\partial y} (\frac{B_x^n}{\rho ^{n+1}}[(\rho v) ^ * - \Delta t\frac{\partial}{\partial y} (p^n)] - \frac{B_y^n}{\rho^{n+1}}[(\rho u)^* - \Delta t \frac{\partial}{\partial x}(p^n)]),\\
    B_y^* &= B_y^n - \Delta t \frac{\partial}{\partial x} (\frac{B_y^n}{\rho ^{n+1}}[(\rho u) ^ * - \Delta t\frac{\partial}{\partial x} (p^n)] - \frac{B_x^n}{\rho^{n+1}}[(\rho v)^* - \Delta t \frac{\partial}{\partial y}(p^n)]), \\
    B_z^* &= B_z^n - \Delta t \frac{\partial}{\partial x} (\frac{B_z^n}{\rho ^{n+1}}[(\rho u) ^ * - \Delta t\frac{\partial}{\partial x} (p^n)] - \frac{B_x^n}{\rho^{n+1}}(\rho w)^*)\\
    & \quad \quad \quad -\Delta t \frac{\partial}{\partial y} (\frac{B_z^n}{\rho ^{n+1}}[(\rho v) ^ * - \Delta t\frac{\partial}{\partial y} (p^n)] - \frac{B_y^n}{\rho^{n+1}}(\rho w)^*).
    \end{aligned}
    \right.
\end{equation}

Due to the semi-implicit discretisation of the magnetic term $||\mathbf{B}||^2 \approx \mathbf{B}^n \cdot \mathbf{B}^{n+1}$ in the equation (\ref{equ:discreteofB}), the resulting system is linear, see \cite{BoscheriThomann2024} for further details.

After obtaining the magnetic field at time level $t = t^{n+1}$,
the magnetic sub-system~\eqref{equ:magnective sub-system} is updated by evaluating the flux
$F^{\mathbf{B}}(\rho^{n+1}, \rho \mathbf{u}^{*}, E^{*}, \mathbf{B}^{n+1})$.
This update provides the intermediate states of momentum $(\rho \mathbf{u})^{**}$ and energy $E^{**}$.

\subsubsection{Pressure sub-system}
\label{sec:pressure}

After the magnetic field update of Section~\ref{sec:magnetic_subsystem}, the magnetic field
$\mathbf{B}^{n+1}$ is known and the intermediate momentum $(\rho\mathbf{u})^{**}$
and energy $E^{**}$ have been obtained. This section derives the implicit pressure
solve and the subsequent momentum and energy updates in semi-discrete form; the
fully discrete counterparts are presented in Section~\ref{sec:fully_discrete}.

\paragraph{Momentum update:}
The final momentum is obtained by combining the implicit pressure gradient with the
intermediate momentum:
\begin{equation}
\label{equ:finalrhou}
(\rho\mathbf{u})^{n+1} = (\rho\mathbf{u})^{**}
- \Delta t\,\nabla\!\cdot\!\big(p^{n+1}\mathbf{I}\big).
\end{equation}

\paragraph{Derivation of the pressure wave equation:}
From the pressure sub-system~\eqref{equ:pressure sub-system}, the discrete energy
equation reads
\begin{equation}
\label{eq:energy_discrete}
\frac{E^{n+1}-E^{**}}{\Delta t}
+ \nabla\!\cdot\!\big(h^{n}(\rho\mathbf{u})^{n+1}\big) = 0,
\end{equation}
where the total energy at the new time level is
\begin{equation}
\label{eq:Udef}
E^{n+1} = (\rho e)^{n+1}
+ \frac{1}{2}\frac{|(\rho\mathbf{u})^{n+1}|^{2}}{\rho^{n+1}}
+ \frac{1}{2}\,|\mathbf{B}^{n+1}|^{2}.
\end{equation}
In contrast to the ideal-gas case, the internal energy is related to the pressure
through a general, possibly nonlinear, equation of state,
\begin{equation}
\label{eq:eos_V}
(\rho e)^{n+1} = \underline{\mathbf{V}}(p^{n+1}),
\end{equation}
which we retain in the generic form $\underline{\mathbf{V}}(p)$ throughout the
derivation. Since the magnetic field $\mathbf{B}^{n+1}$ is already known from
Section~\ref{sec:magnetic_subsystem}, we introduce the auxiliary quantity
\begin{equation}
\label{eq:pstar}
\mathcal{E}^{**} = E^{**} - \frac{1}{2}\,|\mathbf{B}^{n+1}|^{2},
\end{equation}
which absorbs the known magnetic energy. Substituting~\eqref{eq:Udef},
\eqref{eq:eos_V} and~\eqref{eq:pstar} into~\eqref{eq:energy_discrete} and
multiplying by $\Delta t$ yields
\begin{equation}
\label{eq:intermediate}
\underline{\mathbf{V}}(p^{n+1})
+ \frac{1}{2}\frac{|(\rho\mathbf{u})^{n+1}|^{2}}{\rho^{n+1}}
+ \Delta t\,\nabla\!\cdot\!\big(h^{n}(\rho\mathbf{u})^{n+1}\big)
= \mathcal{E}^{**}.
\end{equation}
Equation~\eqref{eq:intermediate} still couples the pressure to the new momentum
$(\rho\mathbf{u})^{n+1}$, both through the kinetic-energy term and inside the
divergence. The momentum update~\eqref{equ:finalrhou} provides the missing link,
and inserting it into the divergence term eliminates $(\rho\mathbf{u})^{n+1}$, producing an elliptic wave equation for the scalar pressure field:
\begin{equation}
\label{eq:wave}
\underline{\mathbf{V}}(p^{n+1})
- \Delta t^{2}\,\nabla\!\cdot\!\big(h^{n}\nabla p^{n+1}\big)
= \mathcal{E}^{**} - \frac{1}{2}\frac{|(\rho\mathbf{u})^{n+1}|^{2}}{\rho^{n+1}}
- \Delta t\,\nabla\!\cdot\!\big(h^{n}(\rho\mathbf{u})^{**}\big).
\end{equation}
The only difference with respect to the ideal-gas formulation is that the linear
term $p^{n+1}/(\gamma-1)$ is replaced by the EOS relation
$\underline{\mathbf{V}}(p^{n+1})$, which enters~\eqref{eq:wave} exclusively
through the diagonal; the elliptic operator on the left-hand side is unchanged.
Note that the kinetic-energy term on the right-hand side of~\eqref{eq:wave} is
itself a function of $p^{n+1}$ through~\eqref{equ:finalrhou}; this residual
coupling is resolved by the two-stage procedure described below.

\paragraph{Two-stage pressure solve:}
The right-hand side of~\eqref{eq:wave} involves the kinetic energy at the new time
level, which is not yet available since $(\rho\mathbf{u})^{n+1}$ depends on
$p^{n+1}$ through~\eqref{equ:finalrhou}. Following~\cite{FVVEM}, we adopt a
two-stage strategy in which the wave equation~\eqref{eq:wave} is solved twice.

\paragraph{First pressure solve:}
The wave equation is first solved using the intermediate momentum
$(\rho\mathbf{u})^{**}$ as a provisional value for the new kinetic energy:
\begin{equation}
\label{eq:pressurefirst}
\underline{\mathbf{V}}(\tilde{p}^{\,n+1})
- \Delta t^{2}\,\nabla\!\cdot\!\big(h^{n}\nabla\tilde{p}^{\,n+1}\big)
= \mathcal{E}^{**} - \frac{1}{2}\frac{|(\rho\mathbf{u})^{**}|^{2}}{\rho^{n+1}}
- \Delta t\,\nabla\!\cdot\!\big(h^{n}(\rho\mathbf{u})^{**}\big).
\end{equation}

\paragraph{Momentum update:}
The momentum is then advanced using $\tilde{p}^{\,n+1}$:
\begin{equation}
\label{eq:momentum2}
(\rho\mathbf{u})^{n+1} = (\rho\mathbf{u})^{**}
- \Delta t\,\nabla\tilde{p}^{\,n+1}.
\end{equation}

\paragraph{Second pressure solve:}
To guarantee thermodynamic consistency and conservation of the new energy,
$E^{n+1}$, the wave equation is solved once again for the pressure state,
$p^{n+1}$, with the kinetic energy now computed from the updated
momentum~\eqref{eq:momentum2}:
\begin{equation}
\label{eq:pressuresecond}
\underline{\mathbf{V}}(p^{n+1})
- \Delta t^{2}\,\nabla\!\cdot\!\big(h^{n}\nabla p^{n+1}\big)
= \mathcal{E}^{**} - \frac{1}{2}\frac{|(\rho\mathbf{u})^{n+1}|^{2}}{\rho^{n+1}}
- \Delta t\,\nabla\!\cdot\!\big(h^{n}(\rho\mathbf{u})^{**}\big).
\end{equation}
The new total energy, $E^{n+1}$, then follows directly from its
definition~\eqref{equ:defofU}.

\paragraph{Solution of the pressure system for a general equation of state:}
\label{sec:eos_solve}
Both pressure solves require the solution of a system of the compact form
\begin{equation}
\label{eq:compact}
\underline{\mathbf{V}}(\mathbf{p}^{n+1})
+ \underline{\underline{\mathbf{T}}}\,\mathbf{p}^{n+1} = \mathbf{b},
\end{equation}
where $\mathbf{p}^{n+1}$ is the vector of unknown cell-centred pressures,
$\underline{\underline{\mathbf{T}}}$ is the matrix arising from the
discrete elliptic operator, and
$\mathbf{b}$ collects the known right-hand side. The whole dependence on the
equation of state is carried by the diagonal term
$\underline{\mathbf{V}}(\mathbf{p}^{n+1})$, whereas the coupling matrix
$\underline{\underline{\mathbf{T}}}$ is EOS-independent.

\paragraph{Linear equations of state.}
For a linear EOS, such as the ideal gas law, the internal energy is a linear
function of the pressure, $\underline{\mathbf{V}}(p)=p/(\gamma-1)$, so that
$\partial\underline{\mathbf{V}}/\partial p = \mathbf{I}/(\gamma-1)$ is constant.
System~\eqref{eq:compact} is then linear and is solved directly with a single
linear solve.

\paragraph{General nonlinear equations of state.}
For a general EOS, such as the Redlich--Kwong EOS
models, the relation $\underline{\mathbf{V}}(p)$ is nonlinear and~\eqref{eq:compact}
becomes the nonlinear system
\begin{equation}
\mathbf{g}(\mathbf{p}^{n+1})
= \underline{\mathbf{V}}(\mathbf{p}^{n+1})
+ \underline{\underline{\mathbf{T}}}\,\mathbf{p}^{n+1} - \mathbf{b} = \mathbf{0},
\end{equation}
which we solve by a Newton method following Boscheri and Pareschi~\cite{BoscheriPareschi2021}.
Denoting by $s$ the Newton iteration index, the pressure correction
$\Delta\mathbf{p}$ solves
\begin{equation}
\label{eq:newton_update}
\left(\frac{\partial\underline{\mathbf{V}}}{\partial\mathbf{p}}
\bigg|_{\mathbf{p}^{n+1,s}} + \underline{\underline{\mathbf{T}}}\right)
\Delta\mathbf{p}
= \mathbf{b} - \underline{\mathbf{V}}(\mathbf{p}^{n+1,s})
- \underline{\underline{\mathbf{T}}}\,\mathbf{p}^{n+1,s},
\qquad
\mathbf{p}^{n+1,s+1} = \mathbf{p}^{n+1,s} + \Delta\mathbf{p},
\end{equation}
and the iteration is repeated until $|\Delta\mathbf{p}|<\delta_\mathrm{Newton}$, with
$\delta_\mathrm{Newton}=10^{-10}$. The nonlinearity enters only through the diagonal term
$\partial\underline{\mathbf{V}}/\partial p$, which depends on the already-known
density $\rho^{n+1}$ and on the local EOS, while the off-diagonal coupling
$\underline{\underline{\mathbf{T}}}$ is left unchanged. The system is therefore
only mildly nonlinear and the Newton iteration converges in a few steps; the
linear case is recovered as the special situation in which the Jacobian is
constant and a single iteration is exact.

\subsection{Further stabilisation for high-acoustic-Mach and high-Alfvén-Mach number flows}
\label{sec:flattener_mhd}

The central-difference discretisation of the pressure wave equations cannot fully 
suppress over- and undershoots at strong discontinuities, producing localised 
pressure oscillations. Following the a-priori flattener strategy of Boscheri and 
Pareschi~\cite{BoscheriPareschi2021}, we add extra dissipation only in cells where a 
shock is detected, through the cell-centred indicator 
$\chi^{n}\in[0,1]$~\cite{Balsara2012}.

Since the scheme is asymptotic-preserving in both the low-acoustic-Mach and low-Alfvén-Mach 
regimes, the indicator must stay inactive whenever the flow is stiffened either 
acoustically or magnetically.  The flattener reads \cite{Balsara2012}
\begin{equation}
\label{eq:flattener_mhd}
\tilde{\chi}^{\,n} 
= \min\!\left[\,1,\ \max\!\left(0,\ 
-\,\frac{\delta\,(\nabla\cdot\mathbf{u}^{n}) + k_1\,c_{f,\min}^{\,n}}
        {k_1\,c_{f,\min}^{\,n}}\right)\right],
\qquad 
\chi^{n} = \max_{\mathcal{N}} \tilde{\chi}^{\,n},
\end{equation}
with $\delta = \min(\Delta x,\Delta y)$, $k_1 = 10^{-3}$, $c_{f,\min}^{\,n}$ the 
minimum magnetosonic speed over the $3\times3$ stencil $\mathcal{N}$. 
Using $c_f$ rather than $a$ triggers the indicator only on a super-fast-magnetosonic 
compression: a flow stiff in either the acoustic or the magnetic sense yields a 
large $c_f$ and hence $\chi^{n}\to 0$, so no spurious dissipation is introduced in 
either asymptotic limit.

In contrast to~\cite{BoscheriPareschi2021}, which modifies the energy flux explicitly, 
we augment the pressure wave equation with a shock-activated Laplacian treated 
implicitly. Because this equation is assembled from the internal-energy relation 
$\underline{\mathbf{V}}(p)=\rho\,e(\rho,p)$ while the unknown is $p$, the 
dissipation is a regularisation of the internal energy and must act on 
$\underline{\mathbf{V}}$ rather than on $p$ directly. With the thermodynamic 
derivative
\begin{equation}
\label{eq:Vprime}
\underline{\mathbf{V}}'(p)=\frac{\partial(\rho e)}{\partial p}\Big|_{\rho},
\qquad
\underline{\mathbf{V}}'(p)=\frac{1}{\gamma-1}\ \text{(ideal gas),}
\end{equation}
and the chain rule $\nabla\underline{\mathbf{V}}(p)=\underline{\mathbf{V}}'(p)\,
\nabla p$, the augmented system reads
\begin{equation}
\label{equ:kappa_V}
\underline{\mathbf{V}}(p^{n+1})
 - \Delta t^{2}\,\nabla\!\cdot\!\bigl(h^{n}\,\nabla p^{n+1}\bigr)
 - \Delta t\,\nabla\!\cdot\!\bigl(\tilde{\kappa}^{\,n}\,\nabla p^{n+1}\bigr)
 = \mathbf{b},
\qquad
\tilde{\kappa}^{\,x,n} = \underline{\mathbf{V}}'(p^{n})\,\chi^{n}|u^{n}|\Delta x,
\quad
\tilde{\kappa}^{\,y,n} = \underline{\mathbf{V}}'(p^{n})\,\chi^{n}|v^{n}|\Delta y,
\end{equation}
so that the dissipative flux $\tilde{\kappa}^{\,n}\nabla p^{n+1}$ is a consistent
discretisation of $\kappa^{n}\nabla\underline{\mathbf{V}}(p^{n+1})$, with
$\kappa^{x,n}=\chi^{n}|u^{n}|\Delta x$, $\kappa^{y,n}=\chi^{n}|v^{n}|\Delta y$:
the chain rule $\nabla\underline{\mathbf{V}}=\underline{\mathbf{V}}'\nabla p$ is applied inside
the flux, with the factor $\underline{\mathbf{V}}'(p^{n})$ frozen at the explicit state,
consistently with $h^{n}$ and $\chi^{n}$. Keeping $\underline{\mathbf{V}}'$ inside the
divergence preserves the strict flux form of the dissipation, so that the added
term enters the coupling matrix $\underline{\underline{\mathbf{T}}}$ as a conservative
contribution (Lemma~\ref{lem:pos_T}); for an ideal gas $\underline{\mathbf{V}}'=1/(\gamma-1)$ is constant
and the weighting reduces to a constant rescaling of $\kappa^{n}$.

The added term modifies only the linear coupling matrix 
$\underline{\underline{\mathbf{T}}}$ and is solved together with the nested Newton 
iteration of Section~\ref{sec:positivity}. The convective scaling of 
$\kappa^{n}$ and the magnetosonic normalisation of $\chi^{n}$ keep the scheme 
asymptotic-preserving in both limits, with no time step restriction beyond the 
convective CFL condition.

\begin{figure}
\centering
\includegraphics[width=\linewidth]{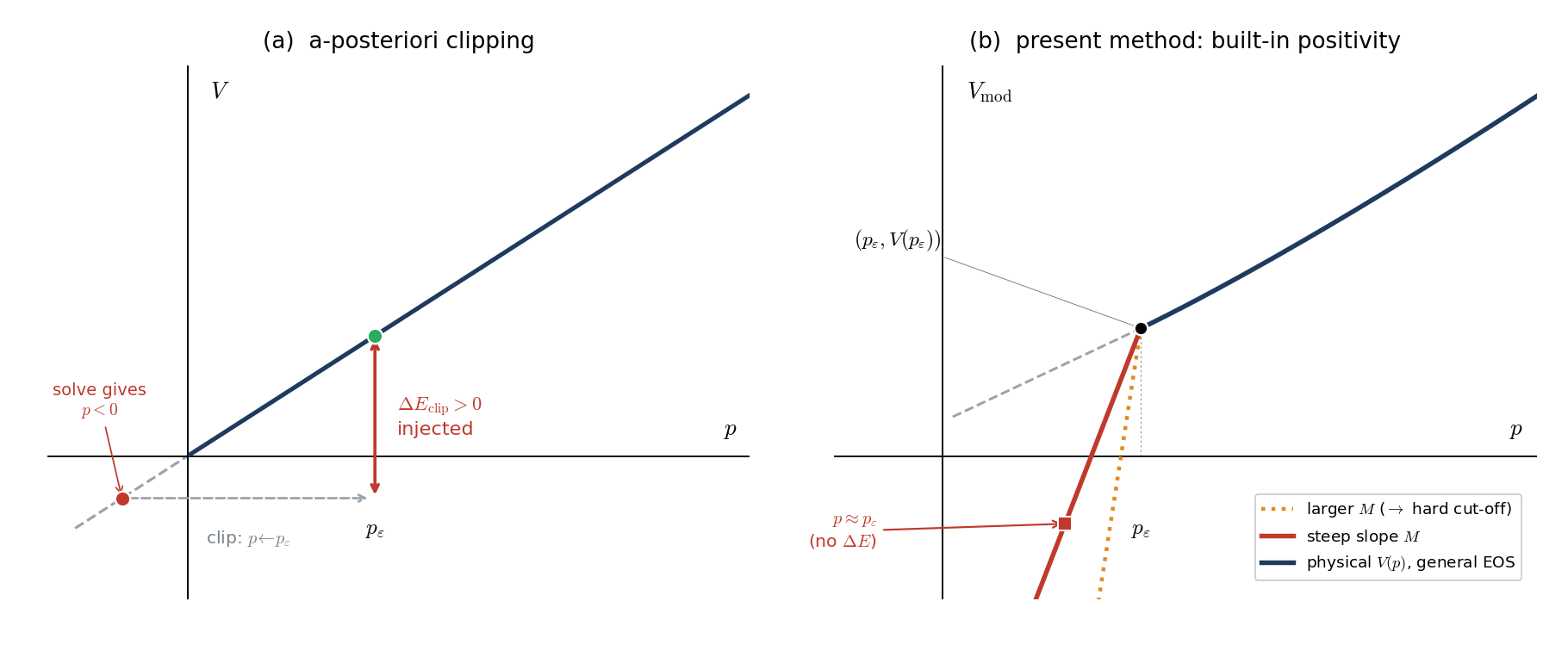}
\caption{Comparison of two strategies for enforcing pressure positivity.
  (a) A-posteriori clipping resets a negative pressure to the floor
  $p_\varepsilon$, injecting a spurious energy $\Delta E_{\mathrm{clip}}>0$ and
  breaking conservation. (b) The present method builds positivity into the
  pressure solve through the steep-slope modification $V_{\mathrm{mod}}$, which
  confines the pressure to $p\approx p_\varepsilon$ without injecting energy.}
\label{fig:positiveper}
\end{figure}

\subsection{Conservative enforcement of pressure positivity}
\label{sec:positivity}

The nested Newton procedure introduced above for a nonlinear equation of state can be
further exploited to enforce a positivity-preserving safeguard on the pressure, without
modifying the structure of the solver. The compact system~\eqref{eq:compact} does not,
by itself, prevent the discrete pressure from becoming negative: in strongly rarefied or
strongly magnetised regions the right-hand side may drive the local solution below zero,
since the internal-energy relation $\underline{\mathbf{V}}(p)$ offers no barrier at
$p=0$. A natural remedy would be to clip the offending cell,
$p\leftarrow\max(p,p_{\varepsilon})$, but this acts on the solution after the
system has been solved and therefore injects energy into that cell without any
compensating flux, breaking conservation. We instead build the floor into the
system, so that a positive solution is obtained as part of the same nested Newton solve.
The key observation is that both the nonlinearity of the equation of state and the
pressure positivity constraint act exclusively on the diagonal term
$\underline{\mathbf{V}}(\tilde{\mathbf{p}})$ in~\eqref{eq:compact}, leaving the flux
coupling matrix $\underline{\underline{\mathbf{T}}}$ untouched.

We recall that in \eqref{eq:newton_update} the Newton
iteration only requires the internal-energy relation
$\underline{\mathbf{V}}(\tilde{\mathbf{p}})$ and its derivative
$\partial\underline{\mathbf{V}}/\partial\tilde{\mathbf{p}}$. For a general equation of
state this derivative is itself a function of the pressure and may become small, or even
degenerate, in low-pressure or thermodynamically unstable regions, which is precisely
where negative pressures are prone to appear. To promote pressure positivity, we therefore bound
the diagonal slope from below by a large prescribed threshold $M>0$,

\begin{equation}
\label{eq:Vmod_slope}
\frac{\partial V_{\mathrm{mod}}}{\partial p}=
\begin{cases}
\frac{\partial V}{\partial p}, & p\ge p_{\varepsilon},\\[6pt]
M, & p<p_{\varepsilon},
\end{cases}
\end{equation}
and recover the modified relation $V_{\mathrm{mod}}(p)$ by integration, so that it
remains continuous. The construction applies to any equation of state: below the floor $p_{\varepsilon}$ the physical slope is replaced by the steep slope $M$, while for $p \geq p_\varepsilon$
the relation is left unchanged. For the ideal gas law, where $\partial V/\partial p=1/(\gamma-1)$ is constant, this reduces to the piecewise-linear
form
\begin{equation}
\label{eq:Vmod}
V_{\mathrm{mod}}(p)=
\begin{cases}
V(p), & p\ge p_{\varepsilon},\\[6pt]
V(p_{\varepsilon})+M\,(p-p_{\varepsilon}), & p<p_{\varepsilon},
\end{cases}
\end{equation}
which switches to the steep slope below a pressure floor $p_{\varepsilon}>0$, the two
branches being joined continuously through the offset $V(p_{\varepsilon})$. This
construction is inspired by the wetting-and-drying treatment of
Casulli~\cite{Casulli2009}, where a similar piecewise relation keeps the water
depth non-negative.

The effect of the steep slope is best understood by solving the local relation for a
cell that the right-hand side would otherwise push below the pressure floor. Isolating cell $i$
and collecting the neighbour coupling into
$\beta_i:=b_i-\sum_{j\neq i}T_{ij}\,\tilde{p}_j$, the modified relation
$V_{\mathrm{mod}}(p_i)+T_{ii}p_i=\beta_i$ gives, to leading order in the steep slope,
\begin{equation}
\label{eq:penalty}
p_{i}\ \approx\ p_{\varepsilon}+\frac{\beta_i-V(p_{\varepsilon})}{M},
\end{equation}
so that the correction term vanishes as $M\to\infty$ and $p_i\to p_{\varepsilon}$.
In other words, for a sufficiently steep slope the modification acts \emph{as if the
pressure had been reset to the floor value} $p_{\varepsilon}$, yet it does so by
selecting the value that the system itself returns under the modified relation, rather
than by overwriting the result afterwards. We emphasise that both the steep slope
\emph{and} the continuity of $V_{\mathrm{mod}}$ are essential: a steep slope through the
origin alone would give $p_i\approx\beta_i/M$, which still admits negative values; it is
the anchoring of $V_{\mathrm{mod}}$ at the positive floor, enforced by the integration
constant, that confines the solution to the admissible range.

Since the modified slope ~\eqref{eq:Vmod_slope} is strictly positive,
$\underline{\mathbf V}_{\mathrm{mod}}$ is monotonically increasing and the
modified system~\eqref{eq:compact} remains uniquely solvable (Proposition~\ref{prop:pos_wellposed}). The nested
Newton iteration~\eqref{eq:newton_update} remains well defined, the only change being that the
diagonal derivative $\partial\underline{\mathbf V}/\partial\tilde{\mathbf p}$
in the Jacobian is replaced by
$\partial\underline{\mathbf V}_{\mathrm{mod}}/\partial\tilde{\mathbf p}$: for
a piecewise-linear relation, such as the ideal gas law together with the positivity modification, it terminates in finitely many steps, while for a
general nonlinear equation of state it converges locally to the unique
solution (Proposition~\ref{prop:pos_newton}). The lower bound $M$ additionally guarantees that
the diagonal of the Newton system stays well conditioned even when the
underlying thermodynamic model yields a vanishing slope, so that the same
procedure applies uniformly across ideal and general nonlinear equations of
state.

Crucially, the modification acts only on the diagonal term $\underline{\mathbf{V}}$, a
purely cell-local algebraic relation, and leaves the inter-cell flux coupling
$\underline{\underline{\mathbf{T}}}$, which is responsible for the telescoping conservation of
the total energy, untouched. The pressure positivity is therefore obtained as the genuine
solution of a modified but still flux-consistent system, in direct contrast to the
a-posteriori clipping $p\leftarrow\max(p,p_{\varepsilon})$, which overwrites a single
cell without a compensating flux and injects spurious energy. The heuristic
estimate~\eqref{eq:penalty}, the unique solvability of the modified system, the global
control of the discrete pressure, and the exact preservation of energy conservation are
made rigorous in Section~\ref{sec:properties}.

\subsection{Properties of the positivity-preserving pressure system}
\label{sec:properties}

We now establish the properties of the modified pressure system
\begin{equation}
\label{eq:pos_sys}
\underline{\mathbf{V}}_{\mathrm{mod}}(\mathbf{p})
+\underline{\underline{\mathbf{T}}}\,\mathbf{p}=\mathbf{b},
\qquad \mathbf{p}\in\mathbb{R}^{N},\quad N=n_x n_y,
\end{equation}
where $\underline{\underline{\mathbf{T}}}$ is the five-point discretisation of the implicit
elliptic operator
$-\Delta t^{2}\,\nabla\!\cdot(h^{n}\nabla\,\cdot\,)
 -\Delta t\,\nabla\!\cdot(\tilde{\kappa}^{\,n}\nabla\,\cdot\,)$,
with face-averaged coefficients
$h^{n}_{i+\frac12,j}=\tfrac12\bigl(h^{n}_{ij}+h^{n}_{i+1,j}\bigr)$ and
$\tilde{\kappa}^{\,x,n}_{i+\frac12,j}
 =\tfrac12\bigl(\tilde{\kappa}^{\,x,n}_{ij}+\tilde{\kappa}^{\,x,n}_{i+1,j}\bigr)$
(and analogously in $y$). Throughout this section we assume $h^{n}_{f}>0$ at
every face $f$ (this holds for the ideal gas and for all states encountered
in the tests of Section~\ref{sec:results}) while $\tilde{\kappa}^{\,n}_{f}\ge 0$ holds by
construction, since $\chi^{n}\ge 0$ and $\underline{\mathbf{V}}'>0$.
$\underline{\mathbf{V}}_{\mathrm{mod}}$ is defined through the modified
slope~\eqref{eq:Vmod_slope}: we write
$\alpha(p)=\partial\underline{\mathbf{V}}/\partial p>0$ for the physical
equation-of-state slope and assume it to be uniformly positive on the
physical branch,
\begin{equation}
\label{equ:alpha0}
\alpha(p)\;\ge\;\alpha_0\;>\;0
\qquad\text{for } p\ge p_{\varepsilon},
\end{equation}
which holds for the ideal gas, where $\alpha=1/(\gamma-1)$, and for the
Redlich--Kwong closure on all states encountered in Section~\ref{sec:results}. Then
$\partial\underline{\mathbf{V}}_{\mathrm{mod}}/\partial p=M$ on the floored branch
$p<p_{\varepsilon}$ and $\alpha(p)$ elsewhere, so that
$\partial\underline{\mathbf{V}}_{\mathrm{mod}}/\partial p\ \ge\
\underline{\alpha}:=\min(M,\alpha_0)>0$ wherever defined, and we recall that
$\underline{\mathbf{V}}_{\mathrm{mod}}$ is continuous with
$\underline{\mathbf{V}}_{\mathrm{mod}}(p_{\varepsilon})
=\underline{\mathbf{V}}(p_{\varepsilon})$. For a general equation of state $\underline{\mathbf{V}}$ acts cell-wise,
$V_{i}(p)=\rho_{i}\,e(\rho_{i},p)$; in the bounds of
Theorem~\ref{thm:pos_global} and Corollary~\ref{cor:pos_strict} the
quantity $V(p_{\varepsilon})$ is understood as
$\max_{i}V_{i}(p_{\varepsilon})$, a constant for the ideal gas.

\begin{lemma}[Structure of $\underline{\underline{\mathbf{T}}}$]
\label{lem:pos_T}
For $h^{n}_{f}>0$, $\tilde{\kappa}^{\,n}_{f}\ge 0$ and impermeable (periodic,
reflective, or closed) boundaries, the matrix $\underline{\underline{\mathbf{T}}}$ is symmetric, has
positive diagonal and non-positive off-diagonal entries, has zero row and
column sums, and is positive semi-definite, with
\begin{equation}
\label{eq:pos_quadform}
\mathbf{q}^{\top}\underline{\underline{\mathbf{T}}}\,\mathbf{q}
 =\sum_{\text{faces }f} w_{f}\,\bigl(q_{R_{f}}-q_{L_{f}}\bigr)^{2}\;\ge\;0,
\qquad
w_{f}:=\frac{\Delta t^{2}h^{n}_{f}+\Delta t\,\tilde{\kappa}^{\,n}_{f}}
            {\Delta x_{f}^{2}},
\qquad\forall\,\mathbf{q}\in\mathbb{R}^{N}.
\end{equation}
\end{lemma}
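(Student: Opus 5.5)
The plan is to write $\underline{\underline{\mathbf{T}}}$ explicitly as a sum of face contributions and read every claimed property off that decomposition. On the uniform Cartesian mesh, the five-point discretisation of $-\Delta t^{2}\nabla\!\cdot(h^{n}\nabla\cdot)-\Delta t\nabla\!\cdot(\tilde{\kappa}^{\,n}\nabla\cdot)$ applied to $\mathbf{q}$ at cell $(i,j)$ is
\[
(\underline{\underline{\mathbf{T}}}\mathbf{q})_{ij}=\sum_{f\in\partial(i,j)} w_f\,(q_{ij}-q_{\mathrm{nb}(f)}),
\]
where the sum runs over the (at most four) interior faces of the cell and $w_f$ is as in \eqref{eq:pos_quadform}. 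The face coefficients $\Delta t^{2}h^{n}_f$ and $\Delta t\,\tilde{\kappa}^{\,n}_f$ are arithmetic averages of the two adjacent cell values, so each is single-valued per face. The same $w_f$ therefore appears in the rows of both cells sharing $f$.

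Equivalently, I will write $\underline{\underline{\mathbf{T}}}=\sum_f w_f\,(\mathbf{e}_{R_f}-\mathbf{e}_{L_f})(\mathbf{e}_{R_f}-\mathbf{e}_{L_f})^{\top}$. I then need to check the boundary treatment.
\begin{itemize}
\item For periodic boundaries, the boundary faces are ordinary faces joining opposite cells.
\item For reflective or closed boundaries, the ghost value equals the interior value (zero normal pressure gradient), so the boundary face contributes $w_f(q_{ij}-q_{ij})=0$ and drops out.
\end{itemize}
This is the step where impermeability is used. It is also the main point to be careful about: a Dirichlet-type boundary would add an unpaired diagonal term and break the zero row sum. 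I expect it to be the only nontrivial part, and I will handle it by showing that the ghost-cell convention produces exactly zero contribution.

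From the rank-one decomposition the listed properties follow directly.
\begin{itemize}
\item \textbf{Symmetry.} Each term $w_f\,\mathbf{a}\mathbf{a}^{\top}$ is symmetric.
\item \textbf{Sign pattern.} The off-diagonal entry $T_{LR}=-w_f\le 0$, since $h^n_f>0$ and $\tilde\kappa^n_f\ge0$ give $w_f>0$. The diagonal entry $T_{ii}=\sum_{f\ni i}w_f>0$ whenever the cell has at least one interior face, which holds for any mesh with more than one cell.
\item \textbf{Zero row and column sums.} Each vector $\mathbf{e}_R-\mathbf{e}_L$ is orthogonal to $\mathbf{1}$, so $\underline{\underline{\mathbf{T}}}\mathbf{1}=\mathbf{0}$. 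Symmetry then gives zero column sums as well.
\item \textbf{Quadratic form.} Contracting with $\mathbf{q}$ gives $\mathbf{q}^{\top}\underline{\underline{\mathbf{T}}}\mathbf{q}=\sum_f w_f\,(q_{R_f}-q_{L_f})^{2}\ge0$, which is \eqref{eq:pos_quadform} and yields positive semi-definiteness.
\end{itemize}
As an alternative route to the quadratic form, one can sum $q_{ij}(\underline{\underline{\mathbf{T}}}\mathbf{q})_{ij}$ over cells and regroup by faces (discrete summation by parts). Each interior face collects $w_f q_L(q_L-q_R)+w_f q_R(q_R-q_L)=w_f(q_R-q_L)^2$.
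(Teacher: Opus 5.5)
Your proof is correct and follows essentially the same face-wise assembly argument as the paper: each face carries a single shared weight $w_f$, which gives symmetry, the sign pattern, the zero row and column sums, and the quadratic form $\sum_f w_f(q_{R_f}-q_{L_f})^2\ge 0$ after regrouping by faces. Your rank-one decomposition $\underline{\underline{\mathbf{T}}}=\sum_f w_f(\mathbf{e}_{R_f}-\mathbf{e}_{L_f})(\mathbf{e}_{R_f}-\mathbf{e}_{L_f})^{\top}$, together with your check that reflective or closed boundary faces drop out, spells out what the paper leaves implicit in the phrase ``each interior face''.
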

\begin{proof}
Both dissipative contributions are assembled face-wise, and the two cells
$L_{f}$, $R_{f}$ adjacent to a face $f$ share the single value $w_{f}$: hence
$T_{L_{f}R_{f}}=T_{R_{f}L_{f}}=-w_{f}$ and $\mathbf{T}$ is symmetric. Each
interior face contributes $+w_{f}$ to the diagonal of its two adjacent cells
and $-w_{f}$ to their coupling, which gives $T_{ii}>0$, $T_{ij}\le 0$
($i\ne j$), and makes the contributions telescope so that each row and column
sums to zero. Grouping the quadratic form by faces yields \eqref{eq:pos_quadform}; since
$w_{f}\ge 0$ with the $h$-part strictly positive, $\mathbf{T}\succeq 0$.
\end{proof}

\begin{proposition}[Well-posedness]
\label{prop:pos_wellposed}
For every $\mathbf{b}\in\mathbb{R}^{N}$, the nonlinear system~\eqref{eq:pos_sys} admits a
unique solution.
\end{proposition}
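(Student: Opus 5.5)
The approach is to view \eqref{eq:pos_sys} as the equation $\mathbf{F}(\mathbf{p})=\mathbf{b}$ with $\mathbf{F}(\mathbf{p}):=\underline{\mathbf{V}}_{\mathrm{mod}}(\mathbf{p})+\underline{\underline{\mathbf{T}}}\,\mathbf{p}$, and to show that $\mathbf{F}$ is a continuous, uniformly strongly monotone map on $\mathbb{R}^{N}$. Existence and uniqueness then follow from the Browder--Minty theorem, or from the elementary fact that a strongly monotone continuous map on $\mathbb{R}^{N}$ is a homeomorphism onto $\mathbb{R}^{N}$.

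\textbf{Uniqueness and strong monotonicity.} Fix $\mathbf{p},\mathbf{q}\in\mathbb{R}^{N}$. The diagonal part acts cell-wise, and each $V_{\mathrm{mod},i}$ is continuous and piecewise $C^{1}$ with slope at least $\underline{\alpha}=\min(M,\alpha_0)>0$ wherever defined. Integrating the slope along the segment between $q_i$ and $p_i$ (which crosses the kink $p_\varepsilon$ at most once) gives
\[
\bigl(V_{\mathrm{mod},i}(p_i)-V_{\mathrm{mod},i}(q_i)\bigr)(p_i-q_i)\ \ge\ \underline{\alpha}\,(p_i-q_i)^{2}.
\]
By Lemma~\ref{lem:pos_T}, $(\mathbf{p}-\mathbf{q})^{\top}\underline{\underline{\mathbf{T}}}(\mathbf{p}-\mathbf{q})\ge 0$. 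Summing these contributions yields
\[
(\mathbf{F}(\mathbf{p})-\mathbf{F}(\mathbf{q}))^{\top}(\mathbf{p}-\mathbf{q})\ \ge\ \underline{\alpha}\,\|\mathbf{p}-\mathbf{q}\|^{2}.
\]
If $\mathbf{F}(\mathbf{p})=\mathbf{F}(\mathbf{q})=\mathbf{b}$, the left-hand side vanishes, so $\mathbf{p}=\mathbf{q}$. This gives uniqueness.

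\textbf{Existence.} Strong monotonicity implies coercivity: $\mathbf{F}(\mathbf{p})^{\top}\mathbf{p}/\|\mathbf{p}\|\to\infty$ as $\|\mathbf{p}\|\to\infty$. Together with continuity of $\mathbf{F}$, this gives surjectivity via Minty--Browder. A self-contained route is also available. Consider $\mathbf{G}(\mathbf{p})=\mathbf{F}(\mathbf{p})-\mathbf{b}$. On a sufficiently large sphere, $\mathbf{G}(\mathbf{p})^{\top}\mathbf{p}>0$, and the standard acute-angle lemma (a consequence of Brouwer's fixed point theorem) then yields a zero of $\mathbf{G}$ inside the ball. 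Alternatively, one can minimise the strictly convex coercive functional
\[
\Phi(\mathbf{p})=\sum_i \int_{0}^{p_i}V_{\mathrm{mod},i}(s)\,ds+\tfrac12\mathbf{p}^{\top}\underline{\underline{\mathbf{T}}}\mathbf{p}-\mathbf{b}^{\top}\mathbf{p}.
\]
This route uses the symmetry of $\underline{\underline{\mathbf{T}}}$ from Lemma~\ref{lem:pos_T}. $\Phi$ is $C^{1}$ because $V_{\mathrm{mod}}$ is continuous, and its unique minimiser satisfies $\nabla\Phi=\mathbf{F}-\mathbf{b}=\mathbf{0}$. I would present the variational argument as the main line, since it is the most elementary.

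\textbf{Main obstacle.} The delicate step is justifying the cell-wise monotonicity bound globally on $\mathbb{R}$, not just on $p\ge p_\varepsilon$. Assumption \eqref{equ:alpha0} only controls $\alpha$ on the physical branch. On the floored branch the slope is $M$ by construction, and the continuity of $V_{\mathrm{mod}}$ at $p_\varepsilon$ (enforced by the integration constant) is exactly what lets the two branches be concatenated without a jump. I would therefore state explicitly that $V_{\mathrm{mod},i}$ is absolutely continuous with a.e.\ derivative $\ge\underline{\alpha}$. This holds because $V_i(p)=\rho_i e(\rho_i,p)$ is differentiable for $p\ge p_\varepsilon$ and linear below, and it makes the integral estimate rigorous. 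Everything else follows from Lemma~\ref{lem:pos_T}.
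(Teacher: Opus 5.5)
Your proposal is correct and follows essentially the same route as the paper. The paper's proof is exactly your variational argument: the potential $\Phi$, built from a primitive of $V_{\mathrm{mod}}$ plus $\tfrac12\mathbf{p}^{\top}\underline{\underline{\mathbf{T}}}\,\mathbf{p}-\mathbf{b}^{\top}\mathbf{p}$, is $\underline{\alpha}$-strongly convex (by the slope bound $\min(M,\alpha_0)$ and the positive semi-definiteness of Lemma~\ref{lem:pos_T}), hence coercive, and its unique minimiser is the unique root. Your explicit strong-monotonicity estimate, and your remark that $V_{\mathrm{mod},i}$ is absolutely continuous with a.e.\ slope $\ge\underline{\alpha}$, make rigorous what the paper compresses into ``continuous and piecewise $C^{1}$''.
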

\begin{proof}
Define
\begin{equation*}
\Phi(\mathbf{p})=\sum_i \widehat V_{\mathrm{mod}}(p_i)
+\tfrac12\,\mathbf{p}^{\top}\underline{\underline{\mathbf{T}}}\,\mathbf{p}
-\mathbf{b}^{\top}\mathbf{p},
\end{equation*}
where $\widehat V_{\mathrm{mod}}$ is a primitive of $V_{\mathrm{mod}}$. Its
gradient is
$\nabla\Phi(\mathbf{p})=\underline{\mathbf{V}}_{\mathrm{mod}}(\mathbf{p})
+\underline{\underline{\mathbf{T}}}\,\mathbf{p}-\mathbf{b}$, so the roots of
$\mathbf{G}(\mathbf{p}):=\underline{\mathbf{V}}_{\mathrm{mod}}(\mathbf{p})
+\underline{\underline{\mathbf{T}}}\,\mathbf{p}=\mathbf{b}$ are exactly the
critical points of $\Phi$. Since $V_{\mathrm{mod}}$ is continuous and
piecewise $C^{1}$ with
$\partial V_{\mathrm{mod}}/\partial p\ge\underline{\alpha}=\min(M,\alpha_0)>0$
by \eqref{eq:Vmod_slope} and \eqref{equ:alpha0}, each
$\widehat V_{\mathrm{mod}}$ is $\underline{\alpha}$-strongly convex; by
Lemma~\ref{lem:pos_T} the quadratic term is convex. Hence $\Phi$ is
$\underline{\alpha}$-strongly convex, and therefore coercive, on
$\mathbb{R}^{N}$; it admits a unique minimiser, which is the unique root of
$\mathbf{G}$.
\end{proof}

\begin{theorem}[Global discrete positivity bound]
\label{thm:pos_global}
Let $\mathbf{p}$ solve~\eqref{eq:pos_sys} and set $b_{\min}:=\min_i b_i$. Then the
discrete pressure obeys the global lower bound
\begin{equation}
\label{eq:pos_global_bound}
\min_i p_i\ \ge\ p_{\varepsilon}
-\frac{\big|\,b_{\min}-V(p_{\varepsilon})\,\big|}{M},
\end{equation}
\emph{independently of the number and spatial distribution of the floored cells}. In
particular $\min_i p_i\to p_{\varepsilon}$ as $M\to\infty$.
\end{theorem}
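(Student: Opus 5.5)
We argue by a discrete minimum principle at the cell where the pressure attains its minimum. Let $k$ be an index with $p_k=\min_i p_i$. If $p_k\ge p_{\varepsilon}$ the bound \eqref{eq:pos_global_bound} holds trivially, since its right-hand side is at most $p_{\varepsilon}$. We may therefore assume $p_k<p_{\varepsilon}$, so that cell $k$ lies on the floored branch of $V_{\mathrm{mod}}$.

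First we isolate row $k$ of \eqref{eq:pos_sys}. By Lemma~\ref{lem:pos_T}, $\underline{\underline{\mathbf{T}}}$ has zero row sums and non-positive off-diagonal entries. Hence
\[
(\underline{\underline{\mathbf{T}}}\mathbf{p})_k=\sum_{j\neq k}(-T_{kj})\,(p_k-p_j).
\]
Every term here is a product of a non-negative weight $-T_{kj}$ and a non-positive difference $p_k-p_j$, because $p_k$ is the minimum. Therefore $(\underline{\underline{\mathbf{T}}}\mathbf{p})_k\le 0$. Row $k$ of the system then gives
\[
V_{\mathrm{mod},k}(p_k)=b_k-(\underline{\underline{\mathbf{T}}}\mathbf{p})_k\ \ge\ b_k\ \ge\ b_{\min}.
\]
This is the only place the coupling enters. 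Since it is controlled solely through the sign structure of $\underline{\underline{\mathbf{T}}}$, the argument does not depend on how many floored cells there are or where they sit.

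Next we invert the floored branch. For $p<p_{\varepsilon}$, integrating the slope \eqref{eq:Vmod_slope} from $p_{\varepsilon}$ gives
\[
V_{\mathrm{mod},k}(p)=V_k(p_{\varepsilon})+M\,(p-p_{\varepsilon}),
\]
by continuity at the floor. Combining this with the inequality from the previous step yields
\[
p_k\ \ge\ p_{\varepsilon}+\frac{b_{\min}-V_k(p_{\varepsilon})}{M}.
\]
We now use $V_k(p_{\varepsilon})\le V(p_{\varepsilon}):=\max_i V_i(p_{\varepsilon})$, following the convention fixed before Lemma~\ref{lem:pos_T}. Together with $x\ge -|x|$, this gives
\[
p_k\ \ge\ p_{\varepsilon}+\frac{b_{\min}-V(p_{\varepsilon})}{M}\ \ge\ p_{\varepsilon}-\frac{|b_{\min}-V(p_{\varepsilon})|}{M},
\]
which is \eqref{eq:pos_global_bound}.

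The limit statement $\min_i p_i\to p_{\varepsilon}$ as $M\to\infty$ needs an upper bound as well. We argue as follows. Either the minimum is at least $p_{\varepsilon}$, or the case above applies; in the latter case the lower bound tends to $p_{\varepsilon}$ from below while $p_k<p_{\varepsilon}$. So $\liminf_{M\to\infty}\min_i p_i\ge p_{\varepsilon}$, which is the intended reading of the limit.

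The main obstacle is bookkeeping rather than analysis. First, the dependence of $\mathbf{b}$ on $M$ must be treated as fixed data, which it is, since $\mathbf{b}$ is assembled from known quantities. Second, the cell-dependent $V_k(p_{\varepsilon})$ must be replaced by the uniform maximum with the correct sign. Existence of the solution $\mathbf{p}$ is supplied by Proposition~\ref{prop:pos_wellposed}.
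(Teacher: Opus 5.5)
Your proof is correct and is essentially the paper's argument: evaluate the system at the minimising cell $k$, use the zero row sums and non-positive off-diagonal entries of $\underline{\underline{\mathbf{T}}}$ to obtain $V_{\mathrm{mod}}(p_{\min})\ge b_{\min}$, and then invert the steep branch $V_{\mathrm{mod}}(p)=V(p_{\varepsilon})+M(p-p_{\varepsilon})$. Your two additions are refinements of points the paper leaves implicit rather than a different route: bounding the cell-wise $V_k(p_{\varepsilon})$ by $\max_i V_i(p_{\varepsilon})$ with the correct sign, and reading the limit as $\liminf_{M\to\infty}\min_i p_i\ge p_{\varepsilon}$ (literal convergence can fail when no cell is floored).
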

\begin{proof}
Let $i^{\star}=\arg\min_i p_i$ and $p_{\min}=p_{i^{\star}}$. The $i^{\star}$-th equation
of~\eqref{eq:pos_sys} reads
$V_{\mathrm{mod}}(p_{\min})+\sum_j T_{i^{\star}j}\,p_j=b_{i^{\star}}$. Since
$\underline{\underline{\mathbf{T}}}$ has zero row sums (Lemma~\ref{lem:pos_T}),
subtracting $p_{\min}\sum_j T_{i^{\star}j}=0$ gives
$\sum_j T_{i^{\star}j}\,p_j=\sum_{j\neq i^{\star}}T_{i^{\star}j}(p_j-p_{\min})$. For
$j\neq i^{\star}$ one has $T_{i^{\star}j}\le 0$ and $p_j-p_{\min}\ge 0$, so every term is
non-positive and $\sum_j T_{i^{\star}j}\,p_j\le 0$. Therefore
\begin{equation}
V_{\mathrm{mod}}(p_{\min})=b_{i^{\star}}-\sum_j T_{i^{\star}j}\,p_j
\ \ge\ b_{i^{\star}}\ \ge\ b_{\min}.
\end{equation}
If $p_{\min}\ge p_{\varepsilon}$ the bound is trivial. Otherwise $i^{\star}$ lies on the
steep branch, where, by continuity of $V_{\mathrm{mod}}$ at $p_{\varepsilon}$,
$V_{\mathrm{mod}}(p_{\min})=M(p_{\min}-p_{\varepsilon})+V(p_{\varepsilon})$, so
\begin{equation}
M(p_{\min}-p_{\varepsilon})+V(p_{\varepsilon})\ \ge\ b_{\min}
\quad\Longrightarrow\quad
p_{\min}\ \ge\ p_{\varepsilon}+\frac{b_{\min}-V(p_{\varepsilon})}{M}
\ \ge\ p_{\varepsilon}-\frac{|\,b_{\min}-V(p_{\varepsilon})\,|}{M}.
\end{equation}
The right-hand side depends on the data only through $b_{\min}$ and on the fixed
constants $V(p_{\varepsilon}),p_{\varepsilon},M$; it is independent of which cells, or
how many, are floored. Letting $M\to\infty$ gives $p_{\min}\to p_{\varepsilon}$.
\end{proof}

\begin{corollary}[Strict positivity by choice of $M$]
\label{cor:pos_strict}
Given a target floor $\theta p_{\varepsilon}$ with $\theta\in(0,1)$, the choice
\begin{equation}
\label{eq:pos_Mchoice}
M\ \ge\ \frac{\big(V(p_{\varepsilon})-b_{\min}\big)_{+}}{(1-\theta)\,p_{\varepsilon}}
\end{equation}
guarantees $\min_i p_i\ge\theta p_{\varepsilon}>0$. Since $b_{\min}$ is available once
the right-hand side of~\eqref{eq:pos_sys} has been assembled, such an $M$ can be
determined at $O(N)$ cost prior to the solve.
\end{corollary}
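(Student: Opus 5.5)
The plan is to derive the corollary directly from the proof of Theorem~\ref{thm:pos_global} rather than from its final absolute-value form. That proof gives the sharper signed inequality: if $p_{\min}<p_{\varepsilon}$, then
\[
p_{\min}\ \ge\ p_{\varepsilon}+\frac{b_{\min}-V(p_{\varepsilon})}{M}.
\]
I will split into two cases according to the sign of $V(p_{\varepsilon})-b_{\min}$. The argument uses Lemma~\ref{lem:pos_T} (zero row sums, non-positive off-diagonals) and Proposition~\ref{prop:pos_wellposed}, the latter only to guarantee that a solution $\mathbf{p}$ exists to which the bound applies.

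\emph{Case $b_{\min}\ge V(p_{\varepsilon})$.} Here the positive part in \eqref{eq:pos_Mchoice} vanishes, so any $M>0$ is admissible. I will argue that in this case $p_{\min}\ge p_{\varepsilon}$. If instead $p_{\min}<p_{\varepsilon}$, the signed inequality above already yields $p_{\min}\ge p_{\varepsilon}$, a contradiction. Hence $\min_i p_i\ge p_{\varepsilon}>\theta p_{\varepsilon}$.

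\emph{Case $b_{\min}<V(p_{\varepsilon})$.} If $p_{\min}\ge p_{\varepsilon}$ there is nothing to prove. Otherwise the signed inequality gives
\[
p_{\min}\ \ge\ p_{\varepsilon}-\frac{V(p_{\varepsilon})-b_{\min}}{M}.
\]
Inserting the lower bound on $M$ from \eqref{eq:pos_Mchoice} bounds the fraction by $(1-\theta)p_{\varepsilon}$, and therefore $p_{\min}\ge\theta p_{\varepsilon}>0$, because $\theta>0$ and $p_{\varepsilon}>0$.

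The main subtlety is the general-EOS case, where $V(p_{\varepsilon})$ is cell-dependent and is interpreted as $\max_i V_i(p_{\varepsilon})$. I will check that the step $V_{\mathrm{mod},i^\star}(p_{\min})=V_{i^\star}(p_{\varepsilon})+M(p_{\min}-p_{\varepsilon})\le \max_i V_i(p_{\varepsilon})+M(p_{\min}-p_{\varepsilon})$ goes in the right direction. It does: a larger $V(p_{\varepsilon})$ only weakens the resulting lower bound, so the inequality persists. I also need $M$ to be strictly positive, so that Proposition~\ref{prop:pos_wellposed} applies; when the positive part is zero, $M$ is simply taken to be any positive constant.

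For the cost claim, $b_{\min}$ is a single $O(N)$ minimum over the assembled right-hand side. $V(p_{\varepsilon})$ is one EOS evaluation per cell, followed by another $O(N)$ maximum. Formula \eqref{eq:pos_Mchoice} is then a scalar evaluation, and all of this is done before the Newton solve.
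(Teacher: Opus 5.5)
Your proposal is correct and follows essentially the paper's route: the corollary is given without a separate proof, as a direct consequence of Theorem~\ref{thm:pos_global} obtained by inserting the lower bound on $M$ into that theorem's estimate. Working from the signed inequality $p_{\min}\ge p_{\varepsilon}+(b_{\min}-V(p_{\varepsilon}))/M$ rather than the absolute-value form \eqref{eq:pos_global_bound} is the right choice. It is what justifies the positive part in \eqref{eq:pos_Mchoice}, because when $b_{\min}>V(p_{\varepsilon})$ the absolute-value bound alone would not control $p_{\min}$ for small $M$.
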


\begin{remark}
Theorem~\ref{thm:pos_global} is the structural reason why the present enforcement
prevents a negative pressure from forming and propagating: the lower bound is governed
solely by the global minimum $b_{\min}$ of the right-hand side and by the steep slope
$M$, \emph{uniformly} in the configuration of the floored set. Evaluating the discrete
balance at the minimising cell makes the off-diagonal (diffusive) contribution
sign-definite, so that no dependence on neighbouring pressures remains. Equivalently,
$\mathbf{J}=\mathbf{D}+\underline{\underline{\mathbf{T}}}$ is an M-matrix
($\mathbf{J}^{-1}\ge 0$ entrywise), and~\eqref{eq:pos_global_bound} is a discrete minimum
principle. This makes the heuristic estimate~\eqref{eq:penalty} rigorous and removes its
apparent dependence on the neighbouring pressures.
\end{remark}

\begin{theorem}[Exact discrete energy conservation]
\label{thm:conservation}
Assume periodic boundaries, or impermeable walls on which
$\mathbf{u}\cdot\mathbf{n}=0$ and $\mathbf{B}\cdot\mathbf{n}=0$ hold at every
boundary face, and consider the sub-system updates prior to the
divergence-free correction of Section~\ref{chp:divfree_CT}, so that the magnetic
field is the predictor $\tilde{\mathbf{B}}^{n+1}$. Let the total energy be defined
cell-wise from the converged pressure as
\begin{equation}
\label{eq:Udef_VP}
E^{n+1}_{ij}
 = \underline{\mathbf{V}}_{\mathrm{mod}}\bigl(p^{n+1}_{ij}\bigr)
 + \frac{\bigl|(\rho\mathbf{u})^{n+1}_{ij}\bigr|^{2}}{2\rho^{n+1}_{ij}}
 + \tfrac12\,\bigl\|\tilde{\mathbf{B}}^{n+1}_{ij}\bigr\|^{2},
\end{equation}
where $p^{n+1}$ solves the discrete pressure system \eqref{eq:pos_sys}. Then the total
discrete energy is globally conserved,
\begin{equation}
\label{eq:energy_conservation}
\sum_{ij} E^{n+1}_{ij}\,\Delta x\,\Delta y \;=\; \sum_{ij} E^{n}_{ij}\,\Delta x\,\Delta y.
\end{equation}
\end{theorem}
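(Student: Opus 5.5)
Our plan is to follow the total energy through the three sub-system updates of one time step and show that each one changes $\sum_{ij}E_{ij}$ only through a sum of interface flux differences, which vanishes under the stated boundary conditions. There are three stages.

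\textbf{Explicit advective stage.} The energy update \eqref{eq:advective_update} is written in conservative flux-difference form with single-valued Rusanov fluxes $\mathbf{F}^c_{i+\frac12,j}$ and $\mathbf{G}^c_{i,j+\frac12}$. Summing over cells therefore telescopes to the boundary fluxes. These vanish for periodic boundaries, and for walls with $\mathbf{u}\cdot\mathbf{n}=0$ through mirrored ghost states: there the flux $ku$ vanishes, and the dissipation term acts on equal kinetic energies. Hence $\sum E^\star=\sum E^n$.

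\textbf{Magnetic stage.} $E^{**}=E^\star-\Delta t\,\nabla\cdot F^{\mathbf B}$ is again a flux-difference update with face-centred fluxes shared by the two adjacent cells. The boundary terms vanish because $\mathbf{u}\cdot\mathbf{n}=0$ and $\mathbf{B}\cdot\mathbf{n}=0$ kill both $\|\mathbf B\|^2u_n$ and $B_n(\mathbf u\cdot\mathbf B)$. So $\sum E^{**}=\sum E^\star$. The magnetic field used here is the predictor $\tilde{\mathbf B}^{n+1}$, which matches the definition \eqref{eq:Udef_VP}.

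\textbf{Pressure stage.} This is where the real content lies. We take the fully discrete second solve \eqref{eq:pressuresecond}, augmented as in \eqref{equ:kappa_V}, and write it cell-wise as
\[
V_{\mathrm{mod}}(p^{n+1}_{ij})+\bigl(\underline{\underline{\mathbf T}}\,\mathbf p^{n+1}\bigr)_{ij}
=\mathcal E^{**}_{ij}-\frac{|(\rho\mathbf u)^{n+1}_{ij}|^2}{2\rho^{n+1}_{ij}}-\Delta t\,\bigl(\nabla_h\cdot(h^n(\rho\mathbf u)^{**})\bigr)_{ij}.
\]
Here $(\rho\mathbf u)^{n+1}$ is the momentum \eqref{eq:momentum2}, which is fixed once the first solve is done. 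The key point is that this kinetic energy is \emph{exactly} the one appearing in \eqref{eq:Udef_VP}. Adding it and $\tfrac12\|\tilde{\mathbf B}^{n+1}\|^2$ to both sides, and using \eqref{eq:pstar}, gives
\[
E^{n+1}_{ij}=E^{**}_{ij}-\Delta t\,\bigl(\nabla_h\cdot(h^n(\rho\mathbf u)^{**})\bigr)_{ij}-\bigl(\underline{\underline{\mathbf T}}\,\mathbf p^{n+1}\bigr)_{ij}.
\]
The divergence term is in face-flux form with face values $h^n_f$, so it telescopes, and it vanishes at walls since $(\rho\mathbf u)^{**}\cdot\mathbf n=0$. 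By Lemma~\ref{lem:pos_T}, $\underline{\underline{\mathbf T}}$ has zero column sums, so $\sum_{ij}(\underline{\underline{\mathbf T}}\mathbf p)_{ij}=\mathbf 1^\top\underline{\underline{\mathbf T}}\mathbf p=0$. The shock-activated $\tilde\kappa$ term is also covered, because it is assembled face-wise inside $\underline{\underline{\mathbf T}}$. Summing over cells then gives $\sum E^{n+1}=\sum E^{**}$. Chaining the three stages proves \eqref{eq:energy_conservation}.

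\textbf{Main obstacle.} The delicate step is justifying that the energy defined by \eqref{eq:Udef_VP} really equals the conservative update, given the two-stage procedure. We must check carefully that the right-hand side of the second solve uses $(\rho\mathbf u)^{n+1}$ from $\tilde p^{n+1}$ rather than from $p^{n+1}$, so that the kinetic terms cancel identically. We must also check that the resulting effective energy flux $h^n_f\bigl[(\rho\mathbf u)^{**}_f-\Delta t\,\nabla_f p^{n+1}\bigr]$ is single-valued at every face, i.e.\ that $\underline{\underline{\mathbf T}}$ has exactly the face-symmetric assembly of Lemma~\ref{lem:pos_T}. We would also note that $V_{\mathrm{mod}}$ enters only diagonally, so the argument holds for any converged solution of \eqref{eq:pos_sys}, floored or not; unique solvability is guaranteed by Proposition~\ref{prop:pos_wellposed}.
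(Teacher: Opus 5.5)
Your proposal is correct and is essentially the paper's argument. At convergence of the second pressure solve, the cell-wise definition \eqref{eq:Udef_VP} coincides with a conservative flux-form update whose single-valued interface fluxes telescope, and the boundary contributions vanish by $\mathbf u\cdot\mathbf n=0$ and $\tilde{\mathbf B}\cdot\mathbf n=0$. Your stage-by-stage bookkeeping, the explicit cancellation of the kinetic and magnetic terms, and the use of $\mathbf 1^{\top}\underline{\underline{\mathbf T}}=\mathbf 0^{\top}$ from Lemma~\ref{lem:pos_T} simply make explicit what the paper asserts ``by construction''. One small slip: through $\mathsf P$ the Rusanov dissipation acts on the jump in $E$, not in $k$, but mirrored wall states make that jump vanish too.
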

\begin{proof}
By construction, the discrete pressure system is the cell-wise total-energy balance in
which the internal energy is represented by $V_{\mathrm{mod}}(p^{n+1})$; at convergence of
the Newton iteration the cell-wise definition of $E^{n+1}$ therefore coincides with the
conservative flux-form update
\begin{equation}
\label{eq:fluxform}
E^{n+1}_{ij}=E^{n}_{ij}
-\frac{\Delta t}{\Delta x}\big(\mathcal{F}_{i+\frac12,j}-\mathcal{F}_{i-\frac12,j}\big)
-\frac{\Delta t}{\Delta y}\big(\mathcal{G}_{i,j+\frac12}-\mathcal{G}_{i,j-\frac12}\big),
\end{equation}
whose numerical energy fluxes, $\mathcal{F}$ and $\mathcal{G}$, collect the advective, magnetic
and pressure contributions. These fluxes are \emph{single-valued} at each interface: the
flux leaving cell $(i,j)$ through its right face equals the flux entering cell
$(i+1,j)$ through its left face,
$\mathcal{F}^{\,\mathrm{right}}_{i+\frac12,j}=\mathcal{F}^{\,\mathrm{left}}_{i+\frac12,j}$,
and analogously in $y$. Summing~\eqref{eq:fluxform} over all control volumes, every
interior interface flux therefore appears twice with opposite signs and cancels
(telescoping), so that
\begin{equation}
\sum_{ij}\big(E^{n+1}_{ij}-E^{n}_{ij}\big)\,\Delta x\,\Delta y
=-\Delta t\!\!\sum_{\mathrm{boundary\ faces}}\!\!\mathcal{F}\cdot\mathbf{n}\,\ell_f .
\end{equation}
The advective and pressure energy fluxes are proportional to
$\mathbf{u}\cdot\mathbf{n}$ and vanish since $\mathbf{u}\cdot\mathbf{n}=0$ on
the walls; the magnetic energy flux consists of the terms
$(\mathbf{u}\cdot\mathbf{n})\|\mathbf{\tilde{B}}\|^{2}$ and
$(\mathbf{u}\cdot\mathbf{\tilde{B}})(\mathbf{\tilde{B}}\cdot\mathbf{n})$, which vanish since
$\mathbf{u}\cdot\mathbf{n}=0$ and $\mathbf{\tilde{B}}\cdot\mathbf{n}=0$, respectively;
for periodic boundaries there are no boundary faces. Hence the boundary sum is
zero and \eqref{eq:energy_conservation} follows.
\end{proof}

\begin{remark}
Conservation rests on the single-valuedness of the numerical energy fluxes shared between
adjacent cells and on the convergence of the pressure solve, at which point the cell-wise
energy definition~\eqref{eq:Udef_VP} coincides with the flux-form
update~\eqref{eq:fluxform}. The single-valuedness of the interface fluxes shared between adjacent cells, encoded
in $\underline{\underline{\mathbf{T}}}$ through the shared face weight $w_f$
(Lemma~\ref{lem:pos_T}), is the algebraic manifestation of this property. This is the
discrete analogue of the equivalence established in~\cite{FVVEM} for the
pressure-based semi-implicit formulation.
\end{remark}

\begin{remark}
On floored cells, i.e.\ where $p^{n+1}_{i}<p_{\varepsilon}$ at convergence, the
stored internal energy is $\underline{\mathbf{V}}_{\mathrm{mod}}(p^{n+1}_{i})$ rather than
the equation-of-state value $\underline{\mathbf{V}}(p^{n+1}_{i})$: the pressure returned by
the solve acts as a floored surrogate that exceeds the value the equation of
state would associate with the stored internal energy. This is the exact
counterpart of Proposition~\ref{prop:pos_clip}: a-posteriori clipping keeps the equation-of-state
relation and creates the spurious energy $\Delta E_{\mathrm{clip}}$, whereas
the present construction conserves the total energy exactly and relaxes the
equation-of-state relation on the floored cells only. Away from the floored set
$\underline{\mathbf{V}}_{\mathrm{mod}}\equiv\underline{\mathbf{V}}$, so the two definitions coincide,
and thermodynamic consistency is restored automatically as soon as the
discrete balance lifts a cell above $p_{\varepsilon}$.
\end{remark}

\begin{proposition}[Contrast with a-posteriori clipping]
\label{prop:pos_clip}
The a-posteriori clipping $p_i\leftarrow\max(p_i,p_{\varepsilon})$ injects a spurious
positive energy
$\Delta E_{\mathrm{clip}}=\sum_{i\in\mathcal{C}}\big(V(p_{\varepsilon})-V(p_i^{\mathrm{old}})\big)>0$
over the set $\mathcal{C}$ of clipped cells, whereas the present construction yields
$\Delta E=0$ by Theorem~\ref{thm:conservation}.
\end{proposition}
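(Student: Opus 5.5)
The statement has two parts. The clipping part is a direct computation on the stored internal energy. The claim for the present construction is inherited from Theorem~\ref{thm:conservation}.

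\textbf{Clipping.} Let $\mathbf{p}^{\mathrm{old}}$ be the pressure returned by a solve that does not build in the floor. Take the solution of \eqref{eq:compact} with the unmodified $\underline{\mathbf{V}}$ and the same right-hand side, so that the flux-form balance of Theorem~\ref{thm:conservation} holds with internal energy $V_i(p_i^{\mathrm{old}})$. That pre-clipping state therefore has total energy $\sum_{ij}E^n_{ij}\,\Delta x\,\Delta y$.

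Clipping resets $p_i\leftarrow p_\varepsilon$ on $\mathcal{C}=\{i: p_i^{\mathrm{old}}<p_\varepsilon\}$. Keeping the equation-of-state relation, the stored internal energy becomes $V_i(p_\varepsilon)$ on those cells. Momentum and $\mathbf{B}$ are untouched, so the kinetic and magnetic parts of \eqref{eq:Udef_VP} do not change. Summing the cell-wise differences gives
\[
\Delta E_{\mathrm{clip}}=\sum_{i\in\mathcal{C}}\bigl(V_i(p_\varepsilon)-V_i(p_i^{\mathrm{old}})\bigr),
\]
with the cell-volume factor absorbed as in the statement.

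\textbf{Sign of $\Delta E_{\mathrm{clip}}$.} I use that the physical slope $\alpha=\partial V/\partial p$ is positive. For the ideal gas this is immediate, since $V$ is linear with slope $1/(\gamma-1)$ on the whole line. More generally I assume $V$ is strictly increasing on the range swept between $p_i^{\mathrm{old}}$ and $p_\varepsilon$. Then each summand is strictly positive because $p_i^{\mathrm{old}}<p_\varepsilon$. Provided $\mathcal{C}\neq\emptyset$, the sum is strictly positive.

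I expect this to be the main obstacle. Assumption \eqref{equ:alpha0} only guarantees $\alpha\ge\alpha_0$ for $p\ge p_\varepsilon$, while the relevant values $p_i^{\mathrm{old}}$ lie below the floor. I would therefore state explicitly that the unmodified EOS relation is assumed monotone, $\alpha>0$, down to $p_i^{\mathrm{old}}$. This is automatic for the ideal gas. For a closure such as Redlich--Kwong, where $V$ may not even be defined there, the statement is understood with $V$ continued by its physical branch. Since no compensating flux accompanies the reset, the increment is not cancelled by the telescoping argument, so $\Delta E_{\mathrm{clip}}>0$ is a genuine conservation defect.

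\textbf{Present construction.} Here $\Delta E=0$ follows verbatim from Theorem~\ref{thm:conservation}. The floor is encoded in $V_{\mathrm{mod}}$ inside \eqref{eq:pos_sys}, and Proposition~\ref{prop:pos_wellposed} guarantees a unique converged solution. The stored energy \eqref{eq:Udef_VP} then equals the flux-form update \eqref{eq:fluxform}, whose sum telescopes. Subtracting the two totals gives the contrast claimed.
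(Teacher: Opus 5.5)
Your proposal is correct and follows the paper's own argument. On each clipped cell the stored internal energy is overwritten from $V(p_i^{\mathrm{old}})$ to $V(p_\varepsilon)>V(p_i^{\mathrm{old}})$ by strict monotonicity of $V$, with no compensating flux, while $\Delta E=0$ for the modified system follows directly from the exact-conservation theorem.

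Your monotonicity caveat is well placed: the paper's proof relies on the standing assumption $\alpha=\partial V/\partial p>0$, not on the bound $\alpha\ge\alpha_0$, which only covers $p\ge p_\varepsilon$. Your other additions ($\mathcal{C}\neq\emptyset$, and the unclipped solve conserving energy) are harmless refinements.
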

\begin{proof}
Since $V$ is strictly increasing ($\alpha=\partial V/\partial p>0$) and $p_i^{\mathrm{old}}
<p_{\varepsilon}$ on $\mathcal{C}$, clipping overwrites the internal energy in cell $i$
from $V(p_i^{\mathrm{old}})$ to $V(p_{\varepsilon})>V(p_i^{\mathrm{old}})$ without a
compensating flux, adding the stated positive amount to the total energy. The present
method instead obtains $p_i$ as the genuine solution of~\eqref{eq:pos_sys} and advances
the energy by its definition~\eqref{eq:Udef_VP}, so that no energy is created.
\end{proof}

\begin{proposition}[Solvability and convergence of the nested Newton iteration]
\label{prop:pos_newton}
Let $\mathbf{D}(\mathbf{p})
=\operatorname{diag}\bigl(\partial V_{\mathrm{mod}}/\partial p\,(p_i)\bigr)$
be evaluated branch-wise according to \eqref{eq:Vmod_slope}. Then every such
Jacobian $\mathbf{J}=\mathbf{D}(\mathbf{p}^{s})+\underline{\underline{\mathbf{T}}}
\succeq\underline{\alpha}\,\mathbf{I}\succ0$ is invertible, so the update
$\mathbf{J}\,\Delta\mathbf{p}=-(\mathbf{G}(\mathbf{p}^{s})-\mathbf{b})$ is well
defined at every iteration. When $V_{\mathrm{mod}}$ is piecewise linear (a linear
EOS together with the pressure positivity modification), the iteration is a semi-smooth
Newton method for a piecewise-linear system and terminates in finitely many
steps~\cite{BrugnanoCasulli2009}; if all cells share one branch of
$V_{\mathrm{mod}}$, $\mathbf{G}$ is affine and a single step is exact, in
particular, when no cell is floored this recovers the single linear solve of
the unmodified scheme. For a general nonlinear EOS the iteration converges
locally to the unique solution of Proposition~\ref{prop:pos_wellposed}.
\end{proposition}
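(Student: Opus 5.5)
The proof splits into three parts: the uniform lower bound on the Jacobian, finite termination in the piecewise-linear case, and local convergence for a general EOS.

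\emph{Jacobian bound.} For any iterate $\mathbf{p}^{s}$ and any $\mathbf{q}\in\mathbb{R}^{N}$, write
\[
\mathbf{q}^{\top}\mathbf{J}\mathbf{q}=\sum_i D_{ii}q_i^{2}+\mathbf{q}^{\top}\underline{\underline{\mathbf{T}}}\,\mathbf{q}.
\]
By \eqref{eq:Vmod_slope} and \eqref{equ:alpha0}, each branch-wise diagonal entry satisfies $D_{ii}\ge\underline{\alpha}$. This holds on both branches; at the kink $p_i=p_\varepsilon$ either choice is admissible. By Lemma~\ref{lem:pos_T} and \eqref{eq:pos_quadform}, the second term is non-negative. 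Since $\mathbf{T}$ is symmetric, $\mathbf{J}$ is symmetric, so $\mathbf{J}\succeq\underline{\alpha}\mathbf{I}\succ0$. Hence $\mathbf{J}$ is invertible and each Newton update is well defined. This step also shows that GMRES, or CG, applies. As a side remark, $\mathbf{J}$ has positive diagonal, non-positive off-diagonal entries and is strictly diagonally dominant by the zero row sums, so it is an M-matrix, consistent with the earlier remark.

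\emph{Piecewise-linear case.} For the ideal gas, $V_{\mathrm{mod}}$ has exactly two affine branches in each cell, joined continuously at $p_\varepsilon$. I would cast \eqref{eq:pos_sys} in the Brugnano--Casulli form $\mathbf{V}(\mathbf{p})+\mathbf{T}\mathbf{p}=\mathbf{b}$, with $\mathbf{V}$ a diagonal, continuous, non-decreasing, piecewise-linear map and $\mathbf{T}$ a symmetric M-matrix-type operator. Theorem~\cite{BrugnanoCasulli2009} then gives finite termination, and I would verify its hypotheses one by one. The key mechanism is that each iterate $\mathbf{p}^{s+1}$ solves the affine system obtained by linearising on the active branch pattern of $\mathbf{p}^{s}$. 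Only finitely many patterns ($2^N$) exist, and monotonicity prevents any pattern from recurring. Concretely, because $V_{\mathrm{mod}}$ is convex, every branch-wise linearisation lies below the function, and with $\mathbf{J}^{-1}\ge0$ this yields a monotone sequence after the first step.

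The affine special case is immediate. If all cells share one branch at $\mathbf{p}^{0}$ and the solution lies on the same branch, then $\mathbf{G}$ is affine on that region. One Newton step then solves $\mathbf{G}(\mathbf{p})=\mathbf{b}$ exactly, by uniqueness from Proposition~\ref{prop:pos_wellposed}. With no cell floored, $\mathbf{D}=\mathbf{I}/(\gamma-1)$, and this reproduces the single linear solve.

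\emph{General nonlinear EOS.} Let $\mathbf{p}^{*}$ be the unique root from Proposition~\ref{prop:pos_wellposed}. If $p^{*}_i\neq p_\varepsilon$ for all $i$, then $\mathbf{G}$ is $C^{1}$ near $\mathbf{p}^{*}$, assuming $\alpha$ is Lipschitz there. The Jacobian is nonsingular with $\|\mathbf{J}^{-1}\|\le1/\underline{\alpha}$, uniformly. The standard local Newton theorem then gives quadratic local convergence. If some $p^{*}_i=p_\varepsilon$, $\mathbf{G}$ is only piecewise $C^{1}$ and hence semismooth. Every element of the Clarke generalized Jacobian is a convex combination of branch Jacobians, so it is still $\succeq\underline{\alpha}\mathbf{I}$. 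The Qi--Sun semismooth Newton theorem then gives local superlinear convergence.

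The main obstacle is the finite-termination claim. I would first try to invoke \cite{BrugnanoCasulli2009} directly, checking that $\mathbf{T}$ and the two-branch convex $V_{\mathrm{mod}}$ satisfy its hypotheses. If they do not, I would prove monotonicity by hand from convexity plus the M-matrix property. I note that the slope $M$ must exceed $1/(\gamma-1)$ for $V_{\mathrm{mod}}$ to be concave rather than convex, so the monotone direction has to be chosen accordingly.
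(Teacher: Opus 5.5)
Your proposal is correct and follows the paper's proof essentially step for step. The three parts match: the bound $\mathbf{J}\succeq\underline{\alpha}\,\mathbf{I}$ from the branch-wise slopes together with Lemma~\ref{lem:pos_T}, finite termination by appeal to Brugnano--Casulli, and local (smooth or semismooth, Qi--Sun) Newton convergence for a general EOS. Your monotonicity sketch usefully makes explicit the one-sided structure that the paper's bare citation leaves implicit.

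Do fix the direction in that sketch, as your closing remark anticipates. Since the steep slope satisfies $M>1/(\gamma-1)$ (indeed $M\ge M_0=10^{6}/(\gamma-1)$), $V_{\mathrm{mod}}$ is concave, not convex. Its branch-wise tangents therefore lie \emph{above} it, which gives $\mathbf{G}(\mathbf{p}^{s+1})\le\mathbf{b}$. With $\mathbf{J}^{-1}\ge0$ this yields $\Delta\mathbf{p}\ge0$ from the second step on, so the iterates are non-decreasing. Cells can then only leave the floored set, and any step whose branch pattern is unchanged is exact, which gives finite termination.
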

\begin{proof}
Both one-sided slopes of $V_{\mathrm{mod}}$ at $p_{\varepsilon}$, and its
derivative elsewhere, are bounded below by
$\underline{\alpha}=\min(M,\alpha_0)>0$ by \eqref{eq:Vmod_slope} and
\eqref{equ:alpha0}; hence $\mathbf{D}\succeq\underline{\alpha}\,\mathbf{I}$ for
any branch-wise evaluation and, by Lemma~\ref{lem:pos_T},
$\mathbf{J}\succeq\underline{\alpha}\,\mathbf{I}\succ0$ at every iterate, so
each update is well defined. In the piecewise-linear case, $\mathbf{G}$ is a
piecewise-linear, strongly monotone map whose selection Jacobians share the positive semi-definite matrix $\underline{\underline{\mathbf{T}}}$
with non-positive off-diagonal entries (Lemma~\ref{lem:pos_T}) and differ only
in the positive diagonal; this is precisely the class for which the iteration
of Brugnano and Casulli~\cite{BrugnanoCasulli2009} terminates in finitely many
steps. If all cells share one branch, the branch pattern is constant,
$\mathbf{G}$ is affine there, and one step solves the system exactly. For a
general nonlinear EOS, $\mathbf{G}$ is Lipschitz continuous, strongly monotone
by Proposition~\ref{prop:pos_wellposed}, and piecewise $C^{1}$, the only
non-smoothness being the kink of $V_{\mathrm{mod}}$ at $p_{\varepsilon}$. If no
component of the solution lies exactly on the floor, $\mathbf{G}$ is $C^{1}$
in a neighbourhood of the solution with
$\mathbf{J}\succeq\underline{\alpha}\,\mathbf{I}$ nonsingular, and Newton's
method converges locally; the degenerate configuration
$p_i=p_{\varepsilon}$ falls into the semismooth setting, for which local
superlinear convergence is classical~\cite{QiSun1993}.
\end{proof}

\subsection{Enforcement of the divergence-free constraint}
\label{chp:divfree_CT}

As anticipated in Section~\ref{sec:governing}, the solenoidal condition
$\nabla\cdot\mathbf B=0$ is a constraint of the ideal MHD system: if it holds
for the initial data, the evolution of magnetic field preserve it for all
later times at the continuous level. At the discrete level this property is not
inherited automatically, and the divergence errors generated by a generic
discretisation are known to produce spurious oscillations, incorrect shock
speeds and unphysical states~\cite{BrackbillBarnes1980,Toth2000}. Several strategies
have been devised to control or eliminate these errors. The eight-wave
formulation of Powell et al.~\cite{Powell1999} augments the system with source
terms proportional to the divergence error, at the price of losing strict
conservation. In the projection method of Brackbill and
Barnes~\cite{BrackbillBarnes1980} the magnetic field produced by the base scheme is
projected onto the space of solenoidal fields through the solution of an
auxiliary Poisson problem. Hyperbolic divergence cleaning~\cite{Dedner2002}
couples the constraint to the conservation laws through a generalised Lagrange
multiplier, which damps and advects the divergence error towards the
boundaries. The most widely adopted alternative is the constrained transport
(CT) approach originally introduced by Evans and Hawley~\cite{EvansHawley1988}, in
which the magnetic field is represented on the cell interfaces and the electric
field at the cell corners; a discrete Faraday law then preserves a face-based
divergence operator down to machine accuracy. We refer to the review of
T\'oth~\cite{Toth2000} and to~\cite{BalsaraSpicer1999,GardinerStone2005,%
Rossmanith2006} for a detailed comparison of these families.

In the present work we explore two different constrained transport strategies:
a staggered adaptation of the method of Gardiner and Stone~\cite{GardinerStone2005},
and an unstaggered, potential-based adaptation of the technique introduced by
Rossmanith~\cite{Rossmanith2006}. Two aspects of their use within the present
algorithm are worth stating explicitly. First, since $\nabla\cdot\mathbf B=0$ is
a constraint rather than a member of any of the three flux sub-systems of
Section~\ref{sec:3-split}, it is enforced after all sub-systems have been
solved, and the correction acts on the magnetic field alone: the cell-centred
field $\tilde{\mathbf B}^{n+1}$ delivered by the implicit magnetic sub-system of
Section~\ref{sec:magnetic_subsystem} is regarded as a \emph{predictor}, used to advance the
momentum and the energy, and is subsequently discarded. Second, because the
correction is applied at the end of the time step, the density $\rho^{n+1}$ and
the velocity field $\mathbf u^{n+1}$ resulting from the pressure sub-system are
already available at that stage, so that the electric field entering the
correction is evaluated directly at the new time level. In the following we
describe the two adaptations in turn.

\subsubsection{Staggered adaptation of the constrained transport method}
\label{sec:CT-staggered}

Following Gardiner and Stone~\cite{GardinerStone2005}, Balsara and
Spicer~\cite{BalsaraSpicer1999} and Dumbser et al.~\cite{Dumbser2019}, we
introduce staggered magnetic field components $(B_x)_{i+1/2,j}$ and
$(B_y)_{i,j+1/2}$, located at the cell faces, which are evolved by a discrete
version of Faraday's induction law,
\begin{equation}
\label{eq:CT-faraday}
\begin{aligned}
(B_x)^{n+1}_{i+1/2,j}
  &= (B_x)^{n}_{i+1/2,j}
   - \frac{\Delta t}{\Delta y}\Big[(E_z)_{i+1/2,j+1/2}-(E_z)_{i+1/2,j-1/2}\Big],\\[2pt]
(B_y)^{n+1}_{i,j+1/2}
  &= (B_y)^{n}_{i,j+1/2}
   + \frac{\Delta t}{\Delta x}\Big[(E_z)_{i+1/2,j+1/2}-(E_z)_{i-1/2,j+1/2}\Big],
\end{aligned}
\end{equation}
where the corner values of the $z$-component of the electric field are detailed
below. It is readily verified that~\eqref{eq:CT-faraday} preserves the
face-based discrete divergence
\begin{equation}
\label{eq:CT-div-staggered}
[\nabla\cdot\mathbf B]_{i,j}
= \frac{(B_x)_{i+1/2,j}-(B_x)_{i-1/2,j}}{\Delta x}
+ \frac{(B_y)_{i,j+1/2}-(B_y)_{i,j-1/2}}{\Delta y} = 0
\end{equation}
exactly, to machine accuracy, at every time level, provided it vanishes
initially: the four corner contributions entering the two increments cancel in
pairs. It is therefore essential that the staggered field be initialised in a
way that is compatible with~\eqref{eq:CT-div-staggered}, which is achieved by
evaluating the initial condition for each component at the corresponding face
centre at $t=0$. The staggered components are the primary representation
of the magnetic field: they are stored and evolved for the entire simulation,
and are never reset from cell-centred data, since doing so would reintroduce
divergence errors.

The corner electric field in~\eqref{eq:CT-faraday} is assembled from
face-centred values through the gradient-corrected averaging of Gardiner and
Stone~\cite{GardinerStone2005},
\begin{equation}
\label{eq:CT-corner-E}
\begin{aligned}
(E_z)_{i+1/2,j+1/2}
= (\bar E_z)_{i+1/2,j+1/2}
& + \frac{\Delta x}{8}\left[
   \left(\frac{\partial E_z}{\partial x}\right)_{i+1/4,j+1/2}
 - \left(\frac{\partial E_z}{\partial x}\right)_{i+3/4,j+1/2}\right]
\\& + \frac{\Delta y}{8}\left[
   \left(\frac{\partial E_z}{\partial y}\right)_{i+1/2,j+1/4}
 - \left(\frac{\partial E_z}{\partial y}\right)_{i+1/2,j+3/4}\right],
 \end{aligned}
\end{equation}
with the arithmetic average of the four adjacent face-centred values
\begin{equation}
\label{eq:CT-Ebar}
(\bar E_z)_{i+1/2,j+1/2}
= \tfrac14\Big[(E_z)_{i+1/2,j}+(E_z)_{i+1/2,j+1}
             +(E_z)_{i,j+1/2}+(E_z)_{i+1,j+1/2}\Big].
\end{equation}
Rather than recomputing the face-centred values in~\eqref{eq:CT-Ebar} from an
auxiliary reconstruction and an independent approximate Riemann solve, we
exploit the duality between the electric field and the numerical fluxes of the
induction equation~\cite{Dematte2024},
\begin{equation}
\label{eq:CT-duality}
(E_z)_{i+1/2,j} = -\big(F_{[B_y]}\big)_{i+1/2,j},
\qquad
(E_z)_{i,j+1/2} = +\big(G_{[B_x]}\big)_{i,j+1/2},
\end{equation}
where $\big(F_{[B_y]}\big)_{i+1/2,j}$ and $\big(G_{[B_x]}\big)_{i,j+1/2}$ denote
the interface fluxes of the $B_y$ and $B_x$ rows employed in the cell-centred
update of the magnetic sub-system. We stress that, in the present three-way
splitting~\eqref{equ:3_split}, the induction rows of the advective and pressure
fluxes vanish identically, so that the whole numerical flux of the
magnetic field is carried by the implicit magnetic sub-system; no additional
contribution has to be accumulated from the other two sub-systems. These fluxes
are assembled from the density $\rho^{n+1}$, the velocity $\mathbf u^{n+1}$
resulting from the pressure sub-system, and the predictor
$\tilde{\mathbf B}^{n+1}$. As a consequence, the divergence-free field is
evolved by the same discrete operator that advances the cell-centred predictor,
and the constrained transport step acts as a projection of the implicit magnetic
update onto a divergence-free representation.

The gradients appearing in~\eqref{eq:CT-corner-E} are centred at the midpoints
between a face centre and the cell corner under consideration, and are
approximated by averaging one-sided differences between the face-centred values
and a cell-centred reference field. For instance, at the location
$(i+1/4,\,j+1/2)$,
\begin{equation}
\label{eq:CT-grad}
\left(\frac{\partial E_z}{\partial x}\right)_{i+1/4,j+1/2}
= \frac12\left[
  \frac{(E_z)_{i+1/2,j}-(\tilde E_z)_{i,j}}{\Delta x/2}
+ \frac{(E_z)_{i+1/2,j+1}-(\tilde E_z)_{i,j+1}}{\Delta x/2}\right],
\end{equation}
where the cell-centred reference value is evaluated from
$\mathbf E=-\mathbf u\times\mathbf B$ as
\begin{equation}
\label{eq:CT-Ecell}
(\tilde E_z)_{i,j}
= v^{n+1}_{i,j}\,(\tilde B_x)^{n+1}_{i,j}
- u^{n+1}_{i,j}\,(\tilde B_y)^{n+1}_{i,j}.
\end{equation}
The remaining gradients are obtained analogously. Once the staggered components
have been evolved through~\eqref{eq:CT-faraday}, the corrected cell-centred
field is recovered by simple averaging,
\begin{equation}
\label{eq:CT-average}
(B_x)^{n+1}_{i,j}=\tfrac12\Big[(B_x)^{n+1}_{i-1/2,j}+(B_x)^{n+1}_{i+1/2,j}\Big],
\qquad
(B_y)^{n+1}_{i,j}=\tfrac12\Big[(B_y)^{n+1}_{i,j-1/2}+(B_y)^{n+1}_{i,j+1/2}\Big],
\end{equation}
which overwrites the predictor; in two space dimensions the out-of-plane
component is left unchanged, $(B_z)^{n+1}_{i,j}=(\tilde B_z)^{n+1}_{i,j}$. As
discussed by Rossmanith~\cite{Rossmanith2006} and T\'oth~\cite{Toth2000}, the
cell-centred field obtained from~\eqref{eq:CT-average} satisfies, in addition to
the face-based constraint~\eqref{eq:CT-div-staggered}, the discrete
divergence-free condition
\begin{equation}
\label{eq:CT-div-toth}
[\nabla\cdot\mathbf B]_{i+1/2,j+1/2}
= \frac{(B_x)_{i+1,j}+(B_x)_{i+1,j+1}-(B_x)_{i,j}-(B_x)_{i,j+1}}{2\Delta x}
+ \frac{(B_y)_{i,j+1}+(B_y)_{i+1,j+1}-(B_y)_{i,j}-(B_y)_{i+1,j}}{2\Delta y}=0 .
\end{equation}

\subsubsection{Unstaggered adaptation of the constrained transport method}
\label{sec:CT-unstaggered}

The main drawback of the staggered formulation described above, when embedded
in a genuinely cell-centred algorithm, is that it requires a dual
representation of the magnetic field at both cell centres and cell faces. This
makes the method cumbersome to combine with adaptive mesh refinement, as noted
by Rossmanith~\cite{Rossmanith2006}, and awkward at sharp material interfaces,
where mixed-material Riemann solvers require the full set of variables to be
collocated at a single point. 
To retain a fully cell-centred
discretisation we therefore consider a second variant, adapted from the
unstaggered CT algorithm of Rossmanith~\cite{Rossmanith2006}, in which the
out-of-plane magnetic potential $A$ is evolved in place of the magnetic field
itself, with
\begin{equation}
\label{eq:CT-BcurlA}
\mathbf B = \nabla\times\big(A\,\mathbf e_z\big).
\end{equation}

Since the induction equation is the curl of the evolution equation for the
potential, $\partial_t A = -E_z$, and since the ideal Ohm law gives
$\mathbf E=-\mathbf u\times\mathbf B$, the substitution
of~\eqref{eq:CT-BcurlA} yields
$E_z = v B_x - u B_y = u\,\partial_x A + v\,\partial_y A$, so that the potential
equation reduces to the scalar transport equation
\begin{equation}
\label{eq:CT-transport}
\frac{\partial A}{\partial t}
+ u\,\frac{\partial A}{\partial x}
+ v\,\frac{\partial A}{\partial y} = 0 .
\end{equation}
The two formulations are equivalent at the continuous level, but not once
discretised. We deliberately discretise the transport form~\eqref{eq:CT-transport}
directly, rather than reconstructing corner electric fields as
in~\eqref{eq:CT-corner-E}: in this way the limiting acts on the smooth scalar
$A$ instead of on the magnetic field or on the electric field, so that the
numerical dissipation is of high order in smooth regions while spurious
oscillations in $A$, and hence in $\mathbf B$, are suppressed near steep
gradients. No corner reconstruction and no gradient estimates are required.
Given the cell-centred field $A^n_{i,j}$ and the velocity $\mathbf u^{n+1}$
obtained from the pressure sub-system, the correction consists of the following
steps.

\begin{itemize}
\item[\textbf{Step I}] Solve the implicit magnetic sub-system~\eqref{equ:discreteofB}
      for the predictor $\tilde{\mathbf B}^{n+1}$, which does not satisfy any
      discrete divergence constraint.
\item[\textbf{Step II}] Advance momentum, pressure and energy through the
      magnetic and pressure sub-systems of Sections~\ref{sec:magnetic_subsystem}
      and~\ref{sec:pressure} using $\tilde{\mathbf B}^{n+1}$, obtaining in
      particular the velocity field $\mathbf u^{n+1}$.
\item[\textbf{Step III}] Advance the magnetic potential
      by solving~\eqref{eq:CT-transport} with the velocity $\mathbf u^{n+1}$
      of Step~II, using the high-resolution wave-propagation method
      of~\cite{Rossmanith2006,leVeque2002}, in which first-order upwind
      fluctuations are combined with limited second-order correction fluxes
      acting directly on the increments of the potential:
      \begin{equation}
      \label{eq:CT-wp}
      A^{n+1}_{i,j} = A^{n}_{i,j}
      - \frac{\Delta t}{\Delta x}\Big[\mathcal A^{+}\!\Delta A_{i-1/2,j}
                                    + \mathcal A^{-}\!\Delta A_{i+1/2,j}
                                    + \tilde F_{i+1/2,j}-\tilde F_{i-1/2,j}\Big]
      - \frac{\Delta t}{\Delta y}\Big[\mathcal B^{+}\!\Delta A_{i,j-1/2}
                                    + \mathcal B^{-}\!\Delta A_{i,j+1/2}
                                    + \tilde G_{i,j+1/2}-\tilde G_{i,j-1/2}\Big],
      \end{equation}
      with the fluctuations and the correction fluxes in the $x$-direction
      given by
      \begin{equation}
      \label{eq:CT-wp-fluct}
      \mathcal A^{\pm}\!\Delta A_{i+1/2,j}
        = u^{\pm}_{i+1/2,j}\big(A_{i+1,j}-A_{i,j}\big),
      \qquad
      \tilde F_{i+1/2,j}
        = \tfrac12\,\big|u_{i+1/2,j}\big|
          \left(1-\frac{\Delta t}{\Delta x}\big|u_{i+1/2,j}\big|\right)
          \widetilde{\Delta A}_{i+1/2,j},
      \end{equation}
      where $u^{\pm}=\tfrac12(u\pm|u|)$, the interface velocity is
      $u_{i+1/2,j}=\tfrac12\big(u^{n+1}_{i,j}+u^{n+1}_{i+1,j}\big)$, and
      $\widetilde{\Delta A}_{i+1/2,j}$ denotes the limited increment obtained
      from the neighbouring increments through a standard TVD limiter; the
      $y$-direction contributions $\mathcal B^{\pm}$ and $\tilde G$ are defined
      analogously. Since only the material velocity enters~\eqref{eq:CT-wp},
      this update is subject to the same convective CFL
      condition~\eqref{eq:cfl_advective} that governs the explicit sub-system, and
      introduces no additional time-step restriction.
\item[\textbf{Step IV}] Recover the corrected, divergence-free cell-centred
      magnetic field as the discrete curl of the potential,
      \begin{equation}
      \label{eq:CT-curl}
      (B_x)^{n+1}_{i,j} = \frac{A^{n+1}_{i,j+1}-A^{n+1}_{i,j-1}}{2\Delta y},
      \qquad
      (B_y)^{n+1}_{i,j} = -\,\frac{A^{n+1}_{i+1,j}-A^{n+1}_{i-1,j}}{2\Delta x},
      \end{equation}
      which is the field used from this point onwards; the in-plane components
      of the predictor $\tilde{\mathbf B}^{n+1}$ are discarded, while the
      out-of-plane component is unchanged,
      $(B_z)^{n+1}_{i,j}=(\tilde B_z)^{n+1}_{i,j}$.
\end{itemize}

By construction, the field~\eqref{eq:CT-curl} satisfies the wide-stencil
discrete divergence-free condition
\begin{equation}
\label{eq:CT-div-unstaggered}
[\nabla\cdot\mathbf B]_{i,j}
= \frac{(B_x)_{i+1,j}-(B_x)_{i-1,j}}{2\Delta x}
+ \frac{(B_y)_{i,j+1}-(B_y)_{i,j-1}}{2\Delta y} = 0
\end{equation}
exactly and at machine accuracy, since the mixed second differences of $A$
cancel identically. As in the staggered variant, the auxiliary variable is the
primary representation of the magnetic field: the potential $A$ is stored and
evolved throughout the simulation, and is never overwritten from the
cell-centred field. Its initialisation is problem-specific: whenever the initial
data are prescribed directly in terms of a magnetic potential, that expression
is used; when only the magnetic field $\mathbf B^0$ is given, the initial
potential is obtained by solving the discrete Poisson problem
\begin{equation}
\label{eq:CT-poisson}
\Delta A^{0} = -\left(\frac{\partial B^0_y}{\partial x}
                     -\frac{\partial B^0_x}{\partial y}\right),
\end{equation}
as described in~\cite{Rossmanith2006}. In either case the initial cell-centred
magnetic field is re-initialised through
\begin{equation}
\label{eq:CT-reinit}
(B_x)^{0}_{i,j} = \frac{A^{0}_{i,j+1}-A^{0}_{i,j-1}}{2\Delta y},
\qquad
(B_y)^{0}_{i,j} = -\,\frac{A^{0}_{i+1,j}-A^{0}_{i-1,j}}{2\Delta x},
\end{equation}
so that~\eqref{eq:CT-div-unstaggered} is satisfied from the outset. The
boundary conditions for $A$ follow those imposed on $\mathbf B$: periodic
boundaries for the magnetic field induce periodic boundaries for the potential,
whereas a zeroth-order extrapolation of $\mathbf B$ requires a first-order
extrapolation of $A$.

\subsection{Second Order Time Discretisation}

The semi-discrete scheme introduced in the previous section is
first-order accurate in time.
To achieve second-order temporal accuracy, we employ a class of IMEX time integrators originally
proposed in~\cite{RKIMEX}.
These Runge--Kutta methods are particularly well suited to the present
framework, since a time linearisation has been applied to the three-way
flux splitting in \eqref{equ:3_split}, which naturally leads to a semi-implicit
formulation.

Without loss of generality, we consider the one-dimensional
system \eqref{equ:1DMHD} and the corresponding semi-discrete formulation
introduced in Sect. \ref{sec:scheme}.
Following~\cite{RKIMEX}, the governing equations can be written in autonomous
form as
\begin{equation}
\frac{\partial \mathbf{U}}{\partial t}
=
\mathcal{H}\bigl(\mathbf{U}_E(t), \mathbf{U}_I(t)\bigr),
\label{equ:RK}
\end{equation}
with initial condition $\mathbf{U}_0 = \mathbf{U}(t=0)$.
The operator $\mathcal{H}$ represents the spatial discretisation of the
flux terms in \eqref{equ:3_split} and depends on two arguments, namely the explicitly
treated state vector $\mathbf{U}_E$ and the implicitly treated state
vector $\mathbf{U}_I$.

To integrate system \eqref{equ:RK} in time, we adopt Implicit--Explicit
Runge--Kutta (IMEX-RK) schemes; see, for example,~\cite{RKIMEX2} for a review
of stiffly accurate IMEX methods commonly used in multiscale problems.
An IMEX-RK scheme is characterized by a double Butcher tableau,
\begin{equation}
\begin{array}{c|c}
\tilde{\mathbf{c}} & \tilde{\mathbf{A}} \\
\hline
& \tilde{\mathbf{b}}^{\mathrm T}
\end{array}
\qquad
\begin{array}{c|c}
\mathbf{c} & \mathbf{A} \\
\hline
& \mathbf{b}^{\mathrm T}
\end{array},
\end{equation}
where $\tilde{\mathbf{A}}, \mathbf{A} \in \mathbb{R}^{s \times s}$ and
$\tilde{\mathbf{c}}, \mathbf{c}, \tilde{\mathbf{b}}, \mathbf{b}
\in \mathbb{R}^s$.
The tilde notation refers to the explicit scheme, for which
$\tilde{\mathbf{A}}$ is strictly lower triangular with vanishing diagonal
entries, while $\mathbf{A}$ corresponds to the implicit scheme and is
lower triangular with non-zero diagonal entries.

In this work, we employ the second-order stiffly accurate LSDIRK2 scheme
introduced in~\cite{LSDIRK2}, which is defined by the following Butcher
tableaux:
\begin{equation}
\label{eq:butcher}
\begin{array}{c|cc}
0 & 0 & 0 \\
\tilde{c} & \tilde{c} & 0 \\
\hline
& 1-\alpha & \alpha
\end{array}
\qquad
\begin{array}{c|cc}
\alpha & \alpha & 0 \\
1 & 1-\alpha & \alpha \\
\hline
& 1-\alpha & \alpha
\end{array},
\end{equation}
with
\[
\alpha = 1 - \frac{1}{\sqrt{2}},
\qquad
\tilde{c} = \frac{1}{2\alpha}.
\]
Since $\tilde{\mathbf{b}} = \mathbf{b}$ and the weight vector
$\mathbf{b}$ coincides with the last row of $\mathbf{A}$, the scheme is
stiffly accurate (SA), which is a crucial property for ensuring asymptotic
consistency and accuracy~\cite{BOSCARINO2019594}.

To construct the semi-implicit IMEX-RK update, we initialize
$\mathbf{U}_E^n = \mathbf{U}_I^n = \mathbf{U}^n$.
The stage values for $i = 1, \dots, s$ are then computed as
\begin{equation}
\label{equ:stage_flux}
    \begin{aligned}
\mathbf{U}_E^{\,i}
&=
\mathbf{U}_E^n
+
\Delta t
\sum_{j=1}^{i-1}
\tilde{a}_{ij}\,\mathbf{k}_j,
\qquad 2 \le i \le s,
\\
\tilde{\mathbf{U}}_I^{\,i}
&=
\mathbf{U}_I^n
+
\Delta t
\sum_{j=1}^{i-1}
a_{ij}\,\mathbf{k}_j,
\qquad 2 \le i \le s,
\\
\mathbf{k}_i
&=
\mathcal{H}
\!\left(
\mathbf{U}_E^{\,i},
\tilde{\mathbf{U}}_I^{\,i}
+
\Delta t\, a_{ii}\,\mathbf{k}_i
\right),
\qquad 1 \le i \le s.
\end{aligned}
\end{equation}

The evaluation of the stage fluxes \eqref{equ:stage_flux} requires the solution of an
implicit system, which corresponds to the implicit magnetic and pressure
sub-systems defined in \eqref{equ:magnective sub-system} and \eqref{equ:pressure sub-system}, respectively.

Finally, owing to the stiffly accurate property of the IMEX scheme,
the numerical solution at the new time level is obtained as $\mathbf{U}^{n+1} = \mathbf{U}^{\,s}$,
i.e.\ the solution coincides with the last Runge--Kutta stage.

\subsection{Fully Discrete Scheme in Two Dimensions}
\label{sec:fully_discrete}
A single time step of the proposed semi-implicit method, advancing the solution
from time level $t^n$ to $t^{n+1}$, consists of the following steps. Throughout,
$\mathcal{F}^{x}$ and $\mathcal{F}^{y}$ denote the conservative finite-difference
flux-divergence operators in the $x$- and $y$-directions.

\begin{enumerate}

\item \textbf{Explicit advective sub-system.}
The advective sub-system is solved explicitly for density, momentum and energy,
based on the advective flux $\mathbf{F}^{c}$:
\begin{equation}
\begin{aligned}
\rho_{ij}^{\star}
&= \rho_{ij}^{n}
 - \Delta t\left[\mathcal{F}^{x}\!\big((\rho u)^{n}\big)
 + \mathcal{F}^{y}\!\big((\rho v)^{n}\big)\right],\\
(\rho u)_{ij}^{\star}
&= (\rho u)_{ij}^{n}
 - \Delta t\left[\mathcal{F}^{x}\!\big((\rho u^{2})^{n}\big)
 + \mathcal{F}^{y}\!\big((\rho u v)^{n}\big)\right],\\
(\rho v)_{ij}^{\star}
&= (\rho v)_{ij}^{n}
 - \Delta t\left[\mathcal{F}^{x}\!\big((\rho v u)^{n}\big)
 + \mathcal{F}^{y}\!\big((\rho v^{2})^{n}\big)\right],\\
(\rho w)_{ij}^{\star}
&= (\rho w)_{ij}^{n}
 - \Delta t\left[\mathcal{F}^{x}\!\big((\rho w u)^{n}\big)
 + \mathcal{F}^{y}\!\big((\rho w v)^{n}\big)\right],\\
E_{ij}^{\star}
&= E_{ij}^{n}
 - \Delta t\left[\mathcal{F}^{x}\!\big((E)^{n}\big)
 + \mathcal{F}^{y}\!\big((E)^{n}\big)\right].
\end{aligned}
\end{equation}
We note that $\rho_{ij}^{\star}=\rho_{ij}^{n+1}$, since the density equation does
not involve the pressure or magnetic fluxes and is therefore already fully updated
at this stage. The explicit momentum contribution is collected as
$(\rho\mathbf{u})_{ij}^{\star}=\big((\rho u)_{ij}^{\star},(\rho v)_{ij}^{\star},
(\rho w)_{ij}^{\star}\big)^{\mathrm{T}}$.

\item \textbf{Implicit magnetic field update.}
The implicit magnetic sub-system~\eqref{equ:mag-sub} is solved for the magnetic
field at $t^{n+1}$. To enhance robustness in regimes of large acoustic and
Alfvén Mach numbers, a Laplacian dissipation is added. The
Laplace operator $\mathbb{L}(\kappa,q)$ provides a second-order central
approximation of $\nabla\!\cdot(\kappa\nabla q)$ with a spatially varying
coefficient $\kappa$; in two dimensions,
\begin{equation}
\label{eq:laplacian}
\mathbb{L}(\kappa,q)_{ij}
= \frac{1}{\Delta x^{2}}\Big(
   \kappa_{i+\frac12 j}(q_{i+1\,j}-q_{ij})
 - \kappa_{i-\frac12 j}(q_{ij}-q_{i-1\,j})\Big)
+ \frac{1}{\Delta y^{2}}\Big(
   \kappa_{i\,j+\frac12}(q_{i\,j+1}-q_{ij})
 - \kappa_{i\,j-\frac12}(q_{ij}-q_{i\,j-1})\Big).
\end{equation}
The dissipation coefficient is built from the maximal eigenvalue of the magnetic
sub-system~\eqref{equ:magnective sub-system} $\lambda^{B}$ ,
\begin{equation}
\label{eq:kappa_B}
\kappa_{\mathbf{B},ij}^{x,n}=|\lambda^{\mathbf B}_{ij}|\,\Delta x,
\qquad
\kappa_{\mathbf{B},ij}^{y,n}=|\lambda^{\mathbf B}_{ij}|\,\Delta y,
\end{equation}
so that each Cartesian component of~\eqref{equ:mag-sub} is augmented on its
left-hand side with the implicit dissipation
$-\Delta t\,\mathbb{L}\bigl(\kappa_{B}^{\,n},\tilde B^{\,n+1}_{d}\bigr)$,
$d\in\{x,y,z\}$.
 As the Laplacian is discretised implicitly, it imposes no time step
restriction beyond the material CFL condition~\eqref{eq:cfl_advective}. The
resulting linear system is solved with a GMRES algorithm~\cite{GMRES} to a
prescribed tolerance, typically $10^{-14}$.

\item \textbf{Magnetic sub-system update (momentum and energy).}
Using $\mathbf{B}^{n+1}$, the momentum and energy are advanced with the magnetic
flux $\mathbf{F}^{\mathbf{B}}$:
\begin{equation}
\begin{aligned}
(\rho u)_{ij}^{**}
&= (\rho u)_{ij}^{\star}
 - \Delta t\left[\mathcal{F}^{x}\!\Big(\tfrac{\|\mathbf{B}^{n+1}\|^{2}}{2}-(B_x^{n+1})^{2}\Big)
 + \mathcal{F}^{y}\!\big(-B_x^{n+1}B_y^{n+1}\big)\right],\\
(\rho v)_{ij}^{**}
&= (\rho v)_{ij}^{\star}
 - \Delta t\left[\mathcal{F}^{x}\!\big(-B_x^{n+1}B_y^{n+1}\big)
 + \mathcal{F}^{y}\!\Big(\tfrac{\|\mathbf{B}^{n+1}\|^{2}}{2}-(B_y^{n+1})^{2}\Big)\right],\\
(\rho w)_{ij}^{**}
&= (\rho w)_{ij}^{\star}
 - \Delta t\left[\mathcal{F}^{x}\!\big(-B_x^{n+1}B_z^{n+1}\big)
 + \mathcal{F}^{y}\!\big(-B_y^{n+1}B_z^{n+1}\big)\right],\\
E_{ij}^{**}
&= E_{ij}^{\star}
 - \Delta t\left[\mathcal{F}^{x}\!\big(\|\mathbf{B}\|^{2}u-B_x(\mathbf{u}\!\cdot\!\mathbf{B})\big)
 + \mathcal{F}^{y}\!\big(\|\mathbf{B}\|^{2}v-B_y(\mathbf{u}\!\cdot\!\mathbf{B})\big)\right],
\end{aligned}
\end{equation}
with $(\rho\mathbf{u})_{ij}^{**}=\big((\rho u)_{ij}^{**},(\rho v)_{ij}^{**},
(\rho w)_{ij}^{**}\big)^{\mathrm{T}}$.

\item \textbf{First pressure solve.}
Substituting the momentum update into the discrete energy equation yields the
pressure wave equation, augmented with the shock-activated Laplacian dissipation.
The indicator $\chi^{n}$ and the dissipation coefficient are evaluated from the
known state at $t^{n}$ as in~\eqref{eq:flattener_mhd} and \eqref{equ:kappa_V},
\begin{equation}
\tilde{\kappa}^{\,x,n}_{ij}
 = \underline{\mathbf{V}}'(p^{n}_{ij})\,\chi^{n}_{ij}\,|u^{n}_{ij}|\,\Delta x,
\qquad
\tilde{\kappa}^{\,y,n}_{ij}
 = \underline{\mathbf{V}}'(p^{n}_{ij})\,\chi^{n}_{ij}\,|v^{n}_{ij}|\,\Delta y.
\end{equation}
Solved with the intermediate momentum $(\rho\mathbf{u})^{**}$ in the kinetic
energy, the augmented pressure wave equation reads, for a general equation of
state,
\begin{equation}
\label{eq:fd_pressurefirst}
\underline{\mathbf{V}}_{\mathrm{mod}}\bigl(\tilde p^{\,n+1}_{ij}\bigr)
 - \Delta t^{2}\,\mathbb{L}\bigl(h^{n},\tilde p^{\,n+1}\bigr)_{ij}
 - \Delta t\,\mathbb{L}\bigl(\tilde{\kappa}^{\,n},\tilde p^{\,n+1}\bigr)_{ij}
 = b_{ij},
\end{equation}
with the right-hand side
\begin{equation}
b_{ij} = E^{**}_{ij} - \tfrac{1}{2}\|\mathbf{B}^{n+1}_{ij}\|^{2}
- \frac{|(\rho\mathbf{u})^{**}_{ij}|^{2}}{2\rho^{n+1}_{ij}}
- \Delta t\left[\mathcal{F}^{x}\!\big(h^{n}(\rho u)^{**}\big)
+ \mathcal{F}^{y}\!\big(h^{n}(\rho v)^{**}\big)\right].
\end{equation}
Because $\underline{\mathbf{V}}(p)$ is nonlinear for a general EOS, the system is
mildly nonlinear and is solved by the nested Newton procedure, within which the positivity-preserving
modification of Section~\ref{sec:positivity} keeps the discrete pressure in the
admissible positive range without affecting conservation. Each Newton inner solve
is performed with GMRES to a tolerance of $10^{-14}$. For a linear EOS the
procedure reduces to a single linear solve.

\item \textbf{Momentum update.}
The momentum is advanced using the pressure flux $\tilde{p}^{\,n+1}$:
\begin{equation}
(\rho u)_{ij}^{n+1}=(\rho u)_{ij}^{**}-\Delta t\,\mathcal{G}^{x}\!\big(\tilde{p}^{\,n+1}\big),
\qquad
(\rho v)_{ij}^{n+1}=(\rho v)_{ij}^{**}-\Delta t\,\mathcal{G}^{y}\!\big(\tilde{p}^{\,n+1}\big),
\end{equation}
so that the momentum is fully updated at this stage.

\item \textbf{Second pressure solve.}
To guarantee thermodynamic consistency and conservation of the new energy, the
augmented pressure wave equation is solved once more, with the kinetic energy now
evaluated from the updated momentum and using the same shock-activated dissipation
coefficients:
\begin{equation}
\label{eq:fd_pressuresecond}
\underline{\mathbf{V}}_{\mathrm{mod}}\bigl( p^{\,n+1}_{ij}\bigr)
 - \Delta t^{2}\,\mathbb{L}\bigl(h^{n}, p^{\,n+1}\bigr)_{ij}
 - \Delta t\,\mathbb{L}\bigl(\tilde{\kappa}^{\,n}, p^{\,n+1}\bigr)_{ij}
 = b_{ij},
\end{equation}
where $b_{ij}$ is now evaluated with the kinetic energy
$|(\rho\mathbf{u})^{n+1}_{ij}|^{2}/(2\rho^{n+1}_{ij})$. This solve employs the same
nested Newton procedure and pressure positivity-preserving
modification (Section~\ref{sec:positivity}).

\item \textbf{Energy update.}
Once the updated pressure $p^{n+1}$, momentum $(\rho\mathbf{u})^{n+1}$ and magnetic
field $\mathbf{B}^{n+1}$ are available, the total energy is assembled directly from
its definition rather than advanced through a separate flux update. The internal
energy is obtained from the updated pressure through the general equation of state,
\begin{equation}
\label{eq:fd_internal_energy}
(\rho e)^{n+1}_{ij}=\underline{\mathbf{V}}_\mathrm{mod}(p^{n+1}_{ij}),
\end{equation}
and the total energy is recovered as the sum of its internal, kinetic and magnetic
contributions,
\begin{equation}
\label{eq:fd_energy_def}
E^{n+1}_{ij}
=\underline{\mathbf{V}}_{\mathrm{mod}}(p^{n+1}_{ij})
+\frac{1}{2}\frac{\big\|(\rho\mathbf{u})^{n+1}_{ij}\big\|^{2}}{\rho^{n+1}_{ij}}
+\frac{1}{2}\big\|\mathbf{B}^{n+1}_{ij}\big\|^{2}.
\end{equation}

\item \textbf{Divergence-free enforcement (constrained transport).}
Finally, the magnetic field is corrected to satisfy the discrete divergence-free
constraint. Since $\nabla\!\cdot\mathbf{B}=0$ is a constraint rather than part of any
flux sub-system, it is enforced through one of the two constrained transport variants of
Section~\ref{chp:divfree_CT}, which act only on the magnetic field.

For the staggered constrained transport, the face-centred electric field is given by the
duality relation~\eqref{eq:CT-duality} with the interface fluxes of the implicit magnetic
sub-system, and the corner values $(E_z)_{i+1/2,j+1/2}$ follow from~\eqref{eq:CT-corner-E}.
The face-centred magnetic field components are evolved by the discrete Faraday law
\begin{equation}
\begin{aligned}
(B_x)^{n+1}_{i+1/2,j} &= (B_x)^{n}_{i+1/2,j}
 - \frac{\Delta t}{\Delta y}\Big[(E_z)_{i+1/2,j+1/2}-(E_z)_{i+1/2,j-1/2}\Big],\\
(B_y)^{n+1}_{i,j+1/2} &= (B_y)^{n}_{i,j+1/2}
 + \frac{\Delta t}{\Delta x}\Big[(E_z)_{i+1/2,j+1/2}-(E_z)_{i-1/2,j+1/2}\Big],
\end{aligned}
\end{equation}
and the cell-centred field is recovered by averaging the adjacent face values, so that
the face-based divergence~\eqref{eq:CT-div-toth} vanishes to machine accuracy.

For the unstaggered constrained transport, the out-of-plane magnetic potential is
advanced in transport form,
\begin{equation}
\label{eq:A_transport_step}
\frac{\partial A}{\partial t}
 + u^{\,n+1}\frac{\partial A}{\partial x}
 + v^{\,n+1}\frac{\partial A}{\partial y} = 0 ,
\end{equation}
discretised with the high-resolution wave-propagation method
of~\cite{leVeque2002,Rossmanith2006}, and the divergence-free magnetic field is
recovered as the discrete curl
\begin{equation}
(B_x)^{n+1}_{ij}=\frac{A^{n+1}_{i\,j+1}-A^{n+1}_{i\,j-1}}{2\Delta y},
\qquad
(B_y)^{n+1}_{ij}=-\frac{A^{n+1}_{i+1\,j}-A^{n+1}_{i-1\,j}}{2\Delta x},
\end{equation}
which satisfies the wide-stencil discrete divergence-free
condition~\eqref{eq:CT-div-unstaggered} exactly.

\end{enumerate}
The operator $\mathcal{H}(\mathbf{U}_E,\mathbf{U}_I)$ in~\eqref{equ:stage_flux}
is precisely the one-step update given by steps~1--8 above: evaluating a single stage of
the IMEX-RK scheme~\eqref{eq:butcher} amounts to carrying out these steps, with the
explicit convective flux $\mathbf{F}^c$ contributing to $\mathbf{U}_E$ and the implicit
magnetic, pressure and energy sub-systems contributing to $\mathbf{U}_I$. At stage $k$,
step~1 advances the explicit part with the tableau
$(\tilde{\mathbf{A}},\tilde{\mathbf{c}})$, while steps~2--8 solve the implicit
sub-systems over the increment $\alpha\,\Delta t$ using the stage right-hand side
assembled from the previous stages; the reuse of the same magnetic and pressure solves
and of the constrained transport step requires no additional machinery. The
construction is identical in one and two space dimensions, since the two-stage
integrator acts on the spatial operator $\mathcal{H}$ (steps~1--8) unchanged.

In summary, the numerical scheme is stabilised through three complementary sources of
artificial viscosity. The first contribution stems from the dissipative component of
the explicit Rusanov flux, which scales with the local flow velocity $\mathbf{u}$. The
second stabilisation mechanism is provided by an implicit Laplacian operator acting on
the magnetic sub-system, with diffusion coefficients proportional to the characteristic
magnetic eigenvalue $\lambda^\mathbf{B}$. This dissipation plays a particularly important role in regimes characterized by large acoustic Mach and Alfv\'en Mach numbers. The third source of dissipation is introduced through a
Laplacian-based stabilisation of the pressure field, with diffusion coefficients chosen
proportional to the local flow speed. Owing to the implicit treatment of the Laplacian
terms, neither the magnetic nor the pressure stabilisation imposes additional
restrictions on the material time step required for numerical stability.

\section{Numerical results}

In this section we validate the proposed semi-implicit scheme on a broad suite of test
problems, following the order in which the ingredients of the method are introduced. We begin by assessing the accuracy of the full scheme on a smooth magnetohydrodynamic
vortex, measuring the experimental order of convergence as the Alfv\'en number is
decreased. We then turn to one-dimensional Riemann problems, presented together: first
with the magnetic field switched off, so that the system reduces to the compressible
Euler equations and a set of Redlich--Kwong Riemann problems verifies that the nested
Newton solution of the nonlinear pressure system reproduces the correct wave structure,
by comparison with the exact solution; then a set of ideal-gas MHD Riemann problems, on
which the shock-capturing capabilities of the scheme are examined against classical,
stringent one-dimensional benchmarks. We next consider two-dimensional benchmarks for
ideal MHD featuring strong compressible structures, after which we turn to low acoustic
Mach number, pressure- and magnetic-pressure-dominated configurations that probe the
all-acoustic-Mach and all-Alfv\'en-Mach behaviour of the method. Finally, a series of
strongly magnetised, low-plasma-beta tests is used to demonstrate the robustness and
positivity-preserving character of the scheme under extreme conditions.

Unless otherwise stated, all test cases employ the ideal gas equation of state
with a ratio of specific heats $\gamma = 5/3$. The Redlich--Kwong tests of
Section~\ref{sec:rk_riemann} instead use the parameters $\mathfrak{a}_0 = 0.5$,
$b = 0.5$, $c_v = 1$ and $R = 0.4$. The time step
is built from the advective eigenvalues $\lambda^{c}$, i.e.\ the fluid velocity
alone, according to~\eqref{eq:cfl_advective}, with
$\mathrm{CFL} = 0.8$ and the first time step is taken as a magnetosonic explicit step. The positivity safeguard of Sections~\ref{sec:positivity}
and~\ref{sec:properties} is active in every run, with
$p_{\varepsilon}=10^{-7}$ and $\theta=0.9$; the steep slope is chosen at each
solve as
$M=\max\!\bigl(M_{0},\,(V(p_{\varepsilon})-b_{\min})_{+}/((1-\theta)\,
p_{\varepsilon})\bigr)$ with $M_{0}=10^{6}/(\gamma-1)$, in accordance with
Corollary~\ref{cor:pos_strict}; the baseline $M_{0}$ merely keeps the floored branch
well defined when the bound of the corollary is inactive. The
divergence-free constraint on the magnetic field is enforced by unstaggered constrained
transport; we have verified for the relevant test cases that the discrete
divergence error stays at machine-precision level throughout the simulation.
\label{sec:results}

\subsection{Accuracy of the numerical scheme}
\label{sec:convergence}

\begin{table}[t]
\centering
\caption{MHD traveling vortex. Maximum acoustic Mach number $M_c$ and Alfv\'en
speed $c_a$ at the initial time.}
\label{tab:vortex_mach}
\begin{tabular}{l cccccc}
\toprule
 & \multicolumn{6}{c}{Background density $\rho_0 = 10^{-k}$} \\
 & $k=0$ & $k=1$ & $k=2$ & $k=3$ & $k=4$ & $k=5$ \\
\midrule
$M_c$ & 1.606E$+$00 & 4.489E$-$01 & 1.529E$-$01 & 4.832E$-$02 & 1.529E$-$02 & 4.832E$-$03 \\
$c_a$   & 1.549E$-$01 & 4.900E$-$01 & 1.549E$+$00 & 4.900E$+$00 & 1.549E$+$01 & 4.900E$+$01 \\
\bottomrule
\end{tabular}
\end{table}

\begin{table}[t]
\centering
\footnotesize
\setlength{\tabcolsep}{4pt}
\caption{MHD traveling vortex: $L^2$ errors and EOC of the proposed second-order
scheme for varying background density $\rho_0$ at $t_f=1$.}
\label{tab:vortex_eoc}
\begin{tabular}{ll cc cc cc cc cc}
\toprule
$\rho_0$ & $N_x{=}N_y$
 & \multicolumn{2}{c}{$\rho$}
 & \multicolumn{2}{c}{$u$}
 & \multicolumn{2}{c}{$p$}
 & \multicolumn{2}{c}{$B_x$}
 & \multicolumn{2}{c}{$A_z$} \\
 & & $L^2$ & EOC & $L^2$ & EOC & $L^2$ & EOC & $L^2$ & EOC & $L^2$ & EOC \\
\midrule
\multirow[t]{4}{*}{$10^{0}$}
 & 32  & 2.65e-02 & --   & 4.53e-02 & --   & 1.52e-02 & --   & 5.47e-02 & --   & 3.77e-02 & --   \\
 & 64  & 8.04e-03 & 1.72 & 8.67e-03 & 2.39 & 5.06e-03 & 1.59 & 1.52e-02 & 1.85 & 9.84e-03 & 1.94 \\
 & 128 & 2.00e-03 & 2.00 & 1.80e-03 & 2.27 & 1.48e-03 & 1.78 & 3.90e-03 & 1.96 & 2.48e-03 & 1.99 \\
 & 256 & 4.80e-04 & 2.06 & 4.21e-04 & 2.10 & 4.00e-04 & 1.88 & 9.82e-04 & 1.99 & 6.22e-04 & 2.00 \\
\multirow[t]{4}{*}{$10^{-1}$}
 & 32  & 2.06e-03 & --   & 4.70e-02 & --   & 8.81e-03 & --   & 5.45e-02 & --   & 3.76e-02 & --   \\
 & 64  & 6.00e-04 & 1.78 & 1.04e-02 & 2.17 & 2.57e-03 & 1.78 & 1.51e-02 & 1.85 & 9.81e-03 & 1.94 \\
 & 128 & 1.54e-04 & 1.96 & 2.41e-03 & 2.12 & 6.75e-04 & 1.93 & 3.88e-03 & 1.96 & 2.47e-03 & 1.99 \\
 & 256 & 3.88e-05 & 1.99 & 5.85e-04 & 2.04 & 1.75e-04 & 1.95 & 9.77e-04 & 1.99 & 6.20e-04 & 2.00 \\
\multirow[t]{4}{*}{$10^{-2}$}
 & 32  & 2.78e-04 & --   & 1.32e-01 & --   & 7.54e-03 & --   & 5.06e-02 & --   & 3.46e-02 & --   \\
 & 64  & 6.42e-05 & 2.12 & 3.60e-02 & 1.87 & 2.09e-03 & 1.85 & 1.36e-02 & 1.90 & 8.75e-03 & 1.98 \\
 & 128 & 1.56e-05 & 2.04 & 9.20e-03 & 1.97 & 5.53e-04 & 1.92 & 3.44e-03 & 1.98 & 2.19e-03 & 2.00 \\
 & 256 & 3.88e-06 & 2.00 & 2.31e-03 & 1.99 & 1.47e-04 & 1.91 & 8.61e-04 & 2.00 & 5.47e-04 & 2.00 \\
\multirow[t]{4}{*}{$10^{-3}$}
 & 32  & 7.65e-05 & --   & 4.80e-01 & --   & 3.76e-03 & --   & 2.54e-02 & --   & 1.63e-02 & --   \\
 & 64  & 1.50e-05 & 2.35 & 1.23e-01 & 1.97 & 9.75e-04 & 1.95 & 5.84e-03 & 2.12 & 3.49e-03 & 2.22 \\
 & 128 & 2.45e-06 & 2.61 & 3.22e-02 & 1.93 & 2.35e-04 & 2.05 & 1.42e-03 & 2.04 & 8.33e-04 & 2.07 \\
 & 256 & 4.50e-07 & 2.45 & 8.26e-03 & 1.96 & 6.20e-05 & 1.92 & 3.53e-04 & 2.01 & 2.06e-04 & 2.02 \\
\multirow[t]{4}{*}{$10^{-4}$}
 & 32  & 4.51e-05 & --   & 1.10e+00 & --   & 7.57e-03 & --   & 3.95e-02 & --   & 3.00e-02 & --   \\
 & 64  & 1.21e-05 & 1.90 & 2.25e-01 & 2.28 & 2.42e-03 & 1.64 & 1.16e-02 & 1.76 & 7.59e-03 & 1.98 \\
 & 128 & 1.68e-06 & 2.85 & 5.25e-02 & 2.10 & 5.10e-04 & 2.25 & 2.30e-03 & 2.34 & 1.49e-03 & 2.35 \\
 & 256 & 2.21e-07 & 2.93 & 1.36e-02 & 1.95 & 1.02e-04 & 2.32 & 4.43e-04 & 2.38 & 3.24e-04 & 2.20 \\
\multirow[t]{4}{*}{$10^{-5}$}
 & 32  & 2.00e-05 & --   & 8.99e+00 & --   & 1.91e-02 & --   & 1.21e-01 & --   & 1.15e-01 & --   \\
 & 64  & 7.19e-06 & 1.47 & 5.19e-01 & 4.11 & 9.56e-03 & 1.00 & 4.92e-02 & 1.29 & 4.15e-02 & 1.46 \\
 & 128 & 2.12e-06 & 1.76 & 1.19e-01 & 2.12 & 3.26e-03 & 1.55 & 1.52e-02 & 1.70 & 1.17e-02 & 1.83 \\
 & 256 & 2.13e-07 & 3.31 & 2.13e-02 & 2.48 & 5.83e-04 & 2.48 & 2.59e-03 & 2.55 & 1.61e-03 & 2.86 \\
\bottomrule
\end{tabular}
\end{table}
To quantify the accuracy of the proposed scheme we adopt a variant of the
analytic magnetohydrodynamic vortex of~\cite{accuracyvortex, BoscheriThomann2024}, solved on the square
domain $\Omega = [-5,5]^2$. The exact solution superimposes a smooth
perturbation $\delta\mathbf{q}$ onto a uniform background
$\mathbf{q}_0 = (\rho_0, \mathbf{u}_0, p_0, \mathbf{0})$. With the polar angle
$\theta$ and the radius $r=\sqrt{x^2+y^2}$, the velocity and magnetic-field
perturbations are prescribed as
\begin{align}
(\delta u,\, \delta v)
  &= \frac{\tilde{v}}{2\pi}\,
     \exp\!\left(\frac{1-r^{2}}{2}\right)\,
     \bigl(-r\sin\theta,\ r\cos\theta\bigr), \label{eq:vortex_vel}\\[2pt]
(\delta B_x,\, \delta B_y)
  &= \frac{\tilde{B}}{2\pi}\,
     \exp\!\left(\frac{1-r^{2}}{2}\right)\,
     \bigl(-r\sin\theta,\ r\cos\theta\bigr), \label{eq:vortex_mag}
\end{align}
with the amplitude constants $\tilde{v}=\sqrt{2\pi}$ and $\tilde{B}=1$.
 The magnetic perturbation is generated
by the out-of-plane vector potential
\begin{equation}
\delta A_z = \frac{\tilde{B}}{2\pi}\,\exp\!\left(\frac{1-r^{2}}{2}\right).
\label{eq:vortex_Az}
\end{equation}
The pressure is set so as to enforce radial equilibrium between the centrifugal
force of the swirling motion and the combined magnetic tension and
magnetic-pressure gradient produced by the curved field lines. Written in polar
form, this balance reads
\begin{equation}
\frac{\mathrm{d}\,\delta p}{\mathrm{d}r}
  = \frac{\rho\,\delta v_\theta^{2}}{r}
  - \frac{\delta B_\theta^{2}}{r}
  - \frac{1}{2}\,\frac{\mathrm{d}\bigl(\delta B_\theta^{2}\bigr)}{\mathrm{d}r},
\label{eq:vortex_equil}
\end{equation}
where the azimuthal perturbations are
\begin{equation}
\delta v_\theta = \frac{\tilde{v}}{2\pi}\,r\,\exp\!\left(\frac{1-r^{2}}{2}\right),
\qquad
\delta B_\theta = \frac{\tilde{B}}{2\pi}\,r\,\exp\!\left(\frac{1-r^{2}}{2}\right).
\label{eq:vortex_theta}
\end{equation}
Integrating~\eqref{eq:vortex_equil} inward from infinity yields the closed-form
pressure perturbation
\begin{equation}
\delta p = \frac{1}{2}\,e^{\,1-r^{2}}
  \left[\frac{\tilde{B}^{2}}{(2\pi)^{2}}\bigl(1-r^{2}\bigr)
  - \rho_0\left(\frac{\tilde{v}}{2\pi}\right)^{\!2}\right].
\label{eq:vortex_dp}
\end{equation}
The vortex is advected diagonally through the domain by taking
$\mathbf{u}_0=(1,1,0)$, together with $p_0=1$. Since the salient feature of the
scheme is a stability limit governed by a material CFL condition that is
independent of the acoustic and Alfv\'en wave speeds, we probe the experimental
order of convergence (EOC) under a variation of the Alfv\'en speed. A uniform
background field cannot be added without spoiling the analytic character of the
solution; we therefore exploit the $\rho^{-1/2}$ scaling of the Alfv\'en speed
and lower the background density $\rho_0$, which raises the Alfv\'en speed and
hence reduces the Alfv\'en Mach number. Decreasing $\rho_0$ at the same time
increases the sound speed and attenuates the kinetic contribution to the
pressure perturbation~\eqref{eq:vortex_dp}, whereas the magnetic contribution is
left unchanged. As the background field vanishes, a domain-wide Alfv\'en Mach
number is not defined; Table~\ref{tab:vortex_mach} therefore reports the peak
acoustic Mach number $M_c$ and the Alfv\'en speed $c_a$
for the sequence $\rho_0=10^{-k}$, $k=0,\dots,5$.

The vortex is evolved up to the final time $t_f=1$ on a hierarchy of
successively refined grids. Table~\ref{tab:vortex_eoc} summarises the $L^2$
errors and the associated EOC for $\rho$, $u$, $p$, $B_x$ and $A_z$. The design order of two is recovered for every variable in all regimes. Finally, to underline
the advantage of a scale-independent stability constraint, Fig.~\ref{fig:convergence_dt_ratio}
displays the ratio between a fully explicit step, limited by the fastest
characteristic speeds, and the step admitted by the CFL condition of the present
scheme. Particularly at low acoustic and Alfv\'en Mach numbers, the method
sustains time steps several orders of magnitude larger than the explicit limit
while preserving second-order accuracy in space and time.

\begin{figure}
    \centering
    \includegraphics[width=0.6\linewidth]{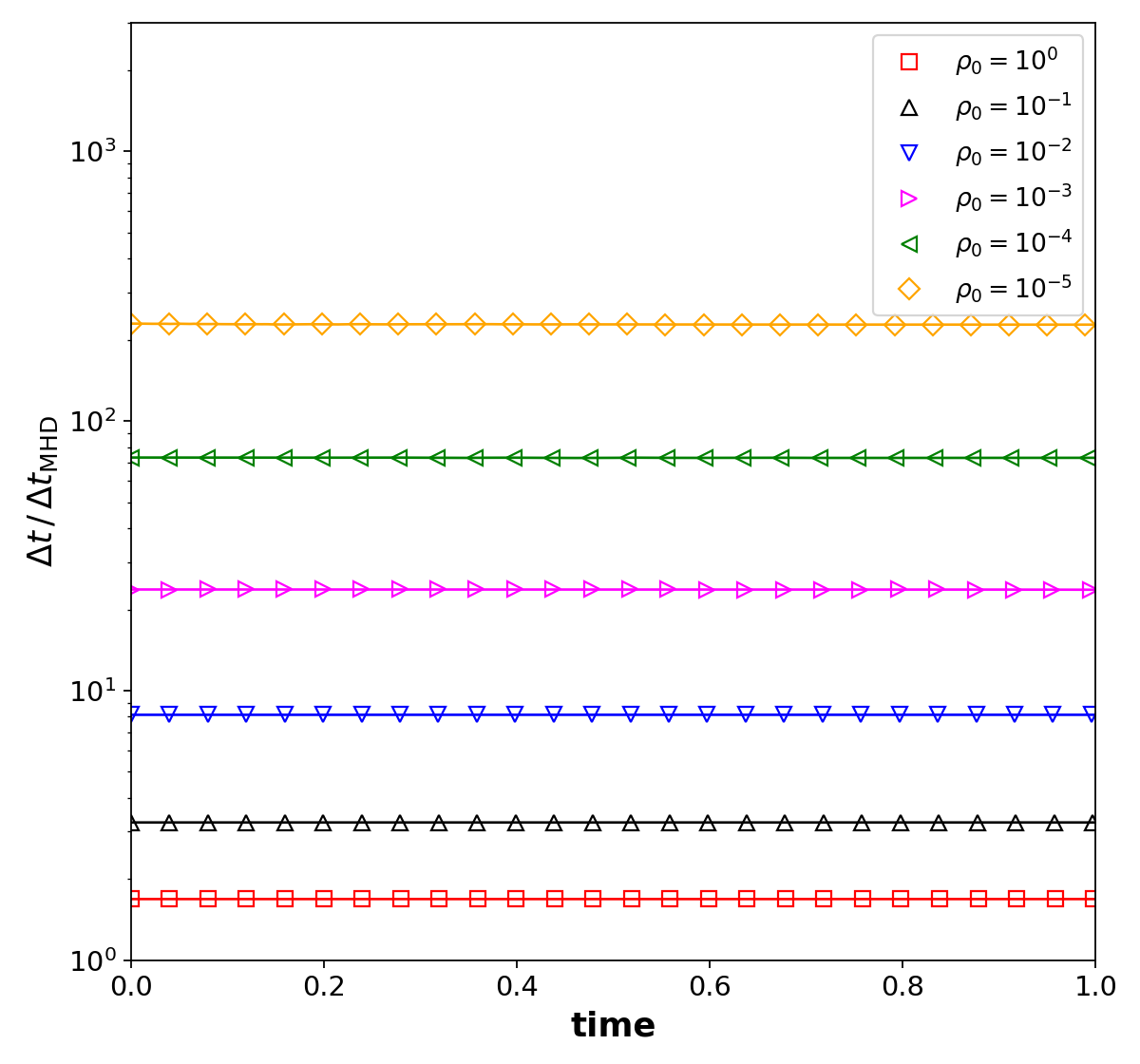}
    \caption{MHD traveling vortex. Time evolution of the ratio between the material time step
    $\Delta t$ of the present semi-implicit IMEX scheme and the magnetosonic time step
    $\Delta t_{\mathrm{MHD}}$ of a fully explicit finite volume scheme, for the background
    densities $\rho_0 = 10^{-k}$, $k = 0,\dots,5$ of Table~\ref{tab:vortex_mach}.}
    \label{fig:convergence_dt_ratio}
\end{figure}

\subsection{One-dimension Riemann test}
Having established the convergence order in Section~\ref{sec:convergence}, we now assess
the shock-capturing behaviour of the scheme on one-dimensional Riemann problems, in
two steps. First, in Section~\ref{sec:rk_riemann}, we isolate and validate the
Redlich--Kwong EOS branch: the magnetic field is switched off, so that the governing
equations reduce to the compressible Euler system, and the solution is compared
against the exact Redlich--Kwong Riemann solution. Second, in Section~\ref{sec:MHD_1d}
the magnetic field is restored and the full ideal-MHD shock-capturing capabilities
are examined on a set of classical, stringent Riemann problems for an ideal gas.

\subsubsection{One-dimensional validation of the generic equation of state}
\label{sec:rk_riemann}
\begin{table}[htbp]
\centering
\caption{Initial left (L) and right (R) states for the Redlich--Kwong Euler
Riemann test problems. The discontinuity is located at $x_d$. In all cases
$c_v = 1$, $R = 0.4$ and $\mathfrak{a}_0 = b = 0.5$.}
\label{tab:rk_riemann}
\setlength{\tabcolsep}{3.5pt}
\renewcommand{\arraystretch}{1.15}
\begin{tabular}{llllllll}   
\toprule
Test & & $\rho$ & $u$ & $p$ & $x_d$ & $t_{\mathrm{out}}$ \\
\midrule
\multirow{2}{*}{Redlich--Kwong EOS test 1}
 & L: & $1.0$ & $+1.0$ & $2.0$  & \multirow{2}{*}{$0.0$} & \multirow{2}{*}{$0.1$} \\
 & R: & $1.0$ & $-1.0$ & $1.0$  & & \\
\addlinespace
\multirow{2}{*}{Redlich--Kwong EOS test 2}
 & L: & $1.0$   & $0.0$ & $1.0$  & \multirow{2}{*}{$0.0$} & \multirow{2}{*}{$0.2$} \\
 & R: & $0.125$ & $0.0$ & $0.1$  & & \\
\addlinespace
\multirow{2}{*}{Redlich--Kwong EOS test 3}
 & L: & $1.0$ & $0.0$ & $1000$ & \multirow{2}{*}{$0.1$} & \multirow{2}{*}{$0.008$} \\
 & R: & $1.0$ & $0.0$ & $0.01$ & & \\
\addlinespace
\multirow{2}{*}{Redlich--Kwong EOS test 4}
 & L: & $1.0$ & $0.0$ & $2.0$  & \multirow{2}{*}{$0.0$} & \multirow{2}{*}{$0.1$} \\
 & R: & $1.5$ & $0.0$ & $1.0$  & & \\
\bottomrule
\end{tabular}
\vspace{2pt}
\end{table}

\begin{figure}
    \centering
    \begin{minipage}{0.32\linewidth}
        \includegraphics[width=\linewidth]{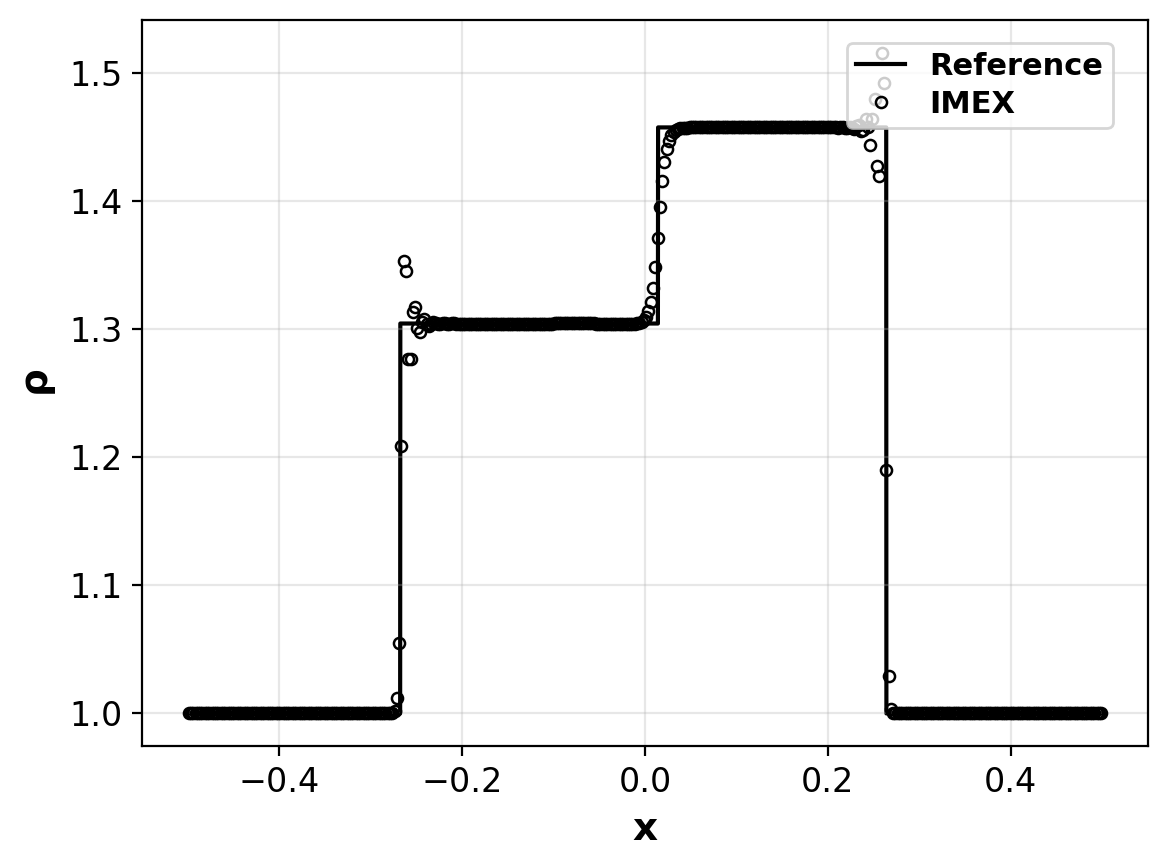}
    \end{minipage}
    \hspace{0.01\linewidth}%
    \begin{minipage}{0.32\linewidth}
        \includegraphics[width=\linewidth]{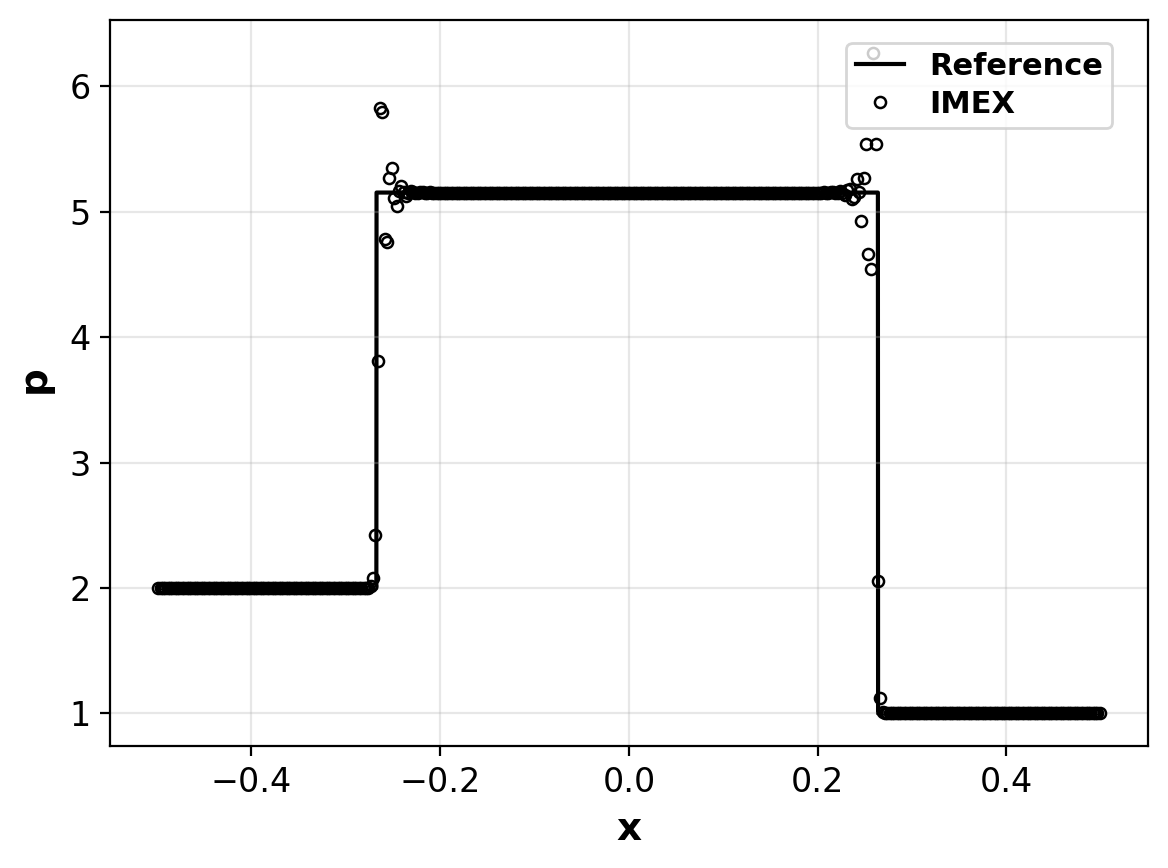}
    \end{minipage}
    \hspace{0.01\linewidth}%
    \begin{minipage}{0.32\linewidth}
        \includegraphics[width=\linewidth]{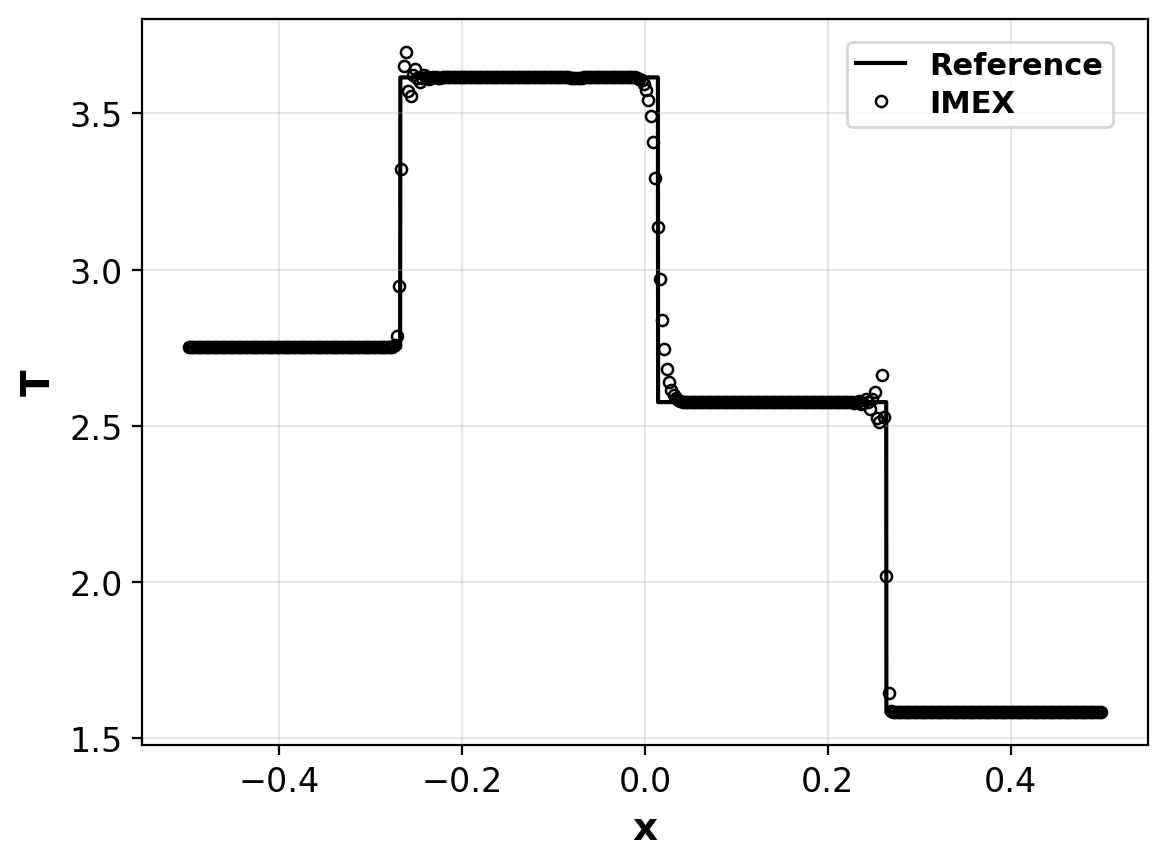}
    \end{minipage}
    \vspace{0.5em}

    \begin{minipage}{0.32\linewidth}
        \includegraphics[width=\linewidth]{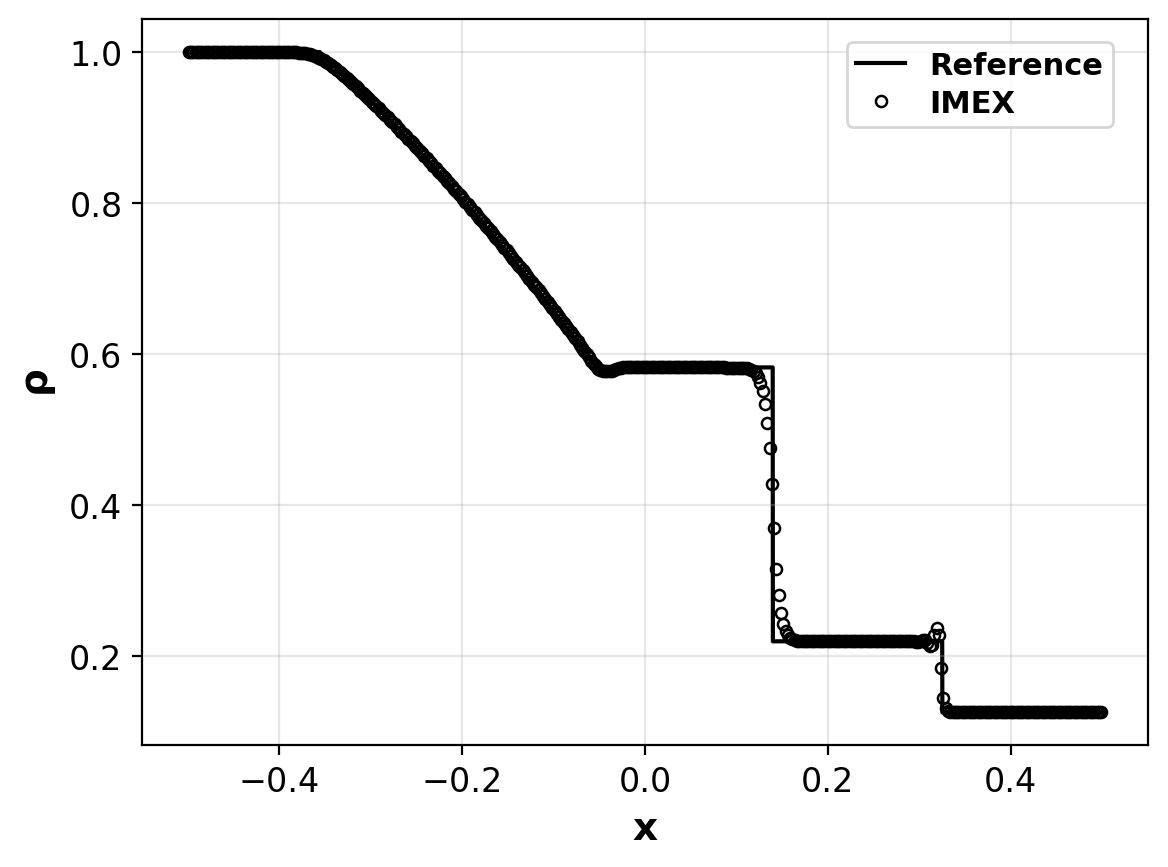}
    \end{minipage}
    \hspace{0.01\linewidth}%
    \begin{minipage}{0.32\linewidth}
        \includegraphics[width=\linewidth]{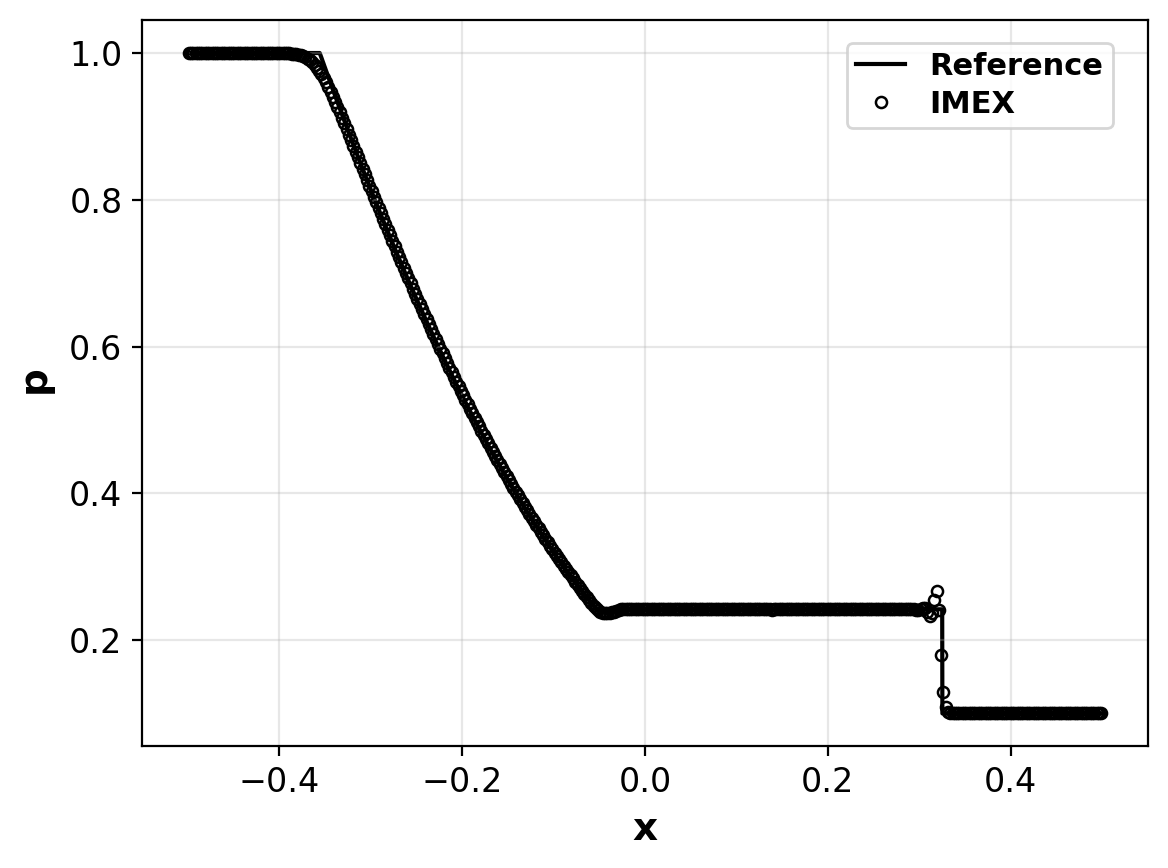}
    \end{minipage}
    \hspace{0.01\linewidth}%
    \begin{minipage}{0.32\linewidth}
        \includegraphics[width=\linewidth]{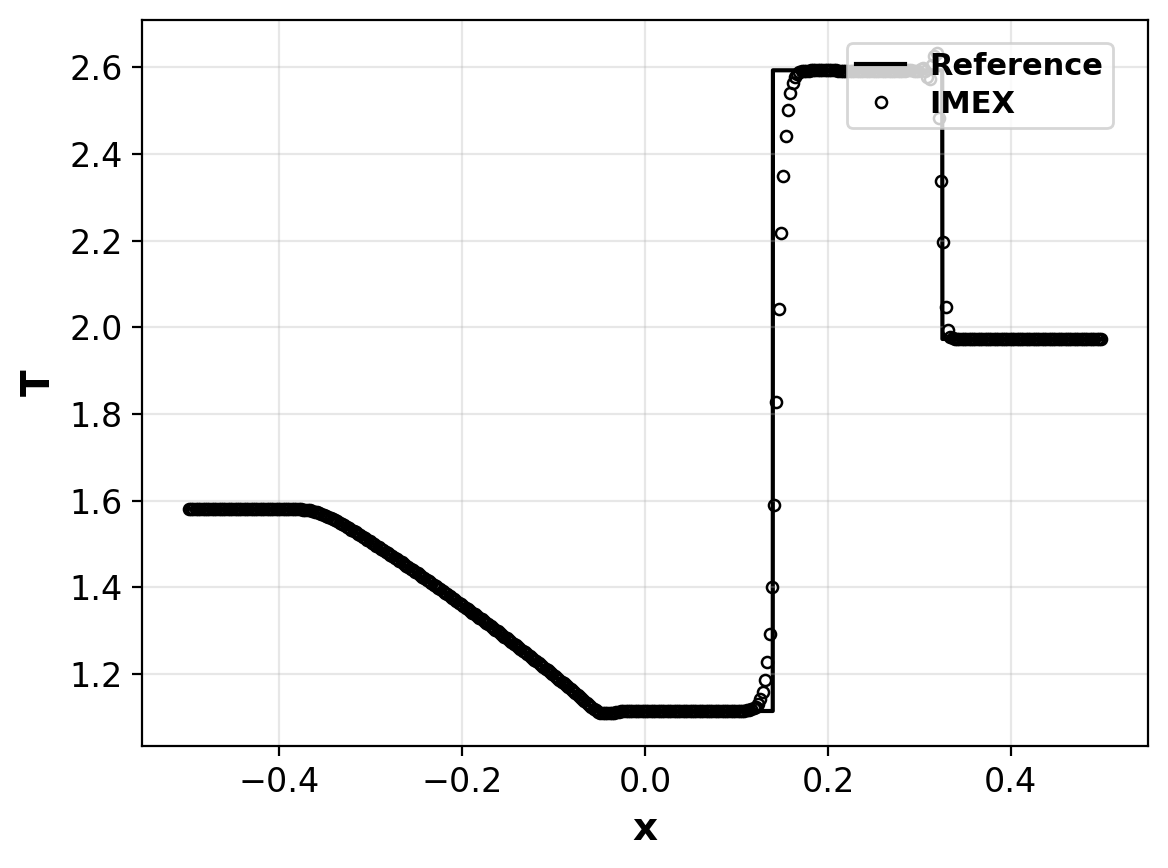}
    \end{minipage}
    \vspace{0.5em}
    \begin{minipage}{0.32\linewidth}
        \includegraphics[width=\linewidth]{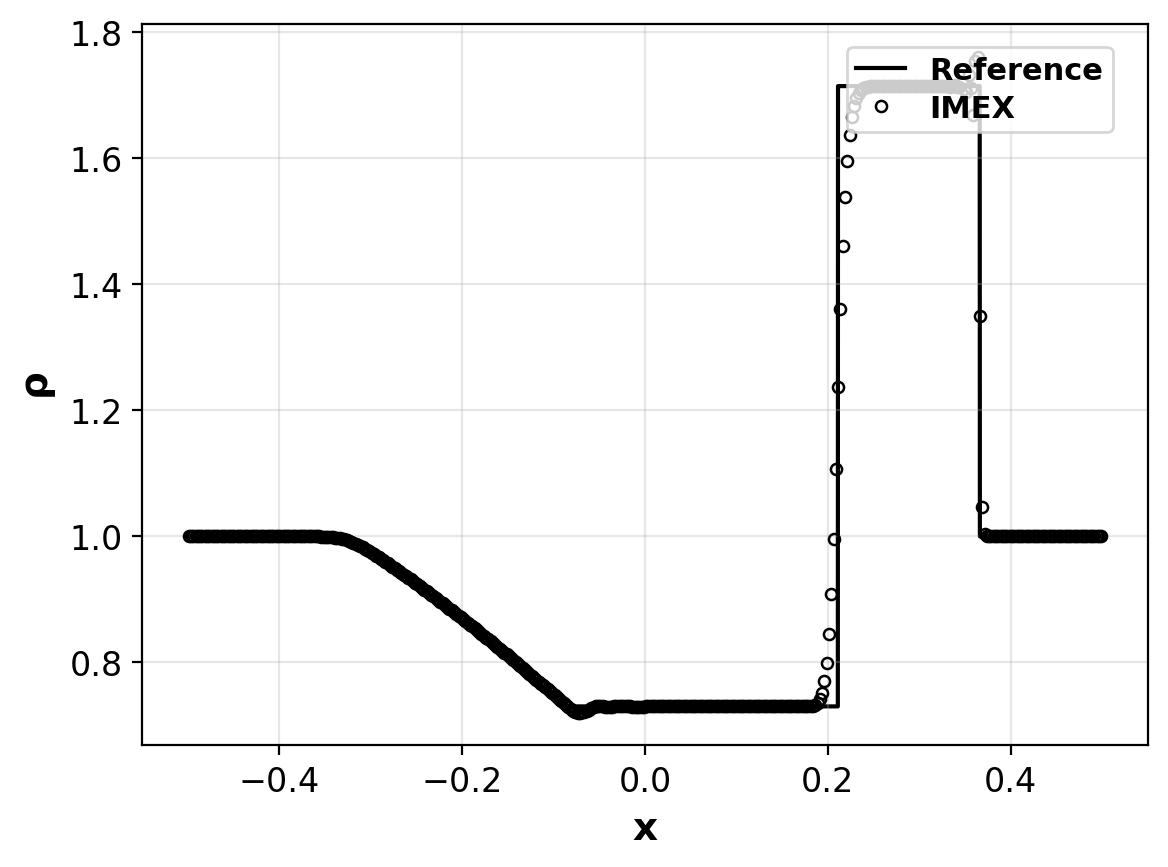}
    \end{minipage}
    \hspace{0.01\linewidth}%
    \begin{minipage}{0.32\linewidth}
        \includegraphics[width=\linewidth]{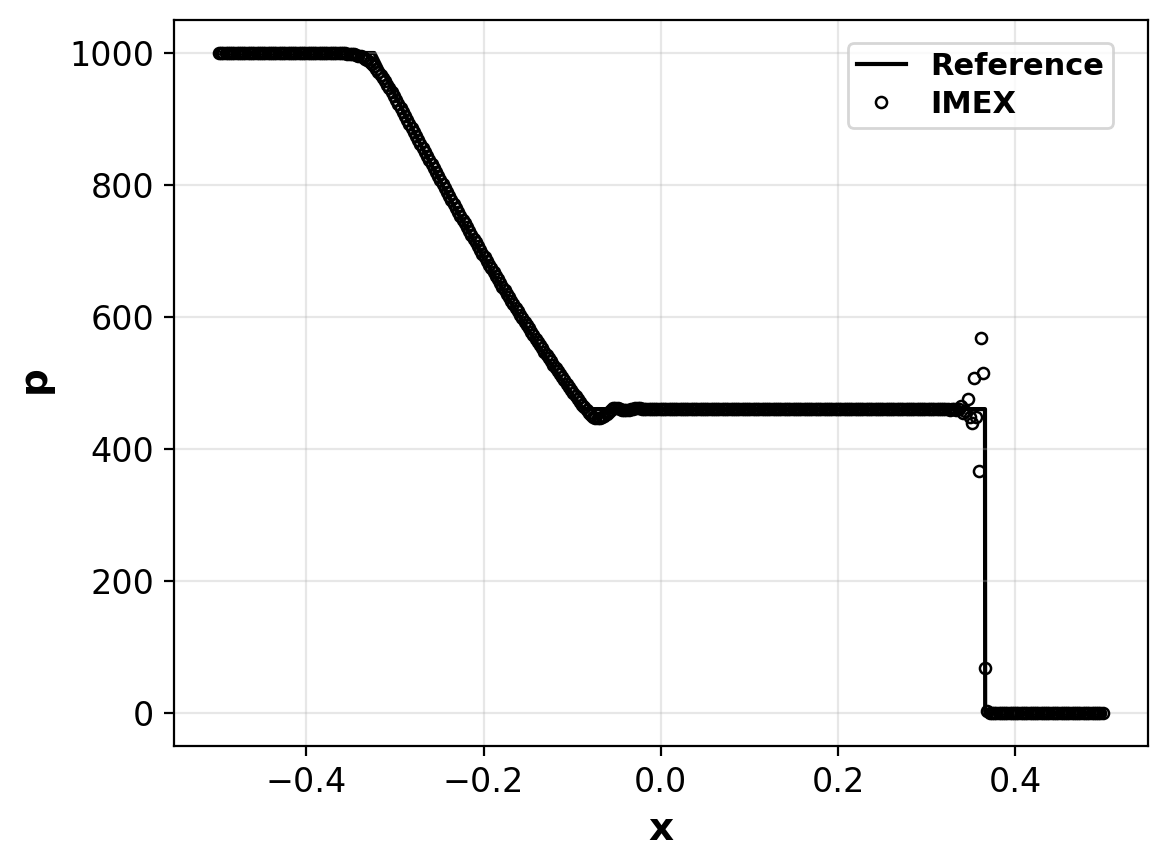}
    \end{minipage}
    \hspace{0.01\linewidth}%
    \begin{minipage}{0.32\linewidth}
        \includegraphics[width=\linewidth]{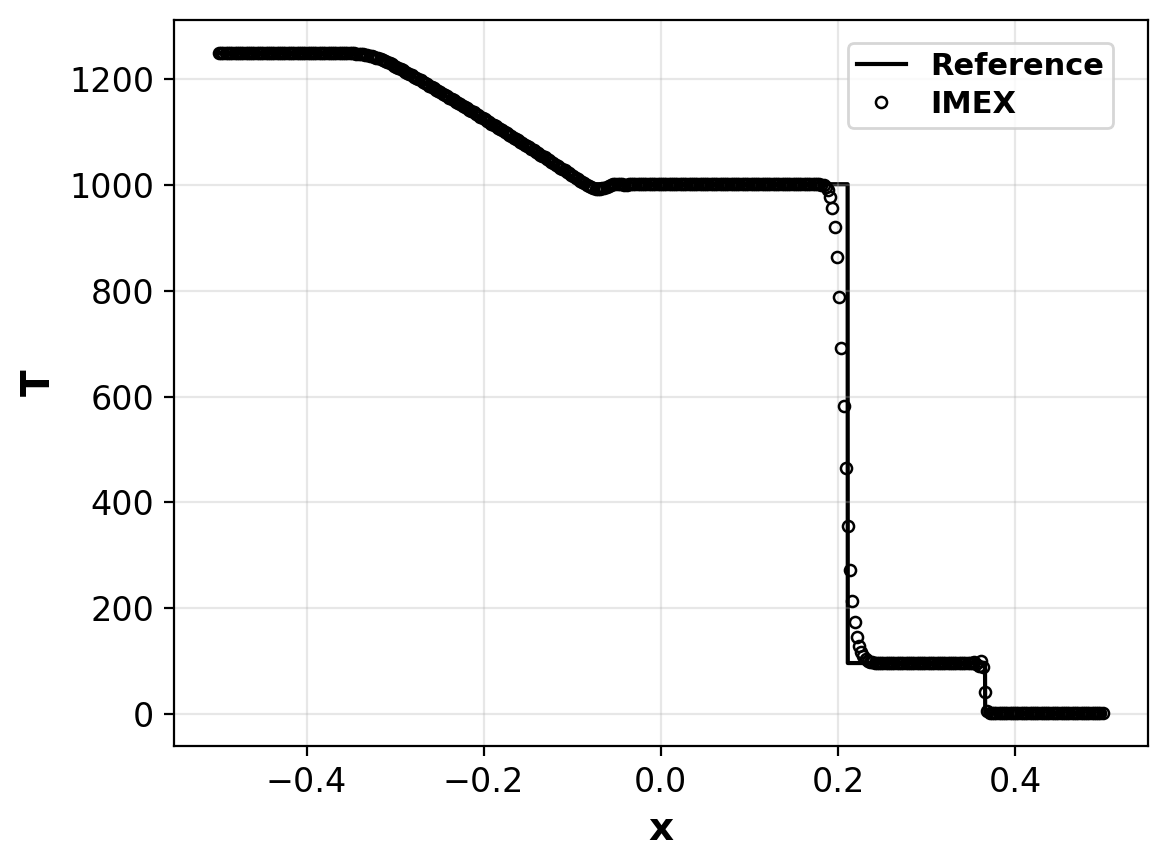}
    \end{minipage}
    
    \vspace{0.5em}

    \begin{minipage}{0.32\linewidth}
        \includegraphics[width=\linewidth]{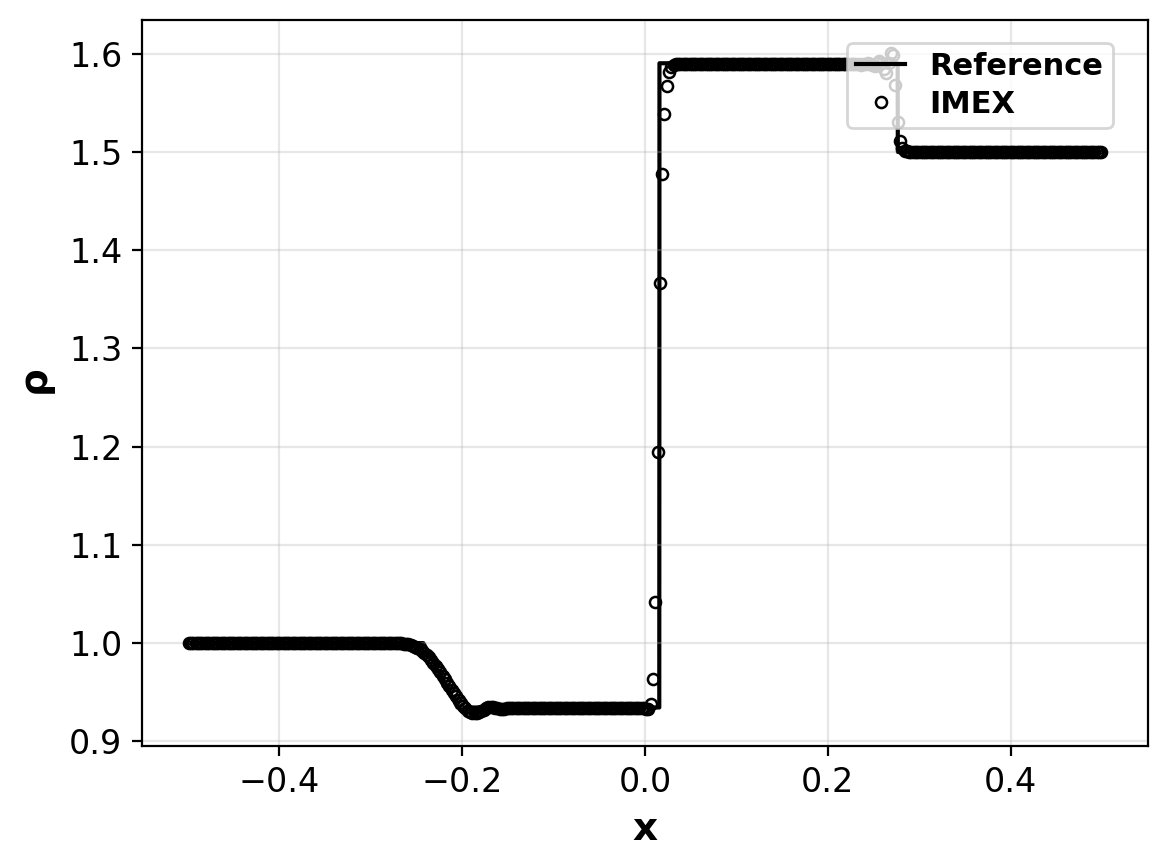}
    \end{minipage}
    \hspace{0.01\linewidth}%
    \begin{minipage}{0.32\linewidth}
        \includegraphics[width=\linewidth]{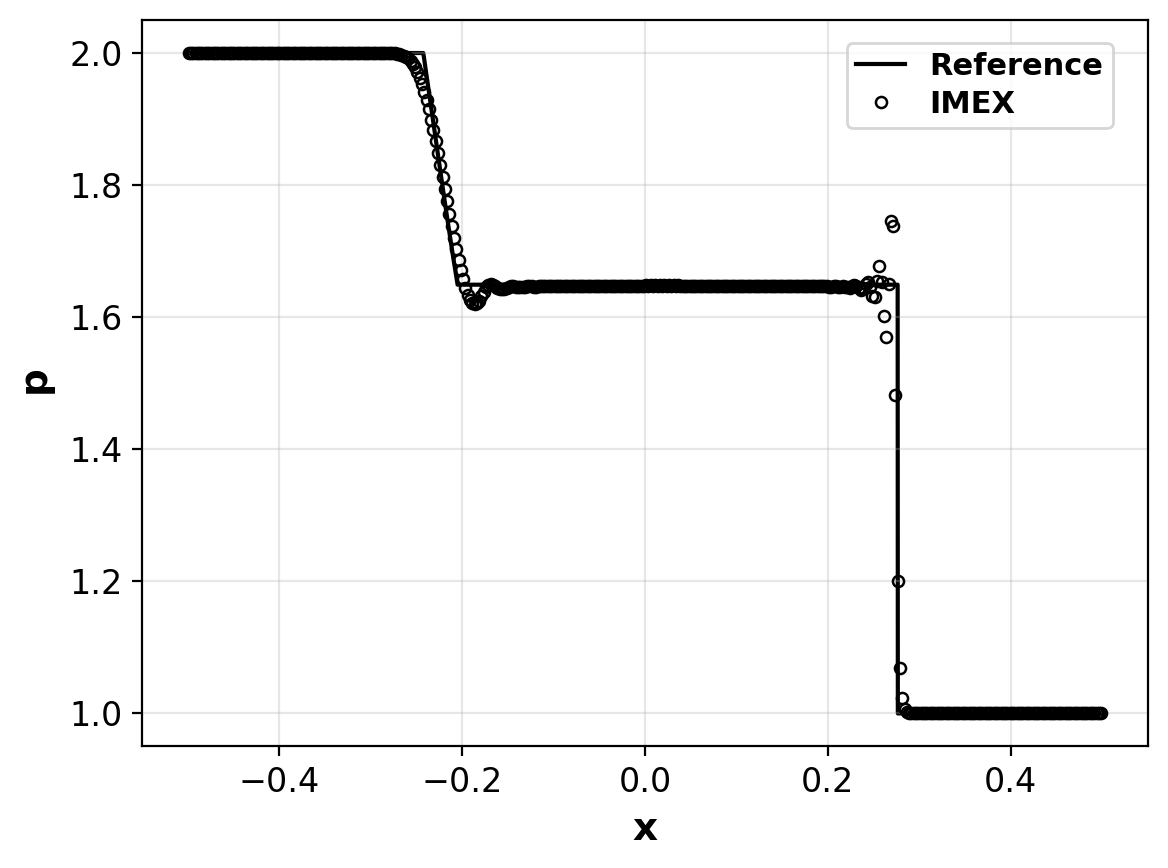}
    \end{minipage}
    \hspace{0.01\linewidth}%
    \begin{minipage}{0.32\linewidth}
        \includegraphics[width=\linewidth]{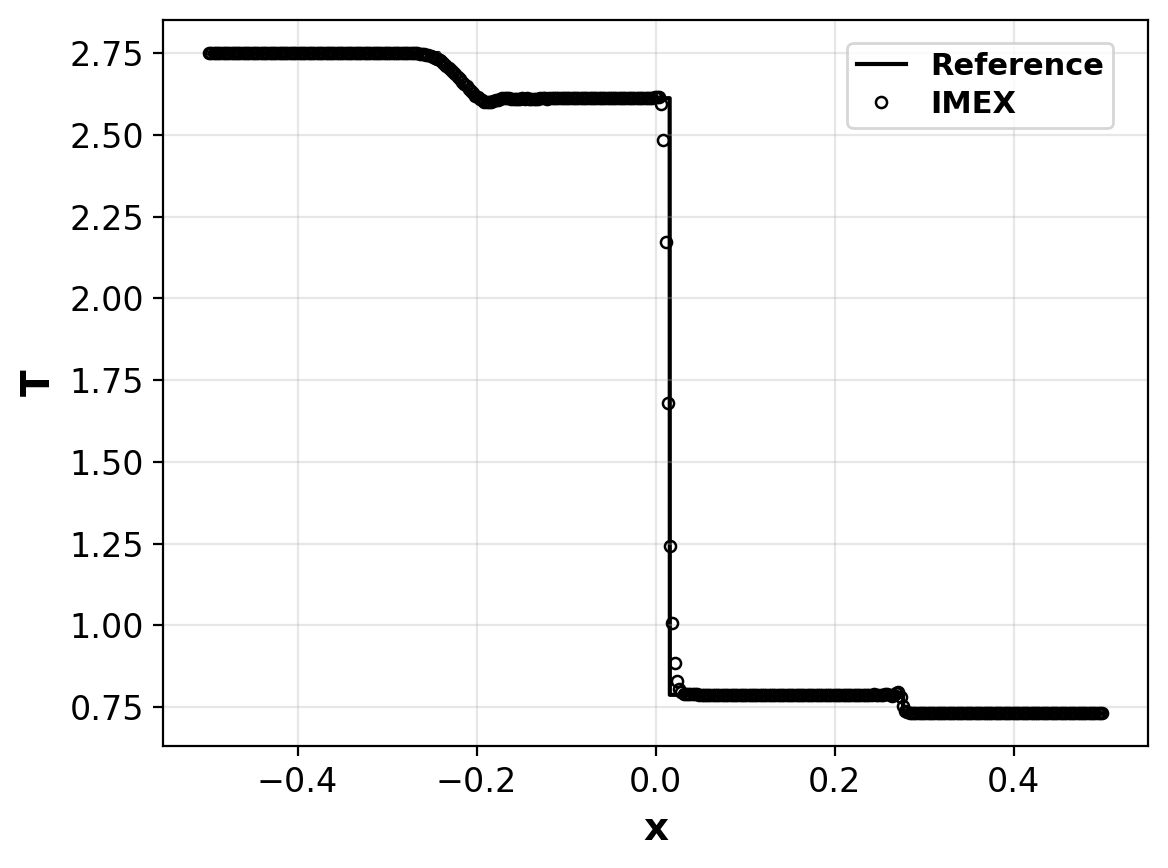}
    \end{minipage}
    \caption{Numerical results for the Riemann problems Redlich--Kwong EOS test 1, test 2, test 3, and test 4 (from top to bottom) obtained using the new semi-implicit scheme (black symbols)  compared to exact solutions (black line). The fluid density (left), the fluid pressure (middle) and the temperature (right) are depicted at the chosen output time for each test.}
    \label{fig:RKEOSRPs}
\end{figure}

We consider the four Riemann problems whose left and right states, together with
the discontinuity position $x_d$, are listed
in Table~\ref{tab:rk_riemann}. Each test is discretised with $N = 400$ cells on the domain $\Omega = [-0.5, 0.5]$, and the
numerical solution is compared against the exact solution of the corresponding
Redlich--Kwong Riemann problem. As shown in
Figs.~\ref{fig:RKEOSRPs}, the computed profiles are in good agreement with the reference throughout, confirming that the generic-EOS machinery reproduces the correct wave structure.

\subsubsection{One-dimension MHD Riemann test of the ideal gas equation of state}
\label{sec:MHD_1d}
We now assess the shock-capturing capabilities of the proposed semi-implicit 
scheme against a set of classical, non-trivial Riemann problems for the ideal 
MHD equations. For each test the initial data consist of two constant states 
$\mathbf{W}_L = [\rho_L, \mathbf{u}_L, p_L, \mathbf{B}_L]$ and 
$\mathbf{W}_R = [\rho_R, \mathbf{u}_R, p_R, \mathbf{B}_R]$ separated by a 
discontinuity located at $x = x_0$. The seven configurations RP1--RP7 are 
collected in Table~\ref{tab:mhd_riemann}, together with the position of the initial 
discontinuity and the final simulation time. The computational domain $[-0.5,0.5]$ 
is discretised with $1000$ cells with transmissive boundaries. These tests span moderate to high acoustic Mach number flows, and 
thus probe the robustness of the scheme across a broad range of regimes.

In Figs.~\ref{fig:MHDRPs1} and~\ref{fig:MHDRPs2} we report the density, the 
pressure and the $y$-component of the magnetic field, compared against the exact 
solution of the Riemann problem, computed with the exact ideal-MHD Riemann solver 
kindly provided by Falle and Komissarov~\cite{exact1,exact2}; an alternative 
exact solver is described in~\cite{exact3}. For every configuration the 
proposed scheme reproduces the correct wave pattern, capturing the shock positions 
and the intermediate states accurately, which confirms its ability to resolve 
shock waves and flow discontinuities. Good agreement is observed with results 
previously reported in the literature for both fully-explicit and other 
semi-implicit schemes. Two 
configurations deserve a specific comment. RP1 is the well-known Brio--Wu 
test~\cite{BrioWu1988}, whose density profile exhibits an artificial compound wave; 
the associated overshoot in density and pressure is not specific to the present 
method, but a common feature of standard finite volume schemes for ideal MHD, 
also arising in approximate solutions produced with the 
HLLEM~\cite{DumbserHLLEM}, Osher-type~\cite{DumbserToro} and Rusanov schemes. 
RP7 contains an isolated, steady Alfvén wave, which is well resolved; the small 
perturbations of density and pressure visible around the jump position are of 
the same order of magnitude as those produced by the explicit scheme and by 
other IMEX methods~\cite{BoscheriThomann2024}.

Being a semi-implicit method, the present scheme naturally introduces more 
numerical dissipation than an explicit approximate Riemann solver. The reason is 
structural: its time step is dictated solely by the advective (material) 
velocity $\mathbf{u}$, rather than by the fast magnetosonic speeds that would 
otherwise restrict a fully explicit update. It is precisely this decoupling that 
lets the step size grow well past the explicit stability limit. To quantify the 
gain, Figs.~\ref{fig:timeofRPs} compares the material step $\Delta t$ effectively 
employed by the scheme with the explicit magnetosonic step 
$\Delta t_{\mathrm{MHD}}$ fixed by the fastest signal speeds. Although most of the seven 
configurations lie in a genuinely compressible regime, the admissible $\Delta t$ 
still exceeds $\Delta t_{\mathrm{MHD}}$ by a clear margin; the trade-off is a 
mild smearing of the fast waves, while the material contact and shear structures 
remain sharply resolved. We finally observe that, whenever the velocity satisfies 
a discrete maximum principle, this ratio stays essentially constant over the 
entire simulation.
\begin{table}[htbp]
\centering
\caption{Initial left (L) and right (R) states for the MHD Riemann test
problems. The discontinuity is located at $x_0$. }
\label{tab:mhd_riemann}
\setlength{\tabcolsep}{3.5pt}
\renewcommand{\arraystretch}{1.15}
\resizebox{\textwidth}{!}{%
\begin{tabular}{llllllllllll}   
\toprule
Test & & $\rho$ & $u$ & $v$ & $w$ & $B_x$ & $B_y$ & $B_z$ & $p$ & $x_0$ & $t_{\mathrm{out}}$ \\
\midrule
\multirow{2}{*}{RP1}
 & L: & $1.0$   & $0.0$ & $0.0$ & $0.0$ & $0.75$ & $1.0$  & $0.0$ & $1.0$ & \multirow{2}{*}{$0.0$} & \multirow{2}{*}{$0.1$} \\
 & R: & $0.125$ & $0.0$ & $0.0$ & $0.0$ & $0.75$ & $-1.0$ & $0.0$ & $0.1$ & & \\
\addlinespace
\multirow{2}{*}{RP2}
 & L: & $1.08$   & $1.2$     & $0.01$   & $0.5$      & $0.564190$ & $1.015541$ & $0.564190$ & $0.95$    & \multirow{2}{*}{$-0.1$} & \multirow{2}{*}{$0.2$} \\
 & R: & $0.9891$ & $-0.0131$ & $0.0269$ & $0.010037$ & $0.564190$ & $1.135262$ & $0.564923$ & $0.97159$ & & \\
\addlinespace
\multirow{2}{*}{RP3}
 & L: & $1.7$ & $0.0$ & $0.0$ & $0.0$       & $1.1$ & $1.0$      & $0.0$      & $1.7$ & \multirow{2}{*}{$-0.1$} & \multirow{2}{*}{$0.15$} \\
 & R: & $0.2$ & $0.0$ & $0.0$ & $-1.496891$ & $1.1$ & $0.785887$ & $0.618370$ & $0.2$ & & \\
\addlinespace
\multirow{2}{*}{RP4}
 & L: & $1.0$ & $0.0$ & $0.0$ & $0.0$ & $1.3$ & $1.0$  & $0.0$ & $1.0$ & \multirow{2}{*}{$0.0$} & \multirow{2}{*}{$0.16$} \\
 & R: & $0.4$ & $0.0$ & $0.0$ & $0.0$ & $1.3$ & $-1.0$ & $0.0$ & $0.4$ & & \\
\multirow{2}{*}{RP5}
 & L: & $0.15$ & $21.55$  & $1.0$ & $1.0$ & $0.05/\sqrt{4\pi}$ & $-2.0/\sqrt{4\pi}$ & $-1.0/\sqrt{4\pi}$ & $0.28$ & \multirow{2}{*}{$0.0$} & \multirow{2}{*}{$0.04$} \\
 & R: & $0.1$  & $-26.45$ & $0.0$ & $0.0$ & $0.05/\sqrt{4\pi}$ & $2.0/\sqrt{4\pi}$  & $1.0/\sqrt{4\pi}$  & $0.1$  & & \\
\addlinespace
\multirow{2}{*}{RP6}
 & L: & $1.0$ & $36.87$  & $-0.115$ & $-0.0386$ & $4/\sqrt{4\pi}$ & $4/\sqrt{4\pi}$ & $1/\sqrt{4\pi}$ & $1.0$ & \multirow{2}{*}{$0.0$} & \multirow{2}{*}{$0.03$} \\
 & R: & $1.0$ & $-36.87$ & $0.0$    & $0.0$     & $4/\sqrt{4\pi}$ & $4/\sqrt{4\pi}$ & $1/\sqrt{4\pi}$ & $1.0$ & & \\
\addlinespace
\multirow{2}{*}{RP7}
 & L: & $1/(4\pi)$ & $-1.0$ & $1.0$  & $-1.0$ & $1/\sqrt{4\pi}$ & $-1/\sqrt{4\pi}$ & $1/\sqrt{4\pi}$ & $1.0$ & \multirow{2}{*}{$0.0$} & \multirow{2}{*}{$0.25$} \\
 & R: & $1/(4\pi)$ & $-1.0$ & $-1.0$ & $-1.0$ & $1/\sqrt{4\pi}$ & $1/\sqrt{4\pi}$  & $1/\sqrt{4\pi}$ & $1.0$ & & \\
\bottomrule
\end{tabular}}

\vspace{2pt}
\end{table}

\begin{figure}
    \centering
    \begin{minipage}{0.32\linewidth}
        \includegraphics[width=\linewidth]{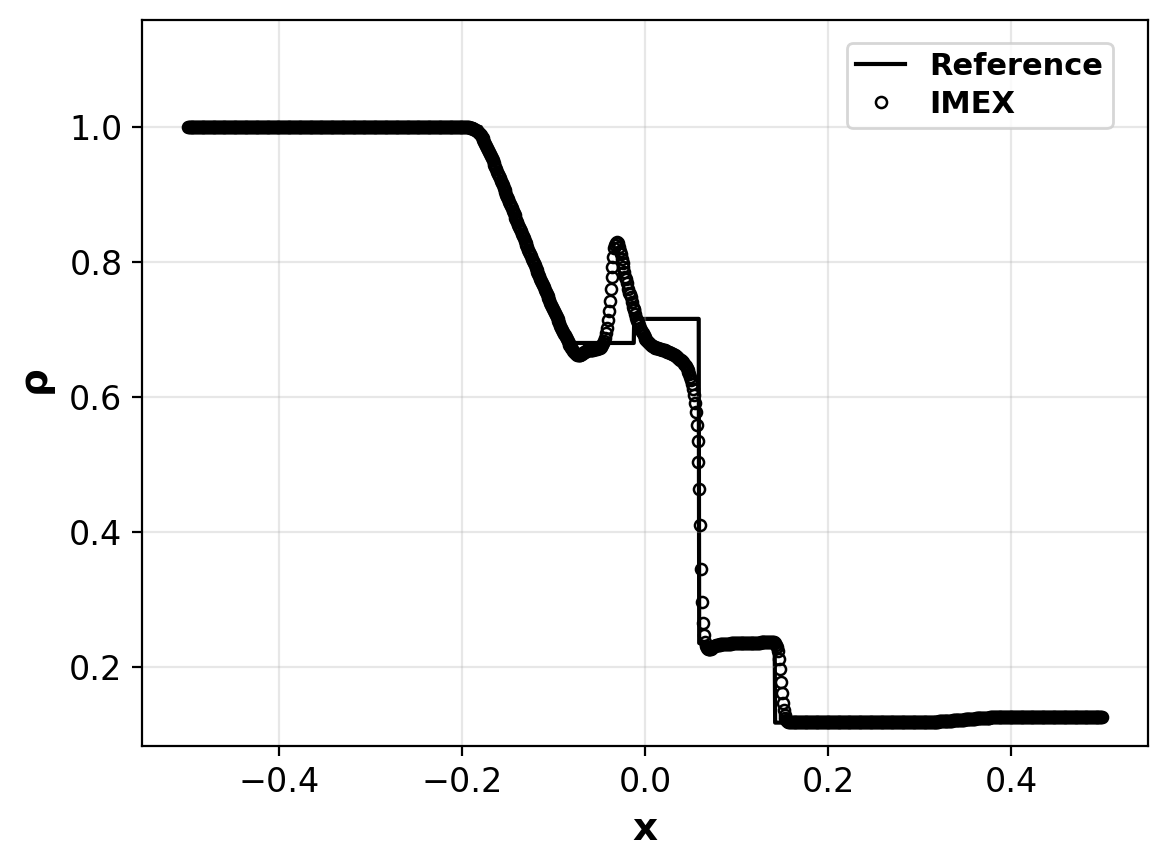}
    \end{minipage}
    \hspace{0.01\linewidth}%
    \begin{minipage}{0.32\linewidth}
        \includegraphics[width=\linewidth]{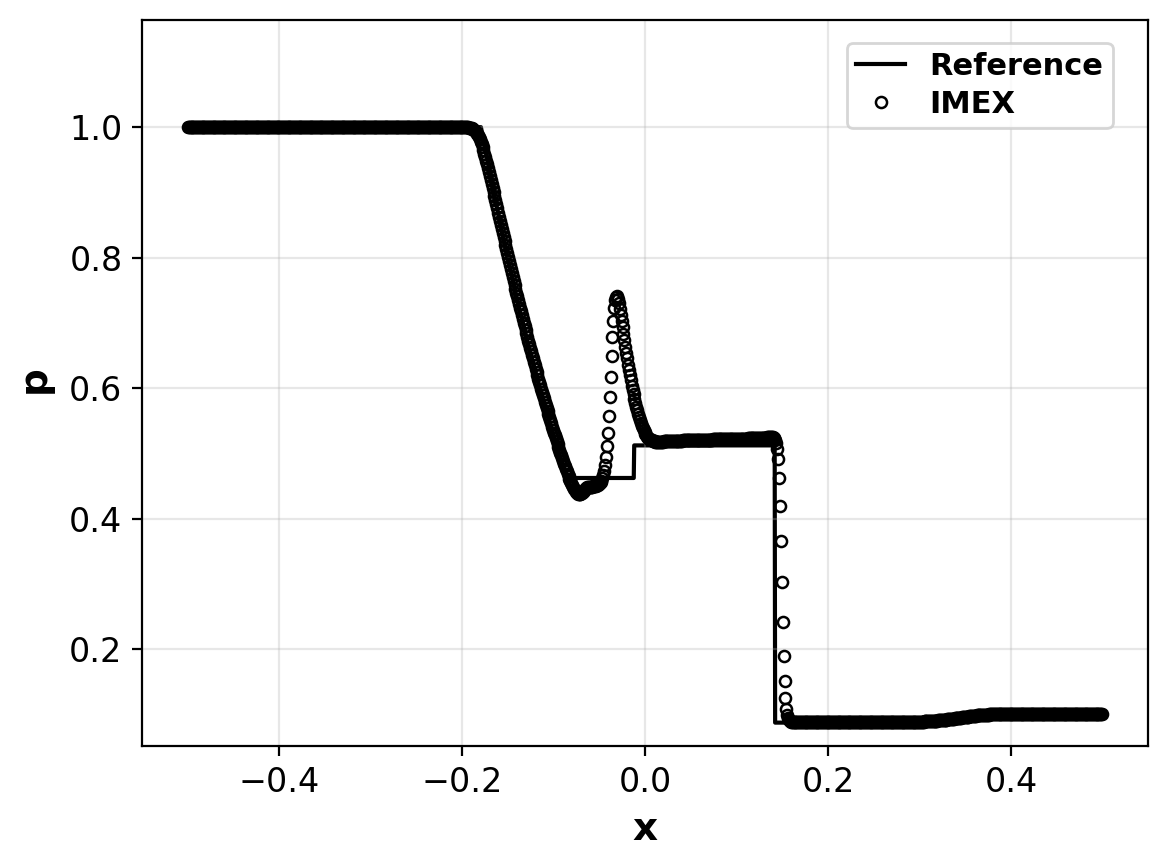}
    \end{minipage}
    \hspace{0.01\linewidth}%
    \begin{minipage}{0.32\linewidth}
        \includegraphics[width=\linewidth]{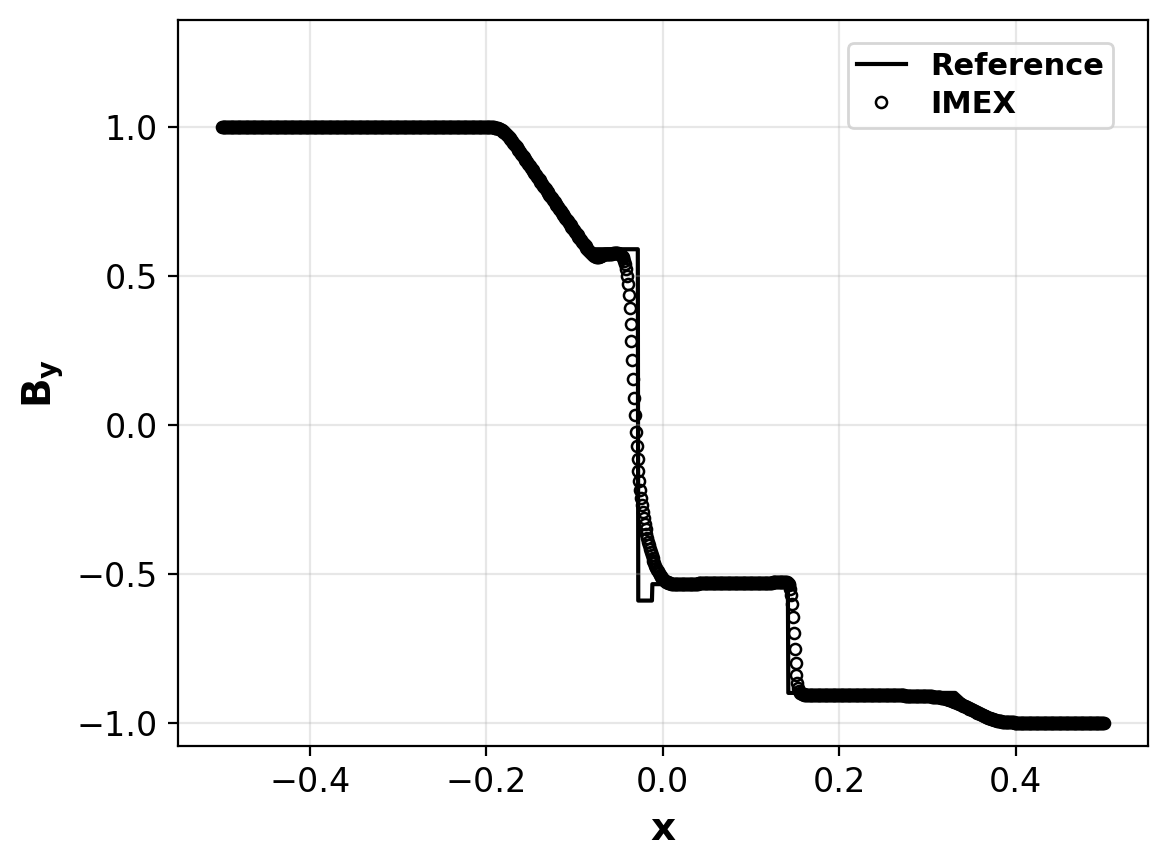}
    \end{minipage}
    \vspace{0.5em}

    \begin{minipage}{0.32\linewidth}
        \includegraphics[width=\linewidth]{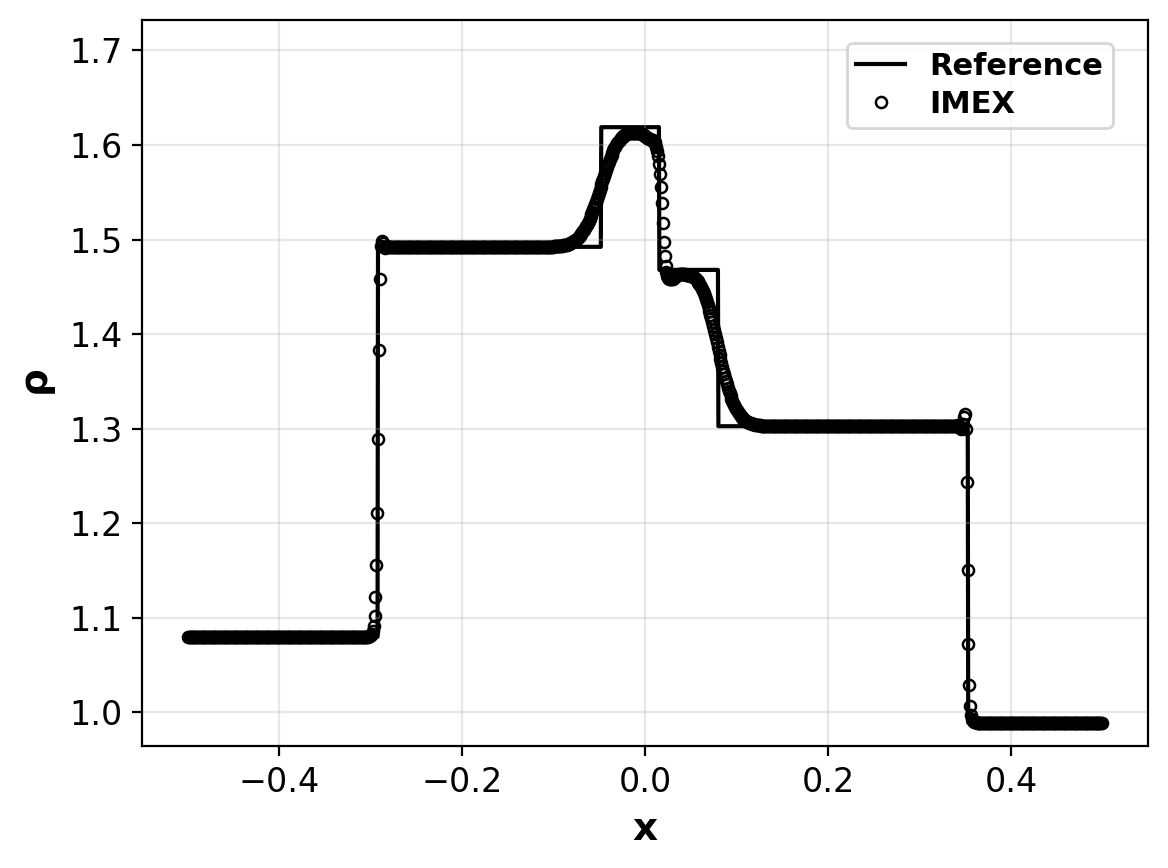}
    \end{minipage}
    \hspace{0.01\linewidth}%
    \begin{minipage}{0.32\linewidth}
        \includegraphics[width=\linewidth]{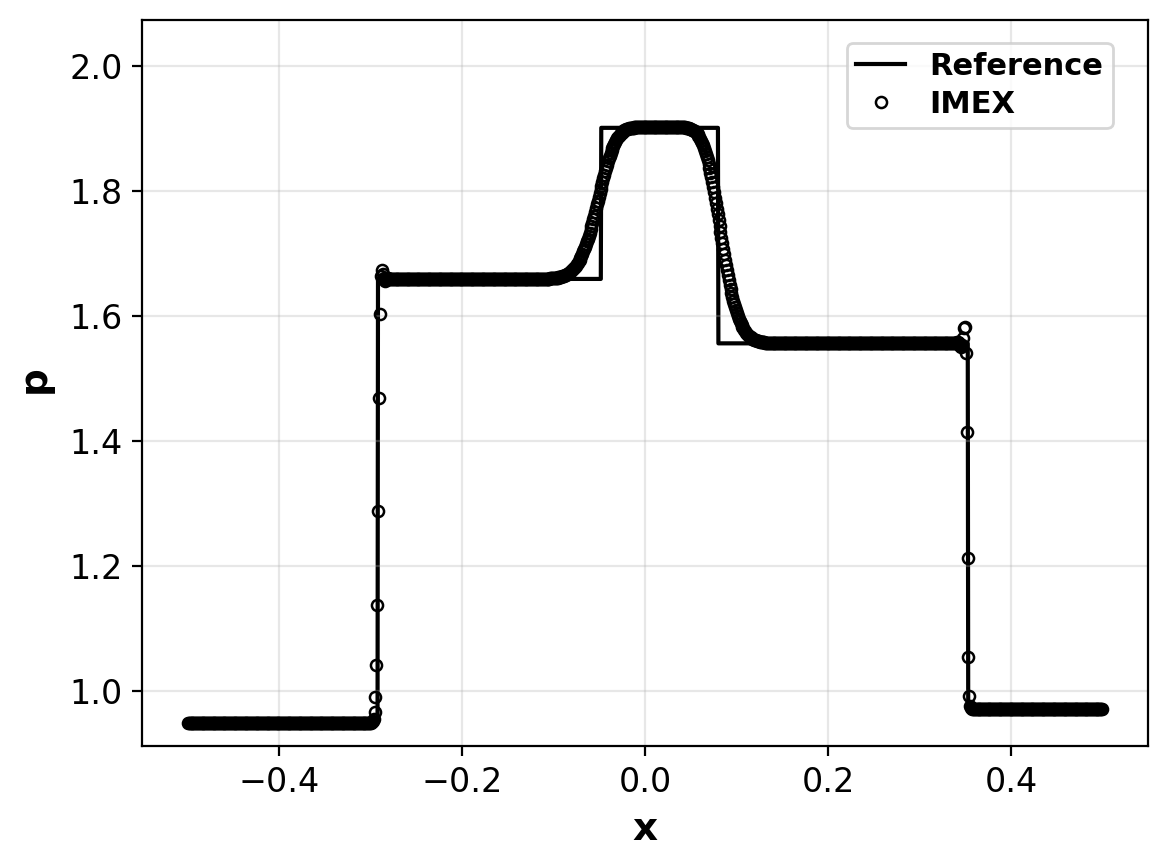}
    \end{minipage}
    \hspace{0.01\linewidth}%
    \begin{minipage}{0.32\linewidth}
        \includegraphics[width=\linewidth]{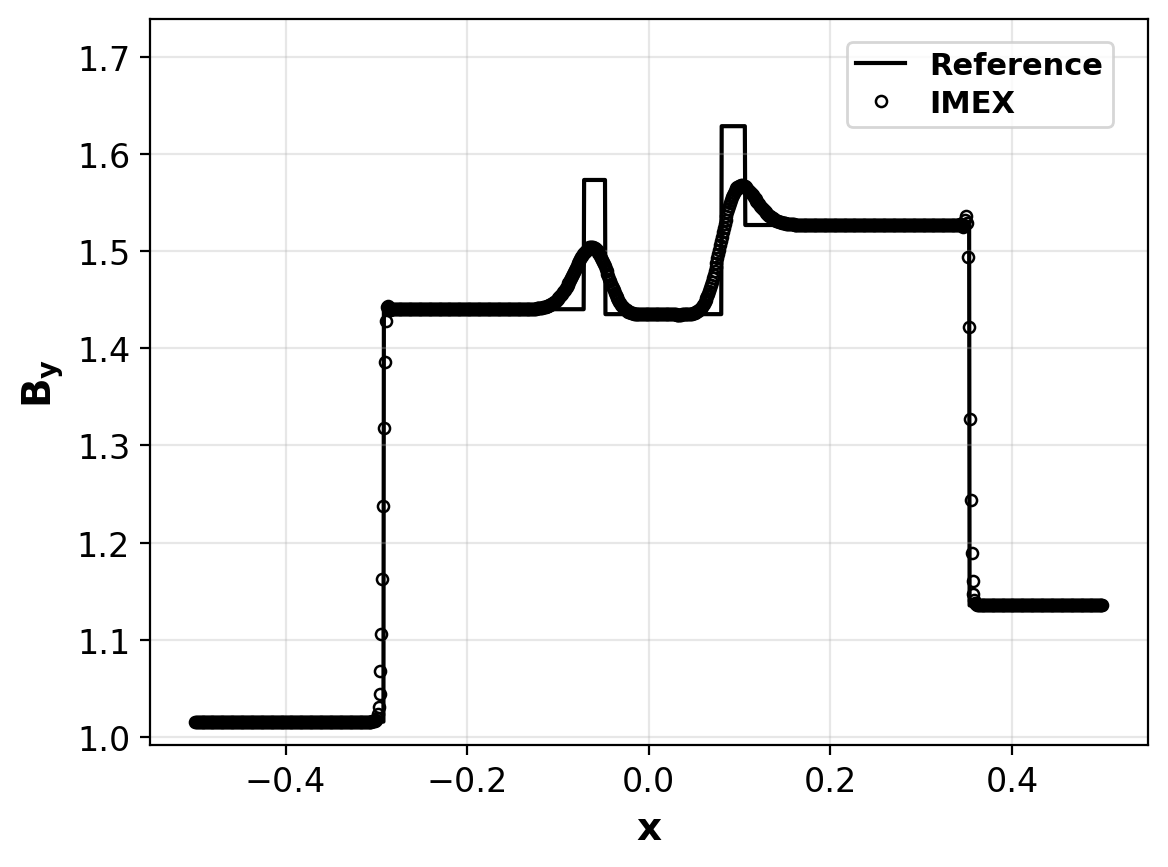}
    \end{minipage}
    \vspace{0.5em}
    \begin{minipage}{0.32\linewidth}
        \includegraphics[width=\linewidth]{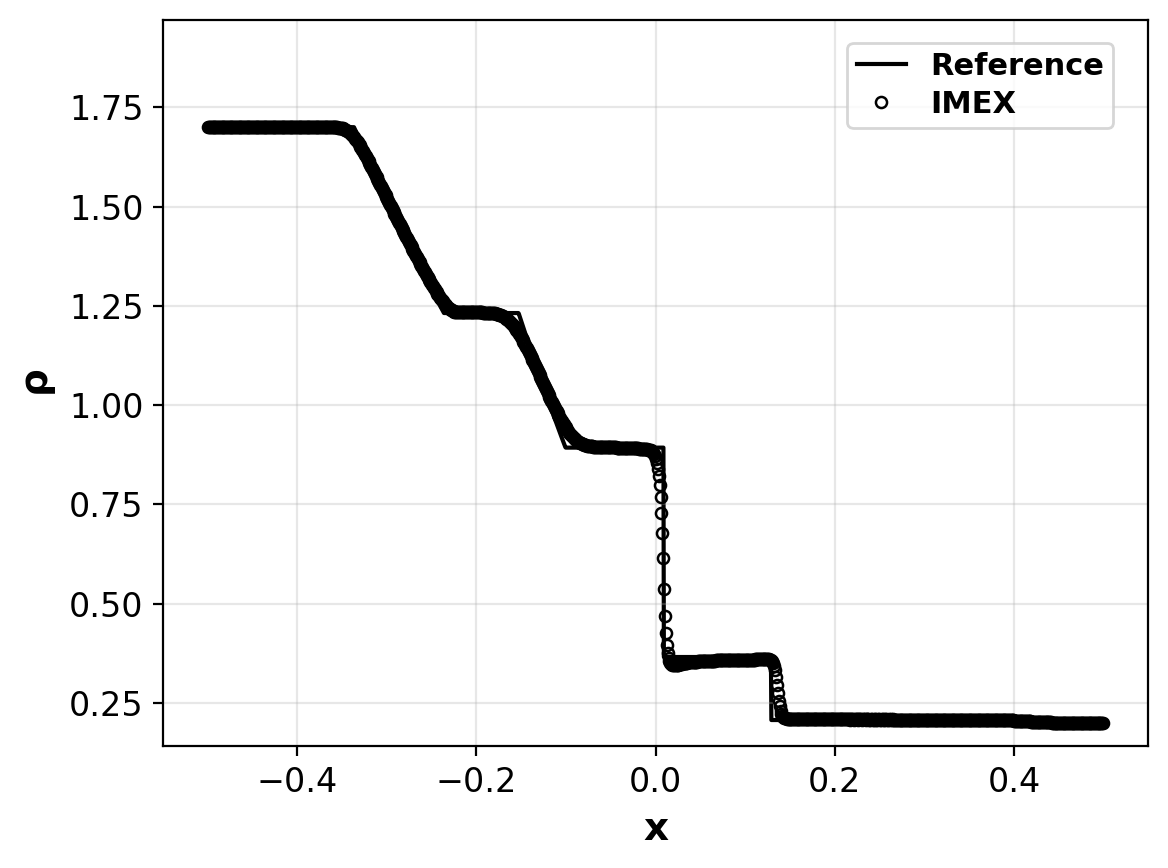}
    \end{minipage}
    \hspace{0.01\linewidth}%
    \begin{minipage}{0.32\linewidth}
        \includegraphics[width=\linewidth]{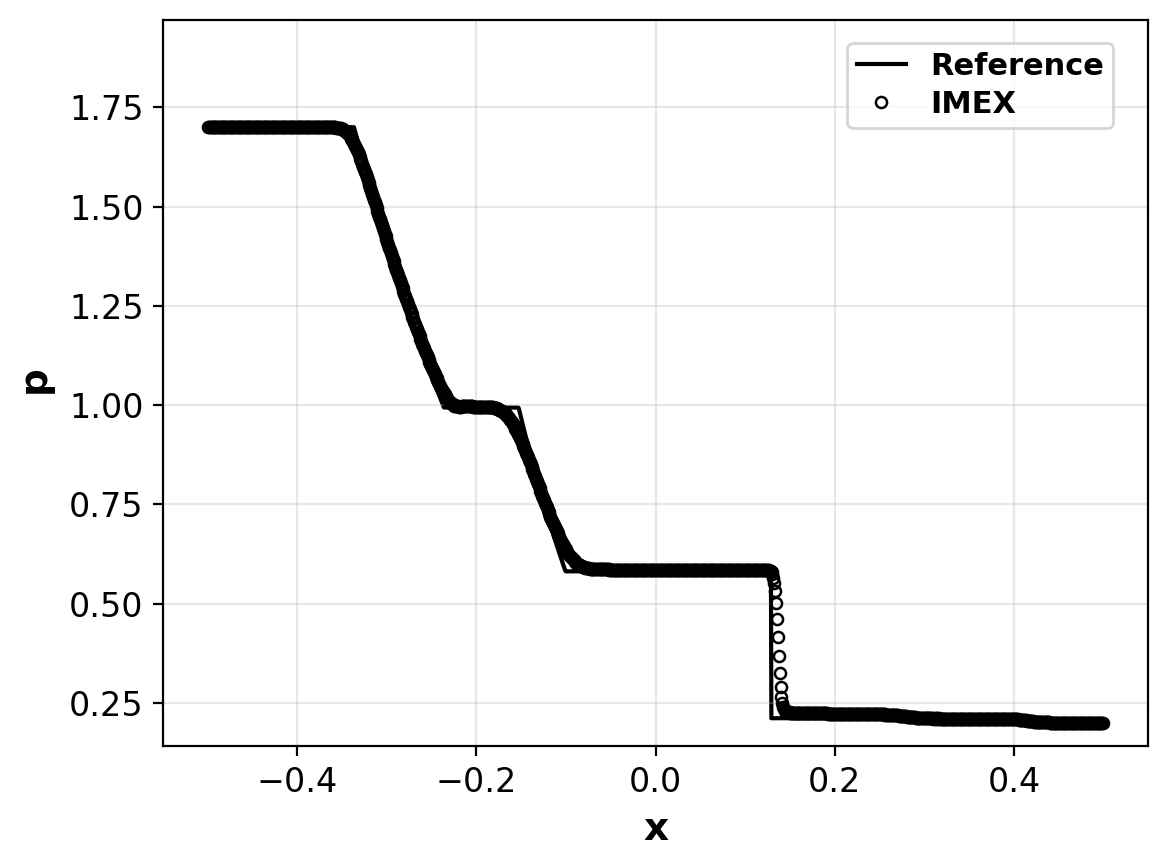}
    \end{minipage}
    \hspace{0.01\linewidth}%
    \begin{minipage}{0.32\linewidth}
        \includegraphics[width=\linewidth]{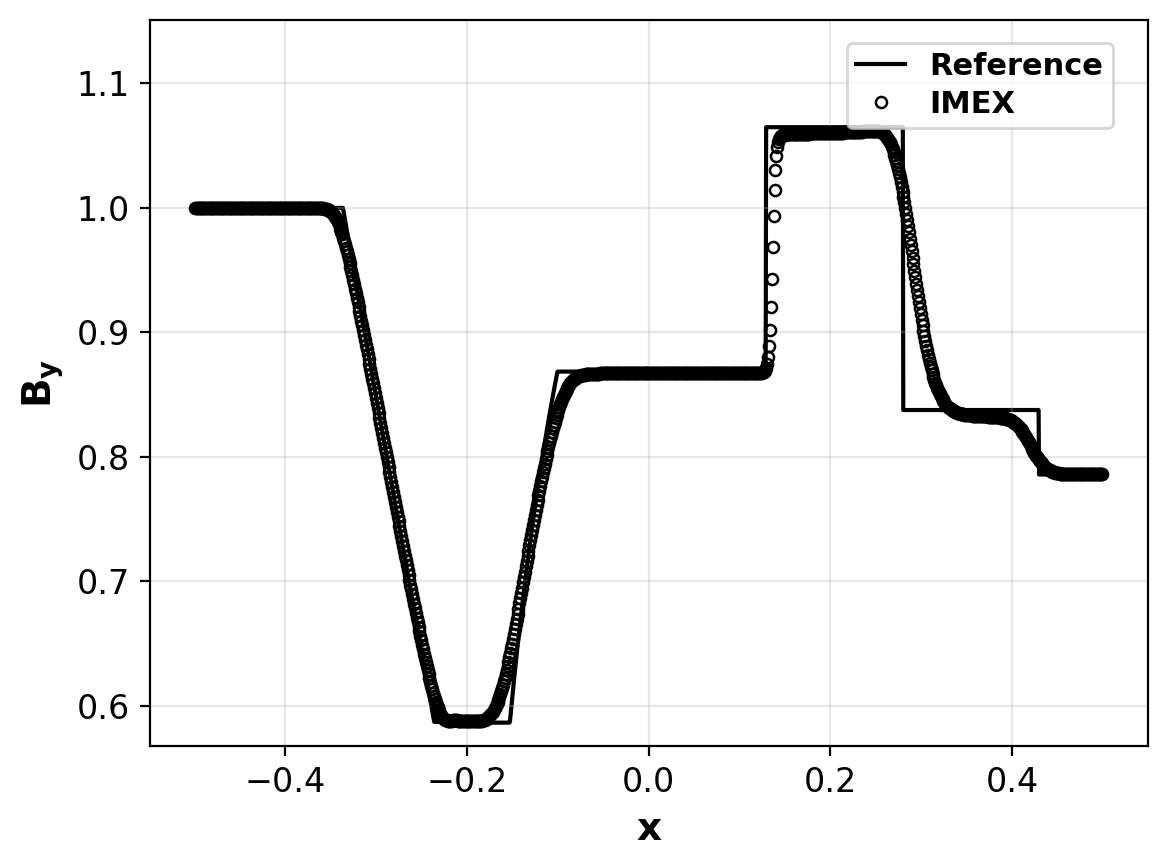}
    \end{minipage}
    
    \vspace{0.5em}

    \begin{minipage}{0.32\linewidth}
        \includegraphics[width=\linewidth]{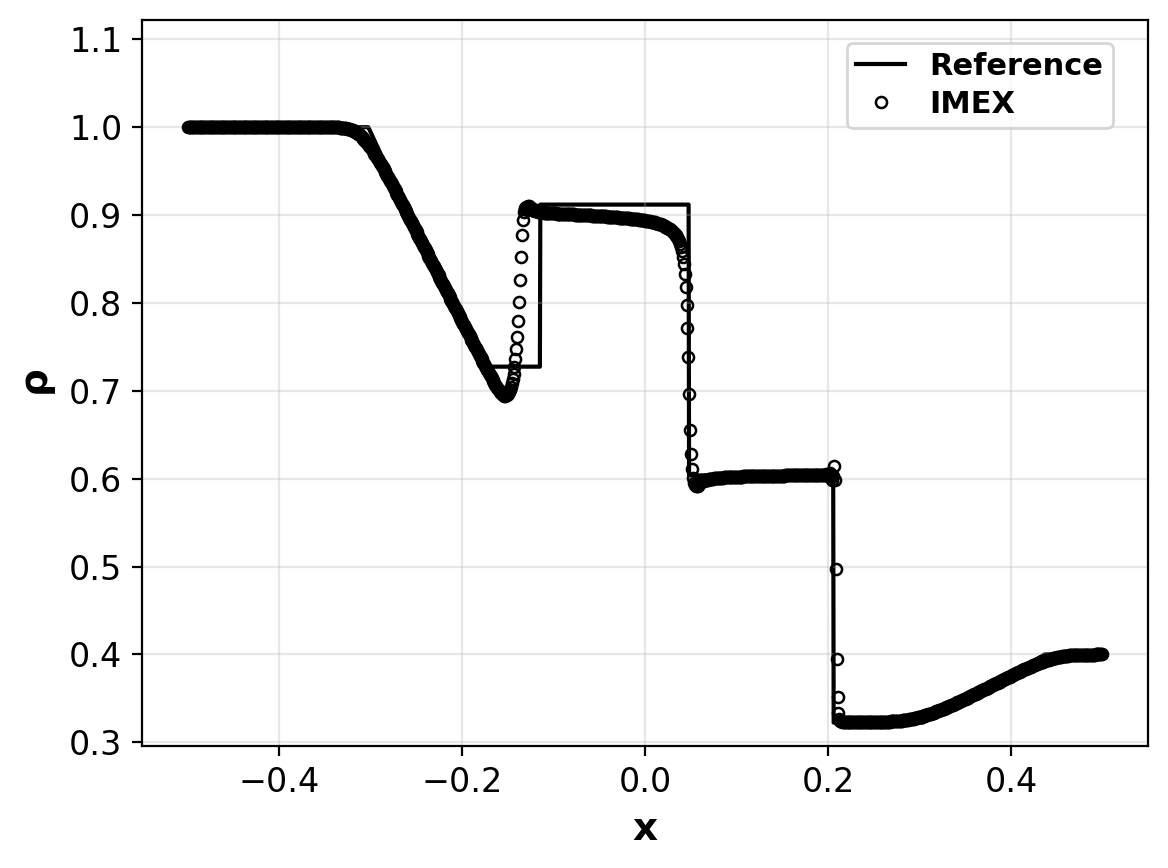}
    \end{minipage}
    \hspace{0.01\linewidth}%
    \begin{minipage}{0.32\linewidth}
        \includegraphics[width=\linewidth]{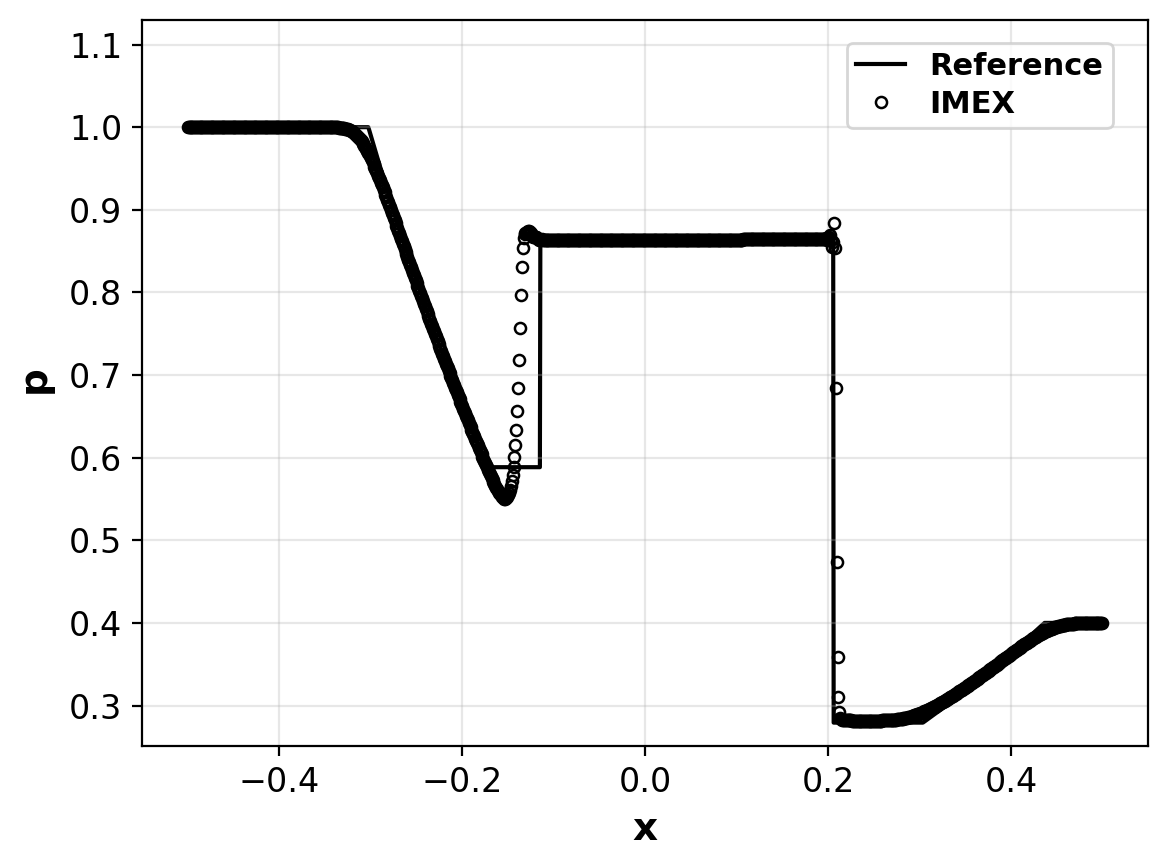}
    \end{minipage}
    \hspace{0.01\linewidth}%
    \begin{minipage}{0.32\linewidth}
        \includegraphics[width=\linewidth]{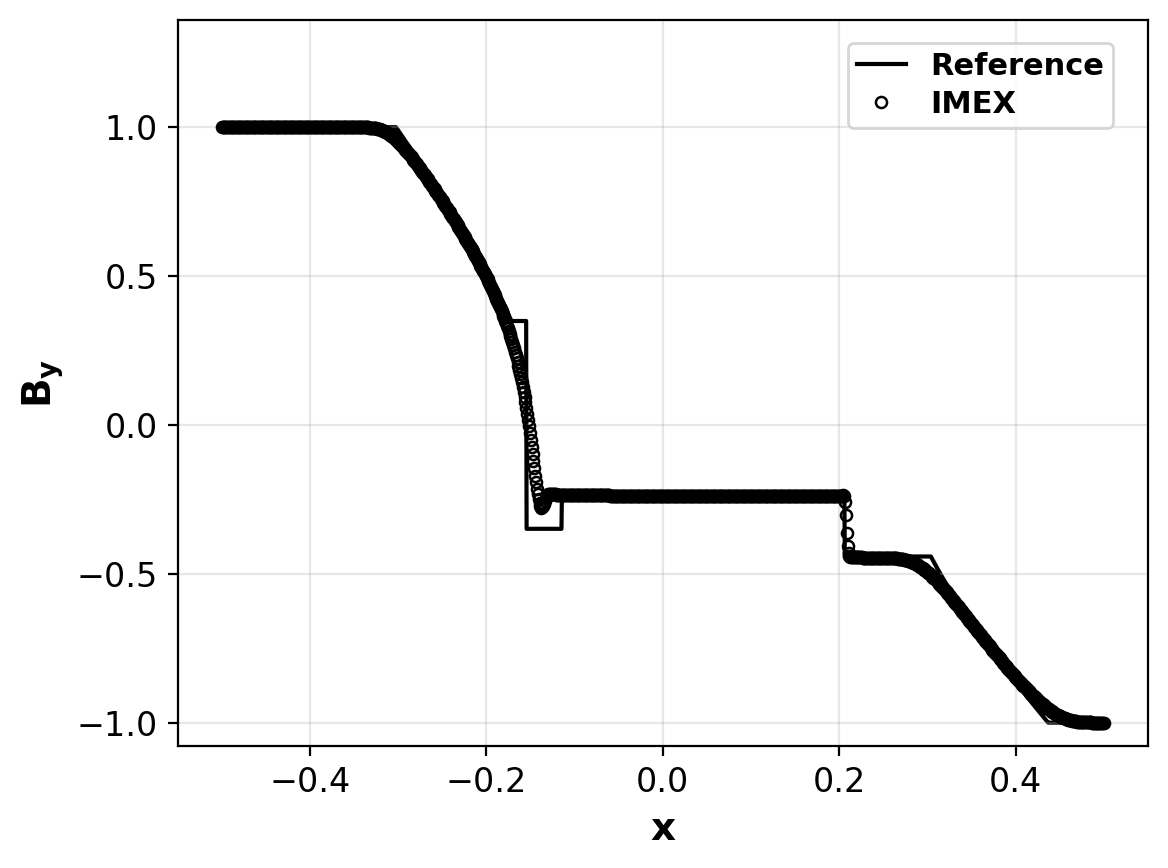}
    \end{minipage}

    \caption{Numerical results for the Riemann problems RP1, RP2, RP3 and RP4 (from top to bottom) obtained using the new semi-implicit scheme compared to exact Riemann solutions (black line). The black symbols indicate results obtained by proposed IMEX scheme.
    The fluid density (left), the fluid pressure (middle) and the component of the magnetic field (right) are depicted at the chosen output time for each test.}
    \label{fig:MHDRPs1}
\end{figure}

\begin{figure}
    \centering
    \begin{minipage}{0.32\linewidth}
        \includegraphics[width=\linewidth]{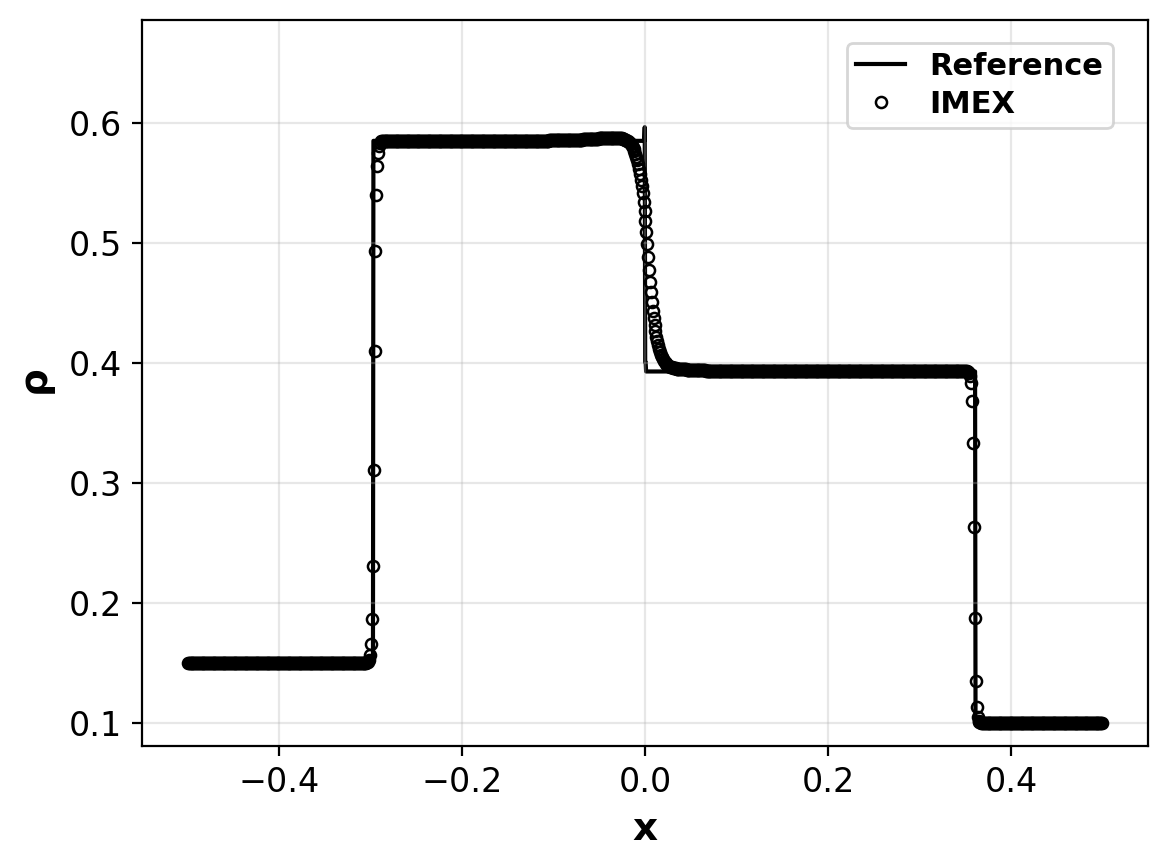}
    \end{minipage}
    \hspace{0.01\linewidth}%
    \begin{minipage}{0.32\linewidth}
        \includegraphics[width=\linewidth]{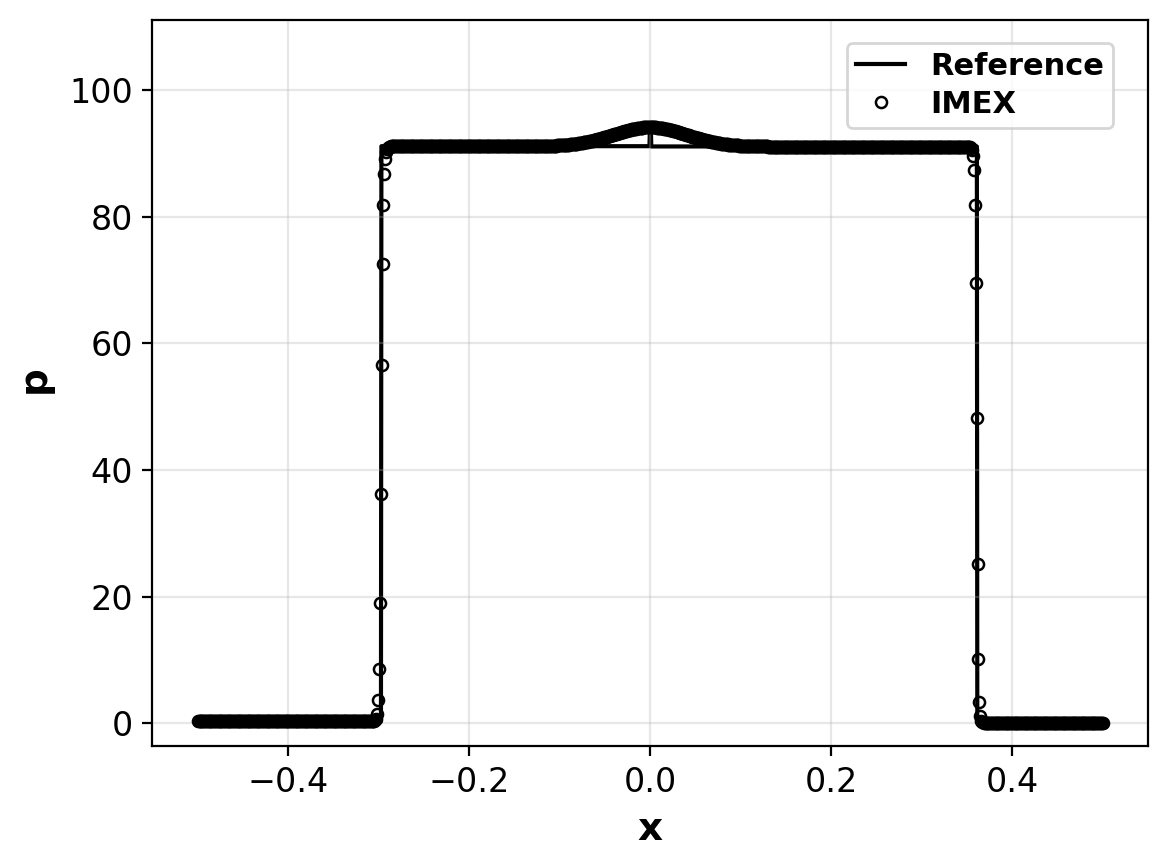}
    \end{minipage}
    \hspace{0.01\linewidth}%
    \begin{minipage}{0.32\linewidth}
        \includegraphics[width=\linewidth]{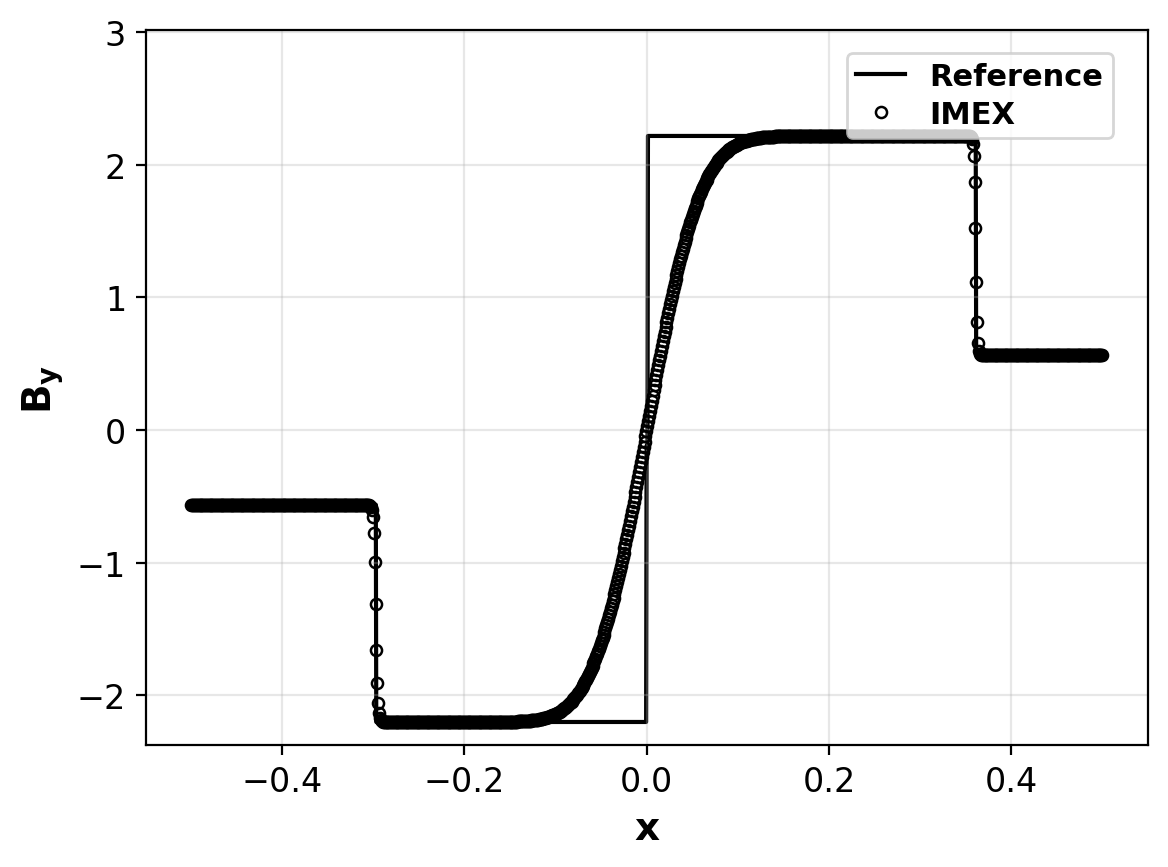}
    \end{minipage}
    \vspace{0.5em}

    \begin{minipage}{0.32\linewidth}
        \includegraphics[width=\linewidth]{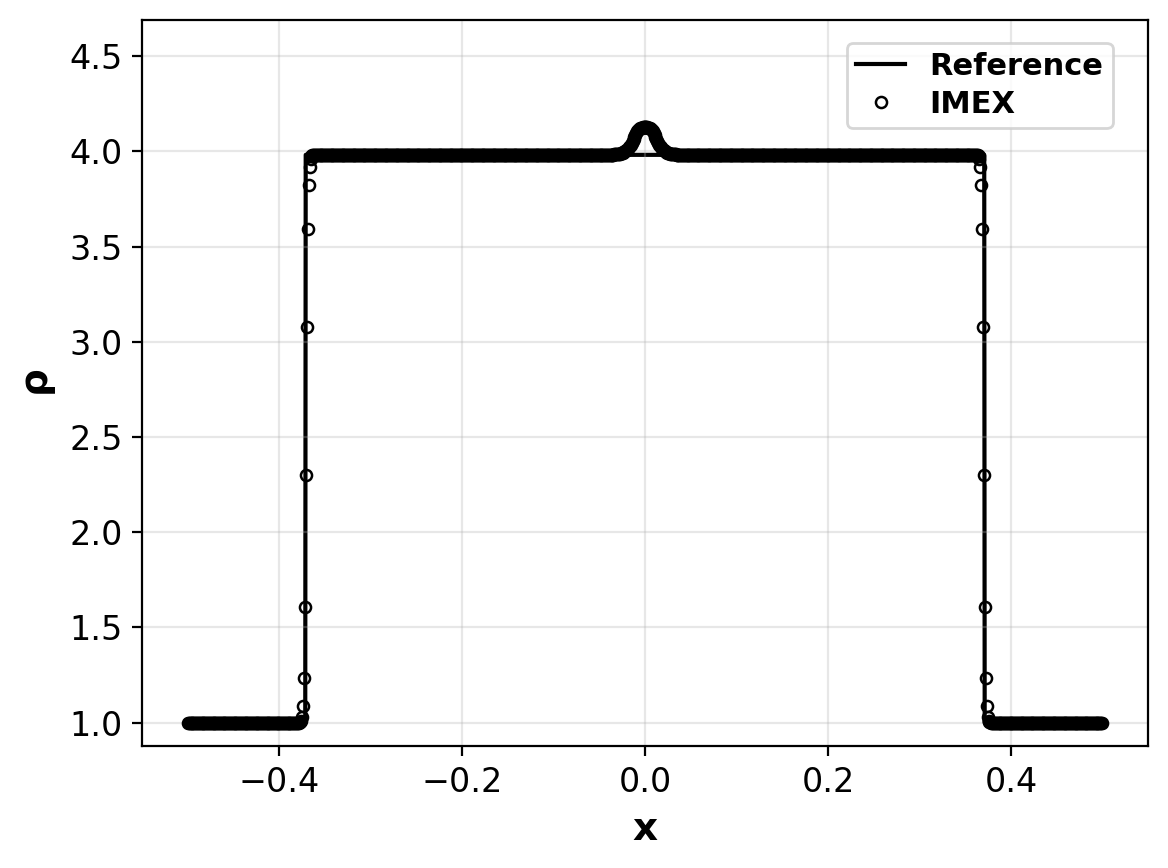}
    \end{minipage}
    \hspace{0.01\linewidth}%
    \begin{minipage}{0.32\linewidth}
        \includegraphics[width=\linewidth]{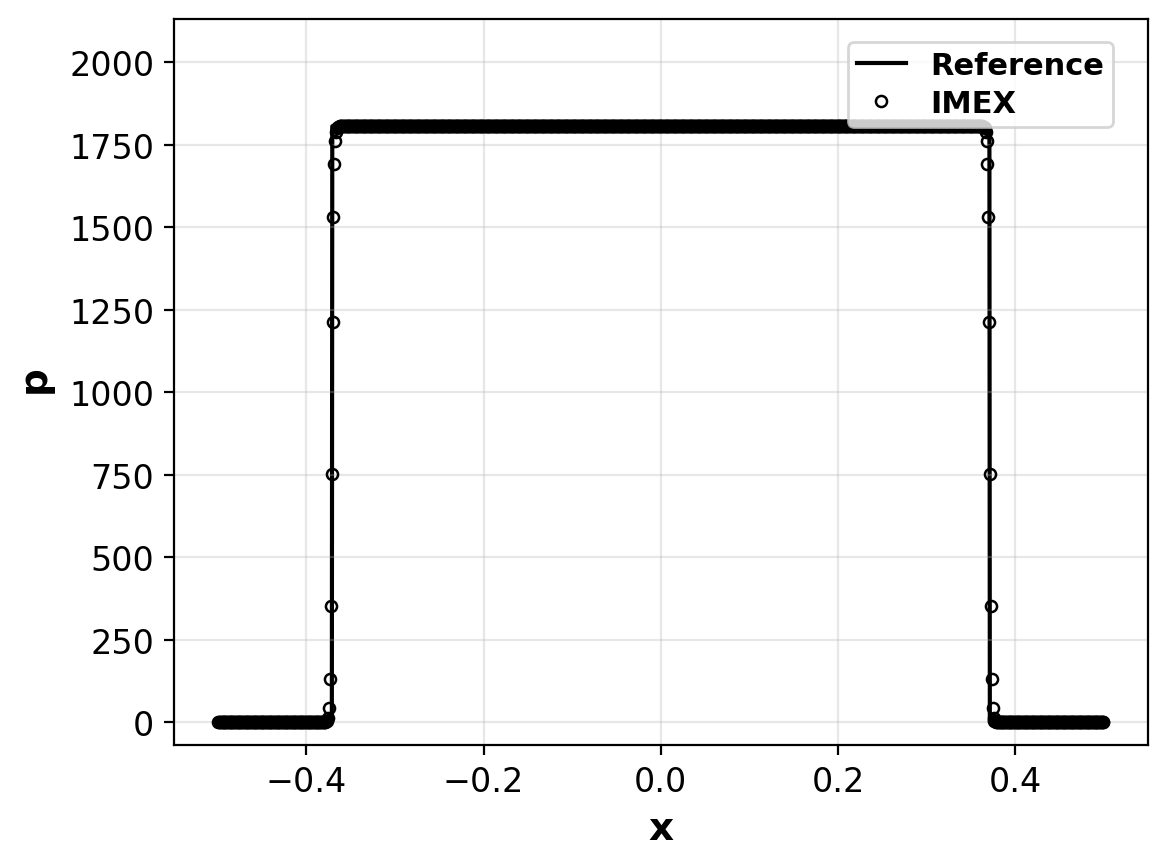}
    \end{minipage}
    \hspace{0.01\linewidth}%
    \begin{minipage}{0.32\linewidth}
        \includegraphics[width=\linewidth]{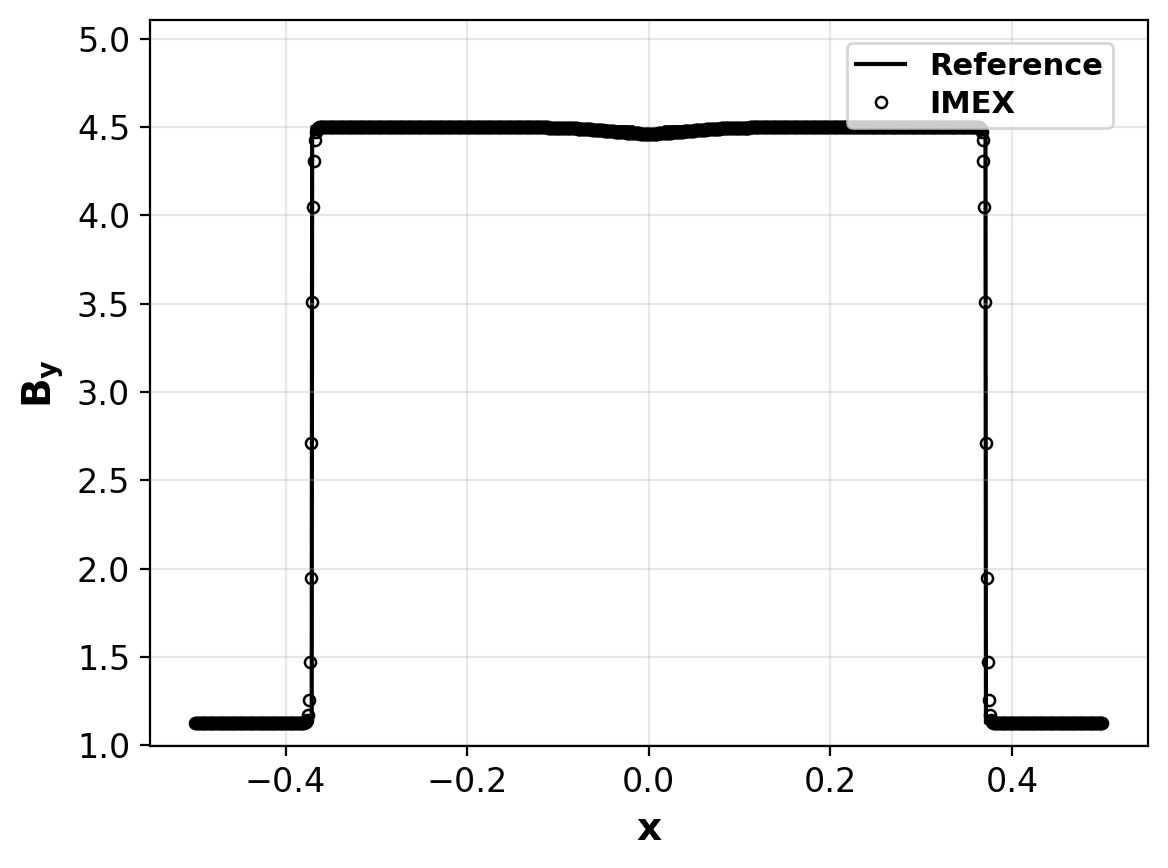}
    \end{minipage}
    \vspace{0.5em}
    \begin{minipage}{0.32\linewidth}
        \includegraphics[width=\linewidth]{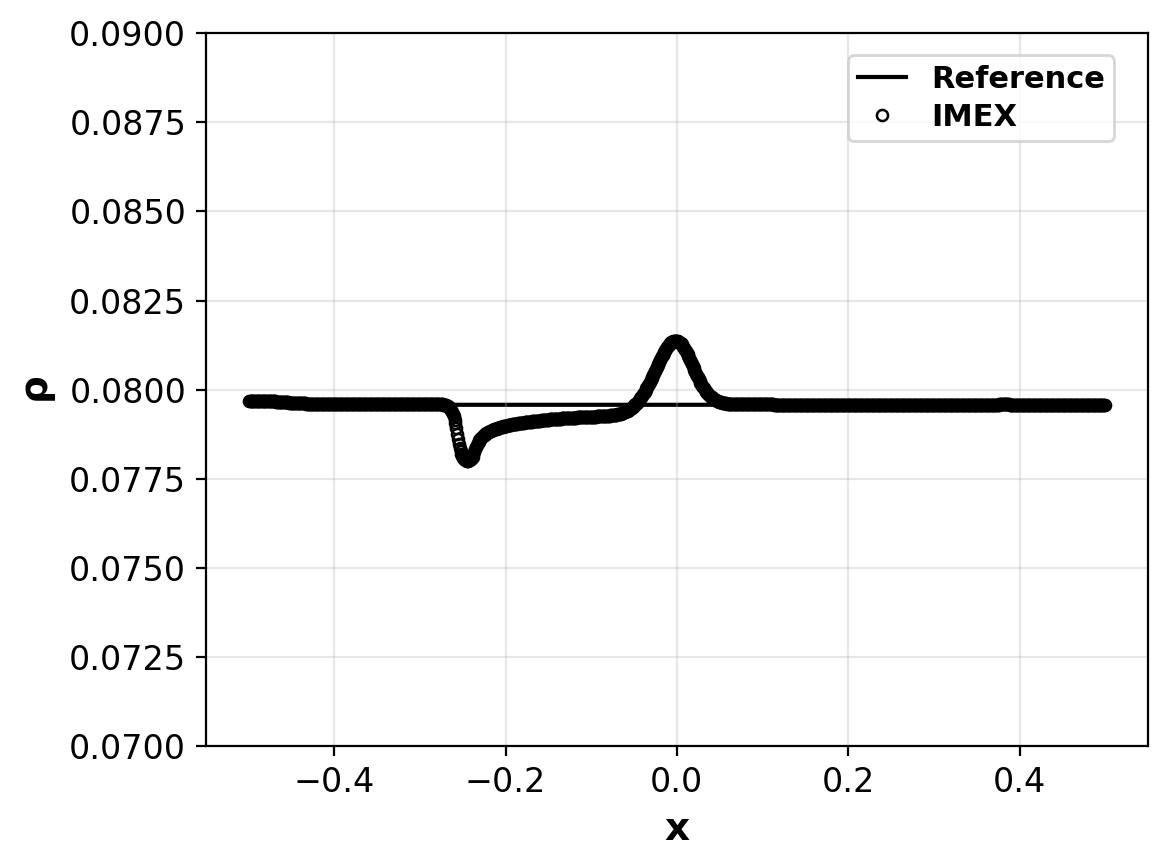}
    \end{minipage}
    \hspace{0.01\linewidth}%
    \begin{minipage}{0.32\linewidth}
        \includegraphics[width=\linewidth]{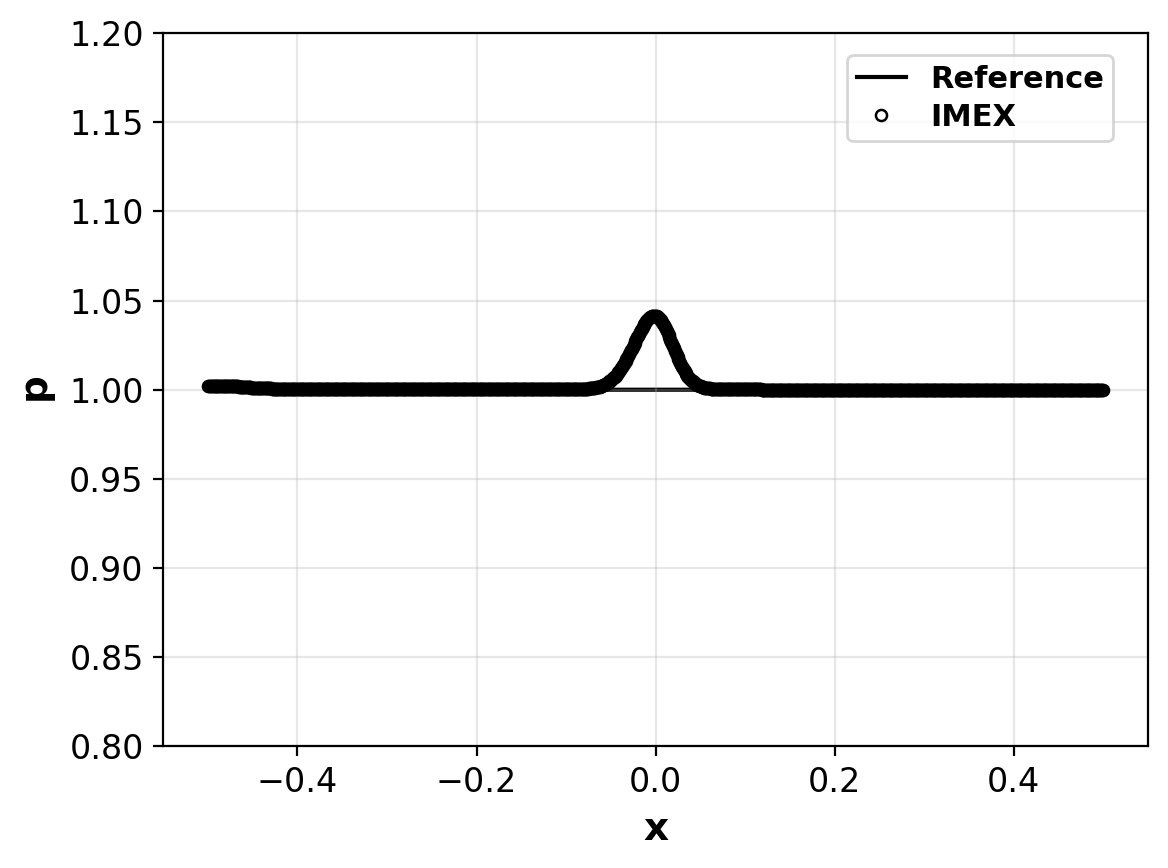}
    \end{minipage}
    \hspace{0.01\linewidth}%
    \begin{minipage}{0.32\linewidth}
        \includegraphics[width=\linewidth]{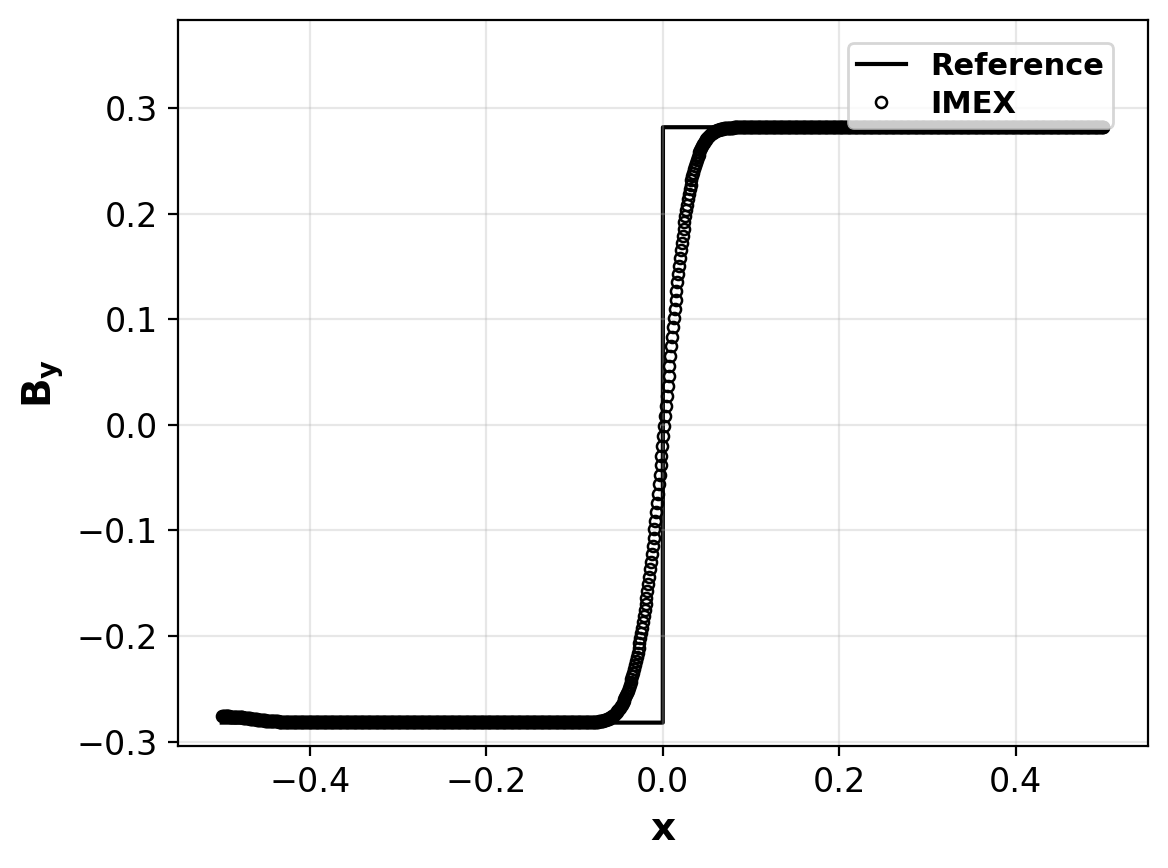}
    \end{minipage}
    \caption{Numerical results for the Riemann problems RP5, RP6 and RP7 (from top to bottom) obtained using the new semi-implicit scheme compared to exact Riemann solutions (black line). The black symbols indicate results obtained by proposed IMEX scheme.
    The fluid density (left), the fluid pressure (middle) and the y-component of the magnetic field (right) are depicted at the chosen output time for each test.}
    \label{fig:MHDRPs2}
\end{figure}

\begin{figure}
    \centering
    \includegraphics[width=0.6\linewidth]{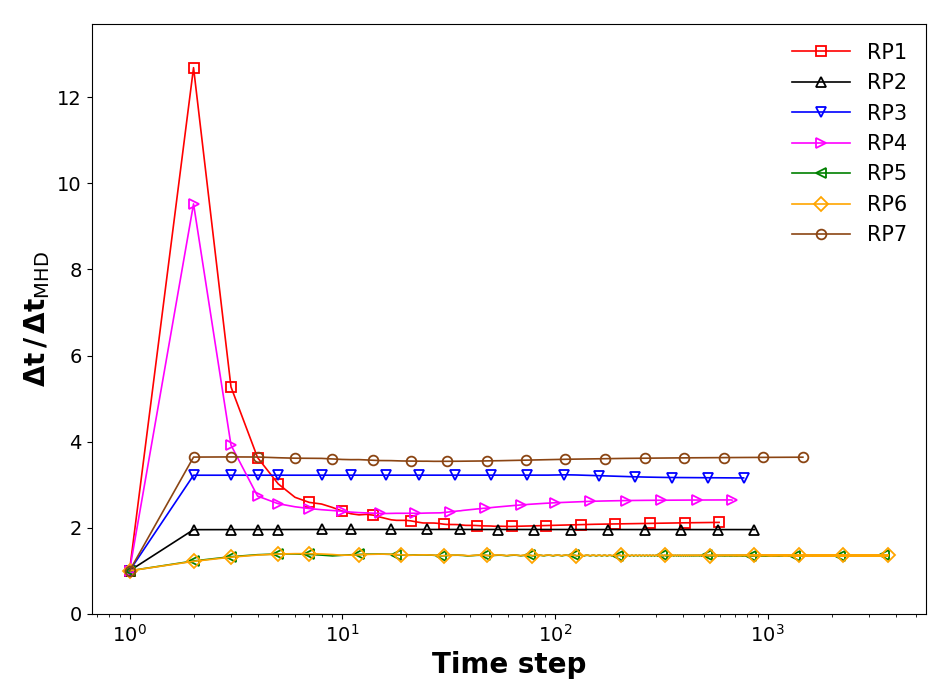}
    \caption{Time evolution of the ratio between the material time step $\Delta t$ of the present semi-implicit IMEX scheme and the magnetosonic time step of a fully explicit finite volume scheme for Riemann problems RP1–RP7.}
    \label{fig:timeofRPs}
\end{figure}

\subsection{Smooth stationary MHD vortex}
\label{chp:vortex_longtime}
 
We now consider a stationary, isodensity variant of the vortex of
Section~\ref{sec:convergence}, run to long simulation times following
Fambri~\cite{Fambri2021}. The initial condition
is given by
\begin{equation}
\label{eq:vortex_ic}
\begin{bmatrix} \rho_0 \\ u_0 \\ v_0 \\ (B_x)_0 \\ (B_y)_0 \\ p_0 \end{bmatrix}
=
\begin{bmatrix}
1 \\[2pt]
-\dfrac{\varepsilon}{2\pi}\, e^{\frac12(1-r^2)}\, y \\[6pt]
\phantom{-}\dfrac{\varepsilon}{2\pi}\, e^{\frac12(1-r^2)}\, x \\[6pt]
-\dfrac{1}{2\pi}\, e^{\frac12(1-r^2)}\, y \\[6pt]
\phantom{-}\dfrac{1}{2\pi}\, e^{\frac12(1-r^2)}\, x \\[6pt]
1 + \dfrac{1}{8\pi}\big(\tfrac{\tilde{\mu}}{2\pi}\big)^{2}(1-r^2)\,e^{(1-r^2)}
  - \dfrac12 \big(\tfrac{\varepsilon}{2\pi}\big)^{2} e^{(1-r^2)}
\end{bmatrix},
\end{equation}
with $\varepsilon = 1$ and $\tilde{\mu} = \sqrt{4\pi}$. The computational
domain is the periodic square $[-10,10]\times[-10,10]$, discretised using
$200$ cells in each coordinate direction. The stationary vortex was run up to
a final time $t=1000$ using a CFL-type stability condition only based on the
fluid velocity, with a $\mathrm{CFL}=0.45$.
 
Numerical results for the normalised magnetic field
$\mathbf{b} = \mathbf{B}/\max\lVert\mathbf{B}_0\rVert$ obtained with the
proposed semi-implicit scheme using the unstaggered CT method are shown in
Fig.~\ref{fig:MHDvortex} at times $t=500$ and $t=1000$. A one-dimensional
line-out through the vortex centre of the obtained normalised magnetic field
$\mathbf{b}$ is then presented in Fig.~\ref{fig:vortex_lineout} at time
$t=1000$. In this figure, the numerical solution obtained from the present
algorithm using the staggered CT method is compared to the solution obtained using the unstaggered CT presented in Section~\ref{chp:divfree_CT}, and in turn both are compared to the
reference (initial) solution. We observe that the proposed scheme, using
either methodology for controlling the divergence constraint, is able to
preserve the stationarity of the vortex for this long time simulation, with
the magnetic field divergence remaining at machine precision throughout the
entire run. The results are in good agreement with those obtained by
Fambri~\cite{Fambri2021} and Dematt\'e et al.~\cite{Dematte2024}.

\begin{figure}
    \centering
    \begin{minipage}{0.48\linewidth}
        \includegraphics[width=\linewidth]{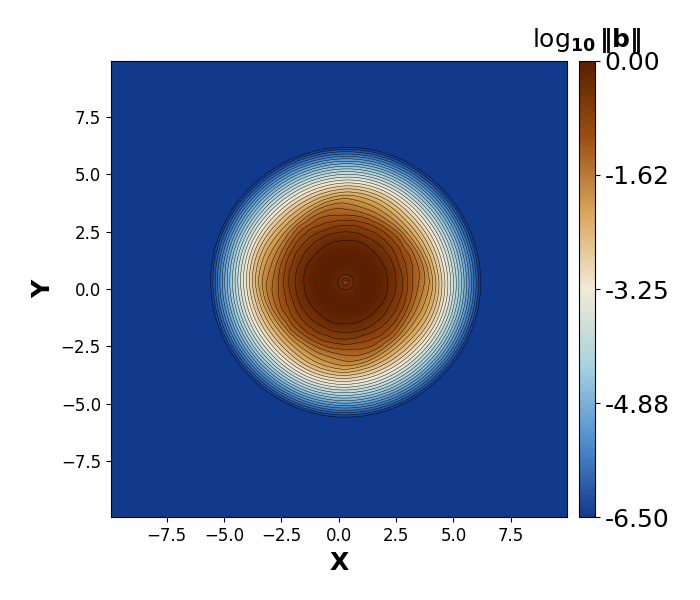}
    \end{minipage}
    \hspace{0.01\linewidth}%
    \begin{minipage}{0.48\linewidth}
        \includegraphics[width=\linewidth]{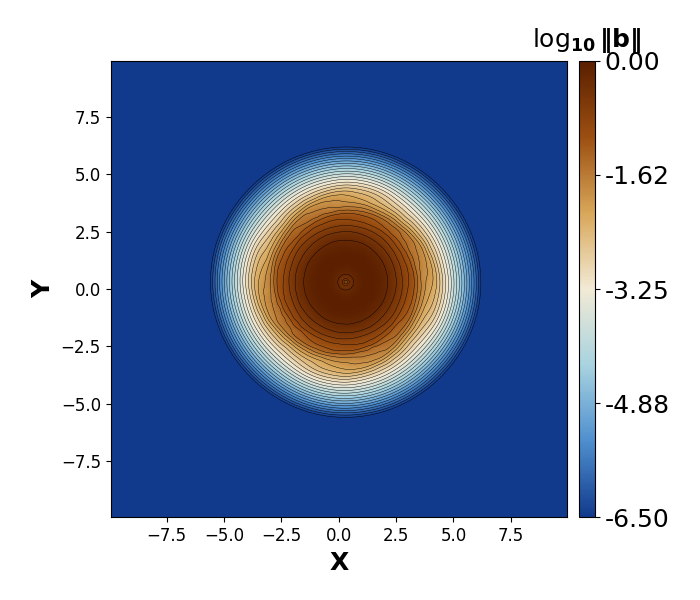}
    \end{minipage}
    \caption{Long-time simulation of a smooth stationary MHD vortex. Contour plots of $\log_{10}\|\mathbf{b}\|$, overlaid with 20 equidistant contour lines between $-6.5$ and $0$, obtained with the proposed semi-implicit scheme using the unstaggered adaptation of the CT method, shown at times $t=500$ (left) and $t=1000$ (right).}
    \label{fig:MHDvortex}
\end{figure}

\begin{figure}
    \centering
    \includegraphics[width=0.6\linewidth]{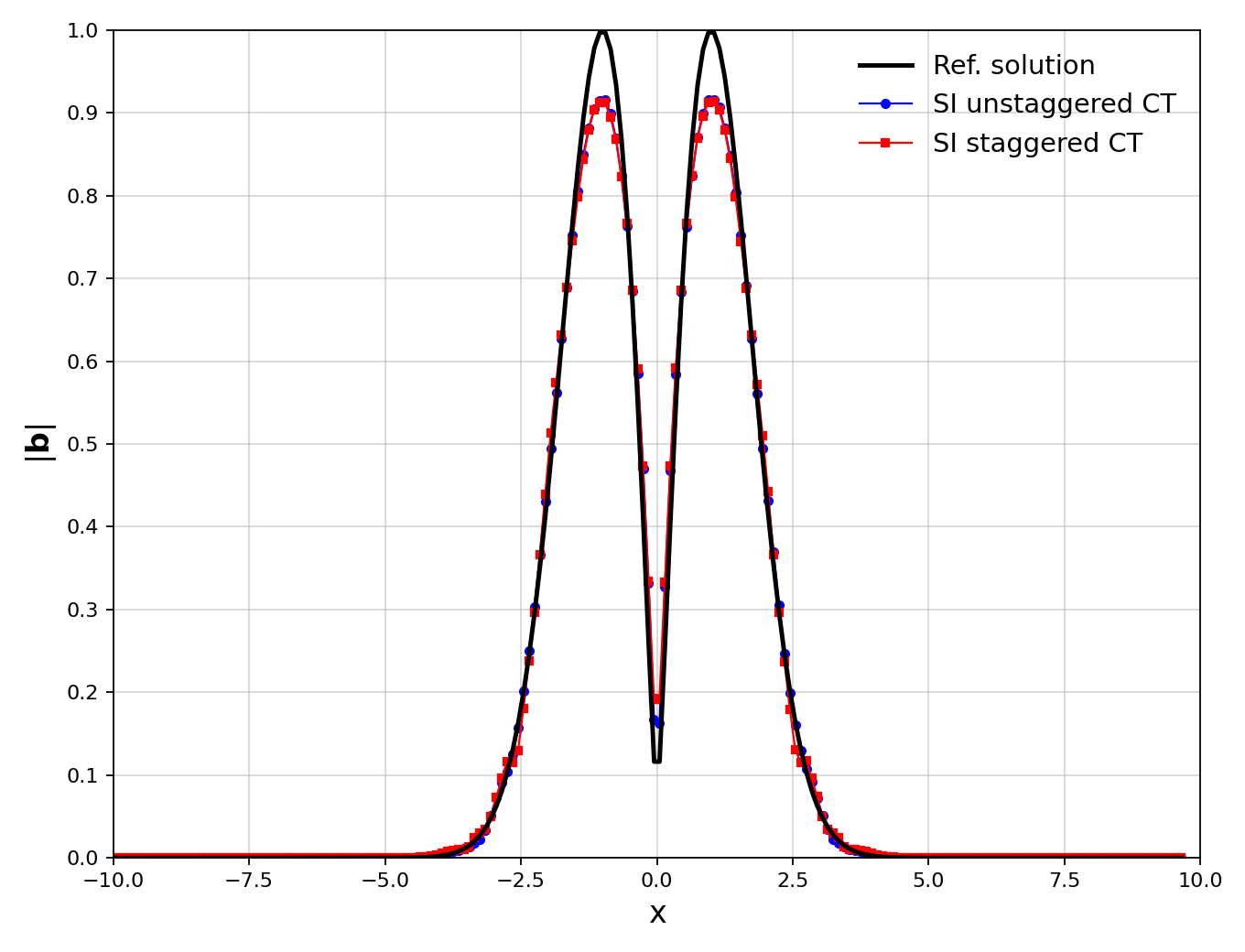}
    \caption{Long-time simulation of a smooth stationary MHD vortex at time $t=1000$. One-dimensional line-out along the $x$-axis of the normalised magnetic field $\mathbf{b}$, obtained with the proposed semi-implicit algorithm using either the staggered or the unstaggered adaptation of the CT method described in Section~\ref{chp:divfree_CT} to enforce the divergence-free condition of the magnetic field. The solution is compared with the reference (initial) solution.}
    \label{fig:vortex_lineout}
\end{figure}

\subsection{Ideal MHD rotor problem}

We next turn to the MHD rotor test of Balsara and Spicer~\cite{BalsaraSpicer1999},
by now a standard benchmark for gauging how numerical schemes cope with the
ideal MHD system. The configuration consists of a dense, rapidly spinning
cylinder of fluid immersed in a lighter fluid at rest. As the heavy core
rotates, it whips strong torsional Alfv\'en waves out into the ambient medium.
Pressure and magnetic field are initially uniform over the whole domain, and the
complete initial data read
\begin{equation}
\mathbf{B}_0 = \left(\frac{2.5}{\sqrt{4\pi}},\,0,\,0\right)^{\!\top},
\qquad p_0 = 1.0, \qquad w_0 = 0,
\end{equation}
\begin{equation}
(\rho_0,\, u_0,\, v_0) =
\begin{cases}
(10.0,\ -10\,y,\ 10\,x)^{\top}, & r \le 0.1, \\[4pt]
(1.0,\ 0.0,\ 0.0)^{\top}, & r > 0.1,
\end{cases}
\qquad r = \sqrt{x^2 + y^2}.
\end{equation}
The domain is the
unit square $[-0.5,0.5]^2$, discretised with a uniform Cartesian mesh of
$1024 \times 1024$ cells.

Fig~\ref{fig:rotor_results} collects the fluid density $\rho$, the pressure $p$ and
the magnetic energy $m$ at $t=0.25$. The left and
right columns compare the staggered and unstaggered CT formulations of
Sections~\ref{chp:divfree_CT}. In either case the solution matches the
literature closely, see ~\cite{Dumbser2019,BoscheriADER,BoscheriThomann2024}. To confirm that the solenoidal constraint is
respected, Fig.~\ref{fig:rotor_CT} tracks the $L_2$ norm of the magnetic-field
divergence error in time, which stays at machine-precision level throughout the
run for both CT strategies.

\begin{figure}
    \centering
    \includegraphics[width=0.85\linewidth]{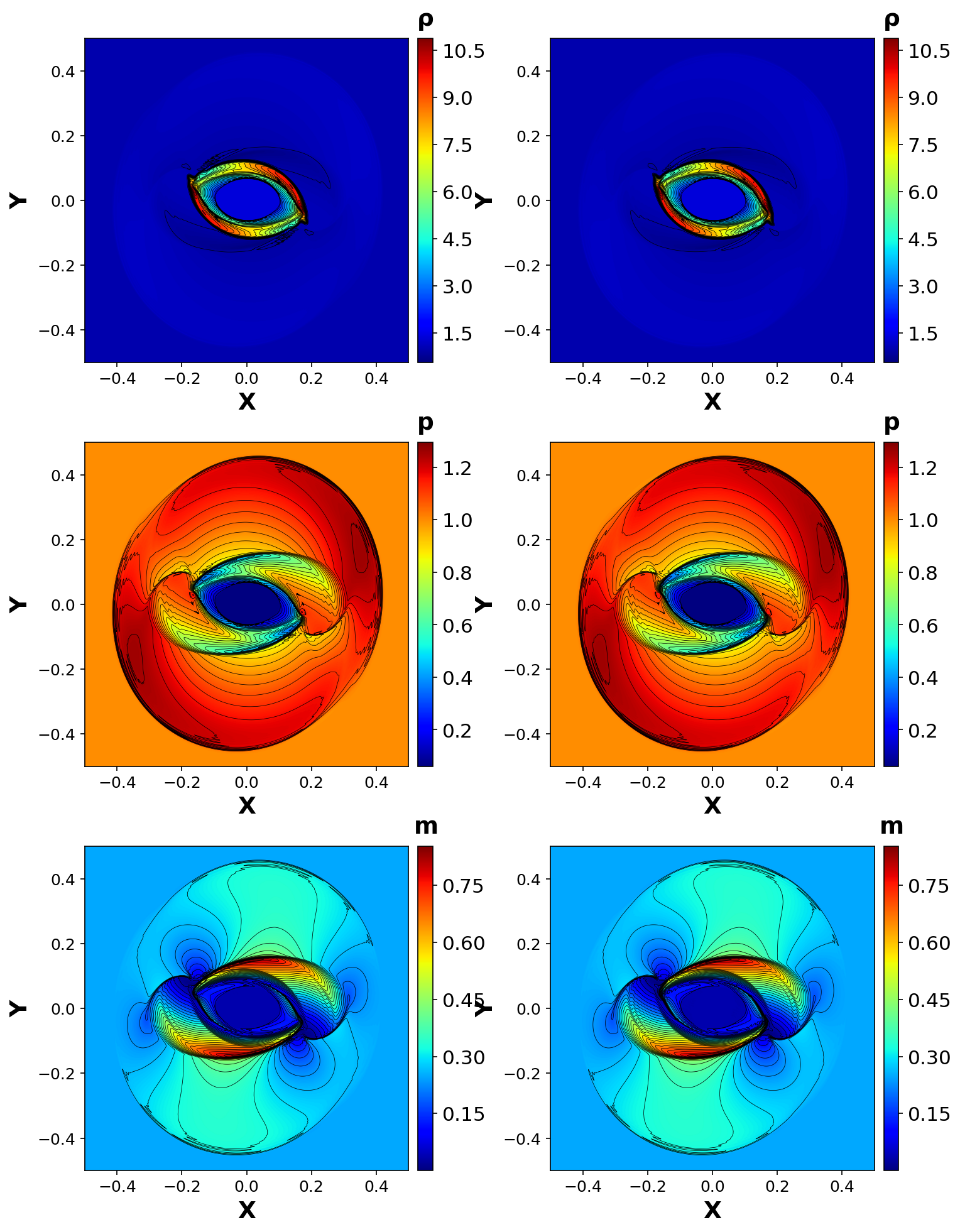}

    \caption{Ideal MHD rotor problem at $t=0.25$. Fluid density $\rho$, pressure $p$
    and magnetic energy $m$ (top to bottom), computed with the proposed semi-implicit
    scheme. The left and right columns show the staggered and unstaggered adaptations
    of the CT method, respectively.}
    \label{fig:rotor_results}
\end{figure}

\begin{figure}
    \centering
    \includegraphics[width=0.42\linewidth]{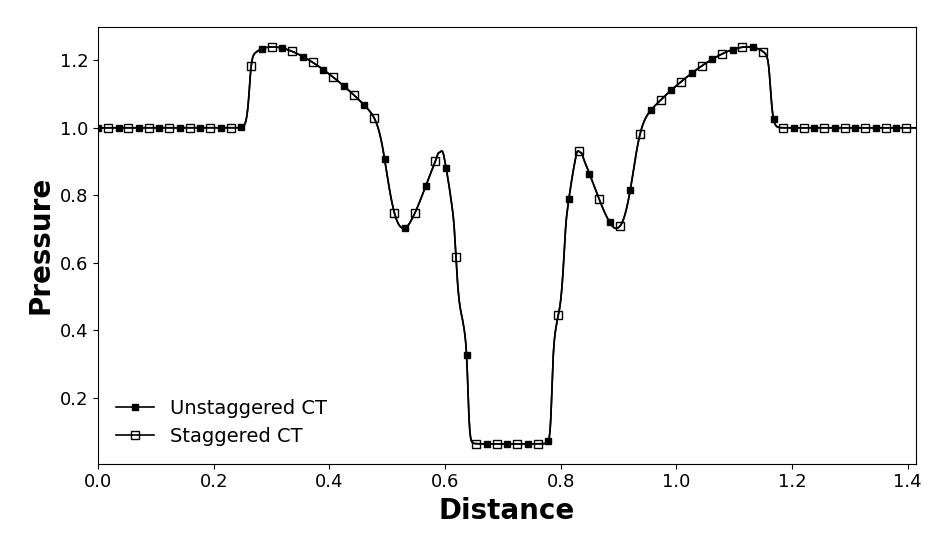}
    \includegraphics[width=0.46\linewidth]{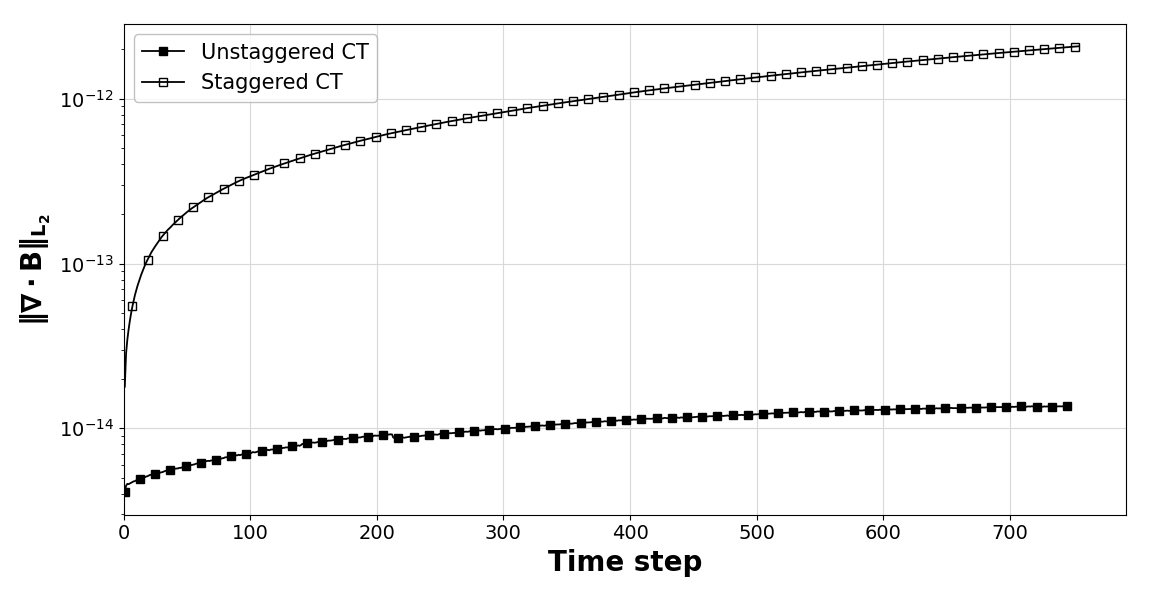}

    \caption{Detailed comparison of the two CT formulations for the ideal MHD rotor
    problem. Left: one-dimensional cross-section of the pressure field along the
    diagonal connecting $(0,0)$ and $(1,1)$, obtained with the proposed semi-implicit
    scheme using the staggered (open symbols) and unstaggered (filled symbols)
    adaptations of the CT method. Right: time evolution of
    $\|\nabla\!\cdot\mathbf{B}\|_{L^2}$ for the same two formulations, sampled every
    10 time steps.}
    \label{fig:rotor_CT}
\end{figure}

\subsection{Ideal Orszag--Tang vortex}

We now consider the Orszag--Tang vortex~\cite{orszag1979}, a popular test for
probing the robustness of a scheme in the presence of MHD shock formation,
shock--shock interactions and the ensuing transition to two-dimensional MHD
turbulence. Starting from a smooth initial state, the flow rapidly spawns a
family of shock waves travelling at disparate speeds before breaking down into
turbulence; for a thorough account of the underlying physics we refer to Orszag
and Tang~\cite{orszag1979} and Dahlburg and Picone~\cite{dahlburg1989,picone1991}. The computational domain is the unit square $[0,1]^2$ on a
uniform Cartesian grid of $1000 \times 1000$ cells, and the initial data are
\begin{align}
\rho_0 &= \gamma^{2}, \qquad p_0 = \gamma, \qquad
\mathbf{u}_0 = \bigl(-\sin(2\pi y),\ \sin(2\pi x),\ 0\bigr)^{\!\top}, \\[4pt]
\mathbf{B}_0 &= \bigl(-\sin(2\pi y),\ \sin(4\pi x),\ 0\bigr)^{\!\top},
\end{align}
with $\gamma = 5/3$. Fig.~\ref{fig:ot_vortex} shows the gas pressure and the
magnetic-field magnitude delivered by the proposed semi-implicit scheme at
$t=0.5/2\pi$, $2.0/2\pi$, $3/2\pi$ and $5/2\pi$. The $\nabla\cdot\mathbf{B}=0$
constraint is enforced through the unstaggered CT method. The computed fields agree well
with results in the literature, see e.g.\ Dumbser et
al.~\cite{Dumbser2019,dumbser2008}, Fambri~\cite{Fambri2021} and Balsara et
al.~\cite{balsara2015}.

\begin{figure}
\centering
\includegraphics[height=0.8496\textheight]{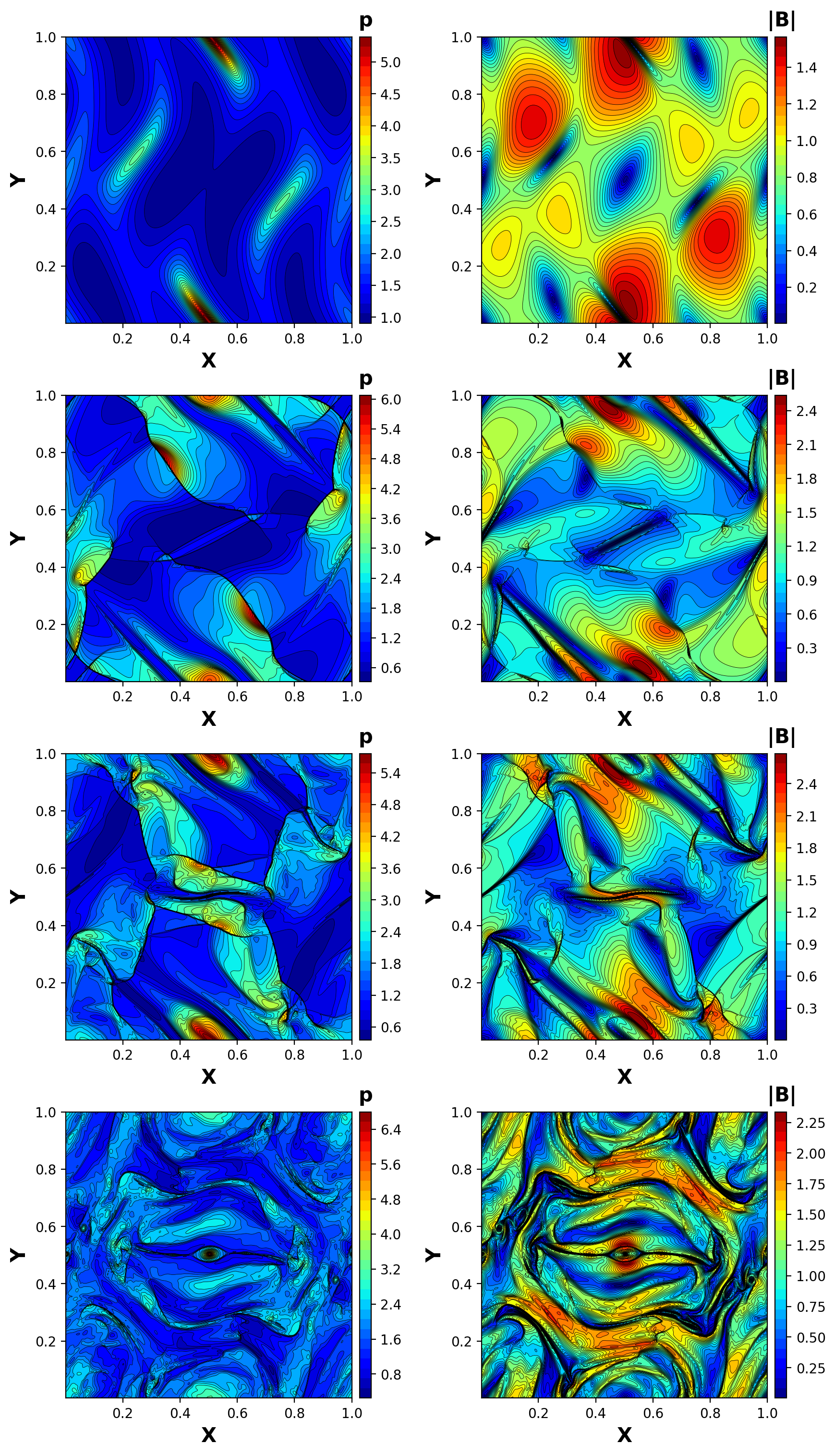}
\caption{Numerical solution for the pressure (left) and the magnetic field
magnitude (right) obtained with the proposed semi-implicit scheme for the ideal
Orszag--Tang vortex at times $t=0.5/2\pi$, $t=2.0/2\pi$, $t=3/2\pi$ and
$t=5/2\pi$ (from top to bottom).}
\label{fig:ot_vortex}
\end{figure}

\subsection{Ideal MHD blast wave problem}

The MHD blast wave of Balsara and Spicer~\cite{BalsaraSpicer1999} was devised to
scrutinise how a scheme handles strong shocks sweeping through a strongly
magnetised background, and is accordingly a demanding measure of solver
robustness. It readily provokes unphysical densities or pressures whenever the
divergence-free condition is enforced inadequately, or whenever insufficient
numerical dissipation is supplied at the shock fronts. The blast is launched from
an over-pressurised circular pocket of radius $R=0.1$ at the centre of the
domain, with the ambient fluid threaded by a very strong, uniform magnetic
field. The initial data are
\begin{equation}
\rho_0 = 1.0, \qquad \mathbf{u}_0 = \mathbf{0},
\qquad
\mathbf{B}_0 = \frac{1}{\sqrt{4\pi}}\,(100.0,\,0.0,\,0.0)^{\!\top},
\end{equation}
\begin{equation}
p =
\begin{cases}
1000.0, & 0 \le r \le R, \\[2pt]
0.1, & \text{otherwise},
\end{cases}
\end{equation}
with $r = \sqrt{x^2 + y^2}$. The domain is the square $[-0.5,0.5]^2$, resolved by
a uniform Cartesian grid of $1000 \times 1000$ cells.

Fig.~\ref{fig:blast} presents the density, velocity magnitude, gas pressure and
magnetic pressure at $t=0.01$ obtained with the proposed semi-implicit finite
volume scheme. The results compare favourably with those reported
in the literature, see for instance Balsara and Spicer~\cite{BalsaraSpicer1999}.

\begin{figure}
    \centering
    \includegraphics[width=\linewidth]{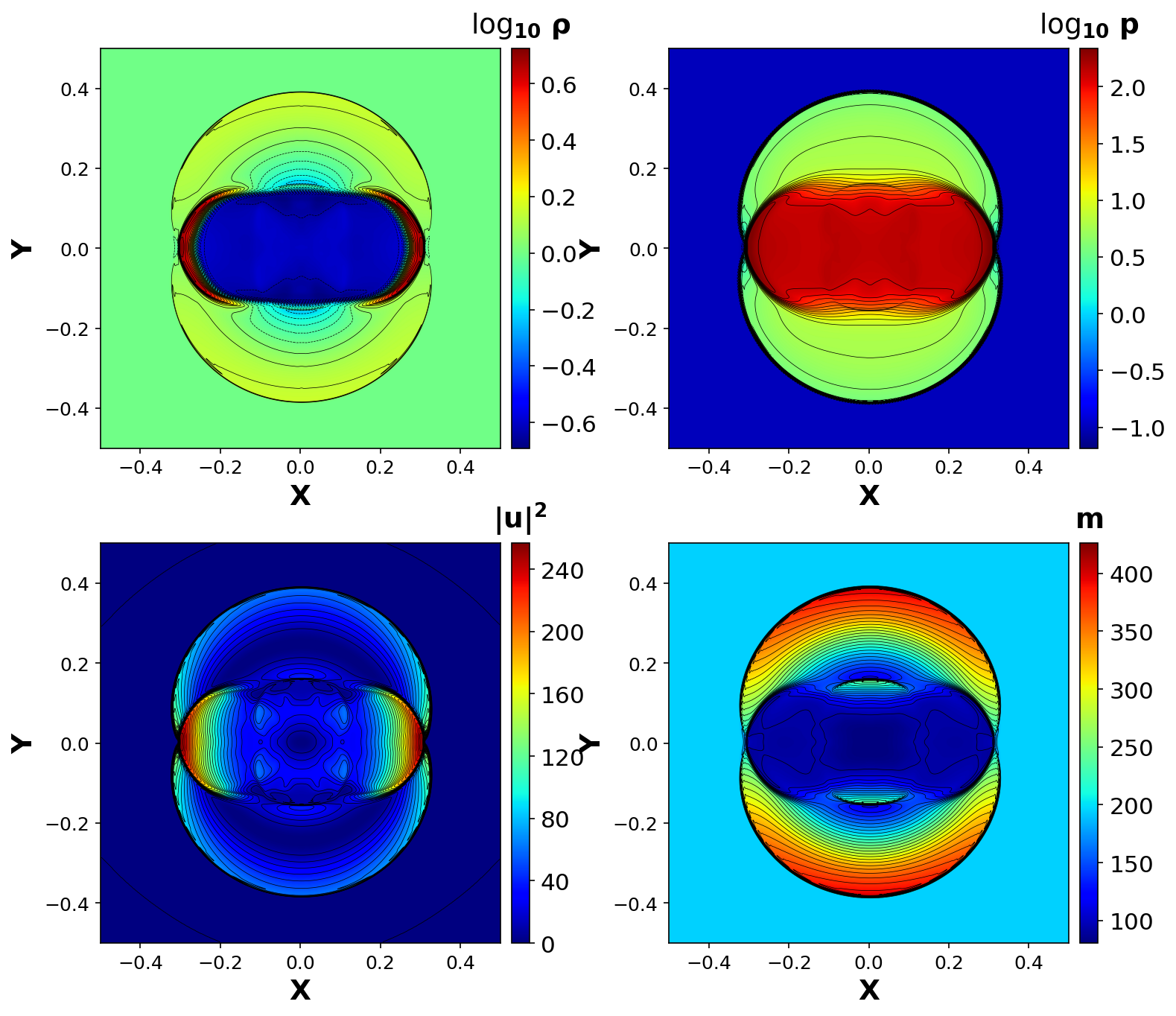}

    \caption{Numerical solution of the MHD blast wave test obtained
with the proposed semi-implicit scheme. Base-10 logarithm of the density
(top left) and of the pressure (top right), squared velocity magnitude
$|\mathbf{u}|^{2}$ (bottom left) and magnetic pressure $m$ (bottom right).}
    \label{fig:blast}
\end{figure}

\begin{figure}
    \centering
    \includegraphics[width=\linewidth]{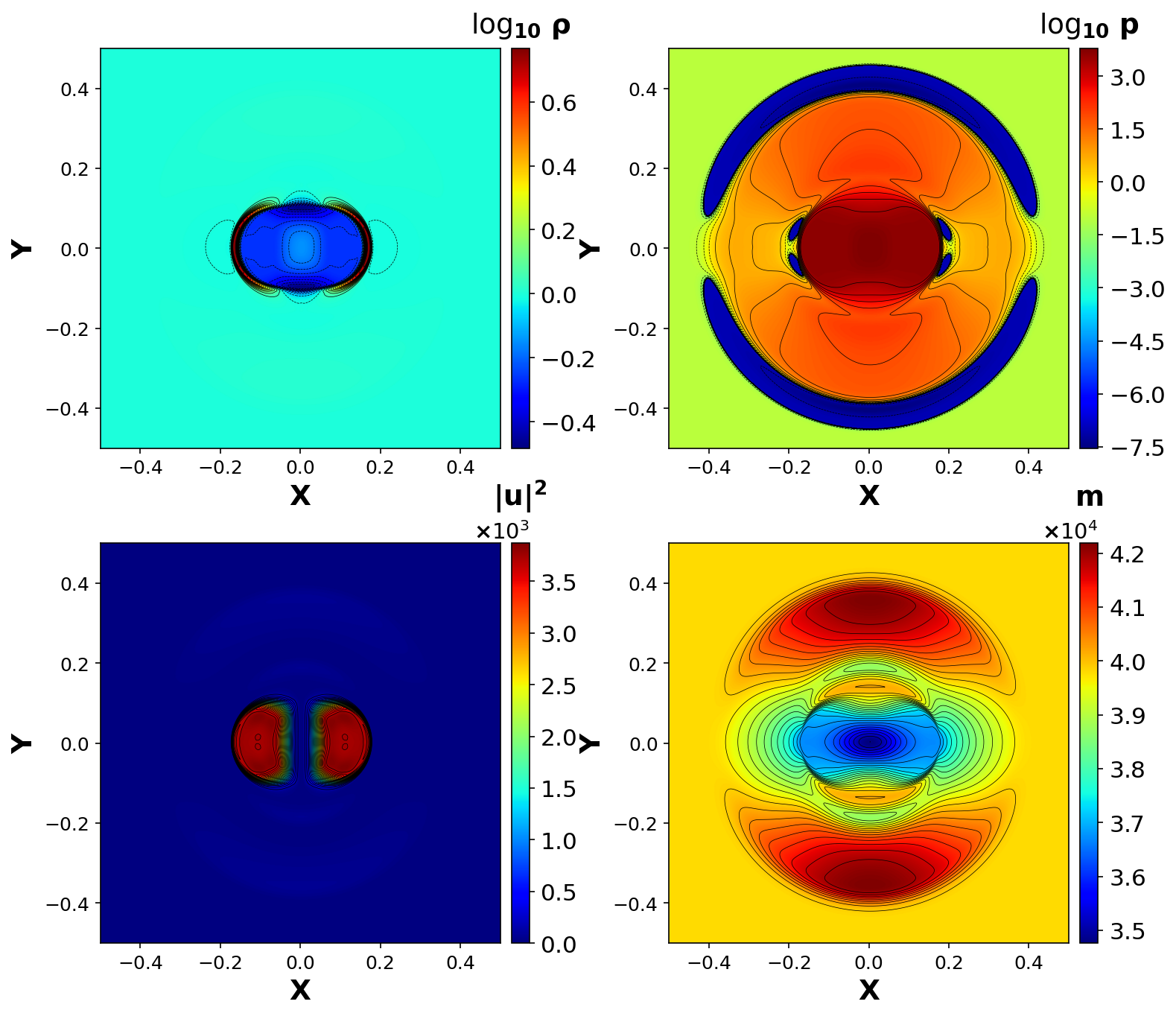}
    \caption{Numerical solution of the MHD blast wave test II obtained
with the proposed semi-implicit scheme. Base-10 logarithm of the density
(top left) and of the pressure (top right), squared velocity magnitude
$|\mathbf{u}|^{2}$ (bottom left) and magnetic pressure $m$ (bottom right).}
    \label{fig:blast2}
\end{figure}

\paragraph{Blast wave test II}
To further stress the scheme, and in particular to probe the positivity-preserving
pressure solver, we consider the more
demanding blast configuration of Liu et al.~\cite{activeflux}. The layout mirrors the
previous case, but with a markedly stronger explosion and magnetisation: the
central pressure is raised to $p_0 = 10^{4}$ and the ambient field to
\begin{equation}
\mathbf{B}_0 = \frac{1}{\sqrt{4\pi}}\,(1000.0,\,0.0,\,0.0)^{\!\top}.
\end{equation}
These choices produce far sharper discontinuities and an exceedingly low plasma
beta, $\beta \approx 2.51 \times 10^{-6}$, roughly one hundredth of the value in
the standard blast setup above. In this regime the internal energy is recovered
as a tiny difference between the total energy and the dominant kinetic and
magnetic contributions, so that a naive update readily yields negative pressures
or densities; the test therefore constitutes a stringent assessment of the
positivity machinery. The remaining initial data, the domain $[-0.5,0.5]^2$ and
the density and pressure background, are left unchanged, and the mesh is refined
to a uniform Cartesian grid of $1000 \times 1000$ cells. The solution is advanced
to $t = 0.001$.

The corresponding results are collected in Fig.~\ref{fig:blast2}. Throughout the
entire run the pressure and density are kept strictly positive by the
nested-Newton positivity-preserving solver, and no spurious oscillations develop
across the strong fronts, so that the computation completes without recourse to
any ad hoc floors or limiters. Owing to the greatly increased magnetisation, the
outer fast shock is considerably weaker and can no longer be discerned in the
density contours, in line with earlier reports~\cite{activeflux,SC3,wu2018}. This
test confirms the robustness of the proposed scheme, and of its
positivity-preserving pressure update, in the strongly magnetised,
low-plasma-beta regime.

\subsection{Field loop advection}

We consider the advection of a magnetic field loop, a classical benchmark
introduced by Gardiner and Stone~\cite{GardinerStone2005}, here in the low-acoustic-Mach
variant of~\cite{BoscheriThomann2024}. The medium has constant density
$\rho=1$ and a high background pressure
$p=10^{5}$, and is transported by a uniform diagonal velocity
$(u,v,w)=(2,1,0)$. The domain $\Omega=[-1,1]\times[-0.5,0.5]$ is covered by
$N_x\times N_y=512\times256$ cells with periodic boundaries. The magnetic field
is obtained from a $z$-aligned vector potential, which reads
\begin{equation}
A_z =
\begin{cases}
\dfrac{A_0}{\sqrt{4\pi}}\,(R-r), & r \le R,\\[2mm]
0, & r > R,
\end{cases}
\qquad r=\sqrt{x^2+y^2},
\end{equation}
with $A_0=10^{-3}$ and $R=0.3$. Taking the curl yields a field of constant
magnitude $\|\mathbf{B}\|=A_0/\sqrt{4\pi}$ inside the loop, with a singular
point at the centre of the domain that makes the test particularly demanding.
This setting corresponds to a very low acoustic Mach number,
$M_c\approx5.5\times10^{-3}$, together with a very high Alfv\'en Mach number,
$M_b\approx7.9\times10^{3}$. 

To make the test more stringent, we carry out four simulations with a
progressively larger amplitude $A_0$, up to $A_0=1$, which lowers the Alfv\'en
Mach number to $M_b\approx7.9$. The final time is $t_f=1$ and the results are
collected in Fig.~\ref{fig:fieldloop}. Across the whole sequence the magnitude
of the magnetic field retains essentially the same shape, despite the wide
variation in Alfv\'en Mach number, which confirms that the behaviour of the
proposed scheme is independent of the magnetic scales. Indeed, the time step is
identical in all four runs and is dictated solely by the convective velocity
$(u,v)=(2,1)$ transporting the loop diagonally across the domain, irrespective
of the underlying Alfv\'en speed. Finally, Fig.~\ref{fig:fieldloop_divb} reports
the divergence error of the magnetic field for all the runs, measured as the
infinity norm of $\nabla\!\cdot\mathbf{B}$ taken over the whole domain at every
time level; in all cases the discrete divergence-free constraint is preserved
down to machine accuracy.

\begin{figure}
    \centering
    \includegraphics[width=\linewidth]{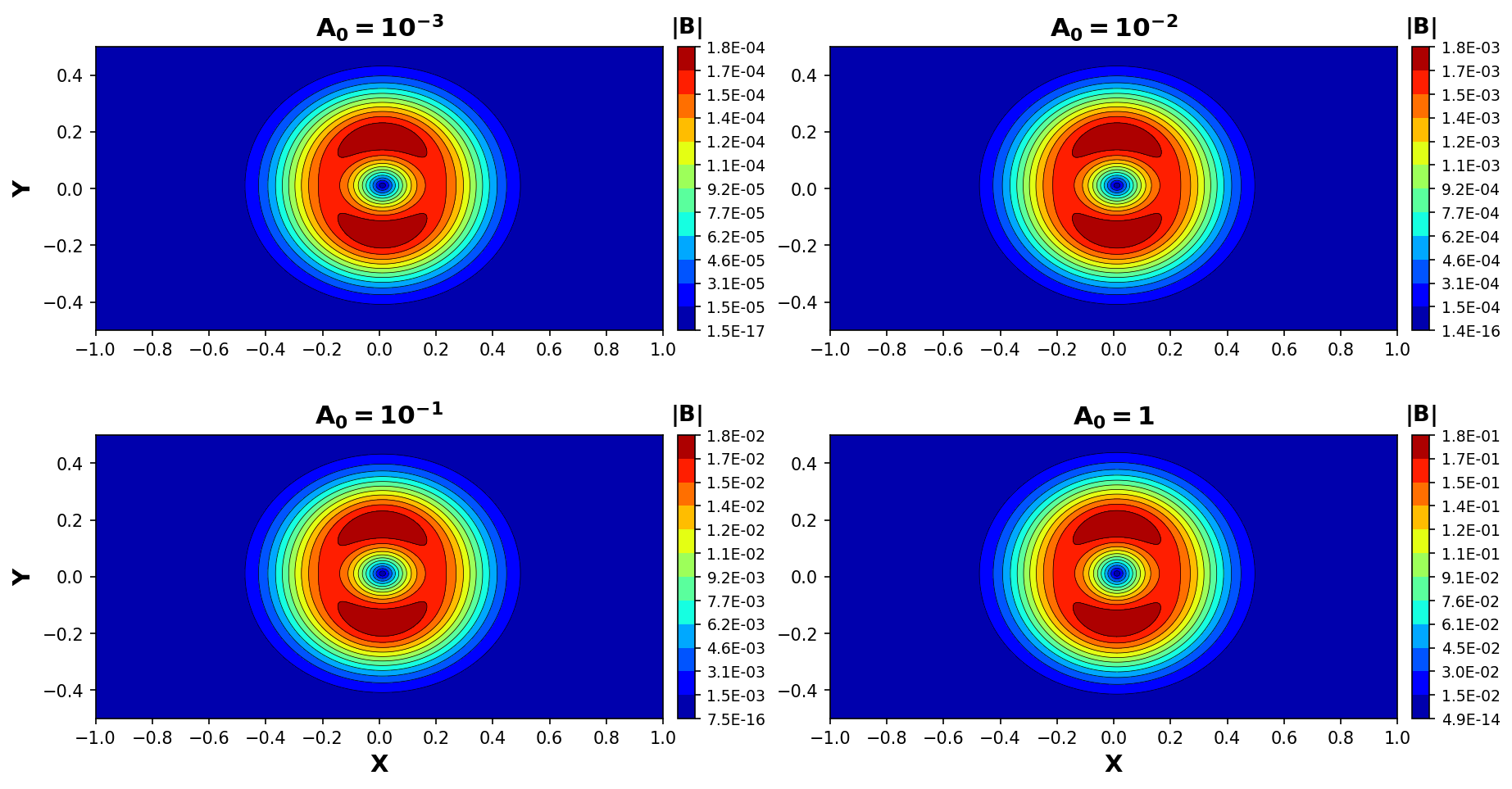}
    \caption{Magnetic field loop advection at $t_f=1$. Magnitude of the magnetic
field $\|\mathbf{B}\|$ obtained with the proposed semi-implicit scheme for four
increasing loop amplitudes, from $A_0=10^{-3}$ (top) to $A_0=1$ (bottom).}
    \label{fig:fieldloop}
\end{figure}

\begin{figure}
    \centering
    \includegraphics[width=0.6\linewidth]{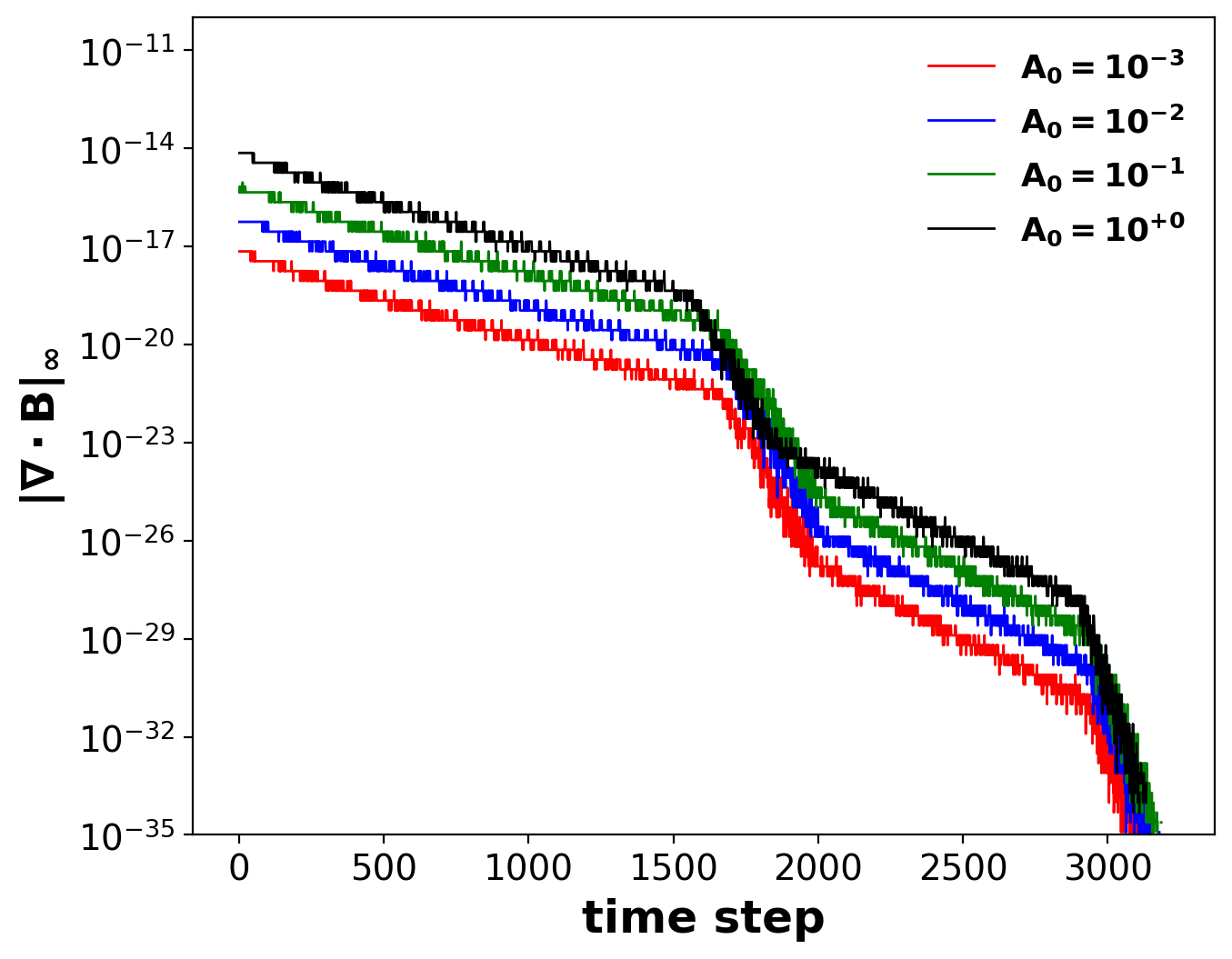}
    \caption{Magnetic field loop advection. Time evolution of the divergence error
    of the magnetic field, measured as the infinity norm of $\nabla\!\cdot\mathbf{B}$
    over the whole domain, for the four amplitudes $A_0$ of Fig.~\ref{fig:fieldloop}.
    In all cases the discrete divergence-free constraint is maintained down to
    machine accuracy.}
    \label{fig:fieldloop_divb}
\end{figure}

\subsection{Magnetized Kelvin--Helmholtz instability}

We adopt the ideal-MHD Kelvin--Helmholtz setup recently proposed
in~\cite{BoscheriThomann2024}. The computational domain is the rectangular box
$\Omega = [0,2] \times [-0.5,0.5]$ with periodic boundaries, paved by a grid of
$N_x \times N_y = 512 \times 256$ control volumes. The fluid is initialised at
uniform density $\rho = \gamma$, with $\gamma = 1.4$, and uniform pressure
$p = 1$. Velocity and magnetic field are parametrised through $M_x$, the maximum
acoustic Mach number of the horizontal stream. The velocity field is prescribed
as
\begin{equation}
u = M_x\,\bigl(1 - 2\,\eta(y)\bigr), \qquad
v = 0.1\,M_x\,\sin(2\pi x),
\label{eq:kh_vel}
\end{equation}
where the shear-layer profile $\eta(y)$ reads
\begin{equation}
\eta(y) =
\begin{cases}
0.5\,\bigl(1 + \sin(16\pi(y+0.25))\bigr), & -\tfrac{9}{32} \le y < -\tfrac{7}{32}, \\[4pt]
1, & -\tfrac{7}{32} \le y < \tfrac{7}{32}, \\[4pt]
0.5\,\bigl(1 - \sin(16\pi(y-0.25))\bigr), & \tfrac{7}{32} \le y < \tfrac{9}{32}, \\[4pt]
0, & \text{otherwise}.
\end{cases}
\label{eq:kh_eta}
\end{equation}
The initial magnetic field is uniform and aligned with the $x$-axis,
$B_x = 0.1\,M_x$, $B_y = 0$. This state yields a unit adiabatic sound speed and a
minimum Alfv\'en Mach number $M_{b,\min} = 11.82$ at $t=0$. Simulations are
carried out for several values of $M_x$, each advanced to the final time
$t_f = t_{\max}/6$ with $t_{\max} = 4.8/M_x$. During this window a smooth,
well-resolved interface forms along the shear layer, so that the early-stage
dynamics remain convergent.

Fig.~\ref{fig:kh_mach} shows the acoustic Mach number at the final time,
rescaled by the prescribed $M_x$. The profiles obtained for the different $M_x$
are virtually indistinguishable, because the implicit central discretisation of
the pressure and magnetic terms keeps numerical dissipation low. A fully
explicit scheme, by contrast, would fail to retain the vortical structures as
$M_x$ is reduced, as already observed in~\cite{BoscheriThomann2024}. Moreover,
for these test cases the proposed scheme operates with a time step up to four
orders of magnitude larger than that of a fully explicit method, as reported in
Fig.~\ref{fig:kh_dtratio}.

\begin{figure}
    \centering
    \includegraphics[width=\linewidth]{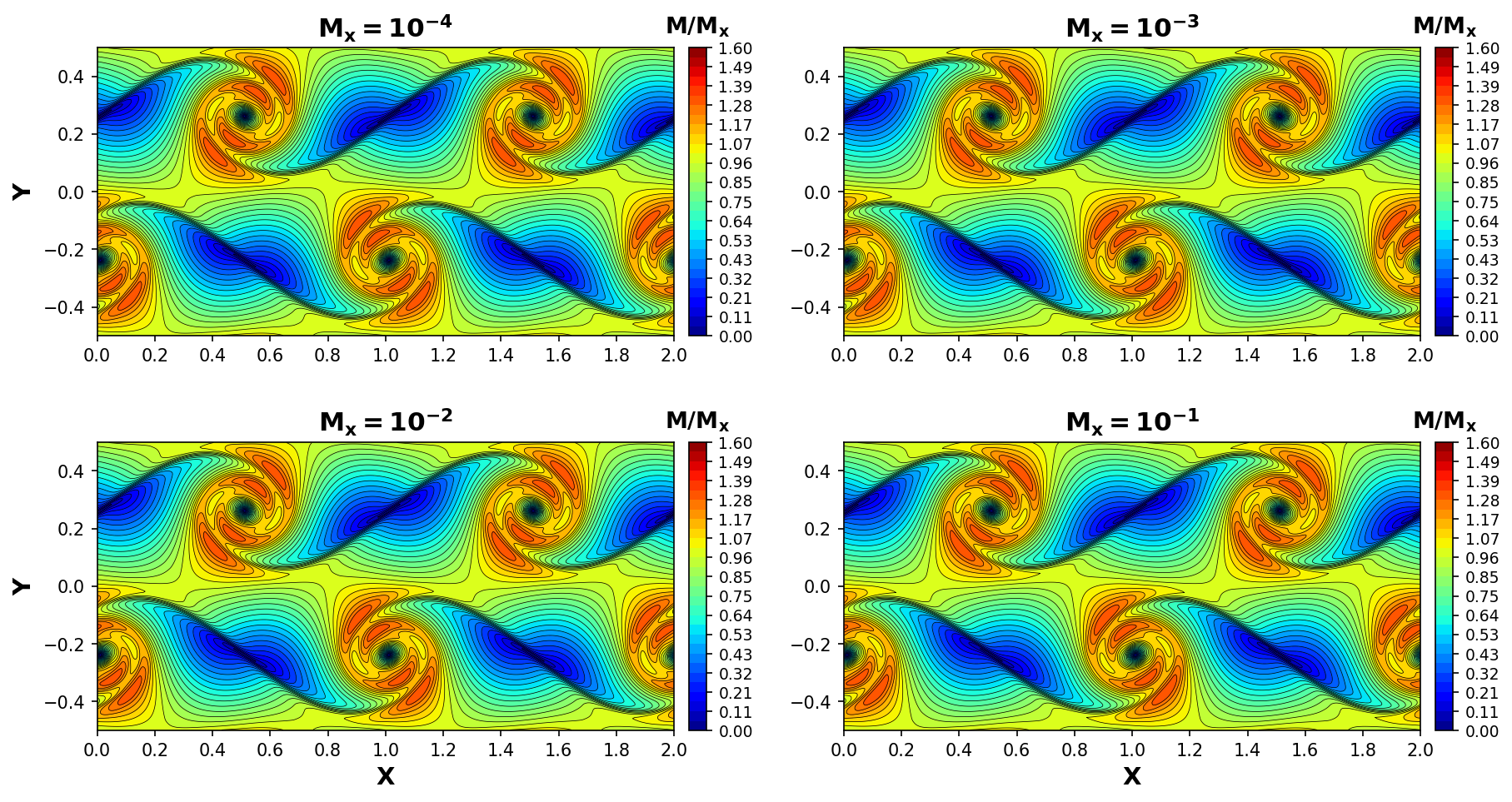}
    \caption{Magnetized Kelvin--Helmholtz instability at $t_f = t_{\max}/6$.
    Acoustic Mach number rescaled by the prescribed value $M_x$, for increasing
    $M_x$ from top left to bottom right. The rescaled profiles are almost
    identical across the whole range, showing that the proposed scheme avoids
    excessive numerical dissipation at low acoustic Mach number.}
    \label{fig:kh_mach}
\end{figure}

\begin{figure}
    \centering
    \includegraphics[width=0.6\linewidth]{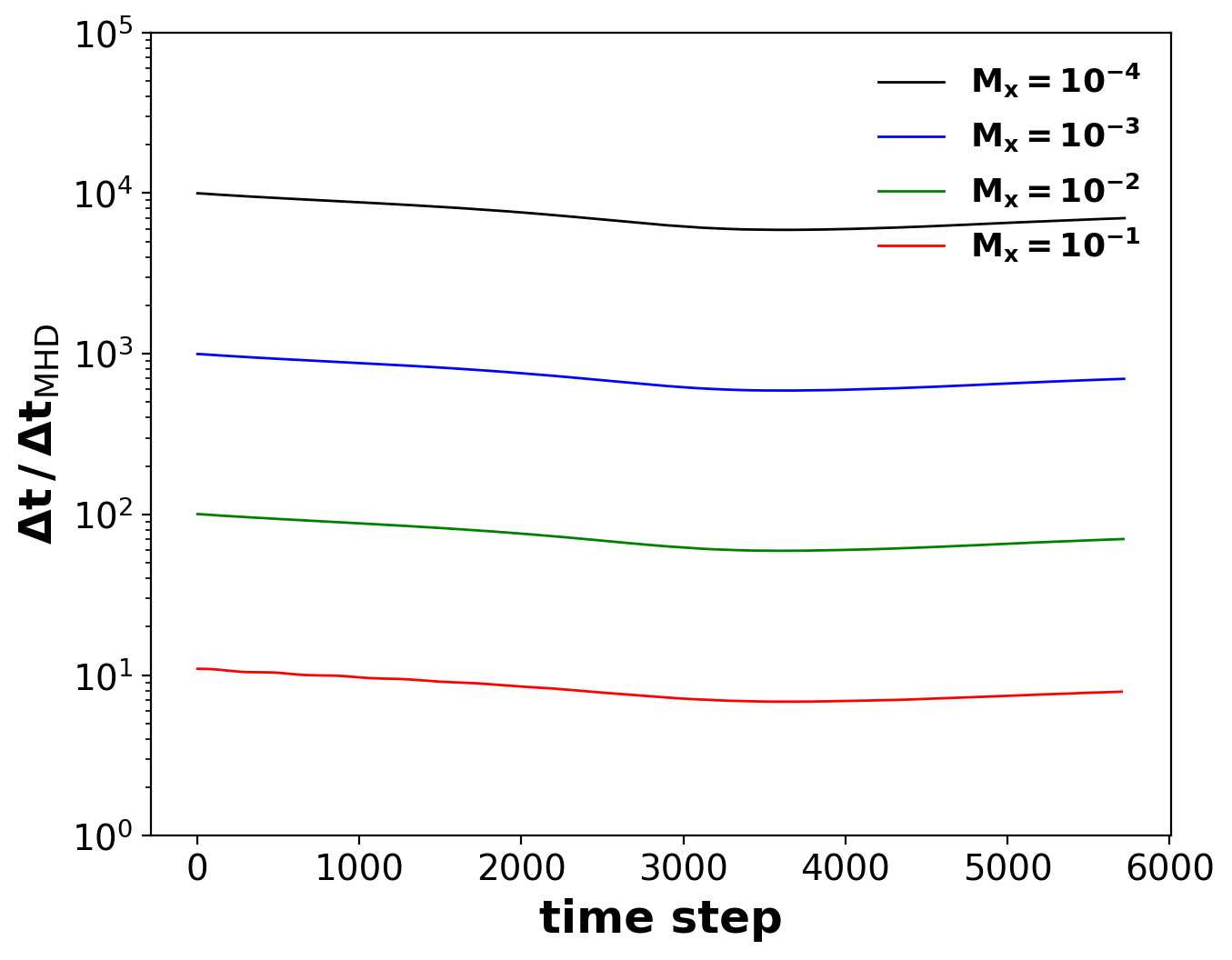}
    \caption{Magnetized Kelvin--Helmholtz instability. Time evolution of the
    ratio between the material time step of the proposed semi-implicit scheme and
    the magnetosonic time step of a fully explicit method, for the different
    values of $M_x$. The ratio reaches up to four orders of magnitude as $M_x$
    decreases.}
    \label{fig:kh_dtratio}
\end{figure}

\subsection{Shock--cloud interaction}

The shock–cloud interaction problem models the break-up of a dense cloud struck by a strong shock and the ensuing cascade of discontinuities and small-scale instabilities. We follow
the configuration of~\cite{Sc1,SC2,SC3,wu2018,activeflux}.
The initial data consist of a left and a right state separated by a vertical
discontinuity at $x = 0.6$,
\begin{equation}
(\rho, \mathbf{u}, \mathbf{B}, p) =
\begin{cases}
(3.86859,\ 0,\ 0,\ 0,\ 0,\ 2.1826182,\ -2.1826182,\ 167.345), & x < 0.6, \\[4pt]
(1,\ -11.2536,\ 0,\ 0,\ 0,\ 0.56418958,\ 0.56418958,\ 1), & x \ge 0.6,
\end{cases}
\end{equation}
where the ordering is $\rho$, $(v_x,v_y,v_z)$, $(B_x,B_y,B_z)$, $p$. Embedded in
the right state is a circular cloud of enhanced density $\rho = 10$, centred at
$(0.8,0.5)$ with radius $0.15$. The computational domain is the unit square
$[0,1]^2$, discretised with a uniform $400 \times 400$ Cartesian mesh. Inflow
conditions are prescribed on the right boundary and outflow conditions on the
remaining three sides.

Fig.~\ref{fig:shockcloud} presents the solution at $t = 0.06$. The scheme
resolves the intricate flow structure that develops as the shock sweeps over the
cloud, including the bow shock, the reflected shocks, the contact
discontinuities and the fine-scale features shed downstream, in good agreement
with the results reported in the literature~\cite{Sc1,SC2,SC3,wu2018,activeflux}. As in the preceding tests, the pressure and density remain
strictly positive throughout the computation, providing a further confirmation of
the robustness and positivity-preserving character of the proposed scheme.

\begin{figure}
    \centering
    \includegraphics[width=\linewidth]{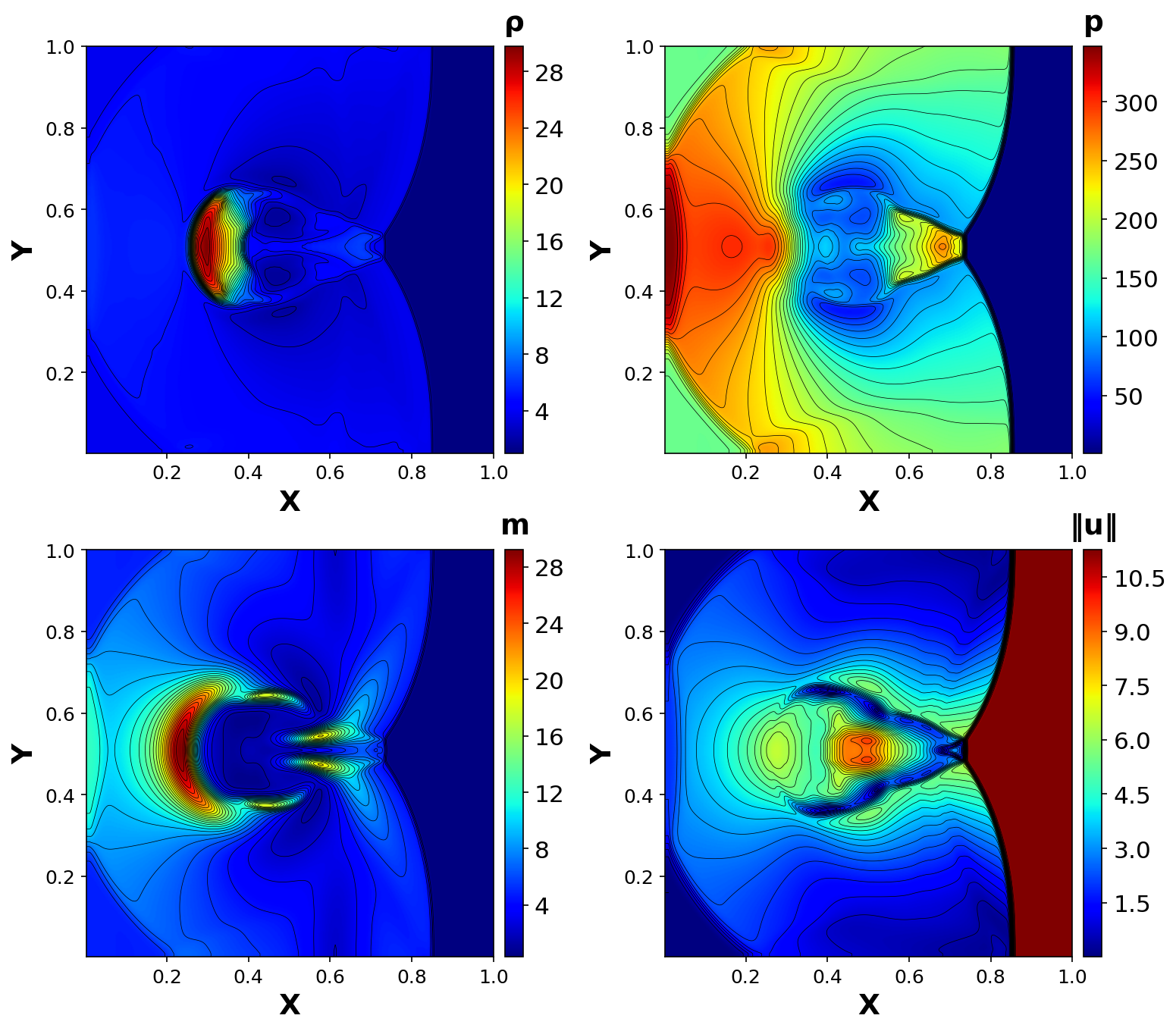}
    \caption{Shock--cloud interaction problem. Density (top left), thermal pressure
(top right), magnetic pressure (bottom left) and velocity magnitude (bottom
right).}
    \label{fig:shockcloud}
\end{figure}

\subsection{MHD Jets}
\begin{figure}
    \centering
    \includegraphics[width=\linewidth]{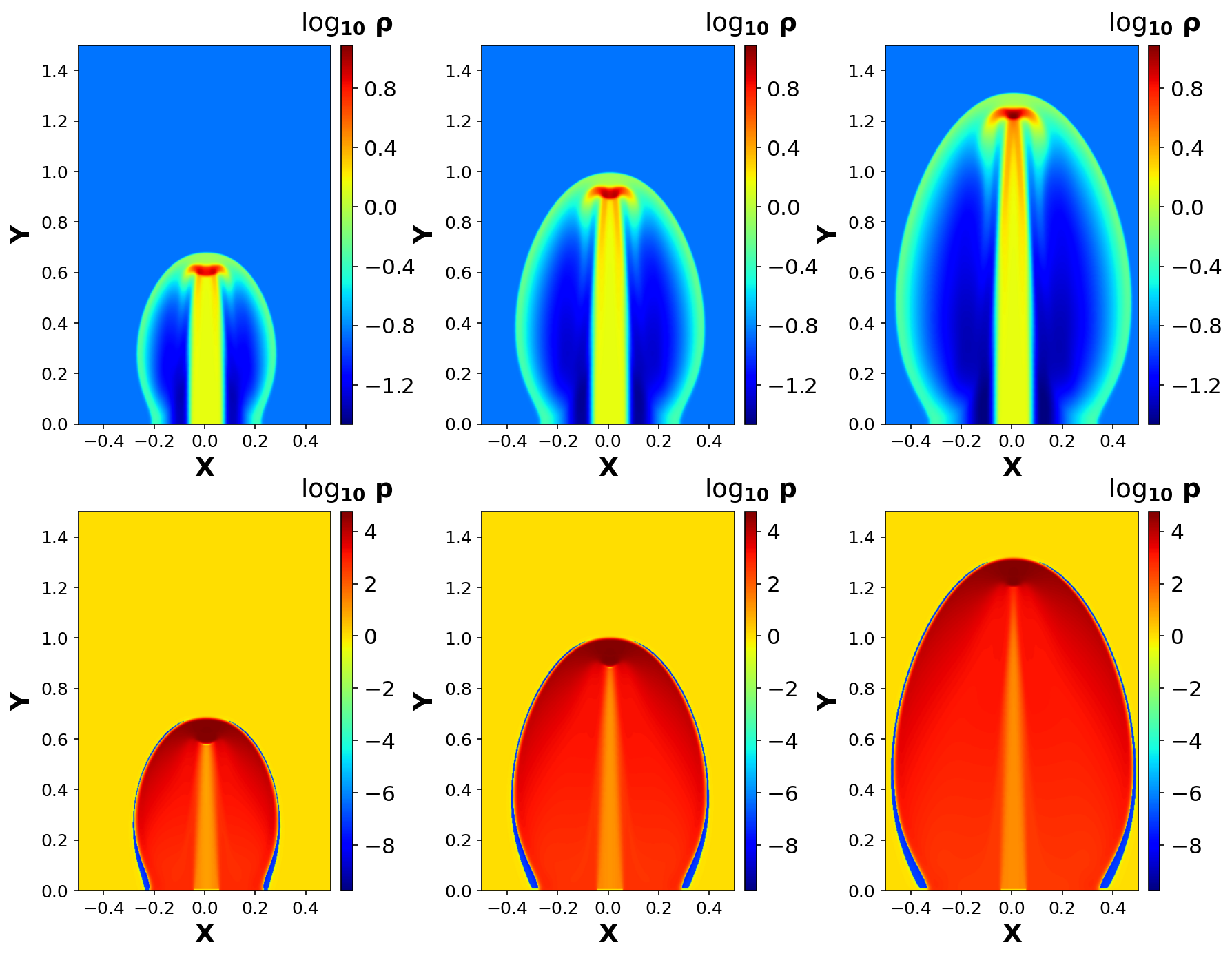}
    \caption{Acoustic Mach 800 jet problem with $B_0=\sqrt{200}$ ($\beta_0=10^{-2}$). Logarithm of the density (top row) and logarithm of the pressure (bottom row) at $t=0.001$, $0.0015$ and $0.002$ (from left to right).}
    \label{fig:MHDJet_200}
\end{figure}
\begin{figure}
    \centering
    \includegraphics[width=\linewidth]{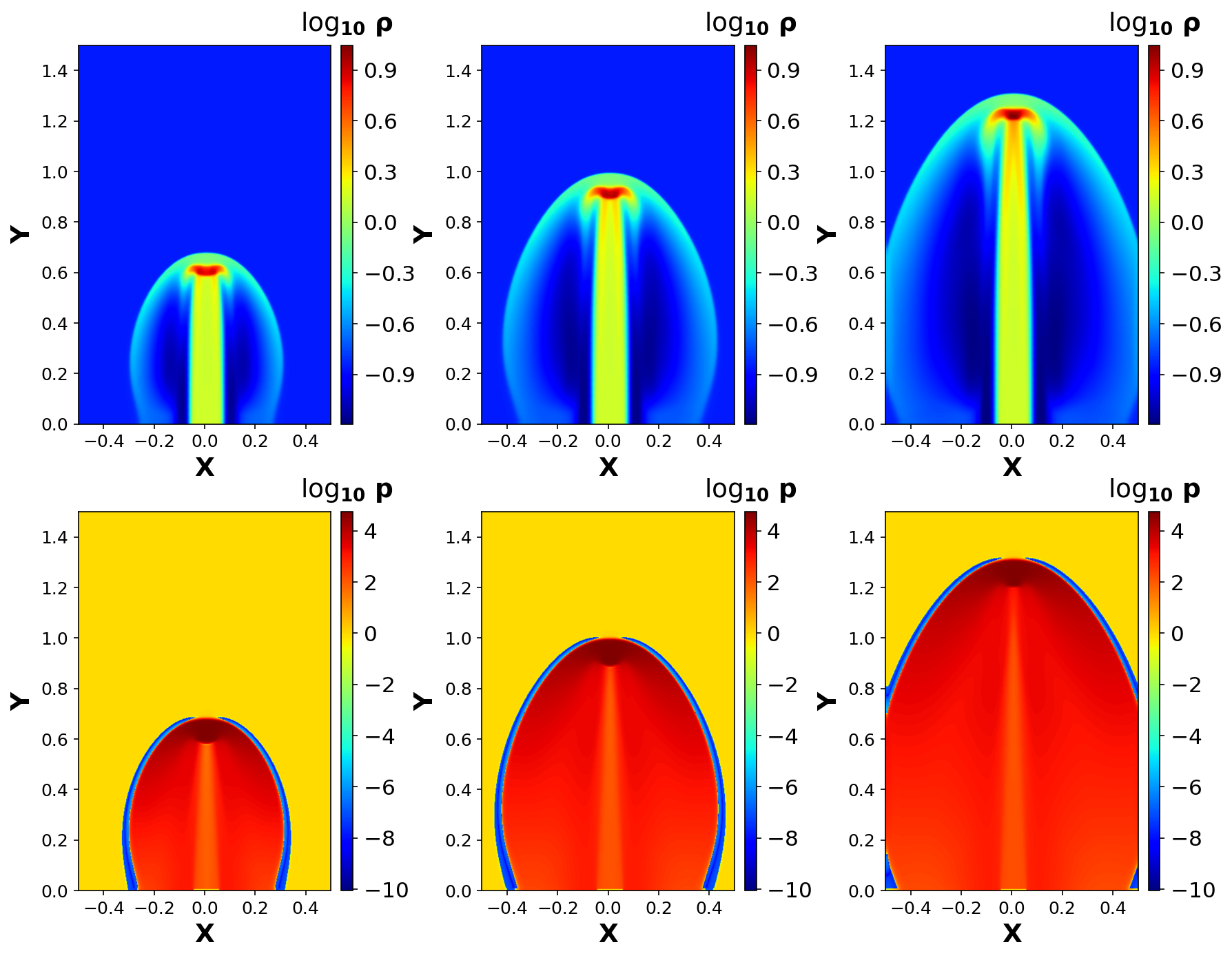}
    \caption{Acoustic Mach 800 jet problem with $B_0=\sqrt{2000}$ ($\beta_0=10^{-3}$).
    Logarithm of the density (top row) and logarithm of the pressure (bottom row)
    at $t=0.001$, $0.0015$ and $0.002$ (from left to right).}
    \label{fig:MHDJet_2000}
\end{figure}
\begin{figure}
    \centering
    \includegraphics[width=\linewidth]{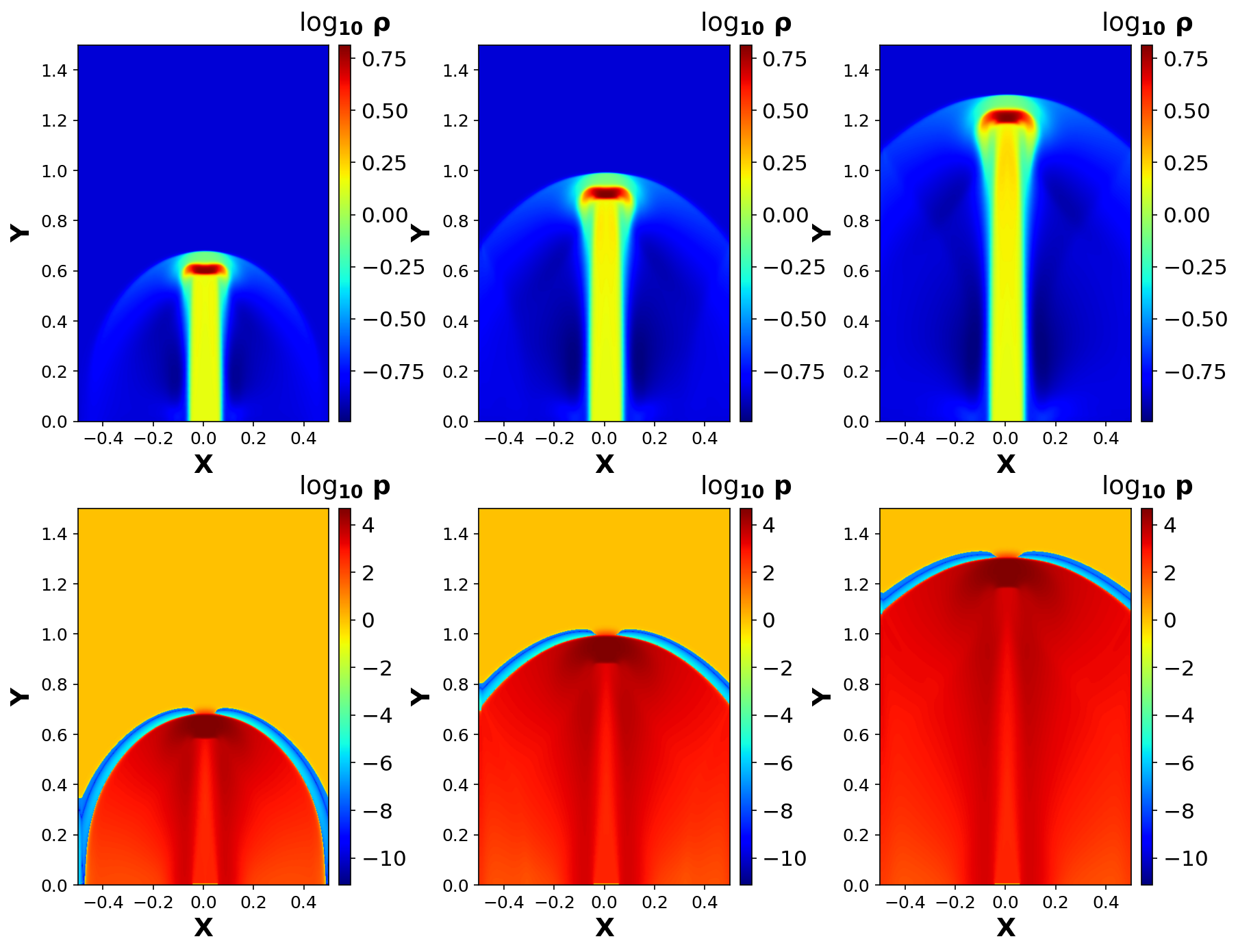}
    \caption{Acoustic Mach 800 jet problem with $B_0=\sqrt{20000}$ ($\beta_0=10^{-4}$).
Logarithm of the density (top row) and logarithm of the pressure
(bottom row) at $t=0.001$, $0.0015$ and $0.002$ (from left to right).}
    \label{fig:MHDJet_20000}
\end{figure}

\begin{figure}
    \centering
    \includegraphics[width=\linewidth]{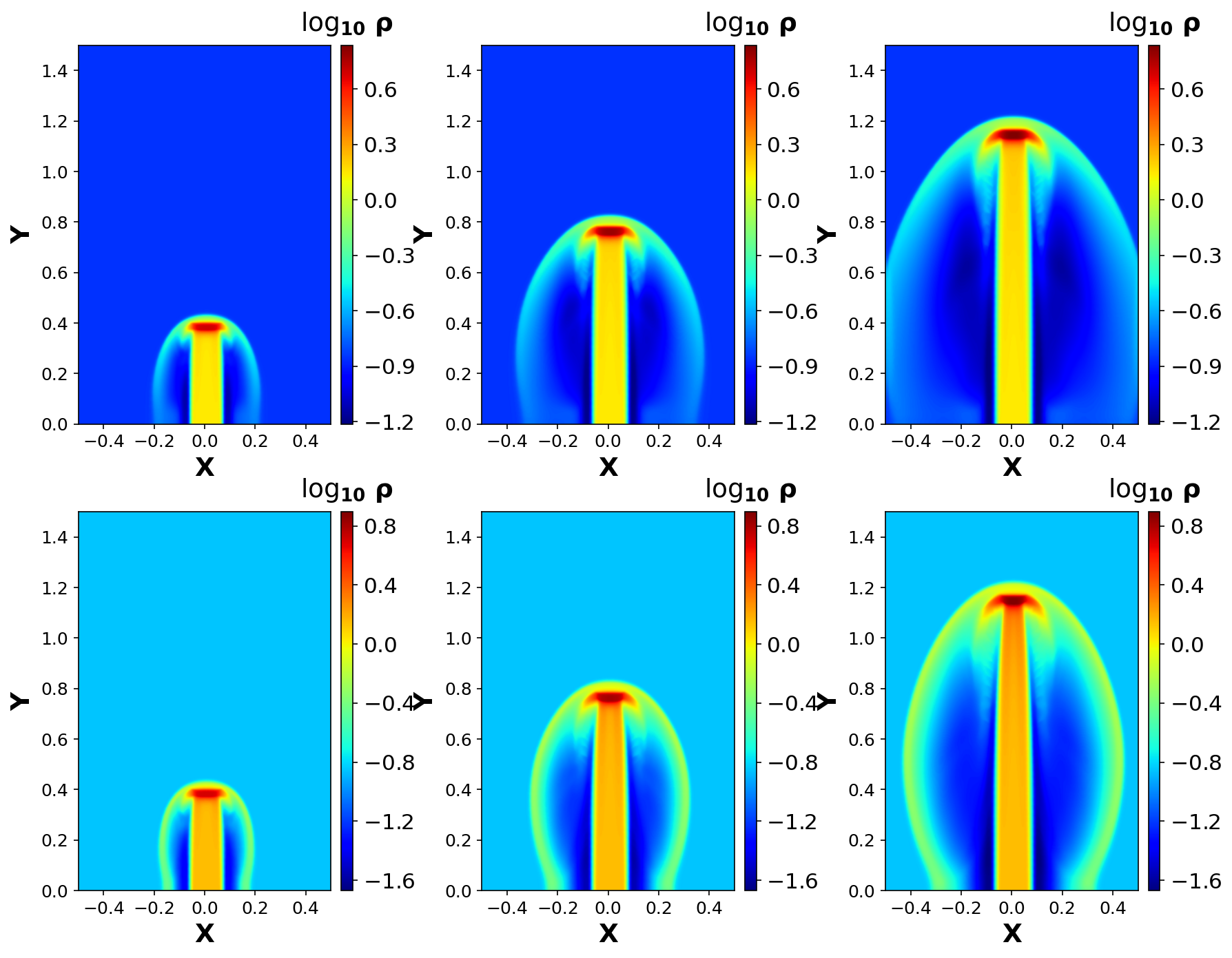}
    \caption{Acoustic MHD jet problem with $B_0=\sqrt{20000}$ ($\beta_0=10^{-4}$). Logarithm of the density for the acoustic Mach 2000 jet at $t=0.001$, $0.0015$ and $0.002$ (top row) and for the acoustic Mach 10000 jet at $t=0.00025$, $0.0005$ and $0.00075$ (bottom row), with time increasing from left to right.}
    \label{fig:MHD_Jet_diffmMch}
\end{figure}

As a last and particularly severe benchmark we turn to the high acoustic Mach number MHD
jet, a standard positivity-preserving test~\cite{SC3,wu2018,activeflux}. The
problem couples supersonic jet propagation, strong magnetic fields and very high
kinetic energy; the interplay of strong shocks, shear layers and interface
instabilities routinely drives approximate solvers toward negative pressures,
which makes it a demanding probe of both robustness and accuracy.

We consider the acoustic Mach $800$ jet on the domain $[-0.5,0.5] \times [0,1.5]$,
discretised uniformly with $400 \times 600$ cells. The ambient medium is
initialised as
\begin{equation}
(\rho, \mathbf{u}, \mathbf{B}, p) = (0.1\gamma,\ 0,\ 0,\ 0,\ 0,\ B_0,\ 0,\ 1),
\qquad \gamma = 1.4,
\end{equation}
where the ordering is $\rho$, $(v_x,v_y,v_z)$, $p$, $(B_x,B_y,B_z)$. Along a
narrow inlet $|x| < 0.05$ on the bottom boundary the jet is injected with the
state
\begin{equation}
(\rho, \mathbf{u}, p, \mathbf{B}) = (\gamma,\ 0,\ 800,\ 0,\ 1,\ 0,\ B_0,\ 0),
\end{equation}
while all the remaining boundaries are treated as outflow.

Following~\cite{SC3,wu2018,activeflux}, we assess the scheme at three levels of
magnetisation by varying the field strength $B_0$:
\begin{equation}
\text{(i)}\ B_0 = \sqrt{200}\ (\beta_0 = 10^{-2}), \quad
\text{(ii)}\ B_0 = \sqrt{2000}\ (\beta_0 = 10^{-3}), \quad
\text{(iii)}\ B_0 = \sqrt{20000}\ (\beta_0 = 10^{-4}).
\end{equation}
As $B_0$ grows and the plasma beta $\beta_0$ falls, the dynamics become
progressively dominated by magnetic forces, which sharpens the numerical
stiffness and heightens the risk of nonphysical states, thereby stressing the
robustness of the method ever more severely.

The results for the three cases are collected in
Figs.~\ref{fig:MHDJet_200}--\ref{fig:MHDJet_20000}, showing logarithmic density and
pressure contours. In every case the acoustic Mach stem, the shear layers and the contact
discontinuities are captured sharply and without spurious oscillations, and no
negative pressure or density is produced at any point of the simulation. This
confirms that the positivity-preserving pressure update keeps the scheme reliable
even under these highly magnetised, extreme conditions.

To push the method still further, we additionally simulate the acoustic Mach $2000$ and
acoustic Mach $10\,000$ jets~\cite{wu2018,activeflux} at the strongest magnetisation
$B_0 = \sqrt{20000}$, reported in Fig.~\ref{fig:MHD_Jet_diffmMch}. As the injection acoustic Mach number rises the jet grows
narrower and more elongated, developing distinct features across the different
magnetisation levels. The scheme resolves the acoustic Mach stem and the attendant
discontinuities cleanly, again without generating any negative density or
pressure, in agreement with its provable positivity-preserving property.
Moreover, the axial symmetry of the flow is well preserved throughout, which
further attests to the physical consistency of the proposed method.

\section{Conclusions}
\label{sec:conclusions}

In this paper we have presented a conservative, cell-centred, semi-implicit finite volume
method for the ideal MHD equations which is simultaneously structure- and
pressure-positivity-preserving. Two features distinguish it from the schemes on which it
builds. First, the implicit step is posed for the pressure rather than for the total
energy, so that a general nonlinear equation of state is admitted without altering the
structure of the pressure system $\mathbf{V}(\mathbf{p}) + \mathbf{T}\,\mathbf{p} =
\mathbf{b}$: the closure enters only through the diagonal term $\mathbf{V}(\mathbf{p})$,
leaving the positive semi-definite coupling matrix $\mathbf{T}$ unchanged, so
that a single linear solve suffices for a $\gamma$-law gas and a nested Newton iteration
for a nonlinear closure. For a tabulated closure such as SESAME or QEOS the
pressure--energy relation is nonlinear, and an energy-based formulation would let that
nonlinearity spread into the coupling matrix, whereas here it is confined to the diagonal.
Second, the positivity of the pressure is obtained from within that same system, as the
solution of a modified but still flux-consistent problem, rather than by correcting the
solution once it has been computed. Both are dictated by the applications that motivated
this work: in magneto-inertial fusion the compressed material is described by tabulated
closures whose pressure--energy relation is nonlinear, and the flows of interest are
strongly magnetised, so that the internal energy is recovered as a small difference
between two much larger quantities.

The construction relies on a three-splitting of the flux \cite{Fambri2021}, which isolates the transport
by the fluid velocity from the magnetic and the acoustic contributions. Only the first of
the three enters the explicit stability constraint: it is advanced by a conservative
finite volume update whose numerical dissipation is proportional to the material velocity
only, while the magnetic and pressure sub-systems, carrying the Alfv\'en, magnetosonic
and acoustic waves, are integrated implicitly. The two implicit solves are performed in a
single one-directional sequence following \cite{BoscheriThomann2024}, namely the magnetic field first, the pressure once that
field is known, so that no outer iteration is required to couple them. A semi-implicit
linearisation of the quadratic field terms keeps the magnetic system linear. The pressure
system is linear for a $\gamma$-law gas and only mildly nonlinear otherwise, since the
equation of state enters exclusively through the diagonal and leaves the positive semi-definite coupling matrix independent of the thermodynamic closure; the
nested Newton iteration therefore converges in a few steps \cite{DumbserCasulli2016}, and collapses to a single
linear solve in the ideal-gas case. Inspired by \cite{BrugnanoCasulli2009, Casulli2009}, positivity is built into that same iteration by a steep-slope modification of the
internal-energy relation. Being confined to the diagonal, it alters no inter-cell flux,
so that the pressure returned is the genuine solution of a modified but still
flux-consistent system rather than a value overwritten after the fact, and the discrete
conservation of the total energy is preserved. 

The scheme attains second order in space and in time. Its generic equation-of-state
branch was verified in isolation, with the magnetic field switched off, against exact
solutions of the Redlich--Kwong Riemann problem. The design order was then recovered on a
smooth magnetohydrodynamic vortex across five decades of background density, and hence
across a correspondingly wide range of Alfv\'en speeds, with an admissible time step
exceeding the explicit magnetosonic one by up to four orders of magnitude in the stiffest
configurations. Shock-capturing was assessed on classical one- and two-dimensional
benchmarks, and the positivity machinery on a set of deliberately punishing tests such as
strongly magnetised blast waves, shock-cloud interaction, and magnetised jets up to acoustic Mach
$10^4$ at plasma beta $10^{-4}$, throughout which neither the pressure nor the density
became negative at any point, and no pressure floor, clipping or positivity limiter was required, beyond the slope limiter used for shock capturing. Taken together,
these results support the claim that the method is a genuine all-acoustic-Mach and all-Alfv\'en-Mach 
number solver, in which structure preservation, guaranteed positivity and a material-only
CFL condition coexist within a single cell-centred framework.

Several directions follow from this work. The most immediate, and the closest to the
applications discussed above, is the coupling of the scheme to tabulated closures of
SESAME \cite{SESAME} or QEOS \cite{QEOS} type. Such tables enter the algorithm only through the pointwise value of
the internal-energy relation and of its derivative with respect to the pressure, and
therefore leave both the coupling matrix and the linear solver untouched. The extension to three space dimensions follows the same construction
and is a natural next step. Beyond that, we intend to derive a well-balanced variant able to preserve
magnetohydrostatic and Grad--Shafranov equilibria of the ideal sub-system exactly at the
discrete level, without which the growth of small perturbations on a static equilibrium
cannot be resolved reliably \cite{BoscheriWB,FAMBRIWB}; and to admit the viscous and
resistive terms, treating them implicitly alongside the pressure sub-system, for which
\cite{Dematte2024} provides a natural starting point.







\printcredits


\bibliographystyle{elsarticle-num}
\bibliography{main}

\begin{thebibliography}{10}
\expandafter\ifx\csname url\endcsname\relax
  \def\url#1{\texttt{#1}}\fi
\expandafter\ifx\csname urlprefix\endcsname\relax\def\urlprefix{URL }\fi
\expandafter\ifx\csname href\endcsname\relax
  \def\href#1#2{#2} \def\path#1{#1}\fi

\bibitem{GoedbloedPoedts2019}
J.~P. Goedbloed, H.~Goedbloed, R.~Keppens, S.~Poedts, Magnetohydrodynamics: of laboratory and astrophysical plasmas, Cambridge University Press, 2019.

\bibitem{Priest2014}
E.~Priest, Magnetohydrodynamics of the Sun, Cambridge University Press, 2014.

\bibitem{Jardin2010}
S.~Jardin, Computational methods in plasma physics, CRC press, 2010.

\bibitem{AtzeniMeyer2004}
S.~Atzeni, J.~Meyer-ter Vehn, The Physics of Inertial Fusion: BeamPlasma Interaction, Hydrodynamics, Hot Dense Matter, Oxford University Press, 2004.

\bibitem{disruption}
T.~Hender, G.~Arnoux, P.~de~Vries, S.~Gerasimov, A.~Huber, M.~Johnson, Jet disruption studies in support of iter, in: Fusion Energy 2010 Proc. 23rd Int. Conf., IAEA, 2010.

\bibitem{Slutz2010}
S.~Slutz, M.~Herrmann, R.~Vesey, A.~Sefkow, D.~Sinars, D.~Rovang, K.~Peterson, M.~Cuneo, Pulsed-power-driven cylindrical liner implosions of laser preheated fuel magnetized with an axial field, Physics of Plasmas 17~(5) (2010).

\bibitem{Gomez2014}
M.~R. Gomez, S.~A. Slutz, A.~B. Sefkow, D.~B. Sinars, K.~D. Hahn, S.~B. Hansen, E.~C. Harding, P.~F. Knapp, P.~F. Schmit, C.~A. Jennings, et~al., Experimental demonstration of fusion-relevant conditions in magnetized liner inertial fusion, Physical review letters 113~(15) (2014) 155003.

\bibitem{Sinars2020}
D.~Sinars, M.~Sweeney, C.~Alexander, D.~Ampleford, T.~Ao, J.~Apruzese, C.~Aragon, D.~Armstrong, K.~Austin, T.~Awe, et~al., Review of pulsed power-driven high energy density physics research on z at sandia, Physics of Plasmas 27~(7) (2020).

\bibitem{Chorin1968}
A.~J. Chorin, Numerical solution of the navier-stokes equations, Mathematics of computation 22~(104) (1968) 745--762.

\bibitem{BrioWu1988}
M.~Brio, C.~C. Wu, An upwind differencing scheme for the equations of ideal magnetohydrodynamics, Journal of computational physics 75~(2) (1988) 400--422.

\bibitem{MiyoshiKusano2005}
T.~Miyoshi, K.~Kusano, A multi-state hll approximate riemann solver for ideal magnetohydrodynamics, Journal of Computational Physics 208~(1) (2005) 315--344.

\bibitem{GuillardViozat2017}
H.~Guillard, B.~Nkonga, On the behaviour of upwind schemes in the low mach number limit: A review, Handbook of Numerical Analysis 18 (2017) 203--231.

\bibitem{Dellacherie2010}
S.~Dellacherie, Analysis of godunov type schemes applied to the compressible euler system at low mach number, Journal of Computational Physics 229~(4) (2010) 978--1016.

\bibitem{barsukow2021truly}
W.~Barsukow, Truly multi-dimensional all-speed schemes for the euler equations on cartesian grids, Journal of Computational Physics 435 (2021) 110216.

\bibitem{barsukow2023all}
W.~Barsukow, All-speed numerical methods for the euler equations via a sequential explicit time integration, Journal of Scientific Computing 95~(2) (2023) 53.

\bibitem{leidi2022finite}
G.~Leidi, C.~Birke, R.~Andrassy, J.~Higl, P.~V. Edelmann, G.~Wiest, C.~Klingenberg, F.~K. R{\"o}pke, A finite-volume scheme for modeling compressible magnetohydrodynamic flows at low mach numbers in stellar interiors, Astronomy \& Astrophysics 668 (2022) A143.

\bibitem{viallet2011towards}
M.~Viallet, I.~Baraffe, R.~Walder, Towards a new generation of multi-dimensional stellar evolution models: development of an implicit hydrodynamic code, Astronomy \& Astrophysics 531 (2011) A86.

\bibitem{RKIMEX}
S.~Boscarino, F.~Filbet, G.~Russo, High order semi-implicit schemes for time dependent partial differential equations, Journal of Scientific Computing 68~(3) (2016) 975--1001.

\bibitem{RKIMEX2}
L.~Pareschi, G.~Russo, Implicit--explicit runge--kutta schemes and applications to hyperbolic systems with relaxation, Journal of Scientific computing 25~(1) (2005) 129--155.

\bibitem{HarlowWelch1965}
F.~H. Harlow, J.~E. Welch, et~al., Numerical calculation of time-dependent viscous incompressible flow of fluid with free surface, Physics of fluids 8~(12) (1965) 2182.

\bibitem{CasulliGreenspan1984}
V.~Casulli, D.~Greenspan, Pressure method for the numerical solution of transient, compressible fluid flows, International Journal for Numerical Methods in Fluids 4~(11) (1984) 1001--1012.

\bibitem{DumbserCasulli2016}
M.~Dumbser, V.~Casulli, A conservative, weakly nonlinear semi-implicit finite volume scheme for the compressible navier- stokes equations with general equation of state, Applied Mathematics and Computation 272 (2016) 479--497.

\bibitem{ToroVazquez2012}
E.~F. Toro, M.~V{\'a}zquez-Cend{\'o}n, Flux splitting schemes for the euler equations, Computers \& Fluids 70 (2012) 1--12.

\bibitem{CasulliZanolli2012}
V.~Casulli, P.~Zanolli, Iterative solutions of mildly nonlinear systems, Journal of Computational and Applied Mathematics 236~(16) (2012) 3937--3947.

\bibitem{BoscheriPareschi2021}
W.~Boscheri, L.~Pareschi, High order pressure-based semi-implicit imex schemes for the 3d navier-stokes equations at all mach numbers, Journal of Computational Physics 434 (2021) 110206.

\bibitem{TavelliDumbser2017}
M.~Tavelli, M.~Dumbser, A pressure-based semi-implicit space--time discontinuous galerkin method on staggered unstructured meshes for the solution of the compressible navier--stokes equations at all mach numbers, Journal of Computational Physics 341 (2017) 341--376.

\bibitem{Dumbser2019}
M.~Dumbser, D.~S. Balsara, M.~Tavelli, F.~Fambri, A divergence-free semi-implicit finite volume scheme for ideal, viscous, and resistive magnetohydrodynamics, International Journal for Numerical Methods in Fluids 89~(1-2) (2019) 16--42.

\bibitem{Fambri2021}
F.~Fambri, A novel structure preserving semi-implicit finite volume method for viscous and resistive magnetohydrodynamics, International Journal for Numerical Methods in Fluids 93~(12) (2021) 3447--3489.

\bibitem{Dematte2024}
R.~Dematt{\'e}, A.~A. Farmakalides, S.~Millmore, N.~Nikiforakis, An all mach number scheme for visco-resistive magnetically-dominated mhd flows, Journal of Computational Physics 514 (2024) 113229.

\bibitem{BalsaraMontecinosToro2016}
D.~S. Balsara, G.~I. Montecinos, E.~F. Toro, Exploring various flux vector splittings for the magnetohydrodynamic system, Journal of Computational Physics 311 (2016) 1--21.

\bibitem{BoscheriThomann2024}
W.~Boscheri, A.~Thomann, A structure-preserving semi-implicit imex finite volume scheme for ideal magnetohydrodynamics at all mach and alfv{\'e}n numbers, Journal of Scientific Computing 100~(3) (2024) 67.

\bibitem{FVVEM}
W.~Boscheri, S.~Busto, M.~Dumbser, An all mach number semi-implicit hybrid finite volume/virtual element method for compressible viscous flows on voronoi meshes, Computer Methods in Applied Mechanics and Engineering 433 (2025) 117502.

\bibitem{LSDIRK2}
S.~Avgerinos, F.~Bernard, A.~Iollo, G.~Russo, Linearly implicit all mach number shock capturing schemes for the euler equations, Journal of Computational Physics 393 (2019) 278--312.

\bibitem{SESAME}
S.~P. Lyon, J.~D. Johnson, {SESAME}: The {L}os {A}lamos national laboratory equation of state database, Tech. Rep. LA-UR-92-3407, Los Alamos National Laboratory (1992).

\bibitem{QEOS}
R.~M. More, K.~H. Warren, D.~A. Young, G.~B. Zimmerman, A new quotidian equation of state ({QEOS}) for hot dense matter, Physics of Fluids 31~(10) (1988) 3059--3078.

\bibitem{RedlichKwong1949}
O.~Redlich, J.~N. Kwong, On the thermodynamics of solutions. v. an equation of state. fugacities of gaseous solutions., Chemical reviews 44~(1) (1949) 233--244.

\bibitem{Balsara2012}
D.~S. Balsara, Self-adjusting, positivity preserving high order schemes for hydrodynamics and magnetohydrodynamics, Journal of Computational Physics 231~(22) (2012) 7504--7517.

\bibitem{wu2018}
K.~Wu, C.-W. Shu, Provably positive high-order schemes for ideal magnetohydrodynamics: analysis on general meshes, arXiv preprint arXiv:1807.11467 (2018).

\bibitem{ChristliebEtAl2015}
A.~J. Christlieb, Y.~Liu, Q.~Tang, Z.~Xu, Positivity-preserving finite difference weighted eno schemes with constrained transport for ideal magnetohydrodynamic equations, SIAM Journal on Scientific Computing 37~(4) (2015) A1825--A1845.

\bibitem{DingWu2024}
S.~Ding, K.~Wu, A new discretely divergence-free positivity-preserving high-order finite volume method for ideal mhd equations, SIAM Journal on Scientific Computing 46~(1) (2024) A50--A79.

\bibitem{WLSENO_PP2021}
M.~Liu, M.~Zhang, C.~Li, F.~Shen, A new locally divergence-free wls-eno scheme based on the positivity-preserving finite volume method for ideal mhd equations, Journal of Computational Physics 447 (2021) 110694.

\bibitem{Casulli2009}
V.~Casulli, A high-resolution wetting and drying algorithm for free-surface hydrodynamics, International Journal for Numerical Methods in Fluids 60~(4) (2009) 391--408.

\bibitem{BrugnanoCasulli2009}
L.~Brugnano, V.~Casulli, Iterative solution of piecewise linear systems and applications to flows in porous media, SIAM Journal on Scientific Computing 31~(3) (2009) 1858--1873.

\bibitem{Toth2000}
G.~T{\'o}th, The {$\nabla \cdot \mathbf{B} = 0$} constraint in shock-capturing magnetohydrodynamics codes, Journal of Computational Physics 161~(2) (2000) 605--652.

\bibitem{EvansHawley1988}
C.~R. Evans, J.~F. Hawley, Simulation of magnetohydrodynamic flows-a constrained transport method, Astrophysical Journal, Part 1 (ISSN 0004-637X), vol. 332, Sept. 15, 1988, p. 659-677. 332 (1988) 659--677.

\bibitem{GardinerStone2005}
T.~A. Gardiner, J.~M. Stone, An unsplit godunov method for ideal mhd via constrained transport, Journal of Computational Physics 205~(2) (2005) 509--539.

\bibitem{BalsaraSpicer1999}
D.~S. Balsara, D.~S. Spicer, A staggered mesh algorithm using high order godunov fluxes to ensure solenoidal magnetic fields in magnetohydrodynamic simulations, Journal of Computational Physics 149~(2) (1999) 270--292.

\bibitem{Rossmanith2006}
J.~A. Rossmanith, An unstaggered, high-resolution constrained transport method for magnetohydrodynamic flows, SIAM Journal on Scientific Computing 28~(5) (2006) 1766--1797.

\bibitem{Dedner2002}
A.~Dedner, F.~Kemm, D.~Kr{\"o}ner, C.-D. Munz, T.~Schnitzer, M.~Wesenberg, Hyperbolic divergence cleaning for the mhd equations, Journal of Computational Physics 175~(2) (2002) 645--673.

\bibitem{Powell1999}
K.~G. Powell, P.~L. Roe, T.~J. Linde, T.~I. Gombosi, D.~L. De~Zeeuw, A solution-adaptive upwind scheme for ideal magnetohydrodynamics, Journal of Computational Physics 154~(2) (1999) 284--309.

\bibitem{BrackbillBarnes1980}
J.~U. Brackbill, D.~C. Barnes, The effect of nonzero {$\nabla \cdot \mathbf{B}$} on the numerical solution of the magnetohydrodynamic equations, Journal of Computational Physics 35~(3) (1980) 426--430.

\bibitem{ChengJuSchochet2021}
B.~Cheng, Q.~Ju, S.~Schochet, Convergence rate estimates for the low mach and alfv{\'e}n number three-scale singular limit of compressible ideal magnetohydrodynamics, ESAIM: Mathematical Modelling and Numerical Analysis 55 (2021) S733--S759.

\bibitem{QiSun1993}
L.~Qi, J.~Sun, A nonsmooth version of newton's method, Mathematical programming 58~(1) (1993) 353--367.

\bibitem{leVeque2002}
R.~J. LeVeque, Finite volume methods for hyperbolic problems, Vol.~31, Cambridge university press, 2002.

\bibitem{BOSCARINO2019594}
S.~Boscarino, J.-M. Qiu, G.~Russo, T.~Xiong, \href{https://www.sciencedirect.com/science/article/pii/S0021999119303109}{A high order semi-implicit imex weno scheme for the all-mach isentropic euler system}, Journal of Computational Physics 392 (2019) 594--618.
\newblock \href {https://doi.org/https://doi.org/10.1016/j.jcp.2019.04.057} {\path{doi:https://doi.org/10.1016/j.jcp.2019.04.057}}.
\newline\urlprefix\url{https://www.sciencedirect.com/science/article/pii/S0021999119303109}

\bibitem{GMRES}
Y.~Saad, M.~H. Schultz, \href{https://doi.org/10.1137/0907058}{Gmres: A generalized minimal residual algorithm for solving nonsymmetric linear systems}, SIAM Journal on Scientific and Statistical Computing 7~(3) (1986) 856--869.
\newblock \href {http://arxiv.org/abs/https://doi.org/10.1137/0907058} {\path{arXiv:https://doi.org/10.1137/0907058}}, \href {https://doi.org/10.1137/0907058} {\path{doi:10.1137/0907058}}.
\newline\urlprefix\url{https://doi.org/10.1137/0907058}

\bibitem{accuracyvortex}
D.~S. Balsara, Second-order-accurate schemes for magnetohydrodynamics with divergence-free reconstruction, The Astrophysical Journal Supplement Series 151~(1) (2004) 149--184.

\bibitem{exact1}
S.~Falle, Rarefaction shocks, shock errors and low order of accuracy in zeus, arXiv preprint astro-ph/0207419 (2002).

\bibitem{exact2}
S.~A. E.~G. FALLE, S.~S. KOMISSAROV, On the inadmissibility of non-evolutionary shocks, Journal of Plasma Physics 65~(1) (2001) 29–58.
\newblock \href {https://doi.org/10.1017/S0022377801008856} {\path{doi:10.1017/S0022377801008856}}.

\bibitem{exact3}
M.~Torrilhon, \href{https://www.sciencedirect.com/science/article/pii/S0021999103003474}{Non-uniform convergence of finite volume schemes for riemann problems of ideal magnetohydrodynamics}, Journal of Computational Physics 192~(1) (2003) 73--94.
\newblock \href {https://doi.org/https://doi.org/10.1016/S0021-9991(03)00347-4} {\path{doi:https://doi.org/10.1016/S0021-9991(03)00347-4}}.
\newline\urlprefix\url{https://www.sciencedirect.com/science/article/pii/S0021999103003474}

\bibitem{DumbserHLLEM}
M.~Dumbser, D.~S. Balsara, \href{https://www.sciencedirect.com/science/article/pii/S0021999115006786}{A new efficient formulation of the hllem riemann solver for general conservative and non-conservative hyperbolic systems}, Journal of Computational Physics 304 (2016) 275--319.
\newblock \href {https://doi.org/https://doi.org/10.1016/j.jcp.2015.10.014} {\path{doi:https://doi.org/10.1016/j.jcp.2015.10.014}}.
\newline\urlprefix\url{https://www.sciencedirect.com/science/article/pii/S0021999115006786}

\bibitem{DumbserToro}
M.~Dumbser, E.~F. Toro, On universal osher-type schemes for general nonlinear hyperbolic conservation laws, Communications in Computational Physics 10~(3) (2011) 635–671.
\newblock \href {https://doi.org/10.4208/cicp.170610.021210a} {\path{doi:10.4208/cicp.170610.021210a}}.

\bibitem{BoscheriADER}
W.~Boscheri, M.~Dumbser, D.~S. Balsara, \href{https://onlinelibrary.wiley.com/doi/abs/10.1002/fld.3947}{High-order ader-weno ale schemes on unstructured triangular meshes—application of several node solvers to hydrodynamics and magnetohydrodynamics}, International Journal for Numerical Methods in Fluids 76~(10) (2014) 737--778.
\newblock \href {http://arxiv.org/abs/https://onlinelibrary.wiley.com/doi/pdf/10.1002/fld.3947} {\path{arXiv:https://onlinelibrary.wiley.com/doi/pdf/10.1002/fld.3947}}.
\newline\urlprefix\url{https://onlinelibrary.wiley.com/doi/abs/10.1002/fld.3947}

\bibitem{orszag1979}
S.~A. Orszag, C.-M. Tang, Small-scale structure of two-dimensional magnetohydrodynamic turbulence, Journal of Fluid Mechanics 90~(1) (1979) 129--143.

\bibitem{dahlburg1989}
R.~Dahlburg, J.~Picone, Evolution of the orszag--tang vortex system in a compressible medium. i. initial average subsonic flow, Physics of Fluids B: Plasma Physics 1~(11) (1989) 2153--2171.

\bibitem{picone1991}
J.~M. Picone, R.~B. Dahlburg, Evolution of the orszag--tang vortex system in a compressible medium. ii. supersonic flow, Physics of Fluids B: Plasma Physics 3~(1) (1991) 29--44.

\bibitem{dumbser2008}
M.~Dumbser, D.~S. Balsara, E.~F. Toro, C.-D. Munz, A unified framework for the construction of one-step finite volume and discontinuous galerkin schemes on unstructured meshes, Journal of Computational Physics 227~(18) (2008) 8209--8253.

\bibitem{balsara2015}
D.~S. Balsara, M.~Dumbser, Divergence-free mhd on unstructured meshes using high order finite volume schemes based on multidimensional riemann solvers, Journal of Computational Physics 299 (2015) 687--715.

\bibitem{activeflux}
M.~Liu, D.~Pang, R.~Abgrall, K.~Wu, \href{https://arxiv.org/abs/2510.19721}{An active-flux-type scheme for ideal mhd with provable positivity and discrete divergence-free property} (2025).
\newblock \href {http://arxiv.org/abs/2510.19721} {\path{arXiv:2510.19721}}.
\newline\urlprefix\url{https://arxiv.org/abs/2510.19721}

\bibitem{SC3}
K.~Wu, C.-W. Shu, A provably positive discontinuous galerkin method for multidimensional ideal magnetohydrodynamics, SIAM Journal on Scientific Computing 40~(5) (2018) B1302--B1329.

\bibitem{Sc1}
W.~Dai, P.~R. Woodward, A simple finite difference scheme for multidimensional magnetohydrodynamical equations, Journal of Computational Physics 142~(2) (1998) 331--369.

\bibitem{SC2}
G.-S. Jiang, E.~Tadmor, Nonoscillatory central schemes for multidimensional hyperbolic conservation laws, SIAM Journal on Scientific Computing 19~(6) (1998) 1892--1917.

\bibitem{BoscheriWB}
C.~Birke, W.~Boscheri, C.~Klingenberg, A well-balanced semi-implicit imex finite volume scheme for ideal magnetohydrodynamics at all mach numbers, Journal of Scientific Computing 98~(2) (2024) 34.

\bibitem{FAMBRIWB}
F.~Fambri, E.~Zampa, S.~Busto, L.~Río-Martín, F.~Hindenlang, E.~Sonnendrücker, M.~Dumbser, A well-balanced and exactly divergence-free staggered semi-implicit hybrid finite volume / finite element scheme for the incompressible mhd equations, Journal of Computational Physics 493 (2023) 112493.
\newblock \href {https://doi.org/https://doi.org/10.1016/j.jcp.2023.112493} {\path{doi:https://doi.org/10.1016/j.jcp.2023.112493}}.

\end{thebibliography}



\end{document}